\declaretheorem[numberwithin=section]{theorem}
\declaretheorem[sibling=theorem]{lemma}
\declaretheorem[sibling=theorem]{proposition}
\declaretheorem[sibling=theorem]{corollary}
\declaretheorem[style=definition,numbered=no]{definition}
\declaretheorem[style=remark,sibling=theorem]{example}
\declaretheorem[style=remark,sibling=theorem]{remark}
\declaretheorem[style=remark,numbered=no]{notation}
\numberwithin{equation}{section}
\numberwithin{figure}{section}
\title{Transversality theorem in highly relative situations and its application}
\date{\today}
\author{Jun Yoshida}
\begin{document}

\maketitle
\tableofcontents

\section*{Introduction}
\addcontentsline{toc}{section}{Introduction}
\label{intro}

In this paper, we aim to provide a notion of ``relative objects'', i.e. objects equipped with some sort of subobjects, in differential topology.
In this area, we often encounter them.
For example, people study the theory of (higher) knots and links, embeddings of codimension two in the sphere of a certain dimension.
In other words, knots and links are kinds of relative objects in the category $\mathbf{Mfd}$ of manifolds and smooth maps, and the knot theory is nothing but the study of them.
Another example is a manifold with boundaries.
Every manifold $X$ with boundaries has a canonical submanifold $\partial X\subset X$.
In addition, for another manifold $Y$ with boundaries, a smooth map $F:X\to Y$ is often required to map $\partial X$ into $\partial Y$.
This requirement is equivalent to saying that $F$ is a map $(X,\partial X)\to (Y,\partial Y)$ of relative objects.
The reader will find other numerous examples.
In spite of these active researches, there seem to be poor notions to deal with them in differential topology in comparison with, for instance, in homotopy theory.
Even if we have an embedding $X'\hookrightarrow X$ in a differentiable context, people often think of the pair $(X,X')$ as a homotopical object.
We want more direct differential calculus on pairs and to extend classical notions and theories to relative situations; e.g. functions, vector fields, jet bundles, singularities, and so on.

One motivation comes from a variant of topological field theory.
Recall that we have a symmetric monoidal category $\mathbf{Cob}_{d+1}$ whose objects are closed oriented manifolds of dimension $d$ and whose morphisms are (diffeomorphism classes of) $(d+1)$-dimensional bordisms.
The composition is just gluing bordisms, and the symmetric monoidal structure is given by the disjoint unions of manifolds.
Atiyah pointed out in \cite{Atiyah88} that a topological quantum field theory (TQFT) can be formulated as a symmetric monoidal functor from $\mathbf{Cob}_{d+1}$ to the category of vector spaces.
The theory of TQFTs is a recent hot topic, and some researchers have introduced variants of the notion.
We here mention, especially, TQFT with defects; it is a theory on bordisms with ``pictures'' drawn.
Each bordism is equipped with a submanifold.
Moreover, one sometimes consider recursive version: submanifolds has submanifolds which has submanifolds...
Hence, we need to treat with ``relative'' bordisms.
It is expected that $2$-dimensional TQFTs with defects are highly related to planar algebras introduced by Jones \cite{Jon99} and other kinds of ``graphical calculi'' in monoidal categories.
To formulate these relations, the classification of bordisms is important.
This is why we attempt to develop a relative notion in differential topology.

To establish the notion, what should we do?
In modern perspective, one of the best ways to understand an object is to investigate the function space on it.
Hence, the main object in this paper is the space $C^\infty(\mathcal X,\mathcal Y)$ of relative smooth maps between relative objects $\mathcal X$ and $\mathcal Y$ in $\mathbf{Mfd}$ which we will call arrangements of manifolds.
Notice that $C^\infty(\mathcal X,\mathcal Y)$ is topologized as a subspace of the usual space $C^\infty(X,Y)$ of smooth maps with Whitney $C^\infty$-topology.
The reader will find that the space $C^\infty(\mathcal X,\mathcal Y)$ is as good as $C^\infty(X,Y)$ and that many arguments in the usual case are still valid in relative cases.
In particular, the goal of this paper is a relative version of Transversality Theorem, which is a foundamental result in the singularity theory of smooth maps originally proved by Thom \cite{Thom56} (and ``multi'' version by Mather \cite{Mat70V}).
The formal statement is as follows:

\begin{restatable*}{theorem}{thmJetTransversal}
\label{thm:multijet-transv}
Let $\mathcal X$ and $\mathcal Y$ be excellent arrangements of manifolds of shape $S$.
Suppose we are given a submanifold $W\subset J^r_{\mathbf n}(\mathcal X,\mathcal Y)$ for a map $\mathbf n:S^{[1]}\to\mathbb Z_{\ge 0}$ and a non-negative integer $r\ge 0$.
Then the subset
\[
\mathcal T_W :=\{F\in C^\infty(\mathcal X,\mathcal Y)\mid j^r_{\mathbf n}(F)\pitchfork W\} \subset C^\infty(\mathcal X,\mathcal Y)
\]
is a residual subset.
Moreover, if $W$ is compact, then $\mathcal T_W$ is open.
\end{restatable*}

Here, $J^r_{\mathbf n}(\mathcal X,\mathcal Y)$ is a relative version of multi-jet bundles defined in \cite{Mat70V}, and $j^r_{\mathbf n}F$ is the multijet of a smooth map $F$.
We note that in the simplest relative case, just pairs of manifolds, the theorem was proved by Ishikawa \cite{Ish98}.
It is, however, still unenough to consider complicated situation; for example, Ishikawa's result does not cover manifolds with corners, and we need it.

Now, let us explain the idea more precisely.
Our main question is simple: what are relative objects in a given category $\mathcal C$.
If one take $\mathcal C=\mathbf{Mfd}$, this is nothing but the question considered above.
One naive answer is as follows: suppose the category $\mathcal C$ is equipped with a notion of ``embeddings''.
An object $X'\in\mathcal C$ is called a subobject of $X$ if it is equipped with an embedding $X'\hookrightarrow X$.
Then, a relative object in $\mathcal C$ is a pair $(X,\{X_\alpha\}_\alpha)$ of an object $X$ and a family $\{X_\alpha\}_\alpha$ of subobjects of $X$.
Writing $\mathbf{Emb}_{\mathcal C}\subset\mathcal C$ the subcategory consisting of embeddings, one may notice that a relative object $(X,\{X_\alpha\}_\alpha)$ gives rise to a functor $\mathcal X:S\to\mathbf{Emb}_{\mathcal C}$ for a poset $S$ with a maximum element such that $\mathcal X(\max S)=X$.
This is a primitive form of the notion, and we will call $\mathcal X$ a pre-arrangement in $\mathcal C$ of shape $S$.
In this point of view, a relative morphism $\mathcal X\to\mathcal Y$ between pre-arrangements of the same shape $S$ is defined as a natural transformation between functors $\mathcal X,\mathcal Y:S\to\mathbf{Emb}_{\mathcal C}\hookrightarrow\mathcal C$.
In particular, in the case $\mathcal C=\mathbf{Mfd}$, this is the definition of the space $C^\infty(\mathcal X,\mathcal Y)$.
If $S$ is a finite lattice, and if $\mathcal X$ sends meets in $S$ to pullbacks in $\mathcal C$, we call $\mathcal X$ an arrangement.
This is important when we are interested in the intersections of subobjects.

For example, consider the category $\mathbf{MkFin}$ whose objects are marked finite sets, i.e. finite sets with some elements ``marked''.
The morphisms are usual maps preserving marked elements.
For a marked finite set $I$, we denote by $I_0\subset I$ the set of marked elements.
We say a morphism $I\to J\in\mathbf{MkFin}$ is an embedding if it is injective (while some of the readers feel uncomfortable).
Then, an arrangement $\mathcal I$ in $\mathbf{MkFin}$ of shape $S$ is equivalent to a marked finite set $I$ equipped with lattice homomorphisms $\mathcal I:S\to 2^I$ and $\mathcal I_0:S\to 2^{I_0}$ preserving infimums and such that $\mathcal I(s)_0\mathcal I(s)$ for each $s\in S$.
Note that this arrangement gives rise to an arrangement of manifolds of shape $S$; we define a functor $\mathcal E^{\mathcal I}:S\to\mathbf{Emb}$ by
\[
\mathcal E^{\mathcal I}(s) := \mathbb R^{\mathcal I_0(s)}\times\mathbb R_+^{\mathcal I(s)\setminus\mathcal I_0(s)}\,.
\]
The arrangement $\mathcal E^{\mathcal I}$ plays an important role when we define relative jet bundles and consider Transversality Theorem.

We here sketch the structure of this paper.
First, in \Cref{sec:prelim}, we review the definition and elementary results on manifolds with corners.
Throughout the paper, we mainly talk about manifolds with corners rather than just manifolds.
We decided to give as many definitios and notations as possible because other authors used different and inequivalent conventions especially on treatment of corners; e.g. what is the tangent space at a corner?
One can find major ones in the paper \cite{Joy09}.
As an unfortunate result, our convention might becomes different from any other literatures even in the reference list.
We, however, believe that ours is the most natural one from the algebraic point of view.
Indeed, we employed algebraic definitions of jet bundles and (co)tangent bundles.
They are parallel to those in algebraic geometry.
The author learned the idea in \cite{ScP09}, but the most comprehensive textbook is \cite{MoerdijkReyes1991}.
Note that, except for notations and definitions, we mainly follow \cite{Mic80} for manifolds with corners and \cite{GG73} for jet bundles and Whitney $C^\infty$-topology.

In \Cref{sec:mult-relative}, we discuss relative objects in the category $\mathbf{Mfd}$.
We will formalize the idea given above, and some elementary constructions will be mentioned.
We also show basic properties of the space $C^\infty(\mathcal X,\mathcal Y)$ including that the Baire property.
In addition, we will define a special class of arrangements of manifolds, called excellent arrangements.
They are, roughly, manifolds modeled on the arrangements $\mathcal E^{\mathcal I}$ defined above.
An important example is a manifold with faces, wihch we will discuss in \Cref{sec:corner-arr}.
This notion was originally introudced by \cite{Jan68}.
For them, we see most of the behaviors of corners can be described in terms of arrangements.
The new notion of edgings will be introduced, which stands for a condition for maps to send each corners to designated one.
This and the relative notions enable us to consider the space $\mathcal F^\beta(X,Y)$ of smooth maps between manifolds with faces along an edging $\beta$.
Furthermore, we will prove, so-called, Collar Neighborhood Theorem in a certain form.
Our version is essentially the same as Laures' \cite{Lau00} but a bit generalized.
In particular, we will consider collarings along general edgings.
This result together with the uniqueness theorem is used to make bordisms into a (higher) category.

The proof of \Cref{thm:multijet-transv} will be given in \Cref{sec:rel-transversality} after defining a relative version of jet bundles.
Thanks to the algebraic definition, the definition is almost straight forward.
For excellent arrangements $\mathcal X$ and $\mathcal Y$ of shape $S$, the relative jet bundle $J^r_\kappa(\mathcal X,\mathcal Y)$ is defined for each non-negative integer $r\ge 0$ and each interval $\kappa=[\kappa_0,\kappa_1]\subset S$.
It consists of ``relative jets'', which are roughly jets of smooth maps preserving arrangements, while we have to be careful around corners.
Obviously $J^r_\kappa(\mathcal X,\mathcal Y)$ is a smooth fiber bundle over the manifold $\mathcal X\double(\kappa_0\double)\times\mathcal Y(\kappa_0)$, here $\mathcal X\double(\kappa_0\double):=\mathcal X(s)\setminus\bigcup_{t<s}\mathcal X(t)$.
In addition, the ``multi'' version will be also considered.
Notice that the relative jet bundles are really related to the space of polynomial maps.
For two arrangements $\mathcal I$ and $\mathcal J$, consider the space $P^r(\mathcal I,\mathcal J)$ of the polynomial maps between arrangements $\mathcal E^{\mathcal I}$ and $\mathcal E^{\mathcal J}$.
It is easily verified that $P^r(\mathcal I,\mathcal J)$ is diffeoomrphic to a finite dimensional Euclidean space.
Since excellent arrangements are modeled on arrangements of the form $\mathcal E^{\mathcal I}$, the relative jet bundles should be modeled on $P^r(\mathcal I,\mathcal J)$.
Along this idea, after certain discussion on corners, we will see that smooth maps $F:\mathcal X\to\mathcal Y$ between excellent arrangements admit perturbations with parameters in a subspace of $P^r(\mathcal I,\mathcal J)$.
This observation is a key step for the proof of \Cref{thm:multijet-transv}.
Indeed, the actual proof is a combination of it and the classical Parametric Transversality Theorem.

In the final section, \Cref{sec:appemb}, we give an application of \Cref{thm:multijet-transv}, Embedding Theorem of manifolds with faces.
In compact cases, this theorem was proved in the paper \cite{Lau00}.
Our result is stronger than his result; e.g. it covers non-compact cases and embeddings into general convex polyhedra.
Moreover, it involves the residuality of embeddings in the space $C^\infty(X,\mathbb R^k\times\mathbb R^n_+)$ rather than just the existence.
Precisely, we prove the following

\begin{restatable*}{theorem}{thmImmersion}
\label{thm:imm-thm}
Let $X$ and $Y$ be two manifolds with finite faces, and let $\beta$ be an edging of $X$ with $Y$.
Assume we have $2\cdot \dim X\le\dim Y$.
Then, immersions along $\beta$ form a residual and, hence, dense subset of the space $\mathcal F^\beta(X,Y)$.
\end{restatable*}

\begin{restatable*}{theorem}{thmEmbedding}
\label{thm:emb-thm}
Let $X$ and $Y$ be manifolds with finite faces, and let $\beta$ be an edging of $X$ with $Y$ so that the space $\mathcal F^\beta(X,Y)$ is non-empty.
Then, for any sufficiently large integer $n>0$, there is a proper embedding $F:X\to Y\times\mathbb R^n$ such that
\begin{enumerate}[label={\rm(\roman*)}]
  \item for each $\tau\in\Gamma_Y$, $F(\partial^\beta_\tau X)\subset\partial_\tau Y$;
  \item for each connected face $D\in\operatorname{bd}Y$ of $Y$, $F\pitchfork D$.
\end{enumerate}
\end{restatable*}

If $Y$ is a polyhedron in the Euclidean space, it is verified that the space $\mathcal F^\beta(X,Y)$ is non-empty for any edging $\beta$.
Hence, we always have an embedding $X\to Y\times\mathbb R^n$ along $\beta$ for a sufficiently large integer $n$.

The idea of the proof of \Cref{thm:emb-thm} is simple.
We will see that some characteristic properties of embeddings can be described in terms of the transversality of jets with certain submanifolds.
In particular, \Cref{thm:imm-thm} is a consequence of this observation and \Cref{thm:multijet-transv}, and it is also follows that injective maps also form a residual subset as soon as they has enough codimensions.
Finally, \Cref{thm:emb-thm} follows from the observation that proper maps form a non-empty open subset in $\mathcal F^\beta(X,Y\times\mathbb R^n)$ as soon as $\mathcal F^\beta(X,Y)\neq\varnothing$.

\subsection*{Acknowledgements}
This paper is written under the supervision of Toshitake Kohno.
I would like to thank him for great encouragement and advice.
I would also like to appreciate my friends, Yuta Nozaki, Satoshi Sugiyama, and Shunsuke Tsuji.
They pointed out some errors and helped me improve the paper.
Finally, I thank the mathematical research institute MATRIX in Australia where part of this research was performed.
This work was supported by the Program for Leading Graduate Schools, MEXT, Japan.
This work was supported by JSPS KAKENHI Grant Number JP15J07641.

\section{Preliminaries}
\label{sec:prelim}

In this first section, we prepare elementary notions.

\subsection{Manifolds with corners}
First of all, we give a quick review on the theory of manifolds with corners.
The notion was oringally introduced by Cerf \cite{Cer61} and Douady \cite{Dou61}.
Nowadays, however, there are some inequivalent definitions and conventions.
A good survey of them was presented by Joyce in \cite{Joy09}.
Nevertheless, the author believes that the definitions in this section are standard ones.

Briefly, a manifolds with corners is a manifold modeled on the space
\[
\mathbb R^n_+ := \{(x_1,\dots,x_n)\in\mathbb R^n\mid 1\le\forall i\le n: x_i\ge 0\}
\]
instead of the Euclidean space $\mathbb R^n$.
Hence, a manifold $X$ with corners is a second countable Hausdorff space equipped with a family $\{(U_\alpha,\varphi_\alpha)\}_\alpha$ of open cover $\{U_\alpha\}_\alpha$ and open embeddings $\varphi_\alpha:U_\alpha\to\mathbb R^n_+$ so that it defines a \emph{smooth structure} on $X$.
In this case, the number $n$ is called the dimension of $X$.
Note that, for the smoothness, we use the convention that if $A\subset\mathbb R^m$ and $B\subset\mathbb R^n$ are subsets of the Euclidean spaces, then a map $F:A\to B$ is smooth if and only if it extends to a map $\widehat F:U\to\mathbb R^n$ on an open neighborhood $U$ of $A$ which is smooth in the usual sense.
We denote by $\mathbf{Mfd}$ the category of manifolds with corners and smooth maps.

\begin{remark}
Some authors require additional conditions for smooth maps.
For example, in \cite{Joy09}, the terminology weakly smooth maps is used for smooth maps in our convention.
Of course, different definitions give rise to different categories.
\end{remark}

To investigate manifolds with corners, it is convenient to introduce the notion of marked sets.
A marked set is a set $I$ with some elements marked; equivalently, a marked set is a pair $(I,I_0)$ os a set $I$ and a subset $I_0\subset I$ whose elements are ``marked''.
A map of marked finite sets is a map preserving marked elements.
We denote by $\mathbf{MkFin}$ the category of marked finite sets and maps of them.
We define a map $\mathbb H^\bullet:\mathbf{MkFin}\to\mathbf{Mfd}$ in the following way: for a marked finite set $I$, we set
\[
\mathbb H^I := \mathbb R^{I_0}\times\mathbb R^{I_+}_+\subset\mathbb R^{\#I}\ .
\]
If $\varphi:I\to J$ is a map of marked finite sets, then $\varphi_\ast:\mathbb H^I\to\mathbb H^J$ is defined by
\[
\varphi_\ast\left((x_i)_i\right)
= \left(\sum_{i\in\varphi^{-1}\{j\}} x_i\right)_j\ .
\]
We give special notations to frequently used marked finite sets: for natural numbers $0\le k\le m$, we write
\[
\langle m|k\rangle := \{1,\dots,m\}
\ \text{with}\ 1,2,\dots,k\ \text{marked}\ .
\]
For example, we have $\mathbb H^{\langle m|k\rangle}\cong\mathbb R^k\times\mathbb R^{m-k}_+$.
As a consequence of the smooth invariance of domain, we have the following result.

\begin{lemma}
\label{lem:mfd-chart}
Let $X$ be a manifold with corners.
Then, each point $p\in X$ admits an open neighborhood $U\in X$, a marked finite set $I$, and an open embedding $\varphi:U\to\mathbb H^I$ which is a local diffeomorphism such that $\varphi(p)=0$.
Moreover, the marked finite set $I$ is unique up to isomorphisms in $\mathbf{MkFin}$.
\end{lemma}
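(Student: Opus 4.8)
The statement asserts two things about the local model near a point $p$ of a manifold with corners $X$: the existence of a chart taking values in some $\mathbb{H}^I$ with $\varphi(p)=0$, and the uniqueness of the marked finite set $I$ up to isomorphism in $\mathbf{MkFin}$. Existence is essentially a restatement of the definition of a smooth atlas, modulo a translation: by hypothesis $p$ lies in some chart $\varphi_\alpha : U_\alpha \to \mathbb{R}^n_+$, and $\mathbb{R}^n_+ = \mathbb{H}^{\langle n|0\rangle}$, so after translating by $-\varphi_\alpha(p)$ — which is a diffeomorphism of an open neighborhood of $\varphi_\alpha(p)$ in $\mathbb{R}^n_+$ onto an open neighborhood of $0$ in some $\mathbb{H}^{I}$, where $I = \langle n|k\rangle$ with $k$ the number of coordinates that vanish at $\varphi_\alpha(p)$ — we obtain the desired chart. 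So the existence half is a short unwinding of definitions; the only point to check is that the translated image is open in the appropriate $\mathbb{H}^I$ and that the translation is a local diffeomorphism in the sense used in the paper, both of which are immediate.

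The substance of the lemma is uniqueness, and here the plan is to invoke the \emph{smooth invariance of domain}, which is explicitly flagged in the statement as the tool. Concretely, suppose we have two local diffeomorphisms $\varphi : U \to \mathbb{H}^I$ and $\psi : V \to \mathbb{H}^J$ onto open neighborhoods of $0$, both sending $p$ to $0$. Shrinking $U$ and $V$, we may assume $U = V$, so that $\psi \circ \varphi^{-1}$ is a diffeomorphism between an open neighborhood of $0$ in $\mathbb{H}^I$ and an open neighborhood of $0$ in $\mathbb{H}^J$ fixing the origin. Since $\mathbb{H}^I = \mathbb{R}^{I_0}\times\mathbb{R}^{I_+}_+$, and since $\#I = \#J$ follows at once by comparing dimensions (both equal $\dim X$), it remains to show that the number of \emph{unmarked} coordinates, i.e.\ $\#I_+$, is a diffeomorphism invariant. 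The standard way to extract this is to characterize $\#I_+$ intrinsically: a point $x \in \mathbb{H}^I$ lies in the "depth-$\ell$ stratum" — meaning exactly $\ell$ of its $\mathbb{R}_+$-coordinates vanish — and one shows that a diffeomorphism germ must preserve this depth stratification. At the origin every $\mathbb{R}_+$-coordinate vanishes, so $0 \in \mathbb{H}^I$ has depth $\#I_+$; if depth is invariant under diffeomorphism germs at $0$, then $\#I_+ = \#J_+$, and combined with $\#I = \#J$ this gives $I \cong J$ in $\mathbf{MkFin}$.

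The key step — and the main obstacle — is therefore establishing that a diffeomorphism germ at $0$ preserves depth, equivalently that it restricts to a homeomorphism between the local boundary structures. The cleanest route is the \emph{smooth} invariance of domain in the half-space setting: if $f : A \to \mathbb{R}^n$ is smooth and injective with smooth inverse on its image, where $A$ is a relatively open subset of $\mathbb{H}^I$, then $f$ carries interior points (in the manifold-with-corners sense) to interior points, by applying the classical open mapping / invariance-of-domain argument to the extension $\widehat{f}$ to an open subset of $\mathbb{R}^n$ and using that $D\widehat{f}$ is invertible along $A$. Iterating this over the iterated boundary strata — each stratum of $\mathbb{H}^I$ is itself locally a half-space $\mathbb{H}^{I'}$ of lower "corner codimension" — shows that the depth filtration is preserved, and in fact that the germ at $0$ of the decomposition of $\mathbb{H}^I$ into its strata is intrinsic. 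One should be mildly careful that the extension $\widehat{f}$ need not be injective off $A$, so the invariance-of-domain argument is applied locally where the derivative is an isomorphism; this is routine but is the one place where the half-space (rather than Euclidean) geometry genuinely enters. Granting this, the marked finite set $I$ is recovered from the germ of $(X,p)$ as: $\#I_+ = $ depth of $p$, and $\#I_0 = \dim X - \#I_+$, which pins down $I$ up to the unique isomorphism in $\mathbf{MkFin}$ and completes the proof.
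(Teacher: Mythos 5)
Your plan is correct and follows exactly the route the paper itself indicates (the paper offers no proof of this lemma beyond the remark that it is ``a consequence of the smooth invariance of domain''): existence is the translation of a defining chart, and uniqueness reduces to showing that the depth of a point is a diffeomorphism invariant, which you extract from invariance of domain applied to the extension $\widehat f$. The only thin spot is the stratum-by-stratum iteration, which as stated presupposes that the germ restricts to a correspondence of strata; a cleaner way to finish within your setup is to note that $L=D\widehat f(0)$ is a linear isomorphism carrying the cone $\mathbb H^I$ onto $\mathbb H^J$ (differentiate curves $t\mapsto f(tv)$ and use that a coordinate $\ge 0$ vanishing at $t=0$ has nonnegative derivative), and the lineality space of $\mathbb H^I$ has dimension $\#I_0$, which immediately gives $I\cong J$ in $\mathbf{MkFin}$.
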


In the case of \Cref{lem:mfd-chart}, we call $(U,\varphi)$ a coordinate chart, or chart briefly, on $X$ centered at $p$.
By virtue of the last assertion, we may assume $\varphi:U\to\mathbb H^{\langle m|k\rangle}$ for unique natural numbers $0\le k\le m$.
We call $p$ a corner of codimension $m-k$.
Notice that $m$ is just the dimension of $X$.
We set $\partial_c X\subset X$ to be the subset of corners of codimension $c$.
It is verified that
\[
\partial_c(X\times Y) = \coprod_{c_1+c_2=c}(\partial_{c_1}X)\times(\partial_{c_2}Y)\ .
\]

Similarly to the usual manifolds, each manifolds with corners admits a canonical sheaf on it, namely the sheaf $U\mapsto C^\infty(U)$ of smooth (real-valued) functions.
Obviously, it is a sheaf of $\mathbb R$-algebras so that we may regard manifolds with corners as locally ringed spaces.
For a manifold $X$ with corners and a point $p\in X$, we denote by $C^\infty_p(X)$ the ring of germs of smooth functions defined near $p$ and by $\mathfrak m_p(X)$ its maximal ideal, namely
\[
\mathfrak m_p(X) := \{f\in C^\infty_p(X)\mid f(p)=0\}\ .
\]
Moreover, each chart $(U,\varphi)$ on $X$ centered at $p$ gives rise to an $\mathbb R$-algebra homomorphism
\[
\varphi_!:C^\infty(U)\ni f \mapsto \sum_\alpha \frac{\partial^{|\alpha|}f\varphi^{-1}}{\partial x^\alpha}(0) x^\alpha \in \mathbb R[\![x_1,\dots,x_n]\!]
\]
into the $\mathbb R$-algebra of formal power series, which sends $\mathfrak m_p(X)$ onto the maximal $(x_1,\dots,x_n)$ of the local ring $\mathbb R[\![x_1,\dots,x_n]\!]$.
Hence, a smooth $F:X\to Y$ induces an $\mathbb R$-algebra homomorphism $C_{F(p)}(Y)\to C_p(X)$ which preserves the maximal ideals.

The above algebraic aspect allows us to describe some geometric notions on manifolds in a more unified ways.
For an $\mathbb R$-algebra $A$, we denote by $\operatorname{Der}(A,\mathbb R)$ the set of $\mathbb R$-valued $\mathbb R$-derivations on $A$, which has a canonical structure of $\mathbb R$-vector space.

\begin{definition}
Let $X$ be a manifold with corners and $p\in X$ be a point.
We define two $\mathbb R$-vector spaces
\[
\begin{gathered}
T_pX := \operatorname{Der}(C^\infty_p(X),\mathbb R) \\
T_p^\ast X := \mathfrak m_p(X) / \mathfrak m_p(X)^2\,,
\end{gathered}
\]
and we call them the tangent and cotangent spaces at $p$ respectively.
\end{definition}

For the space $\mathbb H^I$ with the standard coordinate $(x_i)_{i\in I}$, we have canonical isomorphisms
\[
\begin{gathered}
T_0\mathbb H^I \cong \mathbb R\{\frac\partial{\partial x_i}\mid i\in I\}\ , \\
T_0^\ast\mathbb H^I \cong \mathbb R\{d_0x_i\mid i\in I\}\ .
\end{gathered}
\]
If $X$ is a general manifold with corners, a coordinate function $\varphi:U\to\mathbb H^I$ centered at $p\in X$ gives rise to isomorphisms
\[
\begin{gathered}
\varphi_\ast:T_p X \cong T_0\mathbb H^I \cong \mathbb R^{\#I}\ , \\
\varphi^\ast:T_0^\ast\mathbb H^I \cong T^\ast_pX \cong \mathbb R^{\#I}\ .
\end{gathered}
\]
Notice that the right hand sides depend only on the number of elements of $I$, so the tangent and cotangent spaces are \emph{the same} even on corners.
This allwos us to define the tangent bundle $TX$ and the cotangent bundle $T^\ast X$ on each manifold $X$ with corners in a canonical way, namely
\[
TX = \coprod_{p\in X} T_pX
\ ,\quad T^\ast X = \coprod_{p\in X} T^\ast_pX\ .
\]

In addition, the standard argument shows that there are canonical isomorphisms
\[
T^\ast_pX \cong\mathrm{Hom}_{\mathbb R}(T_pX,\mathbb R)
\ ,\quad T_p X \cong \mathrm{Hom}_{\mathbb R}(T^\ast_pX,\mathbb R)
\]
of $\mathbb R$-vector spaces so that $T^\ast X \cong\mathpzc{Hom}_X(TX,\mathbb R_X)$ and $TX\cong\mathpzc{Hom}_X(T^\ast X,\mathbb R_X)$, where $\mathbb R_X=\mathbb R\times X$ is the trivial bundle.
The following formulas are convenient in the practical computation:
\[
\begin{gathered}
T_{(p,q)}(X\times Y) \simeq T_pX\oplus T_qY \\
T^\ast_{(p,q)}(X\times Y) \simeq T^\ast_pX\oplus T^\ast_qY
\end{gathered}
\]

Smooth sections of the bundle $TX$ are given a special name.

\begin{definition}
Let $X$ be a manifold with corners.
Then a vector fields on $X$ is a smooth section of the tangent bundle $TX\to X$.
We denote by $\mathfrak X(X)$ the $C^\infty(X)$-module of vector fields on $X$.
\end{definition}

Recall that we have $T_pX=\operatorname{Der}(C^\infty_p(X),\mathbb R)$.
Hence, for $\xi\in\mathfrak X(X)$, and for $f\in C^\infty(X)$, we have a map
\[
\xi(f) : X \ni p \mapsto \xi_p(f) \in \mathbb R\ ,
\]
which is smooth.
Then, the space $\mathfrak X(X)$ admits a structure of a Lie algebra over $\mathbb R$ so that $\mathfrak X(X)\cong\operatorname{Der}(C^\infty(X))$ as Lie algebras.

Since we consider manifolds with corners, there is a notion special to them.

\begin{definition}
Let $X$ be a manifold with corners.
\begin{enumerate}[label={\rm(\arabic*)}]
  \item For a point $p\in X$, a vector $v\in T_pX$ in the tangent space is said to be inward-pointing if there is a smooth map $\gamma:\mathbb R_+\to X$ such that $\gamma(0)=p$ and $v=\gamma_\ast(\left.\frac{d}{dt}\right|_0)$.
  \item A vector field $\xi$ on $X$ is said to be inward-pointing if $\xi(p)\in T_pX$ is inward-pointing for each $p\in X$.
\end{enumerate}
\end{definition}

For an inward-pointing vector field $\xi$ on $X$, the condition above guarantees that it locally admits a one-parameter family $\varphi=\{\varphi_t\}_{t\ge 0}$.
It is often the case that $\varphi$ actually defines an smooth map $\varphi:X\times\mathbb R_+\to X$ such that each $\varphi_t=\varphi(\blankdot,t):X\to X$ is an embedding.

Finally, we introduce the notion of transversality of vectors, which admits the most variations.
The version we introduce here is the weakest one.

\begin{definition}
Let $X$ and $Y$ be manifolds with corners, and let $F:X\to Y$ be a smooth map.
\begin{enumerate}[label={\rm(\arabic*)}]
  \item We say that $F$ intersects a submanifold $W\subset Y$ transversally, denoted by $F\pitchfork W$, at $p\in X$ for a point with $F(p)\in W$ if the map
\[
T_pX\oplus T_{F(p)}W \to T_{F(p)}Y
\]
is an epimorphism.
  \item For an arbitrary subset $A\subset W$, we say $F$ intersects $W$ transversally on $A$ if we have $F\pitchfork W$ at every point $p\in X$ with $F(p)\in A$.
In particular the case of $A=W$, we will simply say $F$ intersects $W$ transversally.
\end{enumerate}
\end{definition}

In other words, we have $F\pitchfork W$ at $p\in F^{-1}(W)$ if and only if the composition map
\[
T_pX \xrightarrow{d_pF} T_pY \twoheadrightarrow T_pY / T_pW
\]
is an epimorphism.
In particular, we have two obvious cases:
if $\dim X < \codim W$, we have $F\pitchfork W$ if and only if $F(X)\cap W=\varnothing$.
On the other hand, if $\dim W = \dim Y$, we always have $F\pitchfork W$.

\subsection{Submanifolds}
\label{sec:submfds}

\begin{definition}
Let $X$ and $Y$ be manifolds with corners.
Then a smooth map $F:X\to Y$ is called an immersion (resp. submersion) at $p\in X$ if the induced map
\[
F_\ast:T_pX\to T_{f(p)}Y
\]
is a monomorphism (resp. epimorphism) of $\mathbb R$-vector spaces.
We say $F$ is an immersion (resp. submersion) on a subset $U\subset X$ if $F$ is an immersion (resp. submersion) at any point of $U$.
In particular, we will say, simply, $F$ is an immersion (resp. submersion) if it is immersion (resp. submersion) on whole $X$.
\end{definition}

Note that $F:X\to Y$ is an immersion (resp. a submersion) at $p\in X$ if and only if the induced map
\[
F^\ast:T^\ast_{f(p)}Y\to T^\ast_pX
\]
is an epimorphism (resp. a monomorphism) of $\mathbb R$-vector spaces.
With a little care about corners, we have the following standard criterion:

\begin{proposition}
\label{prop:immersion-chart}
Let $X$ be an $m$-dimensional manifold with corners, and let $Y$ be an $n$-dimensional manifold without boundaries.
Then for a smooth map $F:X\to Y$ and $p\in X$, the following statements are equivalent:
\begin{enumerate}[label={\rm(\alph*)}]
  \item\label{subprop:immersion-chart:immersion} $F$ is an immersion at $p$ (hence $m\le n$).
  \item\label{subprop:immersion-chart:goodchart} There are charts $(U,\varphi)$ on $X$ centered at $p$ and $(V,\psi)$ on $Y$ centered at $F(p)$ such that we have
\[
\psi F \varphi^{-1}(x_1,\dots,x_m) = (x_1,\dots,x_m,0,\dots,0)\,.
\]
\end{enumerate}
Moreover, if $F$ is an immersion, the chart $(U,\varphi)$ in the condition \ref{subprop:immersion-chart:goodchart} can be taken to be a restriction of an arbitrary chart centered at $p$.
\end{proposition}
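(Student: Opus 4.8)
The plan is to reduce everything to the classical immersion normal form in the boundaryless category; the key point is that the coordinates on the source $X$ never need to be changed, which is precisely what makes the final ``moreover'' clause work and also what keeps the corners of $X$ from interfering.

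For the implication \ref{subprop:immersion-chart:goodchart}$\Rightarrow$\ref{subprop:immersion-chart:immersion} I would simply observe that, in the coordinates of \ref{subprop:immersion-chart:goodchart}, the local representative $\psi F\varphi^{-1}$ is the restriction to $\mathbb H^I$ of the standard linear inclusion $\mathbb R^m\hookrightarrow\mathbb R^n$, whose differential at $0$ is a monomorphism. Since $\varphi_\ast\colon T_pX\xrightarrow{\sim}T_0\mathbb H^I$ and $\psi_\ast\colon T_{F(p)}Y\xrightarrow{\sim}T_0\mathbb R^n$ are isomorphisms and the tangent assignment is compatible with composition, $F_\ast$ at $p$ is a monomorphism; in particular $m\le n$.

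For \ref{subprop:immersion-chart:immersion}$\Rightarrow$\ref{subprop:immersion-chart:goodchart}, I would fix an arbitrary chart $(U,\varphi)$ centered at $p$, say $\varphi\colon U\to\mathbb H^I$ with $\#I=m$, and --- using that $Y$ has no boundary --- a chart $(V,\psi)$ centered at $F(p)$ with $\psi$ an open embedding into $\mathbb R^n$. Let $G:=\psi F\varphi^{-1}$ and pick a smooth extension $\widehat G\colon\Omega\to\mathbb R^n$ over an open neighbourhood $\Omega\subset\mathbb R^m$ of $0$; by functoriality of the tangent spaces and the chain rule, $F_\ast$ at $p$ is identified with $d_0\widehat G$, so the hypothesis becomes that $d_0\widehat G$ is injective. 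Then I would run the usual argument: after a preliminary linear automorphism of $\mathbb R^n$ one may assume the first $m$ components of $d_0\widehat G$ form the identity of $\mathbb R^m$; the map $\Phi(x,y):=\widehat G(x)+(0,y)$, defined near $0$ in $\mathbb R^m\times\mathbb R^{n-m}$, then has invertible differential at $0$, so by the inverse function theorem $\Psi:=\Phi^{-1}$ is a diffeomorphism of neighbourhoods of $0$ with $\Psi\bigl(\widehat G(x)\bigr)=(x,0)$. Replacing $\psi$ by $\Psi\psi$ on a smaller $V$ and shrinking $U$ to $\varphi^{-1}(W\cap\mathbb H^I)$ for a small ball $W$ around $0$ on which all of the above is valid yields charts as in \ref{subprop:immersion-chart:goodchart}; since $\varphi$ was only restricted, never modified, this also gives the last assertion.

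The main thing to be careful about --- and essentially the only place corners enter --- is the identification of $F_\ast$ at $p$ with $d_0\widehat G$: one must check that the differential at $0$ of a smooth extension of $G|_{\mathbb H^I}$ does not depend on the chosen extension. This is true because $\mathbb H^I$ is the closure of its interior (and $0$ lies in that closure), so any two extensions agree to first order at $0$ by continuity of derivatives. I expect this observation, together with the routine bookkeeping of shrinking domains, to be the only delicate point; the rest is the classical immersion theorem applied verbatim in $\mathbb R^n$, which is legitimate precisely because $Y$, hence the target chart, has no boundary, so post-composing with $\Psi$ still produces a valid chart on $Y$.
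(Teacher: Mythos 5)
Your proof is correct, and it is precisely the standard normal-form argument that the paper invokes without proof (the proposition is stated there as a ``standard criterion'' with no written proof). You also correctly isolate the only genuinely corner-sensitive step --- that $d_0\widehat G$ is independent of the chosen extension because $0$ lies in the closure of the interior of $\mathbb H^I$ --- and the fact that you never modify $\varphi$, only restrict it, is exactly what yields the final ``moreover'' clause.
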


\begin{definition}
Let $X$ and $Y$ be manifolds with corners.
Then a smooth map $F:X\to Y$ is called an embedding if it is both an immersion and a topological embedding.
\end{definition}

Similarly to topological embeddings, it follows immediately from the definition that, for a fixed embedding $i:Y\to Z$ of manifolds with corners, a smooth map $F:X\to Y$ is an embedding if and only if so is the composition $iF:X\to Z$.

\begin{corollary}
\label{cor:pbchart}
Let $F:X\to Y$ be a smooth map between manifolds, and suppose $\partial Y=\varnothing$.
Then the following are equivalent:
\begin{enumerate}[label={\rm(\roman*)}]
  \item The map $F$ is an embedding.
  \item For each $p\in X$, there are a chart $(V,\psi)$ on $Y$ centered at $F(p)$ and a pullback square:
\[
\xymatrix{
  F^{-1}(V) \ar[r]^F \ar[d] \ar@{}[dr]|(.4){\pbcorner} & V \ar[d]^{\psi} \\
  \mathbb H^{\langle m|k\rangle} \ar[r] & \mathbb R^n }
\]
\end{enumerate}
\end{corollary}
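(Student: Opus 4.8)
The plan is to deduce this from \Cref{prop:immersion-chart} together with the characterization of topological embeddings into a manifold without boundary. The two implications are handled separately, but both rest on unwinding what the pullback square means chart-locally.

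First I would prove (ii) $\Rightarrow$ (i). Given the pullback square for a point $p$, the bottom map $\mathbb H^{\langle m|k\rangle}\to\mathbb R^n$ is (the germ at $0$ of) a smooth map; since pullbacks in $\mathbf{Mfd}$ against the open embedding $\psi$ just restrict to preimages, $F^{-1}(V)$ is identified with an open subset of $\mathbb H^{\langle m|k\rangle}$ and $F$ with the restriction of that bottom map followed by $\psi^{-1}$. For this to be a pullback square of manifolds, the bottom map must itself be an embedding of $\mathbb H^{\langle m|k\rangle}$ onto a (locally closed) submanifold of $\mathbb R^n$ — indeed a pullback along a mono is a mono, and the pullback of a submanifold inclusion is a submanifold inclusion. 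Hence $F$ restricted to $F^{-1}(V)$ is an embedding; since embeddings are local on the source in the sense that a map which is a topological embedding and is a smooth immersion near every point is an embedding, and since the property of being a pullback square is local on $X$, we conclude $F$ is an immersion everywhere and locally a topological embedding; properness/global injectivity then follows as usual from the fact that the charts $V$ cover $F(X)$. Actually, to get that $F$ is a \emph{global} topological embedding I would note that injectivity is automatic once we know $F$ is injective near each fiber, which the pullback squares give, plus the standard point-set argument; alternatively one observes $F$ factors as $X\hookrightarrow Y$ through the locally closed submanifold assembled from the local pieces.

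For (i) $\Rightarrow$ (ii), suppose $F$ is an embedding and fix $p\in X$. By \Cref{prop:immersion-chart}, since $\partial Y=\varnothing$, there are charts $(U,\varphi)$ on $X$ centered at $p$ and $(V,\psi)$ on $Y$ centered at $F(p)$ with $\psi F\varphi^{-1}(x_1,\dots,x_m)=(x_1,\dots,x_m,0,\dots,0)$. The linear inclusion $\mathbb R^m\hookrightarrow\mathbb R^n$ restricts to an embedding $\mathbb H^{\langle m|k\rangle}\hookrightarrow\mathbb R^n$, and I would take this as the bottom arrow of the square. It remains to check the square is actually a pullback, i.e. that $F^{-1}(V)$ equals (the $\varphi$-preimage of) the full open subset $\varphi(U)\subset\mathbb H^{\langle m|k\rangle}$ — shrinking $V$ if necessary. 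This is where the topological-embedding hypothesis is essential: a priori $F(U)$ could meet $V$ in more than just $\psi^{-1}(\mathbb R^m)$, but since $F$ is a homeomorphism onto its image, $F(U)$ is open in $F(X)$, so after shrinking $V$ we may assume $V\cap F(X)=F(U')$ for a suitable open $U'\subseteq U$; then $F^{-1}(V)=U'$ and the square over $U'$ is a pullback by construction. The main obstacle, then, is precisely this point-set bookkeeping around corners — ensuring the chart $V$ can be shrunk so that $V\cap F(X)$ is exactly the coordinate slice with no extra sheets — and it is handled exactly as in the boundaryless case because \Cref{prop:immersion-chart} already absorbed all the corner subtleties into the model $\mathbb H^{\langle m|k\rangle}$.
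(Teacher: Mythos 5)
Your proposal is correct and follows exactly the route the paper intends: the corollary is stated without proof as an immediate consequence of \Cref{prop:immersion-chart} together with the topological-embedding half of the definition, and your two directions (pulling back the coordinate-slice inclusion to get a local, hence global, embedding; and shrinking $V$ so that $V\cap F(X)$ is precisely the slice $\psi^{-1}(\mathbb H^{\langle m|k\rangle})$) are the standard way to fill that in. The only place needing care is the one you already flag — the point-set shrinking of $V$ in (i)~$\Rightarrow$~(ii) — and your handling of it is fine.
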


The existence of corners often make situation complicated.
To avoid such difficulties, we use the following result throughout the paper.

\begin{proposition}[2.7 in \cite{Mic80}]
\label{prop:emb-bdless}
Every $n$-dimensional manifold $X$ with corners can be embedded into an $n$-dimensional manifold $\widehat X$ without boundaries as a closed subset.
\end{proposition}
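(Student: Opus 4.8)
The plan is to thicken the corner charts of $X$ to genuine open subsets of $\mathbb R^n$ and then reglue them one chart at a time, so that the points added in the process form a thin exterior collar along the corner strata of $X$.

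Concretely, I would start from a countable, locally finite atlas $\{(U_\alpha,\varphi_\alpha)\}_{\alpha\in\mathbb N}$ of $X$ --- which exists because a manifold with corners is paracompact and second countable and because \Cref{lem:mfd-chart} provides charts at every point --- where each $\varphi_\alpha$ is a diffeomorphism onto an open subset $W_\alpha$ of a model space $\mathbb H^{\langle n|k_\alpha\rangle}=\mathbb R^{k_\alpha}\times\mathbb R^{n-k_\alpha}_+$; I would also fix a shrinking $\{V_\alpha\}$ covering $X$ with each $\overline{V_\alpha}\subset U_\alpha$ compact. Since $\mathbb H^{\langle n|k_\alpha\rangle}$ is \emph{closed} in $\mathbb R^n$, I can choose an open set $\widehat W_\alpha\subset\mathbb R^n$ with $\widehat W_\alpha\cap\mathbb H^{\langle n|k_\alpha\rangle}=W_\alpha$; thus $\widehat W_\alpha$ is a boundaryless $n$-manifold in which the copy $W_\alpha\cong U_\alpha$ of a piece of $X$ sits as a closed subset. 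The crucial structural input is that, by the paper's convention on smoothness of maps between subsets of Euclidean spaces, each transition diffeomorphism $\theta_{\beta\alpha}=\varphi_\beta\varphi_\alpha^{-1}$ (between open subsets of model spaces) extends to a smooth map on an open neighbourhood in $\mathbb R^n$; applying this also to $\theta_{\alpha\beta}$ and shrinking, I get mutually inverse diffeomorphisms between open neighbourhoods of $\varphi_\alpha(U_\alpha\cap U_\beta)$ in $\widehat W_\alpha$ and of $\varphi_\beta(U_\alpha\cap U_\beta)$ in $\widehat W_\beta$.

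Rather than gluing all the $\widehat W_\alpha$ simultaneously --- which would force a cocycle condition on the extended transition maps that need not hold --- I would assemble $\widehat X$ by induction on $k$: set $\widehat X_1:=\widehat W_1$, and given a boundaryless $n$-manifold $\widehat X_{k-1}$ that contains (the image of) $U_1\cup\dots\cup U_{k-1}$ as a closed subset, attach $\widehat W_k$ to $\widehat X_{k-1}$ along a suitable open neighbourhood of the overlap, using the extended transitions to those of $\varphi_1,\dots,\varphi_{k-1}$ whose charts actually meet $U_k$ (finitely many, by local finiteness). Attaching a single chart is just an ordinary gluing of two manifolds along a pair of diffeomorphic open subsets, so no cocycle issue arises; the result $\widehat X_k$ is again a boundaryless $n$-manifold, it contains $\widehat X_{k-1}$ as an open subset, and $X\cap\widehat X_k$ equals the image of $U_1\cup\dots\cup U_k$ and remains closed in $\widehat X_k$ because on each model chart it reduces to the closed set $W_i$. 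Then $\widehat X:=\bigcup_k\widehat X_k$ is a boundaryless $n$-manifold, it is second countable since the atlas is countable, and $X$ sits inside it as a closed subset carrying its original smooth structure, as closedness is a local condition on the open cover $\{\widehat X_k\}$.

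The one genuinely delicate point --- and the place I expect the work to concentrate --- is preserving the \emph{Hausdorff} property through the induction (once each $\widehat X_k$ is Hausdorff, $\widehat X$ is too, since any two of its points already lie in a common $\widehat X_k$). Gluing two Hausdorff manifolds along open subsets can destroy Hausdorffness, and here the failure could only come from a newly added collar point of $\widehat W_k$ being ``indistinguishable'' from a point of $\widehat X_{k-1}$ near the corner strata; for points of $X$ themselves no such clash can occur, because there the overlap looks the same on both sides and $X$ is Hausdorff, so any potential clash is resolved by identification in the glued space. To rule out clashes involving collar points, I would choose the thickenings $\widehat W_\alpha$ to lie inside prescribed metric neighbourhoods of $W_\alpha$ that shrink quickly relative to the locally finite structure and to the compact shrinking $\{\overline{V_\alpha}\}$, and take the gluing region at each stage as large as the extensions allow; with this bookkeeping the collar stays too thin to produce a non-Hausdorff point, and the construction goes through. (Alternatively, one simply invokes \cite{Mic80}.)
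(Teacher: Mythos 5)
The paper offers no proof of this statement---it is quoted directly from Michor \cite{Mic80}---so your argument has to stand on its own. Your overall strategy (thicken each corner chart to an open subset of $\mathbb R^n$ and reglue inductively) is the standard one, and you are right to single out the Hausdorff property as a danger point. But there is a genuine gap at precisely the step you declare unproblematic.

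To attach $\widehat W_k$ to $\widehat X_{k-1}$ you must produce a \emph{single} open embedding of a neighbourhood $\Omega_k$ of $A_k:=\varphi_k\bigl(U_k\cap(U_1\cup\dots\cup U_{k-1})\bigr)$ in $\widehat W_k$ into $\widehat X_{k-1}$, and on $A_k$ itself this map is forced (it must induce the identity of $X$). You propose to assemble it from the extended transitions $\hat\theta_{k1},\dots,\hat\theta_{k,k-1}$ and assert that ``no cocycle issue arises.'' It does: the composites $\iota_i\circ\hat\theta_{ki}$ agree only on $\varphi_k(U_k\cap U_i\cap U_j)$, a relatively open piece of the closed quadrant $\mathbb H^{\langle n|k_k\rangle}$, and a smooth extension off a closed quadrant is not unique even as a germ along the quadrant's topological boundary (already on $\mathbb R$, the zero function on $\mathbb R_+$ extends smoothly both by $0$ and by $e^{1/x}$ for $x<0$). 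Hence on the newly added collar near a triple overlap the various $\hat\theta_{ki}$ genuinely disagree, and your attaching map is undefined exactly where the new points live; the cocycle problem has been relocated, not removed. It can be repaired---for instance, patch local extensions of the forced map $A_k\to\widehat X_{k-1}$ in the \emph{target} using a partition of unity together with local convexity (a Riemannian exponential) on the boundaryless manifold $\widehat X_{k-1}$, note that the patched map has invertible differential along $A_k$ by continuity of derivatives from the interior of the quadrant, conclude it is a local diffeomorphism near $A_k$, and then shrink to regain injectivity---but this is real work, not a formal consequence of gluing one chart at a time. Relatedly, your Hausdorff paragraph names the right danger but gives no argument: what must be checked at each stage is that the graph of the attaching diffeomorphism is closed in $\widehat W_k\times\widehat X_{k-1}$; for limit points in $X$ this follows from Hausdorffness of $X$ and the inductive closedness of $X$ in $\widehat X_{k-1}$, but for collar limit points ``the collar is too thin'' is not a criterion---you need to choose $\Omega_k$ so that its closure is carried into the gluing region, and that is where your shrinking data must actually enter. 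Absent these two repairs the proof is incomplete; with them (or by simply invoking \cite{Mic80}, as the paper does) the statement stands.
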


In boundaryless cases, submanifolds are locally defined as linear subspaces in appropriate coordinates, and we can discuss many local algebraic properties on submanifolds.
We cannot, however, take such coordinates in general manifolds with corners because charts around corners are more ``rigid'' than those around internal points.
Fortunately, we have another way to describe submanifolds algebraically in general cases:

\begin{proposition}[Whitney, cf \cite{BL75}]
\label{prop:whitney-zero}
Every closed subset of a manifold $X$ with corners is the zero-set of a smooth function which values in $[0,1]$.
\end{proposition}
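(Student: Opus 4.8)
The statement to prove is Whitney's classical result in the present setting: every closed subset $A$ of a manifold $X$ with corners is the zero-set of a smooth function valued in $[0,1]$. The plan is to reduce to the boundaryless case and then invoke the standard Whitney construction. First, by \Cref{prop:emb-bdless}, embed $X$ as a closed subset of an $n$-dimensional manifold $\widehat X$ without boundaries. Since $A$ is closed in $X$ and $X$ is closed in $\widehat X$, the set $A$ is closed in $\widehat X$ as well. Now the classical Whitney theorem for boundaryless manifolds (and indeed for open subsets of $\mathbb R^n$, via a partition of unity) gives a smooth function $\widehat g:\widehat X\to[0,\infty)$ whose zero-set is exactly $A$. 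Restricting, $g:=\widehat g|_X$ is smooth on $X$ in the sense of our convention (a restriction of a smooth map on an open neighborhood is smooth), and $g^{-1}(0)=A\cap X=A$.

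It remains to rescale $g$ so that it takes values in $[0,1]$ without changing its zero-set. Composing with any smooth function $\varphi:[0,\infty)\to[0,1)$ that is strictly increasing with $\varphi(0)=0$ — for instance $\varphi(t)=t/(1+t)$, or $\varphi(t)=1-e^{-t}$ — yields $f:=\varphi\circ g:X\to[0,1)\subset[0,1]$, which is smooth, and $f^{-1}(0)=g^{-1}(0)=A$ since $\varphi$ vanishes only at $0$. This $f$ is the desired function.

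The only point requiring a little care is the construction of the Whitney function $\widehat g$ on $\widehat X$ in the first place. The standard argument is: cover the open complement $\widehat X\setminus A$ by countably many charts $\{(U_i,\psi_i)\}$ with $\psi_i(U_i)$ precompact in $\mathbb R^n$, pick a smooth bump function $\rho_i\ge 0$ on $\mathbb R^n$ with support exactly $\overline{\psi_i(U_i)}$, transport it to a function $\rho_i$ on $\widehat X$ supported in $\overline{U_i}\subset\widehat X\setminus A$, and set $\widehat g=\sum_i c_i\,\rho_i$ with positive constants $c_i$ chosen so small (controlling all derivatives up to order $i$ on a fixed exhaustion) that the sum converges in $C^\infty$. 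Then $\widehat g>0$ on $\widehat X\setminus A$ and $\widehat g\equiv 0$ on $A$ together with all derivatives, so $\widehat g^{-1}(0)=A$. This is entirely classical; the new content here is merely the reduction via \Cref{prop:emb-bdless}, which lets us import it verbatim. The main (and essentially only) obstacle is thus bookkeeping: verifying that ``smooth'' in our restriction-based convention behaves as expected under the embedding $X\hookrightarrow\widehat X$, which is immediate from the definition of smoothness for maps between subsets of Euclidean spaces.
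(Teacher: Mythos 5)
Your argument is correct. Note that the paper itself offers no proof of this proposition --- it is stated as a classical result with a citation to Whitney via \cite{BL75} --- so there is no in-paper argument to compare against; your reduction to the boundaryless case via \Cref{prop:emb-bdless}, followed by the standard Whitney construction and a rescaling such as $t\mapsto t/(1+t)$, is a perfectly valid way to supply the missing details. The only point worth tightening is in your sketch of the classical construction: to have bump functions whose support is \emph{exactly} $\overline{\psi_i(U_i)}$ by an elementary formula, you should take the $U_i$ (or rather their images $\psi_i(U_i)$) to be open balls with closures contained in $\widehat X\setminus A$, which a countable refinement of the cover always permits; with that choice the convergence-in-$C^\infty$ argument with coefficients $c_i$ goes through verbatim.
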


\begin{corollary}
\label{cor:submfd-defzero}
Let $X$ be a manifold with corners, and let $X'\subset X$ be a submanifold.
Then, each point $p\in X'$ admits an open neighborhood $U$ in $X$ together with a smooth function $\lambda:U\to[0,1]$ such that $X'\cap U = \{\lambda=0\}\subset U$.
\end{corollary}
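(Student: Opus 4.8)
The plan is to reduce the statement to \Cref{prop:whitney-zero} by passing to a neighborhood of $p$ on which $X'$ becomes a \emph{closed} subset; this is really the only gap between the two statements, since a submanifold need not be closed in the ambient manifold.

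First I would check that $X'$ is locally closed in $X$, i.e.\ that $p$ has an open neighborhood $U$ in $X$ for which $X'\cap U$ is closed in $U$. This follows from the notion of submanifold; failing that, one may appeal to the general topological fact that a locally compact Hausdorff subspace of a locally compact Hausdorff space is locally closed. Here $X'$, being a manifold with corners, is locally compact Hausdorff, and the inclusion $X'\hookrightarrow X$ is a topological embedding, so $X'=\overline{X'}\cap V$ for some open $V\subset X$; since $p\in X'\subset V$ we may take $U=V$ (or any smaller open neighborhood of $p$), and then $X'\cap U=\overline{X'}\cap U$ is the intersection of a closed subset of $X$ with $U$, hence closed in $U$.

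Having fixed such a $U$, I would observe that $U$ is again a manifold with corners, being open in $X$. Applying \Cref{prop:whitney-zero} to the closed subset $X'\cap U\subset U$ then produces a smooth function $\lambda\colon U\to[0,1]$ with $\{\lambda=0\}=X'\cap U$, which is exactly the function required.

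The only step demanding any attention is the local closedness of $X'$; once that is secured, the conclusion is a direct invocation of Whitney's theorem, so I do not anticipate a genuine obstacle here.
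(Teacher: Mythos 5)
Your proof is correct and follows the same two-step skeleton as the paper's: first arrange that $X'$ is closed in a suitable neighborhood $U$ of $p$, then apply \Cref{prop:whitney-zero} to the closed subset $X'\cap U\subset U$. The only place where you diverge is the justification of local closedness. The paper obtains it by embedding $X$ into a boundaryless manifold $\widehat X$ via \Cref{prop:emb-bdless} and then invoking the pullback-chart characterization of embeddings (\Cref{cor:pbchart}) to see that $X'\cap\widehat U$ is closed in some $\widehat U$, finally intersecting back with $X$. You instead use the purely point-set fact that a locally compact subspace of a Hausdorff space is open in its closure; since $X'$ carries the structure of a manifold with corners and $X'\hookrightarrow X$ is a topological embedding, this applies and gives $X'=\overline{X'}\cap V$ with $V$ open, so $X'\cap U$ is closed in $U$ for any open $U\subset V$ containing $p$. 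Your route is more elementary and self-contained (it needs no differential-topological input beyond the definition of a submanifold as the image of an embedding), at the cost of importing an external general-topology lemma; the paper's route stays entirely within the machinery it has already built. Either way the conclusion is a direct invocation of Whitney's theorem, and there is no gap.
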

\begin{proof}
Notice that every point $p\in X'$ of the submanifold admits a neighborhood $U\subset X$ such that $X'\cap U$ is closed in $U$; indeed, embedding $X\hookrightarrow\widehat X$ into a manifold $\widehat X$ without boundary using \Cref{prop:emb-bdless}, we have a neighborhood $\widehat U\subset\widehat X$ of $p$ such that $X'\cap\widehat U$ is closed in $\widehat U$ by \Cref{cor:pbchart}, and $U=\widehat U\cap X$ is a required open neigborhood.
Then the result immediately follows from \Cref{prop:whitney-zero}.
\end{proof}

\Cref{cor:submfd-defzero} means that every submanifold arise from a pullback of smooth maps.
On the other hand, a cospan in the category $\mathbf{Mfd}$ does not have pullbacks in general.
A typical case where it does is related to the transversality.
The following is a well-known result for manifolds without boundaries.

\begin{proposition}[e.g. see Theorem II.4.4 in \cite{GG73}]
\label{prop:transv-invim}
Let $F:X\to Y$ be a smooth map between manifolds without boundaries, and let $W\subset Y$ is a submanifold without boundaries of codimention $k$.
Then if $F\pitchfork W$, the subspace $F^{-1}(W)\subset X$ is a submanifold of codimension $k$.
\end{proposition}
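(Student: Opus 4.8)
The plan is to reduce the statement to a local one and then invoke the classical regular-value theorem. Since being a submanifold of codimension $k$ is a local condition on $X$, it suffices to produce, for each $p\in F^{-1}(W)$, an open neighborhood $U$ of $p$ in $X$ such that $F^{-1}(W)\cap U$ is a submanifold of $U$ of codimension $k$. So I would fix such a $p$ and set $q:=F(p)\in W$.

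The first step is to straighten $W$ near $q$. Since $Y$ is boundaryless, a standard submanifold chart — available e.g. via \Cref{cor:pbchart} (or \Cref{prop:immersion-chart}) applied to the embedding $W\hookrightarrow Y$ — yields a chart $(V,\psi)$ on $Y$ centered at $q$ with $W\cap V=\psi^{-1}(\mathbb R^{n-k}\times\{0\})$, where $n=\dim Y$. Letting $\pi:\mathbb R^n\to\mathbb R^k$ be the projection onto the last $k$ coordinates, I would then introduce
\[
g:=\pi\circ\psi\circ F:F^{-1}(V)\longrightarrow\mathbb R^k\,,
\]
and observe that $F^{-1}(W)\cap F^{-1}(V)=g^{-1}(0)$. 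The problem is thereby reduced to showing that $g$ is a submersion on some neighborhood of $p$.

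This is exactly where the transversality hypothesis enters. Under $d_q\psi:T_qY\xrightarrow{\sim}\mathbb R^n$ one has $d_q\psi(T_qW)=\mathbb R^{n-k}\times\{0\}$, so $d\pi$ descends to an isomorphism $T_qY/T_qW\xrightarrow{\sim}\mathbb R^k$; hence, up to this isomorphism, $d_pg$ coincides with the composite $T_pX\xrightarrow{d_pF}T_qY\twoheadrightarrow T_qY/T_qW$, which is surjective precisely because $F\pitchfork W$ at $p$. Thus $g$ is a submersion at $p$. Since surjectivity of $d_xg$ is an open condition on $x$ (the rank of $d_xg$ is lower semicontinuous and bounded above by $k$), $g$ is a submersion on some open $U\ni p$, i.e. $0$ is a regular value of $g|_U$. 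The implicit function theorem — the regular-value theorem for boundaryless manifolds — then exhibits $g^{-1}(0)\cap U$ as a submanifold of $U$ of codimension $k$, which is the desired local description of $F^{-1}(W)$.

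In this boundaryless setting there is no serious obstacle; the only points requiring a little care are the bookkeeping that turns ``transversal at $p$'' into ``$g$ is a submersion at $p$'' through the algebraic definitions of the tangent and cotangent spaces, and the openness of the submersion condition in $p$ — both entirely routine. The genuinely delicate behaviour that appears when $X$, $Y$, or $W$ carry corners lies outside the scope of this proposition and is addressed separately later in the paper.
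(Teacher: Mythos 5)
Your argument is correct and is precisely the standard proof that the paper delegates to Theorem II.4.4 of \cite{GG73}: straighten $W$ in a submanifold chart, compose with the projection onto the normal coordinates, identify transversality at $p$ with surjectivity of $d_pg$, and invoke openness of the submersion condition together with the regular-value theorem. Nothing further is needed in the boundaryless setting.
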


If manifolds have corners, the situation becomes a little bit more complicated.
One can easily find counter examples of \Cref{prop:transv-invim} if we allow manifolds with corners.
For them, we only have weaker assertion, which is still enough for our purpose.

\begin{proposition}[cf. Lemma 6.3 in \cite{Mic80}]
\label{prop:transinv-cover}
Let $X$ and $Y$ be manidols with corners, and let $W\subset Y$ be a submanifold of codimension $k$ possibly with corners.
Let $F:X\to Y$ be a smooth map such that $F\pitchfork W$.
Suppose we have an embedding $X\hookrightarrow\widehat X$ into a manifold without boundary and of the same dimension as $X$.
Then, there is a countable family $\{N_i\}_{i=1}^\infty$ of codimension $k$ submanifolds of $\widehat X$ without boundaries such that $F^{-1}(W)\subset\bigcup_i N_i$ in $\widehat X$.
Moreover, for each $p\in X\cap N_i$, the induced map $F_\ast:T_pX\to T_pY$ maps $T_pN_i\subset T_p\widehat X\simeq T_pX$ into $T_{F(p)}W\subset T_{F(p)}Y$.
\end{proposition}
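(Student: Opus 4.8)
The plan is to reduce the statement to a purely local construction around each point of $F^{-1}(W)$ and then patch with a countability argument. Concretely, I would prove: for every $p\in F^{-1}(W)$ there are an open neighbourhood $\widehat U_p$ of $p$ in $\widehat X$ and a codimension-$k$ submanifold $N_p\subset\widehat U_p$ without boundary such that $F^{-1}(W)\cap\widehat U_p\subset N_p$, and such that for every $x\in X\cap N_p$ with $F(x)\in W$ the map $d_xF$ carries $T_xN_p\subset T_x\widehat X\cong T_xX$ into $T_{F(x)}W$. Granting this, since $X$ is second countable the subspace $F^{-1}(W)$ is Lindel\"of, so the cover $\{\widehat U_p\}_{p\in F^{-1}(W)}$ has a countable subcover $\{\widehat U_{p_i}\}_{i\ge 1}$; the submanifolds $N_i:=N_{p_i}$ then work, because any $x\in F^{-1}(W)$ lies in some $\widehat U_{p_i}$ hence in $N_{p_i}$.

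For the local construction, fix $p$ and set $q=F(p)\in W$. Using \Cref{prop:emb-bdless} choose a closed embedding $Y\hookrightarrow\widehat Y$ with $\widehat Y$ boundaryless and $\dim\widehat Y=\dim Y=:n$; then $W\hookrightarrow Y\hookrightarrow\widehat Y$ is an embedding, so $W$ is a codimension-$k$ submanifold of the boundaryless manifold $\widehat Y$. Applying \Cref{prop:immersion-chart} to this inclusion at $q$ gives a chart $(\widehat V,\widehat\psi)$ on $\widehat Y$ centred at $q$, $\widehat\psi:\widehat V\xrightarrow{\ \sim\ }\mathbb R^n$, in which (after shrinking $\widehat V$, using that $W$ carries the subspace topology) $W\cap\widehat V$ lies in the coordinate subspace $\{y_{d+1}=\dots=y_n=0\}$, where $d=\dim W$. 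Put $g:=(\widehat\psi_{d+1},\dots,\widehat\psi_n):\widehat V\to\mathbb R^k$; it is a submersion, $W\cap\widehat V\subset g^{-1}(0)$, and since $g$ vanishes identically on $W\cap\widehat V$ we get $T_{q'}W\subset\ker d_{q'}g$ for each $q'\in W\cap\widehat V$, with equality by the dimension count $\dim\ker d_{q'}g=n-k=d=\dim T_{q'}W$. Only the inclusion $W\cap\widehat V\subset g^{-1}(0)$ holds in general: near a corner of $W$ the boundaryless set $g^{-1}(0)$ is strictly larger than $W$, which is exactly why one cannot hope for an equality $F^{-1}(W)=N_p$ and why the tangential assertion is stated only at points mapping into $W$.

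Now pull $g$ back. On the open set $U_p:=F^{-1}(Y\cap\widehat V)\subset X$ set $\tilde g:=g\circ F$, so $\tilde g(p)=0$. The differential $d_p\tilde g$ is onto: since $Y\hookrightarrow\widehat Y$ and $X\hookrightarrow\widehat X$ are embeddings between manifolds of equal dimension they induce isomorphisms $T_qY\cong T_q\widehat Y$ and $T_pX\cong T_p\widehat X$; under the first of these $d_qg$ restricts to a surjection $T_qY\to\mathbb R^k$ with kernel $T_qW$, hence identifies $T_qY/T_qW$ with $\mathbb R^k$, and composing with the surjection $T_pX\xrightarrow{d_pF}T_qY\twoheadrightarrow T_qY/T_qW$ furnished by $F\pitchfork W$ at $p$ shows $d_p\tilde g=d_qg\circ d_pF$ is onto. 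Because $X$ is closed in $\widehat X$, the smoothness convention lets me extend $\tilde g$ to a smooth $h:\widehat U\to\mathbb R^k$ on an open $\widehat U\ni p$ in $\widehat X$ with $\widehat U\cap X\subset U_p$ and $h|_{\widehat U\cap X}=\tilde g|_{\widehat U\cap X}$; since $T_p\widehat X\cong T_pX$, $d_ph$ is onto, so after shrinking to an open $\widehat U_p\subset\widehat U$ on which $h$ is a submersion and with $F(\widehat U_p\cap X)\subset Y\cap\widehat V$, the set $N_p:=h^{-1}(0)\cap\widehat U_p$ is a boundaryless codimension-$k$ submanifold of $\widehat X$ by \Cref{prop:transv-invim}. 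If $x\in F^{-1}(W)\cap\widehat U_p$ then $x\in X$ and $F(x)\in W\cap\widehat V\subset g^{-1}(0)$, so $h(x)=g(F(x))=0$ and $x\in N_p$. If $x\in X\cap N_p$ with $F(x)\in W$, then for $v\in T_xN_p=\ker d_xh$ we have $d_xh(v)=d_{F(x)}g(d_xF(v))=0$, so $d_xF(v)\in\ker d_{F(x)}g=T_{F(x)}W$ by the previous paragraph. This establishes the two local properties and, with the Lindel\"of patching, the proposition.

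The main obstacle is the middle step: manufacturing near $q$ a single submersion $g$ to $\mathbb R^k$ that cuts out $W$ with the right kernel. The genuine subtlety there is the presence of corners on $W$, which forces $g^{-1}(0)$ to be strictly larger than $W$ near them; this is precisely the reason the conclusion takes the form of a countable cover together with the tangential containment rather than "$F^{-1}(W)$ is a submanifold". The extension of $g\circ F$ over $\widehat X$ and the final patching are routine once \Cref{prop:emb-bdless,prop:immersion-chart} and the smoothness conventions are available.
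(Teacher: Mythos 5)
Your argument is correct and is essentially the standard one: the paper itself gives no proof of \Cref{prop:transinv-cover}, deferring to Lemma 6.3 of \cite{Mic80}, and your local construction (a local submersion $g$ cutting out $W$ near each point of the image, pulled back along $F$, extended to $\widehat X$ via the smoothness convention, followed by a Lindel\"of/second-countability patching) is exactly the expected route and handles the corner subtleties — in particular why one only gets a countable cover plus the tangential containment rather than a submanifold structure on $F^{-1}(W)$ — correctly. One cosmetic remark: the closedness of $X$ in $\widehat X$ is not needed and is not among the hypotheses; the extension of $g\circ F$ is purely local around $p$ and follows from the smoothness convention read in a chart of $\widehat X$.
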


To end this section, we introduce a notion regarding how vector fields interact with submanifolds.

\begin{definition}
Let $X$ be a manifold with corners, and $N\subset X$ a submanifolds.
\begin{enumerate}[label={\rm(\arabic*)}]
  \item A vector field $\xi$ on $X$ is said to be along $N$, written $\xi\parallel N$, if for each $p\in N$, the vector $\xi(p)\in T_pX$ belongs to the image of $T_pN$.
  \item A vector field $\xi$ on $X$ is said to be transversal to $N$, written $\xi\pitchfork N$, if for each $p\in N$, the tangent space $T_pX$ is generated, as an $\mathbb R$-vector space, by $\xi(p)$ and $T_pN$.
\end{enumerate}
\end{definition}

It is easily verified that if $\xi$ is a vector field along $N$, then the one-parameter family associated to $\xi$ restricts to $N$.

\subsection{Jet bundles}
\label{sec:jet-bdl}

In this section, we review the notion of jets.
We mainly follow the literature \cite{GG73} and \cite{ScP09} with careful attention to corners (see also \cite{Mic80}).
Omitting some details, we refer the reader to them.

\begin{definition}
Let $X$ be a manifold with corners, and let $p\in X$.
Then we define an $\mathbb{R}$-algebra $J^r_p(X)$ by
\[
J^r_p(X) := C^\infty_p(X)/\mathfrak m_p(X)^{r+1}\,.
\]
For each $f\in C^\infty_p(X)$, we denote by $j^rf(p)$ its image in $J^r_p(X)$ and call it the $r$-th jet of $f$.
\end{definition}

\begin{lemma}
\label{lem:jet-germ}
Let $X$ be an manifold with corners, and let $p\in X$.
Then, every coordinate function $\varphi:U\to\mathbb H^I$ centered at $p$ for a marked finite set $I$ gives rise to an isomorphism
\[
J^r_p(X)
\xrightarrow{\substack{\widetilde\varphi_p\cr\sim}} P^r(I)
:= \mathbb R[x_i\mid i\in I]/(x_i\mid i\in I)^{r+1}
\]
of $\mathbb R$-algebras for each $r\ge 0$, which is given by the Taylor expansion.
\end{lemma}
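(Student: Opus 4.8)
The plan is to strip off the chart, reduce to the model space $\mathbb H^I$ at the origin, and there read off the isomorphism from the classical boundaryless statement by playing it against the restriction-of-germs homomorphism; in this way no Taylor-with-remainder estimate has to be redone near a corner.

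First I would dispose of the chart. Since $\varphi\colon U\to\mathbb H^I$ is a local diffeomorphism and an open embedding with $\varphi(p)=0$, it is a diffeomorphism of $U$ onto an open neighbourhood $\varphi(U)\subset\mathbb H^I$ of $0$, so precomposition with $\varphi^{-1}$ gives an isomorphism of $\mathbb R$-algebras $C^\infty_p(X)\xrightarrow{\sim}C^\infty_0(\mathbb H^I)$ carrying $\mathfrak m_p(X)$ onto $\mathfrak m_0(\mathbb H^I)$, hence $\mathfrak m_p(X)^{r+1}$ onto $\mathfrak m_0(\mathbb H^I)^{r+1}$, and therefore descending to an isomorphism $J^r_p(X)\xrightarrow{\sim}J^r_0(\mathbb H^I)$. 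It then remains to produce an isomorphism $J^r_0(\mathbb H^I)\xrightarrow{\sim}P^r(I)$ realised by truncated Taylor expansion: composing the two and comparing with the homomorphism $\varphi_!$ recalled above will identify the result with the asserted map $\widetilde\varphi_p$.

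For the model space, write $\mathbb H^I\subset\mathbb R^I$ with standard coordinates $(x_i)_{i\in I}$ and let $\rho\colon C^\infty_0(\mathbb R^I)\to C^\infty_0(\mathbb H^I)$ be restriction of germs. By the convention on smoothness for subsets of Euclidean spaces used in \Cref{sec:prelim}, every smooth germ at $0$ on $\mathbb H^I$ extends to one on $\mathbb R^I$, so $\rho$ is a surjective homomorphism of $\mathbb R$-algebras; since also $\rho(g)(0)=g(0)$, this gives $\rho(\mathfrak m_0(\mathbb R^I))=\mathfrak m_0(\mathbb H^I)$ and hence $\rho\bigl(\mathfrak m_0(\mathbb R^I)^{r+1}\bigr)=\mathfrak m_0(\mathbb H^I)^{r+1}$ for every $r$. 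Next I would observe that $\ker\rho$ consists of germs vanishing near $0$ on $\mathbb H^I$, and that — because $\mathbb H^I=\mathbb R^{I_0}\times\mathbb R^{I_+}_+$ contains the open set $\mathbb R^{I_0}\times(0,\infty)^{I_+}$ accumulating at $0$ — every such germ has all its partial derivatives vanishing at $0$, whence the classical Taylor formula with integral remainder displays it as a member of $\mathfrak m_0(\mathbb R^I)^{r+1}$; so $\ker\rho\subset\mathfrak m_0(\mathbb R^I)^{r+1}$. Combining these facts: if $\rho(g)\in\mathfrak m_0(\mathbb H^I)^{r+1}=\rho(\mathfrak m_0(\mathbb R^I)^{r+1})$, pick $G\in\mathfrak m_0(\mathbb R^I)^{r+1}$ with $\rho(G)=\rho(g)$; then $g-G\in\ker\rho\subset\mathfrak m_0(\mathbb R^I)^{r+1}$, so $g\in\mathfrak m_0(\mathbb R^I)^{r+1}$. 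Thus $\rho^{-1}\bigl(\mathfrak m_0(\mathbb H^I)^{r+1}\bigr)=\mathfrak m_0(\mathbb R^I)^{r+1}$, and $\rho$ induces an isomorphism of $\mathbb R$-algebras
\[
\bar\rho\colon\ J^r_0(\mathbb R^I)=C^\infty_0(\mathbb R^I)/\mathfrak m_0(\mathbb R^I)^{r+1}\ \xrightarrow{\ \sim\ }\ C^\infty_0(\mathbb H^I)/\mathfrak m_0(\mathbb H^I)^{r+1}=J^r_0(\mathbb H^I).
\]
Finally I would cite the classical boundaryless fact that the Taylor map $g\mapsto\sum_{|\alpha|\le r}\tfrac1{\alpha!}(\partial^\alpha g)(0)\,x^\alpha$ induces an isomorphism $J^r_0(\mathbb R^I)\xrightarrow{\sim}P^r(I)$ (see, e.g., \cite{GG73,ScP09}), compose it with $\bar\rho^{-1}$ and with the chart isomorphism, and check by unwinding that the composite sends the class of $f\in C^\infty_p(X)$ to the order-$r$ Taylor polynomial at $0$ of an arbitrary smooth extension of $f\circ\varphi^{-1}$, i.e. to the truncation of $\varphi_!(f)$, which is precisely $\widetilde\varphi_p$.

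The only point at which the corner intervenes is the analysis of $\rho$, and that is where the work is: its surjectivity is immediate from the smoothness convention, but the flatness of its kernel — which is exactly what lets the classical Taylor-remainder result be imported from $\mathbb R^I$ to $\mathbb H^I$ at no cost — relies on the density of the interior of $\mathbb H^I$ at $0$. As a bonus the argument shows, for free (from injectivity of $\bar\rho$), that the Taylor coefficients $\partial^\alpha(\widetilde{f\circ\varphi^{-1}})(0)$ with $|\alpha|\le r$ do not depend on the chosen extension, a point that would otherwise need a small density argument of its own.
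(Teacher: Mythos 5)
Your proof is correct. The paper itself does not prove this lemma --- it is stated as part of the review of jets with the details deferred to \cite{GG73} and \cite{ScP09} --- so there is no in-text argument to compare against; what you have written is a complete and valid way to supply the omitted details. The two points you isolate are exactly the corner-specific content that the boundaryless references do not cover: (i) surjectivity of the restriction map $\rho\colon C^\infty_0(\mathbb R^I)\to C^\infty_0(\mathbb H^I)$, which is immediate from the paper's convention that smoothness on a subset means extendability to an open neighbourhood, and (ii) flatness of $\ker\rho$ at $0$ to order $r$, which follows by continuity of the partials because $0$ lies in the closure of the interior $\mathbb R^{I_0}\times(0,\infty)^{I_+}$ of $\mathbb H^I$, after which Hadamard's lemma places $\ker\rho$ inside $\mathfrak m_0(\mathbb R^I)^{r+1}$. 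The deduction $\rho^{-1}\bigl(\mathfrak m_0(\mathbb H^I)^{r+1}\bigr)=\mathfrak m_0(\mathbb R^I)^{r+1}$ and the resulting isomorphism on quotients are then clean, and your closing remark is worth keeping: the injectivity of $\bar\rho$ is precisely what makes the Taylor coefficients of $f\varphi^{-1}$ independent of the chosen extension, i.e.\ what makes the map $\varphi_!$ (and hence $\widetilde\varphi_p$) well defined in the first place --- a point the paper uses tacitly. The only caveat is the degenerate case $I=\varnothing$, where $\mathbb H^I$ is a point and the statement is trivial, so nothing is lost.
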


We will often identify $P^r(I)$ with the vector space of polynomials over $I$ of degree at most $r$.
If $I\simeq\langle m|k\rangle$, $P^r(I)$ is, as an $\mathbb R$-vector space, of dimension
\[
\begin{pmatrix} m+r \\ m \end{pmatrix}\,.
\]
One can use \Cref{lem:jet-germ} to \emph{paste jets together} and give a canonical smooth structure on the set
\[
J^r(X) := \coprod_{p\in X} J^r_p(X)
\]
so that the map $J^r(X)\to X$ is an $\mathbb R$-algebra bundle over $X$, or an algebra object in the category $\mathbf{Vect}_{\mathbb R}(X)$.

More generally, for manifolds $X$ and $Y$ with corners, we define a smooth fiber bundle
\begin{equation}
\label{eq:rjet-bundle}
J^r(X,Y) := \mathpzc{Hom}_{\mathbb R\mathchar`-\mathrm{Alg}}(J^r(Y),J^r(X))
\end{equation}
over $X\times Y$.
Indeed, for finite sets $I$ and $J$, we denote by $P^r(I,J)_0$ the set of polynomial mappings $f:\mathbb R^I\to\mathbb R^J$ of degree at most $r$ with $f(0)=0$.
Then we have a canonical bijection
\[
\mathrm{Hom}_{\mathbb R\mathchar`-\mathrm{Alg}}(P^r(J),P^r(I))
\simeq P^r(I,J)_0\,.
\]
One can easily verify that $P^r(I,J)_0$ is diffeomorphic to the Euclidean space of dimension
\[
n\cdot\left(\begin{pmatrix} m+r \\ m \end{pmatrix}-1\right)\,.
\]
By \Cref{lem:jet-germ}, coordinates $\varphi:U\to\mathbb H^I$ on $X$ and $\psi:V\to\mathbb H^J$ on $Y$ gives rise to an isomorphism
\[
J^r(X,Y)_{(p,q)}
\simeq \mathrm{Hom}_{\mathbb R\mathchar`-\mathrm{Alg}}(P^r(J),P^r(I))\,.
\]
Thus, we obtain a fiber bundle $J^r(X,Y)$ over $X\times Y$ with fiber $P^r(I,J)_0$ for certain (marked) finite set $I$ and $J$.
Furthermore, it is also verified that the composition $J^r(X,Y)\twoheadrightarrow X\times Y\twoheadrightarrow X$ is also a locally trivial fibration with fiber $Y\times P^r(I,J)_0$.
Notice that we have an isomorphism
\[
J^r(X,\mathbb R) \simeq J^r(X)
\]
of fiber bundles over $X$.

We next define jets of smooth maps.
Let $F:X\to Y$ be a smooth map between manifolds with corners.
Then for each point $p\in X$ with $q=f(p)\in Y$, we have an induced homomorphism
\[
F^\ast_p:C^\infty_q(Y)\to C^\infty_p(X)
\]
of $\mathbb R$-algebras.
Since their residue fields are $\mathbb R$, $F^\ast_p$ preserves the maximal ideals, and it induces a homomorphism
\[
j^r F(p):J^r_q(Y) \to J^r_p(X)
\]
of $\mathbb R$-algebras for each non-negative integer $r$.
In other words, we can assign each $p\in X$ to a homomorphism $j^rF(p)$, which gives rise to a map
\[
j^rF: X\ni p \mapsto (p,F(p),j^rF(p))\in X\times Y\times J^r(X,Y)_{(p,q)}\hookrightarrow J^r(X,Y)\,.
\]
For the smoothness, we have the followinng result.

\begin{lemma}
\label{lem:jet-smooth}
Let $X$, $B$, and $Y$ be manifolds with corners, and suppose we have a smooth map $F:X\times B\to Y$.
Then for each non-negative integer $r\ge 0$, the map
\[
X\times B \ni (p,b) \mapsto j^r F_b(p) \in J^r(X,Y)
\]
is smooth, where we write $F_b(p) = F(b,p)$.
\end{lemma}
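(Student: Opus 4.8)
The plan is to reduce to the local model and then to a direct computation of Taylor coefficients. First I would observe that smoothness is a local property on the base $X\times B$, so it suffices to check it in a neighbourhood of an arbitrary point $(p_0,b_0)$. Using \Cref{lem:mfd-chart}, choose a chart $\varphi\colon U\to\mathbb H^I$ centered at $p_0$ and a chart $\psi\colon V\to\mathbb H^J$ centered at $F(p_0,b_0)$, together with an arbitrary chart on $B$ around $b_0$; shrinking $U$ and $B$ we may assume $F(U\times B)\subset V$. By the construction of the smooth structure on $J^r(X,Y)$ via \Cref{lem:jet-germ}, in these coordinates the bundle $J^r(X,Y)$ is locally $U\times B\times V\times P^r(I,J)_0$ — wait, more precisely $J^r(X,Y)$ restricted over $U\times V$ is trivialised as $U\times V\times P^r(I,J)_0$ — and the map $(p,b)\mapsto j^rF_b(p)$ is, in this trivialisation, given by $(p,b)\mapsto\bigl(p,\,F(p,b),\,\text{(truncated Taylor expansion of }\psi F_b\varphi^{-1}\text{ at }\varphi(p)\text{, recentered)}\bigr)$. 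So the claim becomes: the map sending $(p,b)$ to the tuple of partial derivatives $\partial^\alpha(\psi F_b\varphi^{-1})/\partial x^\alpha$ evaluated at $\varphi(p)$, for $|\alpha|\le r$, is smooth.

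The key point is then that these partial derivatives depend smoothly on the evaluation point \emph{and} on the parameter jointly. Concretely, write $G:=\psi F\varphi^{-1}\colon W\times B\to\mathbb H^J$ where $W=\varphi(U)\subset\mathbb H^I$; this is smooth by hypothesis (and by the extension convention for smoothness on subsets of Euclidean space, it extends to a smooth map on an open neighbourhood in $\mathbb R^I\times\mathbb R^{\dim B}$). For each multi-index $\alpha$ with $|\alpha|\le r$ in the $x$-variables only, the partial derivative $\partial^\alpha_x G$ is again smooth on (an open neighbourhood of) $W\times B$. Therefore the assignment $(p,b)\mapsto\bigl((\partial^\alpha_x G)(\varphi(p),b)\bigr)_{|\alpha|\le r}$ is a composition of the smooth map $(p,b)\mapsto(\varphi(p),b)$ with the smooth map $(\partial^\alpha_x G)_{|\alpha|\le r}$, hence smooth. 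Assembling these coordinates — the base point $p$, the value $F(p,b)$, and the Taylor coefficients — gives that $(p,b)\mapsto j^rF_b(p)$ is smooth into the local trivialisation, and hence smooth into $J^r(X,Y)$.

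I expect the only real subtlety — and the step needing the most care rather than difficulty — to be the corner bookkeeping: one must check that the identification of $J^r(X,Y)$ over $U\times V$ with $U\times V\times P^r(I,J)_0$ coming from \Cref{lem:jet-germ} genuinely records the truncated Taylor expansion in the coordinates $\varphi,\psi$, and that the transition functions between two such trivialisations (which involve composing polynomial maps, an operation that is itself polynomial, hence smooth) are compatible — but this is exactly what was used to put the smooth structure on $J^r(X,Y)$ in the first place, so nothing new is required. The map $p\mapsto\varphi(p)$ is smooth because $\varphi$ is a chart, and the smoothness convention on $\mathbb H^I\subset\mathbb R^{\#I}$ ensures partial derivatives of $G$ are well-defined and smooth even where the domain touches the faces $\{x_i=0\}$, since we work with a smooth extension to an open set. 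No transversality or genericity input is needed; this is a purely formal consequence of the definitions, which is why the proof is short.
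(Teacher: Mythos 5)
Your argument is correct. The paper itself omits the proof of this lemma, deferring to the cited references (\cite{GG73}, \cite{Mic80}); what you give is precisely the standard local argument — trivialize $J^r(X,Y)$ over a pair of charts via \Cref{lem:jet-germ}, identify the fibre coordinates of $j^rF_b(p)$ with the partial derivatives $\partial^\alpha_x(\psi F\varphi^{-1})$ evaluated at $(\varphi(p),b)$, and observe these are jointly smooth — together with the right remark that the extension convention for smoothness on $\mathbb H^I$ makes the derivatives well defined and smooth up to the corners, so there is nothing to add.
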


In particular, taking $B$ to be a point in \Cref{lem:jet-smooth}, we obtain a well-defined map
\[
j^r:C^\infty(X,Y) \mapsto C^\infty(X,J^r(X,Y))\,.
\]
We see in the next section that $j^r$ ``induces'' a topology on $C^\infty(X,Y)$ so that the map is continuous.

The construction of jet bundles, moreover, gives rise to a functor.
Indeed, for a smooth map $F:X\to Y$, and for each $p\in X$, we have an $\mathbb R$-algebra homomorphism
\[
F^\ast:J^r_{F(p)}(Y) \to J^r_p(X)\,.
\]
Hence, if $W$ is another manifold, we obtain an $\mathbb R$-linear map
\[
\mathrm{Hom}_{\mathbb R\mathchar`-\mathrm{Alg}}(J^r_p(X),J^r_w(W))
\to \mathrm{Hom}_{\mathbb R\mathchar`-\mathrm{Alg}}(J^r_{F(p)}(Y),J^r_w(W))
\]
for each $w\in W$.
This defines a well-define map
\begin{equation}
\label{eq:jet-functor}
F_\ast:J^r(W,X)\to J^r(W,Y)\,.
\end{equation}

\begin{lemma}
\label{lem:jet-functor}
Let $F:X\to Y$ be a smooth map between manifolds with corners.
Then, for every manifold $W$ with corners, the map \eqref{eq:jet-functor} is smooth.
Moreover, $J^r(W,\blankdot)$ gives rise to a functor $\mathbf{Mfd}\to\mathbf{Mfd}$ which preserves embeddings of manifolds.
\end{lemma}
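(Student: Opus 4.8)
The plan is to reduce everything to a local computation in charts, using the same pasting principle that already underlies the definition of $J^r(X,Y)$. First I would fix $W$ and a point $w\in W$ together with a chart $\chi:O\to\mathbb H^L$ centered at $w$, so that by \Cref{lem:jet-germ} we have $J^r_w(W)\simeq P^r(L)$. For $p\in X$ with $q=F(p)$, choose charts $\varphi:U\to\mathbb H^I$ on $X$ centered at $p$ and $\psi:V\to\mathbb H^J$ on $Y$ centered at $q$ with $F(U)\subset V$; these exist by \Cref{lem:mfd-chart}. Under these identifications the fibers of $J^r(W,X)$ and $J^r(W,Y)$ become $\mathrm{Hom}_{\mathbb R\mathchar`-\mathrm{Alg}}(P^r(I),P^r(L))\simeq P^r(L,I)_0$ and $P^r(L,J)_0$ respectively, and the map \eqref{eq:jet-functor} becomes post-composition by the $\mathbb R$-algebra homomorphism $P^r(J)\to P^r(I)$ induced by $F$, equivalently, pre-composition of polynomial germs by the truncated Taylor expansion of $\psi F\varphi^{-1}$ at $0$. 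The key point is that, as $(p,q)$ varies over $U\times V$, the coefficients of this truncated Taylor expansion depend smoothly on the base point — this is precisely (the $B=$ point case of) \Cref{lem:jet-smooth}, applied to the map $F$ regarded as a family. Since composition of polynomials of bounded degree is a polynomial map in the coefficients, the induced map on fibers is a smooth (indeed fiberwise-polynomial) map whose coefficients vary smoothly over the base; hence $F_\ast$ is smooth on each such chart, and therefore smooth globally.

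For the functoriality statement, I would check the two axioms directly from the algebraic definition. The identity $\mathrm{id}_X$ induces the identity on $J^r_p(X)$ for every $p$, hence the identity on $\mathrm{Hom}_{\mathbb R\mathchar`-\mathrm{Alg}}(J^r_w(W),J^r_p(X))$, so $(\mathrm{id}_X)_\ast=\mathrm{id}_{J^r(W,X)}$. For a composite $X\xrightarrow{F}Y\xrightarrow{G}Z$, the induced maps on germ algebras compose contravariantly, $F^\ast\circ G^\ast=(GF)^\ast:C^\infty_{GF(p)}(Z)\to C^\infty_p(X)$, and this descends to the jet algebras; post-composing a homomorphism $J^r_w(W)\to J^r_p(X)$ first by $F^\ast$ then by $G^\ast$ is the same as post-composing by $(GF)^\ast$, which is exactly $(GF)_\ast=G_\ast\circ F_\ast$ on $J^r(W,\blankdot)$. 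Thus $J^r(W,\blankdot):\mathbf{Mfd}\to\mathbf{Mfd}$ is a functor.

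It remains to show this functor preserves embeddings. Suppose $F:X\hookrightarrow Y$ is an embedding. By \Cref{prop:emb-bdless} embed $Y\hookrightarrow\widehat Y$ into a boundaryless manifold; since a map is an embedding iff its composite with a fixed embedding is, and the same holds after applying $J^r(W,\blankdot)$ once we know it on boundaryless targets, it suffices to treat the case where we may use \Cref{cor:pbchart}: around each point of $X$ there is a chart $(V,\psi)$ on $Y$ in which $F$ looks like the inclusion $\mathbb H^{\langle m|k\rangle}\hookrightarrow\mathbb R^n$ of a linear coordinate subspace. In such coordinates the induced algebra map $P^r(J)\to P^r(I)$ is the evident surjection setting the extra coordinates to zero, so post-composition $\mathrm{Hom}(P^r(I),P^r(L))\hookrightarrow\mathrm{Hom}(P^r(J),P^r(L))$ is the inclusion of a linear coordinate subspace of the fiber; combined with the embedding $X\hookrightarrow Y$ on the base and local triviality, $F_\ast$ is locally the inclusion of a submanifold, and it is a topological embedding because $F$ is and the fiber inclusion is closed. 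Hence $F_\ast:J^r(W,X)\to J^r(W,Y)$ is an embedding. I expect the genuinely delicate step to be the smoothness claim near corners: one must be sure that \Cref{lem:jet-germ}'s pasting isomorphisms are compatible enough that the fiberwise description of $F_\ast$ glues, and that the smooth dependence of Taylor coefficients on the base point — guaranteed by \Cref{lem:jet-smooth} — is exactly what is needed; everything else is formal bookkeeping with $\mathbb R$-algebra homomorphisms.
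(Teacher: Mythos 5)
The paper states \Cref{lem:jet-functor} without proof, so there is no argument of the author's to compare against; your proposal supplies the standard one and it is correct. Reducing to charts, identifying the fibers with $P^r(L,I)_0$ and $P^r(L,J)_0$, observing that $F_\ast$ is fiberwise composition with the order-$r$ Taylor polynomial of $\psi F\varphi^{-1}$ (whose coefficients vary smoothly in the base point by \Cref{lem:jet-smooth}), and handling the embedding case by passing to a boundaryless target via \Cref{prop:emb-bdless} and \Cref{cor:pbchart} so that the fiber map becomes a linear coordinate inclusion, is exactly what is needed. The only slip is notational: the operation on germs is $g\mapsto \widehat F\circ g$ (post-composition of the polynomial map $\mathbb R^L\to\mathbb R^I$ with the Taylor polynomial $\widehat F$), not pre-composition as written, but this does not affect the argument since either composition is polynomial in the coefficients.
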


\begin{lemma}
\label{lem:jet-proj}
Let $X$, $Y$, and $Z$ be manifolds with corners.
Then, for each non-negative integer $r\ge 0$, the projections $Y\times Z\to Y,Z$ induce the following pullback square in the category $\mathbf{Mfd}$:
\[
\xymatrix{
  J^r(X,Y\times Z) \ar[r] \ar[d] \ar@{}[dr]|(.4)\pbcorner & J^r(X,Y) \ar[d] \\
  J^r(X,Z) \ar[r] & X }
\]
\end{lemma}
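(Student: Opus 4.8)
The plan is to exhibit the canonical comparison map
\[
\Phi : J^r(X, Y\times Z) \longrightarrow J^r(X,Y)\times_X J^r(X,Z)
\]
induced by the two projections $Y\times Z\to Y$ and $Y\times Z\to Z$ via \Cref{lem:jet-functor}, and to show it is a diffeomorphism. Since the excerpt records that $J^r(X,Y)\to X$ and $J^r(X,Z)\to X$ are locally trivial fibrations, the fiber product on the right exists in $\mathbf{Mfd}$ and is again a fiber bundle over $X$. The map $\Phi$ covers the identity of $X\times Y\times Z$, so it suffices to check that $\Phi$ is bijective on each fiber over a point $(p,q,z)$ and that, in suitable local trivializations, $\Phi$ and $\Phi^{-1}$ are smooth.

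First I would establish the fiberwise statement. The fiber of the left-hand side over $(p,q,z)$ is $\operatorname{Hom}_{\mathbb R\text{-}\mathrm{Alg}}(J^r_{(q,z)}(Y\times Z),\,J^r_p(X))$, and that of the right-hand side is $\operatorname{Hom}_{\mathbb R\text{-}\mathrm{Alg}}(J^r_q(Y),\,J^r_p(X))\times\operatorname{Hom}_{\mathbb R\text{-}\mathrm{Alg}}(J^r_z(Z),\,J^r_p(X))$. Choosing charts and invoking \Cref{lem:jet-germ}, together with the fact that a chart of $Y\times Z$ at $(q,z)$ is a product of charts (so $\mathbb H^J\times\mathbb H^K\cong\mathbb H^{J\sqcup K}$), we may identify $J^r_q(Y)\cong P^r(J)$, $J^r_z(Z)\cong P^r(K)$, and $J^r_{(q,z)}(Y\times Z)\cong P^r(J\sqcup K)$ for (marked) finite sets $J,K$. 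Now for \emph{any} local $\mathbb R$-algebra $C$ with residue field $\mathbb R$ and $\mathfrak m_C^{r+1}=0$ — such as $C=J^r_p(X)$ — an $\mathbb R$-algebra homomorphism $P^r(I)\to C$ amounts precisely to a choice of image in $\mathfrak m_C$ for each of the $\#I$ generators, because any such assignment extends over $\mathbb R[x_i\mid i\in I]$ and automatically annihilates $(x_i\mid i\in I)^{r+1}$. Hence $\operatorname{Hom}_{\mathbb R\text{-}\mathrm{Alg}}(P^r(I),C)\cong\mathfrak m_C^{I}$ naturally in $C$, and the splitting $\mathfrak m_C^{J\sqcup K}\cong\mathfrak m_C^J\times\mathfrak m_C^K$ yields the required bijection; tracing definitions shows this is exactly $\Phi$ on the fiber. (Equivalently: $J^r_{(q,z)}(Y\times Z)$ is the coproduct of $J^r_q(Y)$ and $J^r_z(Z)$ in the category of local Artinian $\mathbb R$-algebras with residue field $\mathbb R$ annihilated by their $(r{+}1)$-st power — namely the reflection of $J^r_q(Y)\otimes_{\mathbb R}J^r_z(Z)$ into that subcategory — and $J^r_p(X)$ lies in this subcategory.)

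It remains to see $\Phi$ is a local diffeomorphism, which is a local question over $X\times Y\times Z$. Fixing charts $\varphi:U\to\mathbb H^I$, $\psi:V\to\mathbb H^J$, $\chi:W\to\mathbb H^K$ on $X$, $Y$, $Z$, the bundle $J^r(X,Y\times Z)$ trivializes over $U\times V\times W$ with fiber $P^r(I,J\sqcup K)_0$, while $J^r(X,Y)\times_X J^r(X,Z)$ trivializes over the same base with fiber $P^r(I,J)_0\times P^r(I,K)_0$. Splitting the target coordinates gives a diffeomorphism $P^r(I,J\sqcup K)_0\cong P^r(I,J)_0\times P^r(I,K)_0$ of Euclidean spaces, and in these trivializations $\Phi$ is precisely this identification (times the identity on $U\times V\times W$), hence smooth with smooth inverse. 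Therefore $\Phi$ is a diffeomorphism and the square is a pullback.

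The step I expect to require the most care — the main obstacle — is the coordinate-free identification of the fiber $J^r_{(q,z)}(Y\times Z)$ compatibly with the comparison map, i.e. verifying that the abstractly defined arrows in the square restrict on fibers to the combinatorial bijection $\mathfrak m_C^{J\sqcup K}\cong\mathfrak m_C^J\times\mathfrak m_C^K$, independently of the chosen charts. Once this is pinned down, bijectivity and smoothness of $\Phi^{\pm1}$ follow at once from the explicit local models, and the verification that $J^r(X,Y)\times_X J^r(X,Z)$ is a manifold is routine given the local triviality of $J^r(X,\blankdot)$ over $X$.
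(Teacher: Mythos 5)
Your proposal is correct and follows essentially the same route as the paper: the paper's proof also reduces to local charts $X=\mathbb H^I$, $Y=\mathbb H^J$, $Z=\mathbb H^K$ and identifies the fibers via $P^r(I,J\amalg K)_0\cong P^r(I,J)_0\times P^r(I,K)_0$, which is exactly your local-trivialization step. Your fiberwise algebraic argument (homomorphisms out of $P^r(J\sqcup K)$ into a local Artinian quotient splitting as pairs) is a careful, chart-independent justification of what the paper leaves as ``obvious,'' but it is the same underlying identification.
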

\begin{proof}
It is easily verified that the problem reduces to the local one.
In particular, we may assume $X=\mathbb H^I$, $Y=\mathbb H^J$, and $Z=\mathbb H^K$.
Then, we have
\[
\begin{gathered}
J^r(X,Y\times Z) \simeq \mathbb H^I\times \mathbb H^J\times \mathbb H^K\times P^r(I,J\amalg K)_0 \\
J^r(X,Y) \simeq \mathbb H^I\times \mathbb H^J\times P^r(I,J)_0 \\
J^r(X,Z) \simeq \mathbb H^I\times \mathbb H^K\times P^r(I,K)_0 \ .
\end{gathered}
\]
Hence the result is now obvious.
\end{proof}

We saw, as above, the jet bundle $J^r(\blankdot,\blankdot)$ has some good functorial properties on the second variable.
On the other hand, we do not have the dual results on the first variable in genral.
A smooth map $X\to W$ might not even induce a smooth map $J^r(W,Y)\to J^r(X,Y)$.
We have a base-change result instead.
Recall that we have a locally trivial fibration $J^r(W,Y)\to W$ so that we obtain a fiber bundle
\[
X\times_W J^r(W,Y) \to X
\]
over $X$ with the same fiber as $J^r(W,Y)$.

\begin{lemma}
\label{lem:jet-basechange}
Let $W$, $X$, and $Y$ be manifolds with corners, and let $F:X\to W$ be a smooth map.
Then the precomposition with $F$ gives rise to a smooth bundle map
\begin{equation}
\label{eq:jet-basechange:F}
F^\ast:X\times_W J^r(W,Y) \to J^r(X,Y)
\end{equation}
over $X\times Y$.
Moreover, if $F$ is a submersion, then the induced map $F^\ast$ is an embedding.
\end{lemma}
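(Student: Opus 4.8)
The plan is to reduce everything to the local model, just as in the proof of \Cref{lem:jet-proj}. Since both the source and target of the asserted map \eqref{eq:jet-basechange:F} are fiber bundles over $X\times Y$ — the left-hand side by base change of $J^r(W,Y)\to W$ along $F$, the right-hand side by construction — and since smoothness and the property of being an embedding are local, I may assume $X=\mathbb H^I$, $W=\mathbb H^L$, and $Y=\mathbb H^J$, with $F$ a smooth map $\mathbb H^I\to\mathbb H^L$ sending $0$ to $0$ (translating targets if necessary, which is harmless for a local statement). In these coordinates, a point of $X\times_W J^r(W,Y)$ over $(p,q)\in\mathbb H^I\times\mathbb H^J$ is a point $(p,\varphi)$ where $\varphi\in J^r_{F(p)}(W,Y)$, i.e. (after choosing the standard coordinate identifications via \Cref{lem:jet-germ}) a degree-$\le r$ polynomial germ; precomposition with the $r$-jet $j^rF(p)\colon J^r_{F(p)}(W)\to J^r_p(X)$ sends it to $j^rF(p)^{*}\circ\varphi\in J^r_p(X,Y)$. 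So the map \eqref{eq:jet-basechange:F} is, fiberwise, $\varphi\mapsto \varphi\circ(j^rF(p))$, the composition of polynomial maps truncated at degree $r$.

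The key steps are then: first, observe that composition of truncated polynomial maps is a polynomial operation in the coefficients, and that the coefficients of $j^rF(p)$ depend smoothly on $p$ (this is precisely \Cref{lem:jet-smooth}, applied to $F\colon X\times\{*\}\to W$, or rather to the "universal" family). Hence $(p,\varphi)\mapsto \varphi\circ j^rF(p)$ is smooth, which gives the first assertion. Second, for the embedding claim under the hypothesis that $F$ is a submersion: by \Cref{prop:immersion-chart} applied to a submersion — or directly, since $W$ has no relevant constraint here beyond being a manifold with corners and $F$ a submersion onto an open subset of $\mathbb H^L$ — we may, after shrinking and changing charts on $X$, assume $F$ is a linear projection $\mathbb H^I\cong\mathbb H^L\times\mathbb H^{I'}\twoheadrightarrow\mathbb H^L$, $(x,y)\mapsto x$. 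In that case $j^rF(p)^{*}\colon P^r(L)\to P^r(I)$ is the obvious inclusion of variables (independent of $p$!), so \eqref{eq:jet-basechange:F} becomes $(p, \varphi)\mapsto (p, \iota\circ\varphi)$ where $\iota\colon P^r(I',L,J)\hookrightarrow P^r(I,J)$ is a coordinate inclusion; this is manifestly a smooth embedding, being a product of the identity on the base with a linear inclusion of Euclidean fibers. Finally, assemble the local pieces: smoothness is automatic, and being an embedding is checked by noting that \eqref{eq:jet-basechange:F} is injective (the composition with the full precomposition, namely the recovered datum on $X$, determines $\varphi$ whenever $j^rF(p)^{*}$ is injective, which it is since $F$ is a submersion) together with the local immersion statement just established, plus the fact that it is a continuous bijection onto its image fiberwise over the Hausdorff base $X\times Y$.

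The main obstacle I anticipate is the corner bookkeeping in the reduction for the submersion case: \Cref{prop:immersion-chart} is stated for immersions into a \emph{boundaryless} target, and the dual statement for submersions out of a manifold with corners needs a little care, since the chart on the corner-ed source $X$ near $p$ is rigid and one is not free to impose an arbitrary normal form. What saves us is that one only needs the statement locally near $p$ and that a submersion $F$ forces the corner structure of $X$ at $p$ to refine that of $W$ at $F(p)$ in a compatible way, so that a product chart $\mathbb H^L\times\mathbb H^{I'}$ with $F$ the projection is indeed available after shrinking; I would spell this out by choosing the chart on $W$ first and then lifting, using that $dF_p$ is surjective and that one may split off a complementary inward-pointing coordinate system on $X$. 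The rest — smoothness via \Cref{lem:jet-smooth}, and the passage from "local embedding plus injective" to "embedding" — is routine.
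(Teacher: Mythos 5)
Your overall strategy coincides with the paper's: describe the map fiberwise as post-composition with the truncated pullback $F^\ast_p\colon J^r_{F(p)}(W)\to J^r_p(X)$, note that in charts this is a polynomial operation on Taylor coefficients depending smoothly on $p$ (via \Cref{lem:jet-smooth}), and reduce the embedding claim to fiberwise injectivity. The smoothness half of your argument is fine.

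The gap is in the submersion case. You propose to bring $F$ into the normal form of a coordinate projection $\mathbb H^L\times\mathbb H^{I'}\twoheadrightarrow\mathbb H^L$ near $p$; you rightly flag the corner bookkeeping as the danger, but the rescue you sketch does not work, because a submersion between manifolds with corners need not admit such a product normal form at a corner. Concretely, $F\colon\mathbb R_+^2\to\mathbb R_+$, $F(x,y)=x+y$, is a submersion everywhere (in this paper's convention the tangent space at the corner is all of $\mathbb R^2$ and $d_{(0,0)}F=(1,1)$ is onto), yet $F^{-1}(0)=\{(0,0)\}$ is a point, whereas any coordinate projection here has one-dimensional fibers; likewise the inclusion $\mathbb R_+\hookrightarrow\mathbb R$ is a submersion at $0$ but is not a projection in any charts. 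The corner structure of $X$ is \emph{not} forced to split off a copy of that of $W$. Fortunately the normal form is unnecessary. What must be checked—and what the paper checks—is only that $F^\ast_p\colon J^r_{F(p)}(W)\to J^r_p(X)$ is a monomorphism, since post-composition with an injective map is injective on the Hom-sets describing the fibers. In Taylor coordinates $F^\ast_p$ sends a polynomial $g$ of degree at most $r$ to the degree-$\le r$ truncation of $g\circ\widehat F$ with $\widehat F(0)=0$; if $g\neq0$ has lowest nonzero homogeneous part $g_k$ (so $k\le r$), the lowest-degree part of the image is $g_k\circ d_pF$, which is nonzero because $d_pF$ is surjective. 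No change of chart is needed, and the corners play no role since tangent spaces at corners are full vector spaces here. With fiberwise injectivity of this smooth family of linear maps in hand, the standard argument that a fiberwise-injective bundle map over the identity of the base is an embedding—which you invoke at the end—completes the proof.
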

\begin{proof}
For each $p\in X$ with $w=F(p)$, the precomposition with $F$ gives rise to an $\mathbb R$-algebra homomrophism $J^r_w(W)\to J^r_p(X)$.
Then, the map \eqref{eq:jet-basechange:F} is fiberwisely described as the map
\begin{equation}
\label{eq:prf:jet-basechange:homo}
\mathrm{Hom}_{\mathbb R\mathchar`-\mathrm{Alg}}(J^r_q(Y),J^r_w(W))
\to \mathrm{Hom}_{\mathbb R\mathchar`-\mathrm{Alg}}(J^r_q(Y),J^r_p(X))\,.
\end{equation}
Thus, the first assertion is obvious.
To verify the last, it suffices to see if $F$ is a submersion, the homomorphism \eqref{eq:prf:jet-basechange:homo} is a monomorphism for each $p\in X$ and $q\in Y$, which is not difficult.
\end{proof}

\begin{corollary}
\label{cor:jet-prod}
Let $\{X_i\}_{i\in I}$ and $\{Y_i\}_{i\in I}$ be families of manifolds with corners indexed by a finite set $I$.
Then the product of maps gives rise to a smooth map
\[
\prod_{i\in I} J^r(X_i,Y_i) \to J^r(\prod_{i\in I} X_i,\prod_{i\in I} Y_i )\,.
\]
\end{corollary}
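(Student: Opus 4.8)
The plan is to reduce the statement to two facts already in hand: that $J^r(W,-)$ converts finite products in the target into fibre products over $W$ (this is \Cref{lem:jet-proj}, applied repeatedly), and that a smooth map in the source variable induces a base-change map of jet bundles (\Cref{lem:jet-basechange}). Throughout, write $X=\prod_{i\in I}X_i$ and $Y=\prod_{i\in I}Y_i$, and let $\pi_i\colon X\to X_i$ be the $i$-th projection.

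First I would record that iterating \Cref{lem:jet-proj} over the finite index set $I$ gives a canonical identification of $J^r(X,Y)$ with the $\#I$-fold fibre product over $X$ of the bundles $J^r(X,Y_i)\to X$. By the universal property of this fibre product, to construct a smooth map $\prod_{i\in I}J^r(X_i,Y_i)\to J^r(X,Y)$ it suffices to construct, for each $i\in I$, a smooth map $\Phi_i\colon\prod_{j\in I}J^r(X_j,Y_j)\to J^r(X,Y_i)$, all sharing the same composite to $X$.

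For fixed $i$, the source projections assemble into a smooth map $\prod_{j\in I}J^r(X_j,Y_j)\to\prod_{j\in I}X_j=X$, and the $i$-th factor projection gives a smooth map $\prod_{j\in I}J^r(X_j,Y_j)\to J^r(X_i,Y_i)$; since both cover $\pi_i\colon X\to X_i$, they combine into a smooth map into the fibre product $X\times_{X_i}J^r(X_i,Y_i)$. Post-composing with the base-change map $\pi_i^{\ast}\colon X\times_{X_i}J^r(X_i,Y_i)\to J^r(X,Y_i)$ of \Cref{lem:jet-basechange} (which applies since $\pi_i$ is a submersion, though plain smoothness would do) yields $\Phi_i$; it is smooth and lies over the common projection to $X$ by construction. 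Assembling the $\Phi_i$ via the universal property then produces the desired map, and unwinding the definitions shows that on a fibre over $(p,q)=((p_i)_i,(q_i)_i)$ it sends a tuple of $\mathbb R$-algebra homomorphisms $J^r_{q_i}(Y_i)\to J^r_{p_i}(X_i)$ to their ``product'' $J^r_q(Y)\to J^r_p(X)$ — i.e. it is exactly the product of jets.

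I do not anticipate a genuine obstacle: the only care needed is in the iterated bookkeeping of \Cref{lem:jet-proj} (harmless since $I$ is finite) and in checking that all the $\Phi_i$ factor through the same map to $X$, which is immediate. If one prefers a hands-on verification, the problem is \emph{local}, and choosing charts $X_i\simeq\mathbb H^{I_i}$, $Y_i\simeq\mathbb H^{J_i}$ as in the proof of \Cref{lem:jet-proj} turns the map into the literal product of polynomial mappings $\prod_{i}P^r(I_i,J_i)_0\to P^r(\coprod_i I_i,\coprod_i J_i)_0$; this is well defined because each $J_i$-block of components is a degree-$\le r$ polynomial in its own $I_i$-block of variables, hence of total degree $\le r$, and it is polynomial — therefore smooth — in the coefficients.
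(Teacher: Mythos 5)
Your proposal is correct and follows essentially the same route as the paper: both decompose $J^r(\prod_i X_i,\prod_i Y_i)$ as a fibre product over $\prod_i X_i$ via \Cref{lem:jet-proj} and then obtain the component maps by base change along the projections via \Cref{lem:jet-basechange}. The only cosmetic difference is that the paper reduces to the binary case by induction on $\#I$, whereas you handle all factors simultaneously via the universal property of the fibre product.
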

\begin{proof}
Using the induction on the cardinality of $I$, one can notice that we only have to consider the case $I=\{1,2\}$:
\begin{equation}
\label{eq:jet-prod:bipr}
J^r(X_1,Y_1)\times J^r(X_2,Y_2) \to J^r(X_1\times X_2,Y_1\times Y_2)
\end{equation}
By \Cref{lem:jet-proj}, we have a natural diffeomorphism
\[
J^r(X_1\times X_2,Y_1\times Y_2)
\simeq J^r(X_1\times X_2,Y_1)\times_{X_1\times X_2} J^r(X_1\times X_2,Y_2)\,.
\]
On the other hand, we have a canonical diffeomorphism
\[
\begin{multlined}
J^r(X_1,Y_1)\times J^r(X_2,Y_2) \\
\simeq
\left( (X_1\times X_2)\times_{X_1}J^r(X_1,Y_1) \right)
\times_{X_1\times X_2} \left( (X_1\times X_2)\times_{X_2} J^r(X_2,Y_2) \right)\,.
\end{multlined}
\]
Under these diffeomorphisms, the map \eqref{eq:jet-prod:bipr} is induced by the map
\[
(X_1\times X_2)\times_{X_i}J^r(X_i,Y_i)
\to J^r(X_1\times X_2,Y_i)
\]
for $i=1,2$, which is smooth by \Cref{lem:jet-basechange}.
Thus, the result follows from \Cref{lem:jet-proj}.
\end{proof}

In the case $r=1$, the first jet bundle is strongly related to the (co)tangent bundle.

\begin{proposition}
\label{prop:tangent-J1}
For manifolds $X$ and $Y$ with corners, there are natural isomorphisms
\[
J^1(X,Y)
\cong \mathpzc{Hom}_{\mathbb R}(T^\ast Y,T^\ast X)
\cong \mathpzc{Hom}_{\mathbb R}(TX,TY)
\]
of bundles over $X\times Y$, which sends $j^1_pF\in J^1(X,Y)$ to $d_pF:T_pX\to T_{F(p)}Y$ for $F\in C^\infty(X,Y)$.
In particular, in the case $Y=\mathbb R$, we have a canonical isomorphism
\[
J^1(X) \cong \mathbb R_X\otimes T^\ast X
\]
of real vector bundles over $X$.
\end{proposition}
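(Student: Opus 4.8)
The plan is to reduce the whole statement to a fiberwise computation, which is pure linear algebra, and then to promote the resulting fiberwise bijections to smooth bundle isomorphisms by a local coordinate check; the identification of the jet of a map with its differential will then be immediate from the definitions.

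First the fiberwise identification. Fix $(p,q)\in X\times Y$ and recall $J^1_p(X)=C^\infty_p(X)/\mathfrak m_p(X)^2=\mathbb R\oplus T^\ast_pX$ as an $\mathbb R$-algebra, where the ideal $T^\ast_pX=\mathfrak m_p(X)/\mathfrak m_p(X)^2$ is square-zero; similarly for $Y$. An $\mathbb R$-algebra homomorphism $\phi\colon J^1_q(Y)\to J^1_p(X)$ fixes $1$, hence is the identity on the $\mathbb R$-summand; it carries square-zero elements to square-zero elements, and in $\mathbb R\oplus T^\ast_pX$ the square-zero elements are exactly $T^\ast_pX$, so $\phi$ restricts to an $\mathbb R$-linear map $T^\ast_qY\to T^\ast_pX$; conversely, since $T^\ast_pX$ is square-zero, every such linear map extends uniquely to an $\mathbb R$-algebra homomorphism by decreeing it to be the identity in degree $0$. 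This yields a natural bijection
\[
J^1(X,Y)_{(p,q)}=\mathrm{Hom}_{\mathbb R\mathchar`-\mathrm{Alg}}(J^1_q(Y),J^1_p(X))\;\xrightarrow{\ \sim\ }\;\mathrm{Hom}_{\mathbb R}(T^\ast_qY,T^\ast_pX),
\]
which I then compose with the transpose isomorphism $\mathrm{Hom}_{\mathbb R}(T^\ast_qY,T^\ast_pX)\cong\mathrm{Hom}_{\mathbb R}(T_pX,T_qY)$ supplied by the canonical finite-dimensional dualities $T^\ast\cong\mathrm{Hom}_{\mathbb R}(T,\mathbb R)$ and $T\cong\mathrm{Hom}_{\mathbb R}(T^\ast,\mathbb R)$ recorded in \Cref{sec:prelim}.

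Next I would check that these fiberwise maps assemble into smooth bundle isomorphisms over $X\times Y$. Because the construction above is intrinsic to the $\mathbb R$-algebra structures, it is automatically compatible with the transition data used to paste $J^1(X,Y)$, $T^\ast X$, and $TX$, so it suffices to verify smoothness, which is local. Choosing charts $\varphi\colon U\to\mathbb H^I$ on $X$ and $\psi\colon V\to\mathbb H^J$ on $Y$, \Cref{lem:jet-germ} and the subsequent discussion trivialize $J^1(X,Y)$ over $U\times V$ with fiber $P^1(I,J)_0$, the space of degree-$\le 1$ polynomial maps $\mathbb R^I\to\mathbb R^J$ vanishing at the origin, i.e. of $\mathbb R$-linear maps; the same charts trivialize $\mathpzc{Hom}_{\mathbb R}(T^\ast Y,T^\ast X)$ and $\mathpzc{Hom}_{\mathbb R}(TX,TY)$ over $U\times V$, with fiber $\mathrm{Hom}_{\mathbb R}(\mathbb R^J,\mathbb R^I)$ resp. $\mathrm{Hom}_{\mathbb R}(\mathbb R^I,\mathbb R^J)$. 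Under these trivializations the fiberwise maps become the tautological identification of a degree-$\le 1$ polynomial map vanishing at $0$ with the corresponding linear map, and its transpose, both of which are linear isomorphisms and in particular diffeomorphisms; hence the fiberwise isomorphisms glue. (The same pointwise description makes naturality in $X$ and $Y$ transparent, with respect to the functorialities of \Cref{lem:jet-functor} and \Cref{lem:jet-basechange}.)

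It remains to identify the image of a jet and to extract the special case. For $F\in C^\infty(X,Y)$ and $p\in X$ with $q=F(p)$, the homomorphism $j^1F(p)\colon J^1_q(Y)\to J^1_p(X)$ is by construction induced by $F^\ast_p\colon C^\infty_q(Y)\to C^\infty_p(X)$, so its restriction to $T^\ast_qY=\mathfrak m_q(Y)/\mathfrak m_q(Y)^2$ is exactly the codifferential $F^\ast\colon T^\ast_qY\to T^\ast_pX$, whose transpose under the dualities above is, by definition, $d_pF\colon T_pX\to T_qY$; hence the composite isomorphism $J^1(X,Y)\cong\mathpzc{Hom}_{\mathbb R}(TX,TY)$ sends $j^1_pF$ to $d_pF$. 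The last assertion is then the special case $Y=\mathbb R$, using the identification $J^1(X,\mathbb R)\cong J^1(X)$ noted after \Cref{lem:jet-proj} together with the canonical trivialization of $T^\ast\mathbb R$. I expect no serious obstacle here: the only genuine content is the square-zero observation in the second paragraph, and — as everywhere in this paper — corners cause no trouble, since $T_pX$ and $T^\ast_pX$ behave the same at corners as at interior points, so the argument is literally the boundaryless one; the single point demanding a little care is confirming that the evident pointwise identifications respect the pasting data of the jet bundle, which is exactly what the chart-free description of the fiberwise maps takes care of.
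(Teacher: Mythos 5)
Your proof is correct, and it is exactly the argument the paper's setup is designed to support: the paper states this proposition without proof, and your square-zero-ideal computation of $\mathrm{Hom}_{\mathbb R\mathchar`-\mathrm{Alg}}(J^1_q(Y),J^1_p(X))\cong\mathrm{Hom}_{\mathbb R}(T^\ast_qY,T^\ast_pX)$, combined with the chart trivializations by $P^1(I,J)_0$ from \Cref{lem:jet-germ} and the dualities $T^\ast_pX\cong\mathrm{Hom}_{\mathbb R}(T_pX,\mathbb R)$, is the standard and intended route. The only discrepancy is with the paper's final formula, where your (correct) computation yields $J^1(X)\cong\mathbb R_X\oplus T^\ast X$, suggesting the paper's ``$\otimes$'' is a typo for ``$\oplus$''.
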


For each $f\in C^\infty(X)$, we have a smooth map
\[
X\xrightarrow{j^1f} J^1(X) \twoheadrightarrow T^\ast X\,,
\]
which we denote by $df$.

\subsection{Whitney $C^\infty$-topology on the space of smooth maps}
\label{sec:whitney-top}

Finally, we will topologize the set $C^\infty(X,Y)$.
Let $X$ and $Y$ be smooth manifolds with corners.
For a natural number $k\in\mathbb N$ and a subset subset $A\subset J^k(X,Y)$, we define a subset $M(A)\subset C^\infty(X,Y)$ by
\[
M(A) := \{ F\in C^\infty(X,Y) \mid j^kF(X)\subset A \}\,.
\]
Note that for every $A_1,A_2\in J^k(X,Y)$, we have
\begin{equation}
\label{eq:M-cap}
M(A_1\cap A_2) = M(A_1)\cap M(A_2)\,.
\end{equation}
It follows that for a fixed $k\in\mathbb N$, the subsets $M(U)$ for all open subsets $U\subset J^k(X,Y)$ form a basis for a topology on $C^\infty(X,Y)$, which we call the Whitney $C^k$-topology.

\begin{definition}
The Whitney $C^\infty$-topology on $C^\infty(X,Y)$ is the topology generated by all open subsets in the Whitney $C^k$-topology for all $k\in\mathbb N$.
\end{definition}

There is the smooth projection $\pi^l_k:J^l(X,Y)\to J^k(X,Y)$ for $k<l$, so that if $U\subset J^k(X,Y)$ and $V\subset J^l(X,Y)$, then $(\pi^k_l)^{-1}(U)\subset J^l(X,Y)$ is an open subset.
Since $j^k = \pi^k_l j^l$, we have
\[
M((\pi^k_l)^{-1}(U)) = M(U)\,.
\]
Hence, by \eqref{eq:M-cap}, we obtain that all subsets of $C^\infty(X,Y)$ which is open in the Whitney $C^k$-topology for some $k\in\mathbb N$ form a basis for the Whitney $C^\infty$-topology.

In what follows, we always suppose that $C^\infty(X,Y)$ is equipped with the Whitney $C^\infty$-topology unless otherwise noted.

We here review some important properties of the Whitney $C^\infty$-topology following \cite{GG73}.

\begin{lemma}
\label{lem:whitney-metric}
Let $X$ and $Y$ be manifolds with corners.
Choose a metric $d_s$ for each $J^s(X,Y)$.
For $k\in\mathbb N$, we define a map $\rho_k:C^\infty(X,Y)\times C^\infty(X,Y)\to\mathbb R$ by the formula
\[
\rho_k(F,G)
:= \sup_{p\in X} \frac{d_k(j^kF(p),j^kG(p))}{1+d_k(j^kF(p),j^kG(p))}\,.
\]
Then the following hold:
\begin{enumerate}[label={\rm(\arabic*)}]
  \item\label{sublem:whit-metric} $\rho_k$ is a metric on the set $C^\infty(X,Y)$, which is not necessarily compatible with the Whitney topologies.
  \item\label{sublem:whit-contin} $\rho_k$ is continuous with respect to the Whitney $C^k$-topology; equivalently, for every $F\in C^\infty(X,Y)$ and $\varepsilon>0$, the set
\[
B_k(F;\varepsilon):=\left\{G\in C^\infty(X,Y)\mid\rho_k(F,G)<\varepsilon\right\}
\]
is open in the Whitney $C^k$-topology.
  \item\label{sublem:whit-cauchy} Suppose $\{F_i\}\subset C^\infty(X,Y)$ is a sequence which is a Cauchy sequence with respect to $\rho_k$ for each $k\in\mathbb N$.
Then there is a smooth function $G:X\to Y$ such that for each $p\in X$ and $k\in\mathbb N$, we have
\[
j^kG(p) = \lim_{i\to\infty} j^kF_i(p)\in J^k(X,Y)\,.
\]
\end{enumerate}
\end{lemma}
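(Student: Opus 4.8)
The plan is to verify the three assertions separately; the arguments are all local-to-global in flavor, and the only genuinely delicate point is the smoothness of the limit in \ref{sublem:whit-cauchy}. For \ref{sublem:whit-metric}, I would first observe that the function $t\mapsto t/(1+t)$ is a monotone increasing, subadditive, bounded self-map of $[0,\infty)$, so composing it with the (possibly unbounded) metric $d_k$ and taking the supremum over $p\in X$ again yields a metric: symmetry is inherited from $d_k$, the triangle inequality from subadditivity of $t/(1+t)$ together with monotonicity, and $\rho_k(F,G)=0$ forces $d_k(j^kF(p),j^kG(p))=0$ for all $p$, hence $j^kF=j^kG$, hence $F=G$ since a smooth map is determined by its $k$-jets (even at corners). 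The remark that $\rho_k$ need not induce the Whitney topology needs no proof here; it is just a cautionary statement.

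For \ref{sublem:whit-contin}, I would show $B_k(F;\varepsilon)$ is a union of basic Whitney $C^k$-open sets $M(U)$. Fix $G\in B_k(F;\varepsilon)$, so $\rho_k(F,G)=\delta<\varepsilon$. Using continuity of $d_k$ on $J^k(X,Y)$, build an open neighborhood $U$ of the graph $j^kG(X)\subset J^k(X,Y)$ by setting, for each $\sigma\in j^kG(X)$ lying over $p\in X$, a ball around $\sigma$ small enough that any $\sigma'$ in it satisfies $d_k(j^kF(p),\sigma')/(1+d_k(j^kF(p),\sigma'))<\varepsilon$; one has to choose the radius locally so the resulting supremum stays strictly below $\varepsilon$, which works because $\delta<\varepsilon$ gives room and $t\mapsto t/(1+t)$ is uniformly continuous. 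Then $G\in M(U)\subset B_k(F;\varepsilon)$, proving openness. (The subtlety of keeping the sup strictly below $\varepsilon$ rather than merely $\le\varepsilon$ is handled by shrinking from $\delta$ to some $\delta'$ with $\delta<\delta'<\varepsilon$ and doing the construction with $\delta'$.)

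For \ref{sublem:whit-cauchy}, the strategy is to produce $G$ pointwise and then check it is smooth. Since $\{F_i\}$ is $\rho_k$-Cauchy for every $k$, for each fixed $p\in X$ the sequence $\{j^kF_i(p)\}_i$ is Cauchy in $(J^k(X,Y)_p, d_k)$ restricted to the fiber — more precisely in a complete metric neighborhood, as the fiber is a Euclidean space $P^r(I,J)_0$ plus the $X\times Y$ coordinates — and $F_i(p)$ itself converges to some $q\in Y$; define $G(p):=q$. To see $G$ is smooth, work in charts: pick charts $(U,\varphi)$ around $p$ and $(V,\psi)$ around $G(p)$; for $i$ large, $F_i(U')\subset V$ on a smaller neighborhood $U'$ (using that $F_i\to G$ pointwise and $V$ open, together with equicontinuity coming from $\rho_1$-convergence controlling first derivatives), so in coordinates the $F_i$ are maps $U'\to\psi(V)$ whose derivatives up to every order $k$ converge uniformly on $U'$ (that is exactly what $\rho_k$-Cauchy encodes, once one unwinds $d_k(j^kF_i(p),j^kF_j(p))$ as a bound on coordinatewise Taylor coefficients up to order $k$). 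By the standard theorem that a sequence of smooth maps with all derivatives converging uniformly has a smooth limit with the expected derivatives, $G$ is smooth on $U'$ and $j^kG(p)=\lim_i j^kF_i(p)$. Gluing over a cover of $X$ gives the global smooth $G$.

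The main obstacle is \ref{sublem:whit-cauchy}: one must be careful that (a) the target points $F_i(p)$ land eventually in a common chart so that "uniform convergence of derivatives" even makes sense, and (b) the passage from the metric $d_k$ on the jet bundle to honest bounds on partial derivatives in coordinates is only bi-Lipschitz locally, not globally — so the argument must be localized on relatively compact pieces of $X$ and the chosen metrics $d_s$ compared with coordinate norms there. At corners this requires the convention that smoothness means extendability to an open neighborhood in $\mathbb R^n$ (as fixed in \Cref{sec:prelim}), so that the classical $C^\infty$ limit theorem applies to the extensions; \Cref{prop:emb-bdless} lets one embed $X$ in a boundaryless $\widehat X$ and run the argument there, then restrict.
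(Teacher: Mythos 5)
The paper does not actually prove this lemma --- it defers entirely to the first part of Section II-\S3 of \cite{GG73} --- so you are supplying an argument the paper omits, and your outline is essentially the standard one from that source. Parts \ref{sublem:whit-metric} and \ref{sublem:whit-contin} are fine: subadditivity and monotonicity of $t\mapsto t/(1+t)$ give the metric axioms, and the interpolation $\delta<\delta'<\varepsilon$ correctly handles the strict-inequality issue in \ref{sublem:whit-contin}. One small patch is needed in your construction of $U$: a point $\sigma'$ in the ball you place around $j^kG(p)$ may lie over a base point $p'\neq p$, and your ball only controls $d_k(j^kF(p),\sigma')$ rather than the quantity $d_k(j^kF(p'),\sigma')$ that enters $\rho_k(F,H)$. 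Either shrink the balls further using continuity of $j^kF$, or simply take $U:=\{\sigma\in J^k(X,Y)\mid d_k(j^kF(\alpha(\sigma)),\sigma)/(1+d_k(j^kF(\alpha(\sigma)),\sigma))<\delta'\}$ with $\alpha$ the source map, which is open by continuity of $j^kF$, $\alpha$, and $d_k$, and makes the inclusion $M(U)\subset B_k(F;\varepsilon)$ immediate.

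The genuine gap is in \ref{sublem:whit-cauchy}: you assert that $F_i(p)$ converges to some $q\in Y$ and that $\{j^kF_i(p)\}_i$ converges in the fiber, but this does not follow from $\rho_k$-Cauchyness for an arbitrary choice of the metrics $d_s$, and the lemma as stated only says ``choose a metric.'' The fibers being Euclidean does not make the restriction of $d_k$ to them complete, and the $Y$-component can escape: take $X=\mathbb R$, $Y=(0,1)$ with the standard (incomplete) product metrics on $J^k(X,Y)\cong\mathbb R\times(0,1)\times\mathbb R^k$, and $F_i\equiv 1/i$; this sequence is $\rho_k$-Cauchy for every $k$ but $\lim_i j^kF_i(p)$ does not exist in $J^k(X,Y)$, so no $G$ as in \ref{sublem:whit-cauchy} exists. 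The fix is to require (as \cite{GG73} does) that each $d_s$ be a \emph{complete} metric compatible with the topology of $J^s(X,Y)$; such a metric always exists on a smooth manifold (cf.\ the argument of \Cref{lem:compl-metric}), and this completeness is exactly what the Baire-property proof (\Cref{prop:Cinf-baire}) relies on. With that hypothesis added, your chart-by-chart argument --- eventual containment of $F_i(U')$ in a target chart, local bi-Lipschitz comparison of $d_k$ with coordinate norms on relatively compact sets, the classical theorem on uniform convergence of derivatives, and the extension convention at corners via \Cref{prop:emb-bdless} --- goes through.
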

\begin{proof}
See the discussion in the first part of Section II-$\S 3$ of \cite{GG73}.
\end{proof}

\begin{remark}
The topology induced by the metric $\rho_k$ does not coincide with the Whitney $C^k$-topology in general.
Actually, we have a good basis for the Whitney $C^k$-topology; for a smooth function $\delta:X\to (0,\infty)$, we set
\[
B(F;\delta)
:= \left\{ G\in C^\infty(X,Y)\mid\forall p\in X : d_k(j^kF(p),j^kG(p)) < \delta(p)\right\}\,.
\]
Then the family $\{B(F;\delta)\mid\delta:X\to(0,\infty)\}$ is a neighborhood basis around $F\in C^\infty(X,Y)$ in the Whitney $C^k$-topology.
Hence, if $X$ is compact, the topology induced by $\rho_k$ is precisely the Whitney $C^k$-topology.
\end{remark}

Finally, we see that the space $C^\infty(X,Y)$ satisfies some functorial properties.

\begin{proposition}
\label{prop:whit-comp}
Let $X$, $Y$, and $W$ be manifolds with corners.
\begin{enumerate}[label={\rm(\arabic*)}]
  \item\label{subprop:whit-comp:post} If $F:X\to Y$ is a smooth map, then the induced map
\[
F_\ast:C^\infty(W,X)\to C^\infty(W,Y)
\]
is continuous.
Moreover, if $F$ is an embedding, then $F_\ast$ is a topological embedding.
  \item\label{subprop:whit-comp:pre} If $F:X\to Y$ is a smooth proper map, then the induced map
\[
F^\ast:C^\infty(Y,W) \to C^\infty(X,W)
\]
is continuous.
\end{enumerate}
\end{proposition}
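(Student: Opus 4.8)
The plan is to reduce both parts to already-established properties of jet bundles.

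For \ref{subprop:whit-comp:post}, I would use the functoriality from \Cref{lem:jet-functor}: a smooth map $F\colon X\to Y$ induces a smooth map $F_\ast\colon J^r(W,X)\to J^r(W,Y)$, and unwinding the definitions (the pullback on germ algebras is contravariant) gives the naturality identity $j^r(F\circ G)=F_\ast\circ j^rG$ for every $G\in C^\infty(W,X)$. Hence for open $V\subset J^r(W,Y)$ we get $F_\ast^{-1}(M(V))=\{G\mid F_\ast(j^rG(W))\subset V\}=M(F_\ast^{-1}(V))$, which is open since $F_\ast$ is continuous; as the sets $M(V)$ (over all $r$ and all open $V$) form a basis of the Whitney $C^\infty$-topology on $C^\infty(W,Y)$, continuity of $F_\ast$ follows. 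If moreover $F$ is an embedding, then $F$ is injective — so $F_\ast$ is injective on maps — and by \Cref{lem:jet-functor} the induced $F_\ast\colon J^r(W,X)\to J^r(W,Y)$ is an embedding of manifolds, in particular a homeomorphism onto its image. Then for open $V\subset J^r(W,X)$ there is an open $\widetilde V\subset J^r(W,Y)$ with $\widetilde V\cap\operatorname{im}F_\ast=F_\ast(V)$, and since $j^rG(W)$ always maps into $\operatorname{im}F_\ast$ one checks $F_\ast^{-1}(M(\widetilde V))=M(V)$ using injectivity of $F_\ast$. Thus every basic — hence every — open subset of $C^\infty(W,X)$ is an $F_\ast$-preimage of an open subset of $C^\infty(W,Y)$, which together with continuity and injectivity shows $F_\ast$ is a topological embedding.

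For \ref{subprop:whit-comp:pre} there is no functoriality of $J^r(-,W)$ in the first variable, and this is precisely where properness of $F$ must be used. The starting point is \Cref{lem:jet-basechange}: $F\colon X\to Y$ yields a smooth bundle map $F^\ast\colon X\times_Y J^r(Y,W)\to J^r(X,W)$, and precomposition with $F$ gives $j^r(G\circ F)(p)=F^\ast(p,j^rG(F(p)))$. Fixing an open $U\subset J^r(X,W)$, it suffices — since the $M(U)$ form a basis of $C^\infty(X,W)$ — to prove that $\mathcal V:=\{G\in C^\infty(Y,W)\mid (p,j^rG(F(p)))\in (F^\ast)^{-1}(U)\text{ for all }p\in X\}$ is open. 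Given $G_0\in\mathcal V$, I would produce a smooth $\delta\colon Y\to(0,\infty)$ with $B(G_0;\delta)\subset\mathcal V$, where $B(G_0;\delta)$ is the Whitney-open set from the remark following \Cref{lem:whitney-metric}. To construct $\delta$: choose a compact exhaustion $L_n\nearrow Y$ with $L_n\subset\operatorname{int}L_{n+1}$; by properness $K_n:=F^{-1}(L_n)$ is a compact exhaustion of $X$ with $K_n\subset\operatorname{int}K_{n+1}$, so the compact shells $A_n:=K_{n+1}\setminus\operatorname{int}K_n$ cover $X$, and — the crucial point — $\{F(A_n)\}_n$ is a locally finite family of compacta in $Y$, since $F^{-1}(\operatorname{int}L_m)\subset K_m$ meets only finitely many $A_n$. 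On each $A_n$, continuity of $F^\ast$, compactness of $A_n$ and of $\{j^rG_0(q)\mid q\in L_{n+1}\}$, and openness of $U$ produce a constant $\delta_n>0$ such that $F^\ast(p,\tau)\in U$ whenever $p\in A_n$ and $\tau$ is $\delta_n$-close to $j^rG_0(F(p))$ for a fixed metric on $J^r(Y,W)$. Finally I would patch the $\delta_n$ into a single positive smooth $\delta$ satisfying $\delta\le\delta_n$ on $F(A_n)$ for every $n$, using a smooth partition of unity subordinate to a locally finite open refinement of $\{F(A_n)\}_n$ (shrinking each member if necessary); then any $G\in B(G_0;\delta)$ lies in $\mathcal V$, proving $\mathcal V$ open.

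The only step that is not a routine unwinding is the construction of the global $\delta$ in \ref{subprop:whit-comp:pre}: the estimates themselves are local, living on the compact shells $A_n$, and the difficulty is gluing them into one smooth function. This gluing is exactly what the properness hypothesis buys — without it the images $F(A_n)$ need not be locally finite in $Y$, so infinitely many constraints could bind near a point and no such $\delta$ would exist.
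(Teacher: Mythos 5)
Your part \ref{subprop:whit-comp:post} is essentially identical to the paper's proof: both use \Cref{lem:jet-functor} to get the identity $F_\ast^{-1}(M(V))=M(F_\ast^{-1}(V))$, and in the embedding case both write each basic open $M(U)$ of $C^\infty(W,X)$ as $F_\ast^{-1}(M(V_U))$ for a suitable open $V_U\subset J^r(W,Y)$. (One small thing you should make explicit for the ``topological embedding'' claim: injectivity of $F_\ast$ on mapping spaces, which you do note, plus the fact that exhibiting every basic open as a preimage suffices; the paper is equally terse here.)

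For part \ref{subprop:whit-comp:pre} the two routes genuinely differ: the paper simply cites Proposition II.3.9 of \cite{GG73} and proves nothing, whereas you reconstruct the argument from scratch. Your reconstruction is correct and is in substance the standard Golubitsky--Guillemin proof: properness converts a compact exhaustion $L_n$ of $Y$ into one $K_n=F^{-1}(L_n)$ of $X$, the shells $A_n$ carry uniform constants $\delta_n$ by compactness and continuity of the base-change map of \Cref{lem:jet-basechange}, and local finiteness of $\{F(A_n)\}$ (which is exactly where properness enters, as you correctly emphasize) lets you glue the $\delta_n$ into a smooth positive $\delta$ with $B(G_0;\delta)\subset\mathcal V$. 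Two routine points to tidy up if you write this out in full: the shells as you define them omit $K_1$, so set $K_0=\varnothing$ or include $K_1$ among the $A_n$ so that they actually cover $X$; and the uniform $\delta_n$ on $A_n$ needs a short tube-lemma-type argument (the pointwise neighborhoods provided by openness of $U$ must be made uniform over the compact set $\{(p,j^rG_0(F(p))):p\in A_n\}$). What your approach buys is a self-contained proof inside the paper's framework; what the paper's citation buys is brevity, at the cost of leaving the reader to check that the classical argument goes through unchanged for manifolds with corners.
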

\begin{proof}
\ref{subprop:whit-comp:post}:
For every open subset $V\subset J^r(W,Y)$, by \Cref{lem:jet-functor}, we have an open subset $F_\ast^{-1}(V)\subset J^r(W,X)$.
We also have
\begin{equation}
\label{eq:prf:whit-comp:fMV}
F_\ast^{-1}(M(V)) = M(F_\ast^{-1}(V)) \subset C^\infty(W,X)\,,
\end{equation}
which directly implies the first result.
Moreover, if $F$ is an embedding, by \Cref{lem:jet-functor}, so is the smooth map $F_\ast:J^r(W,X)\to J^r(W,Y)$.
Hence, for every open subset $U\subset J^r(W,X)$, there is an open subset $V_U\subset J^r(W,Y)$ such that $F_\ast^{-1}(V_U)=U\subset J^r(W,X)$.
Then, by \eqref{eq:prf:whit-comp:fMV}, we obtain
\[
F_\ast^{-1}(M(V_U)) = M(F_\ast^{-1}(V_U)) = M(U) \subset C^\infty(W,X)\,.
\]
This implies that every basic open subset of $C^\infty(W,X)$ can be written as the inverse image of an open subset of $C^\infty(W,Y)$.
Thus, $F_\ast:C^\infty(W,X)\to C^\infty(W,Y)$ is a topological embedding.

\ref{subprop:whit-comp:pre}:
See Proposition II.3.9 of \cite{GG73}.
\end{proof}

\begin{proposition}
\label{prop:Cinf-prod-contin}
Let $\{X_i\}_{i\in I}$ and $\{Y_i\}_{i\in I}$ be two families of manifolds with corners indexed by the same finite set $I$.
Then the map
\begin{equation}
\label{eq:prop:Cinf-prod-contin}
\prod_{i\in I} C^\infty(X_i,Y_i) \to C^\infty(\prod_{i\in I} X_i,\prod_{i\in I} Y_i)\,;\quad (F_i)_i \mapsto \prod F_i
\end{equation}
is continuous with respect to the Whitney $C^\infty$-topology.
\end{proposition}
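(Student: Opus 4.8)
The plan is to reduce the statement, by induction on $\#I$, to a purely point-set lemma about the jet bundles of \Cref{cor:jet-prod}. For $\#I\le 1$ there is nothing to prove. For the inductive step, pick $i_0\in I$ and use the canonical diffeomorphisms $\prod_{i\in I}X_i\cong X_{i_0}\times\prod_{i\ne i_0}X_i$ and $\prod_{i\in I}Y_i\cong Y_{i_0}\times\prod_{i\ne i_0}Y_i$; a diffeomorphism is a proper embedding, so by \Cref{prop:whit-comp} both pre- and post-composition with these diffeomorphisms are homeomorphisms of mapping spaces, and under them the $\#I$-fold product map factors through the $(\#I-1)$-fold one together with the binary product map. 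Hence it suffices to prove continuity of
\[
\Phi\colon C^\infty(X_1,Y_1)\times C^\infty(X_2,Y_2)\longrightarrow C^\infty(X_1\times X_2,\,Y_1\times Y_2)\,,\qquad (F_1,F_2)\longmapsto F_1\times F_2\,.
\]
Since the sets $M(U)$ with $U\subseteq J^r(X_1\times X_2,Y_1\times Y_2)$ open (and $r\ge 0$ arbitrary) form a basis of the Whitney $C^\infty$-topology, it is enough to show that each $\Phi^{-1}(M(U))$ is open.

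The bridge to jet bundles is the identity $j^r(F_1\times F_2)(p_1,p_2)=\mu\bigl(j^rF_1(p_1),\,j^rF_2(p_2)\bigr)$ for all $(p_1,p_2)$, where $\mu\colon J^r(X_1,Y_1)\times J^r(X_2,Y_2)\to J^r(X_1\times X_2,Y_1\times Y_2)$ is the smooth map of \Cref{cor:jet-prod}; I would check this straight from the definition of jets, using that the Taylor expansion of $F_1\times F_2$ at $(p_1,p_2)$ has no mixed-direction terms. It follows that
\[
\Phi^{-1}\bigl(M(U)\bigr)=\bigl\{(F_1,F_2)\bigm|\ j^rF_1(X_1)\times j^rF_2(X_2)\subseteq V\bigr\}\,,\qquad V:=\mu^{-1}(U)\,,
\]
with $V$ open in $Z_1\times Z_2$, where $Z_i:=J^r(X_i,Y_i)$, because $\mu$ is continuous. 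So the whole problem reduces to a tube-type statement: for the closed subsets $A_i:=j^rF_i(X_i)\subseteq Z_i$, if $V\subseteq Z_1\times Z_2$ is open with $A_1\times A_2\subseteq V$, then there are open $U_i\supseteq A_i$ with $U_1\times U_2\subseteq V$ — for then $M(U_1)\times M(U_2)$ is the desired open neighbourhood of $(F_1,F_2)$ inside $\Phi^{-1}(M(U))$.

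I expect this last step to be the main obstacle: it is a tube-lemma phenomenon over the non-compact spaces $X_1,X_2$, so a naive infimum along one factor is hopeless. The leverage I would try to use is that $A_i$ is not an arbitrary closed set: $j^rF_i\colon X_i\to Z_i$ is a closed embedding onto a section of the bundle $Z_i\to X_i\times Y_i$. Fixing a metric of product type on $Z_1\times Z_2$ one obtains a positive function $\rho$ on $X_1\times X_2$ whose $\rho$-ball about $\bigl(j^rF_1(p_1),j^rF_2(p_2)\bigr)$ lies in $V$, and the heart of the matter is to bound $\rho$ from below by a separated product $\rho_1(p_1)\rho_2(p_2)$ with $\rho_i>0$ on $X_i$, after which the tubes $U_i$ are built as $\rho_i$-neighbourhoods of $A_i$. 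Carrying this out seems to require a partition-of-unity argument on each $X_i$ adapted to the fibre-bundle structure of $Z_i$, and organizing it uniformly is where essentially all the work lies; by contrast, in the same-source situation the decomposition $J^r(X,Y_1\times Y_2)\cong J^r(X,Y_1)\times_X J^r(X,Y_2)$ of \Cref{lem:jet-proj} collapses this difficulty entirely.
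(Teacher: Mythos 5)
Your reduction to the binary case and the identity $j^r(F_1\times F_2)=\mu\circ(j^rF_1\times j^rF_2)$, with $\mu$ the map of \Cref{cor:jet-prod}, is exactly the paper's own argument; the paper then simply asserts that a suitable neighbourhood of $\prod_i j^rF_i(X_i)$ exists and stops, so the tube-type statement you isolate is precisely the point on which both arguments stand or fall. Your instinct that all the work is hidden there is correct, but the step cannot be completed: the tube need not exist, and in fact the proposition is false once two of the $X_i$ are non-compact. Take $X_1=X_2=Y_1=Y_2=\mathbb R$, $F_1=F_2=0$, $r=0$, and recall $J^0(\mathbb R^2,\mathbb R^2)\cong\mathbb R^2\times\mathbb R^2$ with $j^0F(x)=(x,F(x))$. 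Put $\mathcal U=\{(x_1,x_2,y_1,y_2)\mid y_1^2+y_2^2<e^{-x_1^2-x_2^2}\}$, an open neighbourhood of the graph of $0\times 0$. Then $G_1\times 0\in M(\mathcal U)$ forces $G_1(x_1)^2<e^{-x_1^2-x_2^2}$ for every $x_2$, hence $G_1\equiv 0$; since $0$ is not isolated in $C^\infty(\mathbb R,\mathbb R)$ (a compactly supported bump $c\chi$ lies in any given Whitney neighbourhood of $0$ once $c$ is small), the preimage of $M(\mathcal U)$ under the product map contains no set of the form $N_1\times N_2$. Conceptually, $F_1\times F_2=(F_1\pi_1,F_2\pi_2)$ is built from precomposition with the non-proper projections $\pi_i:X_1\times X_2\to X_i$, and the properness hypothesis in \Cref{prop:whit-comp} is there for exactly this reason.

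Two remarks on your proposed repair. First, the inequality you aim for is the wrong one: to get $U_1\times U_2\subseteq V$ from balls of radii $\rho_i(p_i)$ about $j^rF_i(p_i)$ you need $\max(\rho_1(p_1),\rho_2(p_2))$ to be at most the distance from $(j^rF_1(p_1),j^rF_2(p_2))$ to the complement of $V$, for all pairs $(p_1,p_2)$ simultaneously — which is exactly the infimum over the other factor that you rightly call hopeless — whereas a multiplicative bound $\rho_1\rho_2\le\rho$ is achievable by a compact exhaustion but useless (one radius large, the other tiny). Second, the statement and the paper's argument are fine when all the $X_i$ are compact, and the place the proposition is actually needed, \Cref{cor:Cinf-cartprod}, survives by a direct argument: there the factors share the source $W$, the diagonal is proper, and the fibre-product decomposition $J^r(W,X\times Y)\cong J^r(W,X)\times_W J^r(W,Y)$ of \Cref{lem:jet-proj} removes the difficulty, exactly as you observe in your last sentence. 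So the right move is not to fill the gap but to add a compactness hypothesis (and to reroute the applications through \Cref{lem:jet-proj}).
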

\begin{proof}
Suppose we have a smooth map $F_i:X_i\to Y_i$ for each $i\in I$.
Then we have a commutative diagram
\[
\xymatrix@C=1em{
  & \prod_{i\in I} X_i \ar[dl]_{\prod j^rF_i} \ar[dr]^{j^r(\prod F_i)} & \\
  \prod_{i\in I} J^r(X_i,Y_i) \ar[rr] && J^r(\prod_{i\in I} X_i, \prod_{i\in I} Y_i) }
\]
for each non-negative integer $r$, where the bottom arrow is smooth by \Cref{cor:jet-prod}.
Thus, for every neighborhood $\mathcal U$ of $j^r(\prod F_i)(\prod X_i)$, one may find an open nighborhood of $\prod j^rF_i(X_i)$ which is mapped into $U$.
This implies that the map \eqref{eq:prop:Cinf-prod-contin} is continuous.
\end{proof}

\begin{corollary}
\label{cor:Cinf-cartprod}
Let $X$, $Y$ and $W$ be three manifolds with corners.
Then, the projections $X\times Y\to X,Y$ induce the homeomorphism
\[
C^\infty(W,X\times Y)
\xrightarrow\simeq C^\infty(W,X)\times C^\infty(W,Y)\,.
\]
\end{corollary}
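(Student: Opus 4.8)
The plan is to present the displayed map as a continuous bijection with continuous inverse, assembling everything from the two functoriality statements \Cref{prop:whit-comp} and \Cref{prop:Cinf-prod-contin}. Write $\pi_X\colon X\times Y\to X$ and $\pi_Y\colon X\times Y\to Y$ for the projections. First I would observe that, since $\pi_X$ and $\pi_Y$ are smooth, the first part of \Cref{prop:whit-comp} makes the post-composition maps $(\pi_X)_\ast$ and $(\pi_Y)_\ast$ continuous, hence so is
\[
\Phi\colon C^\infty(W,X\times Y)\longrightarrow C^\infty(W,X)\times C^\infty(W,Y),\qquad G\longmapsto(\pi_X G,\pi_Y G)\,.
\]
On underlying sets $\Phi$ is a bijection because $X\times Y$ is the categorical product in $\mathbf{Mfd}$: a smooth map into $X\times Y$ is exactly a pair of smooth maps into $X$ and into $Y$.

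Next I would build the inverse $\Psi$ of $\Phi$ as a composite of continuous maps. For $(F_1,F_2)\in C^\infty(W,X)\times C^\infty(W,Y)$, the smooth map $w\mapsto(F_1(w),F_2(w))$ equals $(F_1\times F_2)\circ\Delta_W$, where $\Delta_W\colon W\to W\times W$ is the diagonal; so $\Psi=\Delta_W^\ast\circ\mu$, with
\[
\mu\colon C^\infty(W,X)\times C^\infty(W,Y)\longrightarrow C^\infty(W\times W,X\times Y),\qquad (F_1,F_2)\longmapsto F_1\times F_2\,.
\]
The map $\mu$ is continuous by \Cref{prop:Cinf-prod-contin} (index set $\{1,2\}$, with both sources equal to $W$). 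To see that $\Delta_W^\ast$ is continuous it suffices, by the second part of \Cref{prop:whit-comp}, that $\Delta_W$ be proper; and since $W$ is Hausdorff, $\Delta_W$ is a closed topological embedding, so the preimage of a compact $K\subset W\times W$ maps homeomorphically onto the closed, hence compact, subset $K\cap\Delta_W(W)$ of $K$. Thus $\Psi$ is continuous.

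Finally I would check that $\Psi$ is a genuine two-sided inverse of $\Phi$ — immediate from $\pi_X\circ\bigl((F_1\times F_2)\circ\Delta_W\bigr)=F_1$, $\pi_Y\circ\bigl((F_1\times F_2)\circ\Delta_W\bigr)=F_2$, and, conversely, $(G_1\times G_2)\circ\Delta_W=G$ for $G=(G_1,G_2)$ — which upgrades $\Phi$ from a continuous bijection to a homeomorphism. I do not anticipate a real obstacle; the only place demanding any attention is the properness of $\Delta_W$, which is exactly where the standing Hausdorff hypothesis on manifolds with corners enters, together with the small bookkeeping that the composite $\Delta_W^\ast\circ\mu$ is the inverse of $\Phi$ and not merely a one-sided section.
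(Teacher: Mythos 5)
Your proof is correct and follows essentially the same route as the paper: the forward map is continuous by postcomposition functoriality, and the inverse is the composite of the product map (continuous by \Cref{prop:Cinf-prod-contin}) with precomposition along the diagonal $\Delta_W$. Your explicit check that $\Delta_W$ is proper (via Hausdorffness) is a detail the paper leaves implicit, but otherwise the two arguments coincide.
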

\begin{proof}
The map is continuous thanks to \ref{subprop:whit-comp:pre} in \Cref{prop:whit-comp}.
Its inverse is clearly given by the composition
\[
C^\infty(W,X)\times C^\infty(W,Y)
\xrightarrow{\times} C^\infty(W\times W,X\times Y)
\xrightarrow{\Delta_W} C^\infty(W,X\times Y)\,,
\]
where $\Delta_W:W\to W\times W$ is the diagonal map.
The first map is continuous by \Cref{prop:Cinf-prod-contin} while so is the second by \ref{subprop:whit-comp:post} in \Cref{prop:whit-comp}.
Hence, we obtain the result.
\end{proof}

\section{Multi-relative settings}
\label{sec:mult-relative}

In this section, we will make clear our relative setting.
This setting provides us a more precise observation for submanifolds than when we naively consider pairs $(M,L)$ of manifolds and submanifolds.

\subsection{Arrangements of submanifolds}
\label{sec:conf-submfd}

We first fix some elementary notations.
We denote by $\mathbf{Emb}$ the category of (smooth) manifolds with corners and smooth embeddings.

\begin{definition}
A pre-arrangement of manifolds is a pair $(S,\mathcal X)$ of a finite poset $S$ with maximum and a functor $\mathcal X:S\to\mathbf{Emb}$.
By abuse of notation, we often denote $(S,\mathcal X)$ by $\mathcal X$ simply.
We call $S$ the shape of $\mathcal X$.
We will write
\[
|\mathcal X| := \mathcal X(\max S)\,,
\]
and call it the ambient manifold of $\mathcal X$.
A pre-arrangement $\mathcal X$ is said to be closed if its image consists of closed embeddings; i.e. for each $s\le t\in S$, the embedding $\mathcal X(s)\to\mathcal X(t)$ is a closed embedding.
\end{definition}

If $\mathcal X$ is a pre-arrangement of manifolds, the functor $\mathcal X:S\to\mathbf{Emb}$ factors through $\mathcal X:S\to\mathbf{Emb}/|\mathcal X|$, the slice category over the ambient manifold.
Note that we can naturally see $\mathbf{Emb}/|\mathcal X|$ as the partially ordered set of submanifolds of the ambient manifold.
In this point of view, a pre-arrangement $\mathcal X$ can be regard as a $S$-indexed family $\{\mathcal X(s)\}_{s\in S}$ of (embedded) submanifolds of $X=|\mathcal X|$ such that $\mathcal X(1)=X$ and $\mathcal X(s)\subset\mathcal X(t)$ whenever $s\le t$ in $S$.

We can also consider relative maps:

\begin{definition}
For pre-arrangements $\mathcal X,\mathcal Y$ of manifolds of the same shape $S$, an $S$-map $\mathcal X\to\mathcal Y$ is a smooth map $|\mathcal X|\to |\mathcal Y|$ between the ambient manifolds which restricts to $\mathcal X(s)\to \mathcal Y(s)$ for each $s\in S$.
We denote by $C^\infty(\mathcal X,\mathcal Y)$ the subset of $C^\infty(|\mathcal X|,|\mathcal Y|)$ consisting of $S$-maps.
\end{definition}

\begin{example}
Take $S=\mathbf{pt}$ to be a one point set.
In this case, a pre-arrangement of shape $\mathbf{pt}$ is nothing but a manifold $X$ with corners, and a $\mathbf{pt}$-map is nothing but a smooth map.
\end{example}

\begin{example}
\label{ex:linear-arr}
Take $S=[n] := \{0\le 1\le 2\le\dots\le n\}$.
Then a pre-arrangement of shape $[n]$ can be seen as a sequence of submanifolds
\[
X_0\hookrightarrow X_2\hookrightarrow\dots\hookrightarrow X_n\,.
\]
If $\mathcal X,\mathcal Y:[n]\to\mathbf{Emb}$ are pre-arrangement, in the classical notation, we have
\[
C^\infty(\mathcal X,\mathcal Y) = \bigcap_{i=0}^{n-1} C^\infty(X_n,X_i;Y_n,Y_i)\subset C^\infty(X,Y)\,.
\]
The case $S=[1]$ is discussed in \cite{Ish98}.
\end{example}

\begin{example}
Let $S$ be a finite poset with maximum, and let $X$ be a manifold with corners.
Then there is a pre-arrangement of shape $S$ consisting only of $X$, which we also denote by $X$.
For any pre-arrangement $\mathcal W$ of shape $S$, we have
\[
C^\infty(\mathcal W, X) = C^\infty(|\mathcal W|,X)\,.
\]
\end{example}

\begin{example}
\label{ex:arr:restrict}
Let $\mathcal X$ be a pre-arrangement of shape $S$, and suppose we have a order-preserving map $\mu:T\to S$ from a poset $T$ with maximum.
Then we can define a pre-arrangement of shape $T$ by
\[
T \xrightarrow{\mu} S \xrightarrow{\mathcal X} \mathbf{Emb}\,,
\]
which we will denote by $\mathcal X_\mu$.
\end{example}

\begin{example}
\label{ex:arr:product}
Let $\mathcal X$ and $\mathcal Y$ be two pre-arrangements of manifolds of shape $S$ and $T$ respectively.
Then we can define their product arrangement $\mathcal X\times\mathcal Y$ to be a pre-arrangement of shape $S\times T$ as
\[
(\mathcal X\times\mathcal Y)(s,t) := \mathcal X(s)\times\mathcal Y(t)\,.
\]
\end{example}

We now have a notion which enables us to describe relative situations.
This notion, however, might cause some problematic situation; for example, two submanifolds $\mathcal X(s)$ and $\mathcal X(t)$ in a pre-arrangement might intersect with one another in a very complicated subset of $|\mathcal X|$.
To avoid these situations, we also need to control intersections of members of pre-arrangements, which can be done in very simple way as follows:

\begin{definition}
A pre-arrangement $\mathcal X$ is called an arrangement if its shape $S$ is a lattice, and the functor $\mathcal X:S\to\mathbf{Emb}$ preserves meets (or pullbacks); i.e. for each $s,t\in S$, we have
\begin{equation}
\label{eq:cond-complete}
\mathcal X(s\wedge t) = \mathcal X(s)\cap\mathcal X(t) \subset |\mathcal X|\,.
\end{equation}
\end{definition}

Note that the equation \eqref{eq:cond-complete} holds if and only if the square
\[
\xymatrix{
  \mathcal X(s\wedge t) \ar[r] \ar[d] \ar@{}[dr]|(.4){\pbcorner} & \mathcal X(s) \ar[d] \\
  \mathcal X(t) \ar[r] & |\mathcal X| }
\]
is a pullback square in the category $\mathbf{Emb}$.

As is often the case, it is a tedious work to describe arrangements completely.
Actually, we can obtain it as a completion of some sort of pre-arrangements.
For a finite poset $S$ with the maximum element, we denote by $S^\vee$ the set of non-empty upper subsets of $S$.
The set $S^\vee$ is ordered by the inclusions, and it is easily checked that $S^\vee$ is a finite lattice.
Moreover, there is a canonical embedding $S\hookrightarrow S^\vee$ of posets.

\begin{definition}
Let $\mathcal X:S\to\mathbf{Emb}$ be a pre-arrangement of manifolds.
Then the right Kan extension $\mathcal X^\vee:S^\vee\to\mathbf{Emb}$ of $\mathcal X$ along $S\hookrightarrow S^\vee$ is, if exists, called the completion of $\mathcal X$.
\end{definition}

The following is an obvious result:

\begin{lemma}
Suppose we have a completion $\mathcal X^\vee$ of a pre-arrangement $\mathcal X$ of manifolds.
Then $\mathcal X^\vee$ is an arrangement of shape $S^\vee$, and we have $|\mathcal X^\vee|=|\mathcal X|$.
Moreover, if we denote by $\iota:S\hookrightarrow S^\vee$ the embedding, we have a canonical isomorphism $\eta^\ast\mathcal X^\vee \simeq \mathcal X$.
\end{lemma}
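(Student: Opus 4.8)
The plan is to combine the pointwise formula for right Kan extensions with the fact that the inclusion $\iota\colon S\hookrightarrow S^\vee$ is fully faithful. Write $\iota(s)=\{t\in S\mid t\ge s\}$ for the principal up-set; then $S^\vee$ is ordered by reverse inclusion, so that $\iota$ is an order embedding and $\iota(\max S)=\max S^\vee$, the meet in $S^\vee$ is $W\wedge W'=W\cup W'$, and for an up-set $W$ there is a (necessarily unique) morphism $W\to\iota(s)$ in $S^\vee$ precisely when $s\in W$.

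First I would pin down $\mathcal X^\vee$ pointwise. Being the right Kan extension $\operatorname{Ran}_\iota\mathcal X$, for each $W\in S^\vee$ the manifold $\mathcal X^\vee(W)$ is the limit in $\mathbf{Emb}$ of the composite of $\mathcal X$ with the projection $W\downarrow\iota\to S$ from the comma category. By the remark above, $W\downarrow\iota$ is canonically the full sub-poset $W\subseteq S$, so $\mathcal X^\vee(W)\cong\varprojlim_{s\in W}\mathcal X(s)$. Taking $W=\iota(s)=\{t\mid t\ge s\}$, the indexing poset has $s$ as its least element, and a limit over a diagram indexed by a poset with a least element equals the value of the diagram there; hence $\mathcal X^\vee(\iota(s))\cong\mathcal X(s)$, naturally in $s$, which is the canonical isomorphism $\iota^\ast\mathcal X^\vee\cong\mathcal X$ of the statement. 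Specializing to $s=\max S$ gives $|\mathcal X^\vee|=\mathcal X^\vee(\max S^\vee)\cong\mathcal X(\max S)=|\mathcal X|$.

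It then remains to check that $\mathcal X^\vee$ preserves meets ($S^\vee$ being already known to be a finite lattice), i.e. that for $W,W'\in S^\vee$ the square
\[
\xymatrix{
  \mathcal X^\vee(W\cup W') \ar[r] \ar[d] & \mathcal X^\vee(W) \ar[d] \\
  \mathcal X^\vee(W') \ar[r] & |\mathcal X^\vee|
}
\]
is a pullback in $\mathbf{Emb}$. The point is that $W$ and $W'$ are up-sets: if $s\le s'$ and $s\in W$ then $s'\in W$, and similarly for $W'$, so no order relation of $S$ runs between $W\setminus W'$ and $W'\setminus W$. Consequently a cone on $\mathcal X|_{W\cup W'}$ is exactly a cone on $\mathcal X|_W$ together with a cone on $\mathcal X|_{W'}$ agreeing on $W\cap W'$; and since $\max S\in W\cap W'$ and every structure map $\mathcal X(s)\to\mathcal X(\max S)=|\mathcal X|$ is a monomorphism, such agreement is forced as soon as the two legs into $|\mathcal X|$ coincide. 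This is precisely the universal property of the fibre product $\mathcal X^\vee(W)\times_{|\mathcal X^\vee|}\mathcal X^\vee(W')$; so $\varprojlim_{s\in W\cup W'}\mathcal X(s)=\mathcal X^\vee(W\cup W')$ carries that universal property, the fibre product exists in $\mathbf{Emb}$, and the square is a pullback. (One routine verification is that the induced map into $\mathcal X^\vee(W\cup W')$ is a morphism of $\mathbf{Emb}$, i.e. an embedding; this holds because its composite with the embedding $\mathcal X^\vee(W\cup W')\hookrightarrow|\mathcal X^\vee|$ is one.)

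The main obstacle is this last step, and within it the two observations that (a) up-set-ness removes the ``cross'' order relations, so that cones over $\mathcal X|_W$ and $\mathcal X|_{W'}$ amalgamate into a cone over $\mathcal X|_{W\cup W'}$, and (b) this amalgamation takes place over the ambient manifold $|\mathcal X^\vee|$ rather than over $\mathcal X^\vee(W\cap W')$, because the embeddings $\mathcal X(s)\hookrightarrow|\mathcal X|$ are monic. Everything else is formal once the comma categories $W\downarrow\iota$ have been identified with the sub-posets $W\subseteq S$.
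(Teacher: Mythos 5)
Your proof is correct, and it fills in an argument the paper does not supply at all: the paper simply declares this lemma ``an obvious result'' and gives no proof. Your identification of the comma category $W\downarrow\iota$ with the sub-poset $W\subseteq S$, the evaluation $\mathcal X^\vee(\iota(s))\cong\varprojlim_{t\ge s}\mathcal X(t)\cong\mathcal X(s)$ via the least element, and the decomposition of a cone over $\mathcal X|_{W\cup W'}$ into a pair of cones agreeing over $|\mathcal X|$ (using that up-sets admit no cross relations and that the structure maps into $\mathcal X(\max S)$ are monomorphisms) are all sound, and they reproduce exactly the description $\mathcal X^\vee(F)=\bigcap_{s\in F}\mathcal X(s)$ that the paper records immediately after the lemma. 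The only point worth making explicit is that you invoke the \emph{pointwise} limit formula for $\operatorname{Ran}_\iota\mathcal X$, whereas the lemma's hypothesis is merely that the right Kan extension exists; in general a Kan extension need not be pointwise, and for a non-pointwise one the counit $\iota^\ast\operatorname{Ran}_\iota\mathcal X\to\mathcal X$ need not be invertible even though $\iota$ is fully faithful. Since the paper itself defines the completion by this pointwise formula (and characterizes its existence by whether the intersections $\bigcap_{s\in F}\mathcal X(s)$ are submanifolds), your reading matches the intended convention, but a one-line remark that ``completion'' means the pointwise Kan extension would make the argument airtight.
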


Not all pre-arrangements have completions because the category $\mathbf{Emb}$ does not have all limits.
However, if the completion exists, we can describe it concretely as follows:
Let $\mathcal X$ be a pre-arrangement of shape $S$, and suppose we have the completion $\mathcal X^\vee$.
Then for each $F\in S^\vee$, we have
\begin{equation}
\label{eq:completion-arrangement}
\mathcal X^\vee(F) = \bigcap_{s\in F} \mathcal X(s)\,.
\end{equation}
This observation tells us that a pre-arrangement $\mathcal X$ has the completion if and only if the right-hand side of \eqref{eq:completion-arrangement} is again a submanifold of $|\mathcal X|$.

\subsection{Whitney $C^\infty$-topology on the space of relative maps}
\label{sec:baire-relative}

If $\mathcal X$ and $\mathcal Y$ are pre-arrangements of shape $S$, then the set $C^\infty(\mathcal X,\mathcal Y)$ of maps of arrangements is a subset of $C^\infty(|\mathcal X|,|\mathcal Y|)$ so that we can topologize $C^\infty(\mathcal X,\mathcal Y)$ with the subspace topology.
More precisely, we have a map
\[
C^\infty(\mathcal X,\mathcal Y)
\hookrightarrow C^\infty(|\mathcal X|,|\mathcal Y|)
\xrightarrow{j^r} C^\infty(|\mathcal X|,J^r(|\mathcal X|,|\mathcal Y|))\,,
\]
which we still denote by $j^r$.
For each subset $A\subset J^r(|\mathcal X|,|\mathcal Y|)$, by abuse of notation, we will also write
\[
M(A) := \{f\in C^\infty(\mathcal X,\mathcal Y)\mid j^r(|\mathcal X|)\subset A\}\,.
\]
Then the Whitney $C^\infty$-topology on $C^\infty(\mathcal X,\mathcal Y)$ is generated by subsets $M(U)\subset C^\infty(\mathcal X,\mathcal Y)$ for open subsets $U\subset J^r(|\mathcal X|,|\mathcal Y|)$ and for $r\in\mathbb N$.

\begin{lemma}
\label{lem:arr-whit-submfd}
Let $\mathcal X$ and $\mathcal Y$ be closed pre-arrangements of manifolds of shape $S$.
Then, for each $s\in S$, the restriction map
\[
\mathrm{res}_s:C^\infty(\mathcal X,\mathcal Y)
\to C^\infty(\mathcal X(s),\mathcal Y(s))
\]
is continuous (with respect to the Whitney $C^\infty$-topology).
\end{lemma}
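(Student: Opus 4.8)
The statement is that for closed pre-arrangements $\mathcal X,\mathcal Y$ of shape $S$ and each $s\in S$, the restriction $\mathrm{res}_s:C^\infty(\mathcal X,\mathcal Y)\to C^\infty(\mathcal X(s),\mathcal Y(s))$ is continuous. The idea is to factor $\mathrm{res}_s$ through the ambient restriction map and invoke \Cref{prop:whit-comp}. Concretely, $\mathrm{res}_s$ fits into a commutative square
\[
\xymatrix{
  C^\infty(\mathcal X,\mathcal Y) \ar[r]^-{\mathrm{res}_s} \ar@{^{(}->}[d] & C^\infty(\mathcal X(s),\mathcal Y(s)) \ar@{^{(}->}[d] \\
  C^\infty(|\mathcal X|,|\mathcal Y|) \ar[r] & C^\infty(\mathcal X(s),|\mathcal Y|)
}
\]
where the vertical maps are the subspace inclusions and the bottom horizontal map is precomposition with the embedding $\iota_s:\mathcal X(s)\hookrightarrow|\mathcal X|$ followed by postcomposition with the embedding $j_s:\mathcal Y(s)\hookrightarrow|\mathcal Y|$ — here I use that for an $S$-map $F$, the restricted map $F|_{\mathcal X(s)}:\mathcal X(s)\to\mathcal Y(s)$ composed with $j_s$ equals $F\circ\iota_s$, so the square genuinely commutes.

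\textbf{The key steps.} First I would set up the above square carefully, checking that the right vertical inclusion $C^\infty(\mathcal X(s),\mathcal Y(s))\hookrightarrow C^\infty(\mathcal X(s),|\mathcal Y|)$ is a topological embedding: this is exactly \ref{subprop:whit-comp:post} of \Cref{prop:whit-comp} applied to the embedding $j_s:\mathcal Y(s)\to|\mathcal Y|$, which is a closed embedding by the hypothesis that $\mathcal Y$ is closed. Second, I would argue that the bottom map $C^\infty(|\mathcal X|,|\mathcal Y|)\to C^\infty(\mathcal X(s),|\mathcal Y|)$ is continuous: it is the composite of $(\iota_s)^\ast$ and $(j_s)_\ast$; the latter is continuous by \ref{subprop:whit-comp:post}, and the former is continuous by \ref{subprop:whit-comp:pre} of \Cref{prop:whit-comp}, since $\iota_s:\mathcal X(s)\hookrightarrow|\mathcal X|$ is a closed embedding (again using closedness of $\mathcal X$) and closed embeddings are proper maps. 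Third, I combine these: the composite $C^\infty(\mathcal X,\mathcal Y)\hookrightarrow C^\infty(|\mathcal X|,|\mathcal Y|)\to C^\infty(\mathcal X(s),|\mathcal Y|)$ is continuous as a composite of continuous maps (the first being continuous because $C^\infty(\mathcal X,\mathcal Y)$ carries the subspace topology), and it factors through the topological embedding $C^\infty(\mathcal X(s),\mathcal Y(s))\hookrightarrow C^\infty(\mathcal X(s),|\mathcal Y|)$; a map into a subspace whose composition with the inclusion is continuous is itself continuous, so $\mathrm{res}_s$ is continuous.

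\textbf{The main obstacle.} The only real subtlety is the use of properness of $\iota_s$ in invoking \ref{subprop:whit-comp:pre}, and this is precisely where the closedness hypothesis on $\mathcal X$ is needed: a closed embedding of manifolds with corners is a proper map (the preimage of a compact set is a closed subset of a compact set), whereas an arbitrary embedding need not be proper and precomposition with it need not be continuous for the Whitney $C^\infty$-topology. I would state this properness observation explicitly as the one nontrivial point. Everything else is a formal diagram chase using the two parts of \Cref{prop:whit-comp} together with the fact that $C^\infty(\mathcal X,\mathcal Y)$ and $C^\infty(\mathcal X(s),\mathcal Y(s))$ are subspaces of the respective ambient mapping spaces. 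I do not expect any difficulty beyond bookkeeping once the square is drawn.
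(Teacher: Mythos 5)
Your proof is correct, but it takes a genuinely different route from the paper's. The paper argues directly at the level of the subbasic open sets: for each open $U'_s\subset J^r(\mathcal X(s),\mathcal Y(s))$ it pulls $U'_s$ along the fiberwise embedding of jet bundles $J^r(\mathcal X(s),\mathcal Y(s))\to J^r(|\mathcal X|,|\mathcal Y|)|_{\mathcal X(s)\times\mathcal Y(s)}$ and then glues on the whole jet bundle over the complement of $\mathcal X(s)\times\mathcal Y(s)$, obtaining an open $U\subset J^r(|\mathcal X|,|\mathcal Y|)$ with $\mathrm{res}_s^{-1}(M(U'_s))=M(U)$; closedness of both arrangements is what makes this union open. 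You instead factor $\mathrm{res}_s$ through the ambient mapping spaces and delegate everything to the two parts of \Cref{prop:whit-comp}, with the one substantive observation being that a closed embedding is proper, so $\iota_s^\ast$ is continuous. Your argument is shorter and more formal; it also reveals that only the closedness of $\mathcal X$ is actually needed (the ``moreover'' clause of \ref{subprop:whit-comp:post} in \Cref{prop:whit-comp} applies to any embedding $j_s$, closed or not), so your citation of closedness of $\mathcal Y$ for the right vertical arrow is superfluous but harmless. What the paper's approach buys in exchange is the slightly stronger, reusable fact that $\mathrm{res}_s^{-1}$ of a subbasic open set is again a subbasic open set $M(U)$, rather than merely an open set. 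One small bookkeeping slip in your write-up: the bottom arrow of your square, with target $C^\infty(\mathcal X(s),|\mathcal Y|)$, is just $(\iota_s)^\ast$ — the postcomposition $(j_s)_\ast$ belongs to the right vertical arrow, not to the bottom one — but this does not affect the validity of the argument.
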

\begin{proof}
To prove the result, it suffices to show that for every open subset $U'_s\subset J^r(\mathcal X(s),\mathcal Y(s))$, we can choose an open subset $U\subset J^r(|\mathcal X|,|\mathcal Y|)$ such that
\[
\mathrm{res}_s^{-1}(M(U'_s)) = M(U)\,.
\]
First notice that, for each $r\ge 0$, we have the following commutative square:
\[
\xymatrix{
  J^r(\mathcal X(s),\mathcal Y(s)) \ar[r] \ar@{->>}[d] & J^r(X,Y) \ar@{->>}[d] \\
  \mathcal X(s)\times\mathcal Y(s) \ar[r] & |\mathcal X|\times |\mathcal Y| }
\]
Hence, we have an embedding $\lambda:J^r(\mathcal X(s),\mathcal Y(s))\to J^r(|\mathcal X|,|\mathcal Y|)|_{\mathcal X(s)\times\mathcal Y(s)}$ of fiber bundles over $\mathcal X(s)\times \mathcal Y(s)$.
It follows that we can choose an open subset $U_s\subset J^r(|\mathcal X|,|\mathcal Y|)|_{\mathcal X(s)\times\mathcal Y(s)}$ such that $U'_s = \lambda^{-1}(U_s)$.
We put
\[
U = U_s\cup J^r(|\mathcal X|,|\mathcal Y|)|_{(|\mathcal X|\times |\mathcal Y|)\setminus(\mathcal X(s)\times\mathcal Y(s))}
\]
which is an open subset of $J^r(|\mathcal X|,|\mathcal Y|)$ since $\mathcal X$ and $\mathcal Y$ are closed arrangements.
Then, for $f\in C^\infty(\mathcal X,\mathcal Y)$, we have
\[
\begin{split}
j^r(f)(|\mathcal X|) \subset U
&\iff \forall p\in |\mathcal X|: j^r(f)(p) \in U \\
&\iff \forall p\in \mathcal X(s): j^r(f)(p) \in U_s \\
&\iff j^r(f)(\mathcal X(s))\subset U_s\,.
\end{split}
\]
Moreover, since $f(\mathcal X(s))\subset \mathcal Y(s)$, we always identify $j^r(f)(\mathcal X(s))$ with $j^r(\mathrm{res}_s f)(\mathcal X(s))\subset J^r(\mathcal X(s),\mathcal Y(s))$ via the inclusion $J^r(\mathcal X(s),\mathcal Y(s))\to J^r(|\mathcal X|,|\mathcal Y|)$.
These imply that we have
\[
M(U) = \mathrm{res}_s^{-1}(M(U'_s))\,,
\]
which is the required result.
\end{proof}

\begin{corollary}
\label{cor:arr-res}
Let $\mathcal X$ and $\mathcal Y$ be closed pre-arrangements of manifolds of shape $S$.
Suppose we have an order-preserving map $\mu:T\to S$ from a poset $T$ with maximum.
Then the canonical map
\[
\mathrm{res}_\mu:C^\infty(\mathcal X,\mathcal Y) \to C^\infty(\mathcal X_\mu,\mathcal Y_\mu)
\]
is continuous (see \Cref{ex:arr:restrict}).
\end{corollary}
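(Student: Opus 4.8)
The plan is to factor $\mathrm{res}_\mu$ through a single restriction map of the kind already handled in \Cref{lem:arr-whit-submfd}. Write $\top$ for the maximum of $T$ and set $s_0:=\mu(\top)\in S$. By definition the ambient manifold of $\mathcal X_\mu$ is $\mathcal X_\mu(\top)=\mathcal X(s_0)$, and likewise $|\mathcal Y_\mu|=\mathcal Y(s_0)$; hence $C^\infty(\mathcal X_\mu,\mathcal Y_\mu)$ is, by construction, a subset of $C^\infty(\mathcal X(s_0),\mathcal Y(s_0))$ carrying the subspace topology.

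First I would observe that $\mathrm{res}_\mu$ is obtained from $\mathrm{res}_{s_0}\colon C^\infty(\mathcal X,\mathcal Y)\to C^\infty(\mathcal X(s_0),\mathcal Y(s_0))$ merely by restricting the codomain. Indeed, given $F\in C^\infty(\mathcal X,\mathcal Y)$ and any $t\in T$, order-preservation gives $\mu(t)\le s_0$, so the restriction of $\mathrm{res}_{s_0}(F)=F|_{\mathcal X(s_0)}$ to $\mathcal X_\mu(t)=\mathcal X(\mu(t))$ is $F|_{\mathcal X(\mu(t))}\colon\mathcal X(\mu(t))\to\mathcal Y(\mu(t))=\mathcal Y_\mu(t)$. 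Thus $\mathrm{res}_{s_0}(F)$ is a $T$-map $\mathcal X_\mu\to\mathcal Y_\mu$, and it agrees with $\mathrm{res}_\mu(F)$ on the nose; so the composite
\[
C^\infty(\mathcal X,\mathcal Y)\xrightarrow{\ \mathrm{res}_\mu\ } C^\infty(\mathcal X_\mu,\mathcal Y_\mu)\hookrightarrow C^\infty(\mathcal X(s_0),\mathcal Y(s_0))
\]
is exactly $\mathrm{res}_{s_0}$.

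Since $C^\infty(\mathcal X_\mu,\mathcal Y_\mu)$ has the subspace topology from $C^\infty(\mathcal X(s_0),\mathcal Y(s_0))$, continuity of $\mathrm{res}_\mu$ is equivalent to continuity of that composite, i.e.\ of $\mathrm{res}_{s_0}$. As $\mathcal X$ and $\mathcal Y$ are closed pre-arrangements, \Cref{lem:arr-whit-submfd} applies with the single index $s_0\in S$ and yields precisely the continuity of $\mathrm{res}_{s_0}$, which completes the proof.

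I do not expect a genuine obstacle here: the content is entirely bookkeeping — identifying $|\mathcal X_\mu|=\mathcal X(\mu(\top))$, and checking that the image of $\mathrm{res}_{s_0}$ really lands in the subset of $T$-maps — both immediate from the definitions of \Cref{sec:conf-submfd}. The only substantive input is the closedness hypothesis, which is needed exactly so that \Cref{lem:arr-whit-submfd} can be invoked.
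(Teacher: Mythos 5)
Your proof is correct and follows the same route as the paper: identify $|\mathcal X_\mu|=\mathcal X(\mu(\max T))$, note that $C^\infty(\mathcal X_\mu,\mathcal Y_\mu)$ carries the subspace topology of $C^\infty(\mathcal X(\mu(\max T)),\mathcal Y(\mu(\max T)))$, and reduce to the continuity of $\mathrm{res}_{\mu(\max T)}$ from \Cref{lem:arr-whit-submfd}. Your extra check that the image of $\mathrm{res}_{\mu(\max T)}$ lands among $T$-maps is the only detail the paper leaves implicit.
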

\begin{proof}
Let us denote by $t\in S$ the image of the maximum element of $T$ under $\mu:T\to S$.
Since the map
\[
C^\infty(\mathcal X_\mu,\mathcal Y_\mu)
\hookrightarrow C^\infty(|\mathcal X_\mu|,|\mathcal Y_\mu|)
= C^\infty(\mathcal X(t),\mathcal Y(t))
\]
is a topological embedding, to see $\mathrm{res}_\mu$ is continuous, it suffices to show that
\[
C^\infty(\mathcal X,\mathcal Y)
\to C^\infty(\mathcal X(t),\mathcal Y(t))
\]
is continous, which is already proved in \Cref{lem:arr-whit-submfd}.
\end{proof}

\begin{corollary}
\label{cor:submfd-jet}
Let $\mathcal X$ and $Y$ be closed pre-arrangements of manifolds of shape $S$.
Then for each $s\in S$ and $r\ge 0$, the map
\[
C^\infty(\mathcal X,\mathcal Y)\ni f
\mapsto j^r(f|_{\mathcal X(s)}) \in C^\infty(\mathcal X(s),J^r(\mathcal X(s),\mathcal Y(s)))
\]
is continuous.
\end{corollary}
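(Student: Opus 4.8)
The strategy is to exhibit the map in question as a composite of two continuous maps. For $f\in C^\infty(\mathcal X,\mathcal Y)$, the restriction $f|_{\mathcal X(s)}$ is by definition the image $\mathrm{res}_s(f)\in C^\infty(\mathcal X(s),\mathcal Y(s))$, and since $\mathcal X$ and $\mathcal Y$ are closed, \Cref{lem:arr-whit-submfd} tells us that $\mathrm{res}_s$ is continuous. The map we must study is then $j^r\circ\mathrm{res}_s$, where
\[
j^r:C^\infty(\mathcal X(s),\mathcal Y(s))\to C^\infty(\mathcal X(s),J^r(\mathcal X(s),\mathcal Y(s)))
\]
is the ordinary jet prolongation between the (corner) manifolds $\mathcal X(s)$ and $\mathcal Y(s)$; it is well-defined by \Cref{lem:jet-smooth} with $B$ a point. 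So everything reduces to the continuity of this classical $j^r$.

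For the latter I would argue straight from the definition of the Whitney $C^\infty$-topology. Write $Z=\mathcal X(s)$ and $W=\mathcal Y(s)$. First, for an open $V\subset J^r(Z,W)$, the set $\{G\in C^\infty(Z,J^r(Z,W))\mid G(Z)\subset V\}$ is the basic open set $M(Z\times V)$, and $(j^r)^{-1}$ of it is exactly $M(V)$, a basic open set of the Whitney $C^r$-topology on $C^\infty(Z,W)$. More generally, for each $l\ge 1$ there is a natural smooth map $\pi^{r,l}:J^{r+l}(Z,W)\to J^l(Z,J^r(Z,W))$ — arising from the observation that the $l$-jet at a point of the section $q\mapsto j^r g(q)$ is a smooth function of $j^{r+l}g$ at that point — such that $j^l(j^r f)=\pi^{r,l}\circ j^{r+l}f$ for all $f$. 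Hence for any open $V'\subset J^l(Z,J^r(Z,W))$ one gets $(j^r)^{-1}(M(V'))=M((\pi^{r,l})^{-1}(V'))$, which is open in the Whitney $C^{r+l}$-topology; as such sets $M(V')$ generate the Whitney $C^\infty$-topology on the target, $j^r$ is continuous. (This is in essence the content alluded to in \Cref{sec:whitney-top}, and is also recorded in \cite{GG73}.)

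Combining the two halves yields the corollary: the map $f\mapsto j^r(f|_{\mathcal X(s)})$ equals $j^r\circ\mathrm{res}_s$, a composite of continuous maps. I do not anticipate a genuine obstacle. The only mildly delicate point is the construction of $\pi^{r,l}$ together with the identity $j^l\circ j^r=\pi^{r,l}\circ j^{r+l}$; this is routine bookkeeping of partial derivatives performed after trivializing the jet bundles via \Cref{lem:jet-germ}, and \Cref{lem:jet-germ} also guarantees that the presence of corners changes nothing, since jet spaces at corner points are canonically identified with those at interior points of the relevant dimension.
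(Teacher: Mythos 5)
Your proof is correct and follows the same route as the paper: factor the map as $j^r\circ\mathrm{res}_s$, invoke \Cref{lem:arr-whit-submfd} for the continuity of $\mathrm{res}_s$, and use the continuity of the classical jet prolongation $j^r$ on $C^\infty(\mathcal X(s),\mathcal Y(s))$. The only difference is that the paper simply cites Proposition II.3.4 of \cite{GG73} for the latter fact, whereas you sketch its standard proof via the maps $\pi^{r,l}$; that sketch is accurate.
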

\begin{proof}
The result follows from \Cref{lem:arr-whit-submfd} and the fact that the map
\[
j^r:C^\infty(\mathcal X(s),\mathcal Y(s))\to C^\infty(\mathcal X(s),J^r(\mathcal X(s),\mathcal Y(s)))
\]
is continuous (e.g. see Proposition II.3.4 in \cite{GG73}).
\end{proof}

In the later sections, we will discuss density of smooth functions.
To do this, the following notion is very essential:

\begin{definition}
Let $X$ be an arbitrary topological space.
\begin{enumerate}[label={\rm(\arabic*)}]
  \item A subset $T\subset X$ is called residual if $T$ is written as a countable intersection of open dense subsets of $X$.
  \item $X$ is called a Baire space if every residual subset is dense in $X$.
\end{enumerate}
\end{definition}

Similarly to the case of the space of usual $C^\infty$-maps (e.g. see \cite{GG73}), we have the following result.

\begin{proposition}
\label{prop:Cinf-baire}
Let $\mathcal X$ and $\mathcal Y$ be closed pre-arrangements of manifolds of shape $S$.
Then the space $C^\infty(\mathcal X,\mathcal Y)$ is a Baire space.
\end{proposition}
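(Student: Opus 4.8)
The plan is to run, inside the subspace $C^\infty(\mathcal X,\mathcal Y)\subset C^\infty(|\mathcal X|,|\mathcal Y|)$, the same argument that shows the space of smooth maps is Baire (Section~II-$\S3$ of \cite{GG73}). The only genuinely new input is that the completeness property of part~\ref{sublem:whit-cauchy} of \Cref{lem:whitney-metric} is inherited by $C^\infty(\mathcal X,\mathcal Y)$, and this is exactly where closedness of the arrangements is used.

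I would first collect the necessary structure by restriction. The metrics $\rho_k$ restrict to metrics on $C^\infty(\mathcal X,\mathcal Y)$ whose balls $B_k(F;\varepsilon)$ are open in the subspace topology by part~\ref{sublem:whit-contin} of \Cref{lem:whitney-metric}; and, writing $B_r(F;\delta):=\{G\mid d_r(j^rF(p),j^rG(p))<\delta(p)\ \text{for all}\ p\}$ for a smooth $\delta\colon|\mathcal X|\to(0,\infty)$ and $r\in\mathbb N$, the sets $B_r(F;\delta)\cap C^\infty(\mathcal X,\mathcal Y)$ (over all $r$ and $\delta$) form a neighbourhood basis of $F$ in the subspace Whitney $C^\infty$-topology, since $C^\infty$-open basic sets are already $C^r$-open for some $r$. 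The crucial point is then completeness: if $\{F_i\}\subset C^\infty(\mathcal X,\mathcal Y)$ is $\rho_k$-Cauchy for every $k$, part~\ref{sublem:whit-cauchy} of \Cref{lem:whitney-metric} provides a smooth $G\colon|\mathcal X|\to|\mathcal Y|$ with $j^kG(p)=\lim_i j^kF_i(p)$ for all $p,k$, hence $G(p)=\lim_i F_i(p)$; for $s\in S$ and $p\in\mathcal X(s)$ we have $F_i(p)\in\mathcal Y(s)$ for all $i$, and $\mathcal Y(s)$ is closed in $|\mathcal Y|$ because $\mathcal Y$ is a closed pre-arrangement, so $G(p)\in\mathcal Y(s)$ and therefore $G\in C^\infty(\mathcal X,\mathcal Y)$.

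With these in hand I would carry out the usual recursion. Given dense open subsets $\{W_i\}_{i\ge1}$ of $C^\infty(\mathcal X,\mathcal Y)$ and a nonempty open $A=A_0$, one inductively picks $F_i\in A_{i-1}\cap W_i$ (nonempty by density), then $r_i$ and $\delta_i\le 2^{-i}$ with $B_{r_i}(F_i;\delta_i)\cap C^\infty(\mathcal X,\mathcal Y)\subset A_{i-1}\cap W_i$, and sets
\[
A_i := B_{r_i}(F_i;\delta_i/2)\cap B_i(F_i;2^{-i})\cap C^\infty(\mathcal X,\mathcal Y)\,.
\]
Then $A_i\subset A_{i-1}$ and $F_j\in A_i$ for all $j\ge i$, so $\rho_k(F_i,F_j)<2^{-k+1}$ whenever $i,j\ge k$ and $\{F_i\}$ is $\rho_k$-Cauchy for every $k$; the previous paragraph then yields $G\in C^\infty(\mathcal X,\mathcal Y)$ with $j^kG(p)=\lim_i j^kF_i(p)$. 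Letting $j\to\infty$ in the inequalities $d_{r_i}(j^{r_i}F_i(p),j^{r_i}F_j(p))<\delta_i(p)/2$ (valid for $j>i$) and using continuity of $d_{r_i}$ gives $G\in B_{r_i}(F_i;\delta_i)\cap C^\infty(\mathcal X,\mathcal Y)\subset A_{i-1}\cap W_i$ for every $i\ge1$; in particular $G\in A\cap\bigcap_iW_i$, so $\bigcap_iW_i$ is dense and $C^\infty(\mathcal X,\mathcal Y)$ is a Baire space.

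The main obstacle is the bookkeeping in the recursion: each $A_i$ must be made small enough in $\rho_i$ to force the Cauchy condition in every $\rho_k$, and simultaneously confined to a tight enough $d_{r_i}$-neighbourhood of $F_i$ so that the jet-wise limit $G$ cannot leave $A_{i-1}\cap W_i$. Both demands are met by the $B_r(F;\delta)$ basis exactly as in \cite{GG73}, so beyond the closedness observation of the second paragraph nothing essentially new occurs.
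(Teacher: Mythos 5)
Your proof is correct and follows essentially the same route as the paper's: build a $\rho_k$-Cauchy sequence by a nested recursion, invoke the completeness statement of \Cref{lem:whitney-metric}, and use closedness of each $\mathcal Y(s)$ to see the jet-wise limit remains an $S$-map. The only cosmetic difference is that you trap the limit via the shrunken balls $B_{r_i}(F_i;\delta_i/2)$ while the paper uses sets of the form $M(\overline W_i)$; both devices serve the identical purpose.
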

\begin{proof}
We have to show that every countable intersection of open dense subsets is dense in $C^\infty(\mathcal X,\mathcal Y)$, so suppose  we have a countable family $\{G_n\}_{n\in\mathbb N}$ of open dense subsets in $C^\infty(\mathcal X,\mathcal Y)$, and let $U\subset C^\infty(\mathcal X,\mathcal Y)$ be an arbitrary non-empty open subset.
We show
\begin{equation}
\label{eq:smooth-baire}
U\cap\bigcap_{n=0}^\infty G_n\neq\varnothing\,.
\end{equation}
Say $X$ and $Y$ are the ambient manifolds of $\mathcal X$ and $\mathcal Y$ respectively.
Notice that, since the space $J^r(X,Y)$ is a locally compact normal space, by the definition of $C^\infty$-topology, we can choose a natural number $k_0\in\mathbb N$ and an open subset $W_0\subset J^{k_0}(X,Y)$ so that $\varnothing\neq M(\overline W_0)\subset U$.
In particular, since $G_0$ is open and dense, we may assume $M(\overline{W}_0)\subset U\cap G_0$.
Thus, in order to see \eqref{eq:smooth-baire}, it suffices to show that there is an element
\begin{equation}
\label{eq:smooth-baire-cl}
g\in M(\overline{W}_0)\cap \bigcap_{n=0}^\infty G_n\,.
\end{equation}

Choose a metric $d_k$ on each $J^k(X,Y)$, and define $\rho_k$ as in \Cref{lem:whitney-metric}.
First, we construct inductively a sequence $\{(k_i,W_i,f_i)\}_{i=0}^\infty$ of triples of a natural number $k_i\in\mathbb{N}$, an open subset $W_i\subset J^{k_i}(X,Y)$, and a smooth function $f_i\in M(W_i)$ such that
\begin{itemize}
  \item $k_0\le k_1\le\dots\le k_i\le\cdots$ ;
  \item $U\supset M(W_0)\supset M(W_1)\supset\dots\supset M(W_i)\supset\cdots$ ;
  \item $M(\overline{W}_i)\subset G_i$ for each $i\in\mathbb{N}$ ;
  \item $\rho_l(f_{i-1},f_i) < 1/2^{i}$ for each $i\in\mathbb{N}$ and each $0\le l\le i$.
\end{itemize}
Suppose we have already defined such a sequence for $i\le n$.
In this case, by \ref{sublem:whit-contin} in \Cref{lem:whitney-metric} and the induction hypothesis, we have the following non-empty open subset
\[
M(W_n)\cap G_{n+1}\cap \bigcap_{l=0}^{n+1}B_l(f_n;\frac1{2^{n+1}})\,.
\]
Take $f_{n+1}$ to be an element of this, then we can also take a sufficient large number $k_{n+1}$ and an open subset $W_{n+1}\subset J^{k_{n+1}}(X,Y)$ which enjoy the required conditions.
By induction, we finally obtain a required sequence $\{(k_i,W_i,f_i)\}_{i=0}^\infty$.

Now, by construction above, the sequence $\{f_i\}$ is a Cauchy sequence in $C^\infty(\mathcal X,\mathcal Y)$ with respect to $\rho_i$ for every $i\in\mathbb N$.
Hence, by \ref{sublem:whit-cauchy} in \Cref{lem:whitney-metric}, there is a smooth function $g:X\to Y$ such that for each $x\in X$ and $k\in\mathbb{N}$, we have
\[
j^kg(x) = \lim_{i\to\infty} j^kf_i(x)
\]
in $J^k(X,Y)$.
In particular, since $f_i(\mathcal X(s))\subset \mathcal Y(s)$ for each $i$ and $\mathcal Y(s)$ is a closed subset of $Y$, we can also verify $g\in C^\infty(\mathcal X,\mathcal Y)$.
Moreover, since we have $\{j^{k_n}f_i(x)\}_{i\ge n}\subset W_n$, we obtain $j^{k_n}g(x)\in \overline{W}_n$.
It follows that $g\in\bigcap_i M(\overline W_i)$.
On the other hand, since we have $M(\overline W_i)\subset G_i$, we also have $\bigcap_i M(\overline W_i)\subset \bigcap_i G_i$.
Thus, we obtain \eqref{eq:smooth-baire-cl}, which completes the proof.
\end{proof}

\subsection{Excellent arrangements}
\label{sec:excellent-arr}

For a study of manifolds, it is very essential to take good coordinates on manifolds.
This is also true when we consider arrangements of submanifolds.
In this section, we discuss coordinates on arrangements.

First, we discuss a local model for ``good'' coordinates.
Indeed, we consider canonical arrangements on Euclidean spaces near the origin.
Because of the existence of corners, we need some technical preliminaries.

\begin{definition}
An arrangement of marked finite sets is a pair $(S,\mathcal I)$ of a finite lattice $S$ and a functor $\mathcal I:S\to\mathbf{MkFin}_{\mathrm{inj}}$ which preserves meets (or pullbacks); i.e. we have
\[
\mathcal I(s\wedge t) = \mathcal I(s)\cap\mathcal I(t) \subset \mathcal I(\max S)\,.
\]
We call $S$ the shape of $\mathcal I$ and $|\mathcal I|:=\mathcal I(\max S)$ the ambient marked set of $\mathcal I$.
\end{definition}

Arrangements of marked finite sets induce those of manifolds.

\begin{definition}
Let $\mathcal I$ be an arrangement of marked finite sets of shape $S$.
Then we define an arrangement $\mathcal E^{\mathcal I}$ of manifolds by
\[
\mathcal E^{\mathcal I}:S\ni s \mapsto \mathbb H^{\mathcal I(s)}\in\mathbf{Emb}\,,
\]
and we call it the standard arrangement associated with $\mathcal I$.
\end{definition}

Roughly speaking, we want to treat with arrangements which are locally $\mathcal E^{\mathcal I}$.
To give a formal definition of this, we prepare a notation:

\begin{definition}
Let $\mathcal X$ be an arrangement of manifolds of shape $S$.
For a point $p\in |\mathcal X|$, we write
\[
s(p) := \bigwedge \{s\in S\mid p\in\mathcal X(s)\}
\]
and call it the scope at $p$ in $\mathcal X$.
If there is no $s\in S$ with $p\in\mathcal X(s)$, then we use the convention that $s(p):=\infty\notin S$.
\end{definition}

Notice that since $\mathcal X:S\to\mathbf{Emb}$ preserves pullbacks, we have $p\in\mathcal X(s)$ if and only if $s\ge s(p)$.

Finally, we can formulate our naive idea of ``excellent'' arrangements.
We denote by $S_{\ge s(p)}$ the full subposet of $S$ spanned by $s\in S$ with $s(p)\le s$.

\begin{definition}
Let $\mathcal X$ be an arrangement of manifolds of shape $S$.
For a point $p\in |\mathcal X|$, an $\mathcal X$-chart around $p$ is a triple $(U,\mathcal I,\varphi)$ of
\begin{itemize}
  \item an open subset $U\subset |\mathcal X|$ of the ambient manifold;
  \item an arrangement $\mathcal I:S_{\ge s(p)}\to\mathbf{MkFin}_{\mathrm{inj}}$ of marked finite sets
  \item an open embedding $\varphi:U\hookrightarrow\mathbb H^{|\mathcal I|}$ with $\varphi(p)=0$
\end{itemize}
which satisfy the following conditions:
\begin{enumerate}[label={\rm(\roman*)}]
  \item For $s\in S$, $U\cap\mathcal X(s)\neq\varnothing$ implies $s\ge s(p)$.
  \item We have $\varphi(U\cap\mathcal X(s))\subset\mathbb H^{\mathcal I(s)}$ for each $s\ge s(p)$.
  \item The following square is a pullback for each $s\ge s(p)$:
\[
\xymatrix{
  U\cap\mathcal X(s) \ar[r]^{\eta_s} \ar[d]_{\varphi_s} \ar@{}[dr]|(.4){\pbcorner} & U \ar[d]^{\varphi} \\
  \mathbb{H}^{\mathcal I(s)} \ar@{^(->}[r] & \mathbb{H}^{|\mathcal I|} }
\]
\end{enumerate}
\end{definition}

\begin{remark}
The condition of pullbacks guarantees that, for each $s(p)\le s\le t\in S$, we have a pullback square:
\[
\xymatrix{
  U\cap\mathcal X(s) \ar[r]^{\eta_{s,t}} \ar[d]_{\varphi_s} \ar@{}[dr]|(.4){\pbcorner} & U\cap\mathcal X(t) \ar[d]^{\varphi_t} \\
  \mathbb{H}^{\mathcal I(s)} \ar@{^(->}[r] & \mathbb{H}^{\mathcal I(t)} }
\]
\end{remark}

\begin{definition}
An arrangement $\mathcal X$ is said to be excellent if each point $p\in |\mathcal X|$ of the ambient manifold admits an $\mathcal X$-chart.
\end{definition}

\begin{remark}
We do not require the closedness for excellent arrangements explicitly.
Actually, if an arrangement $\mathcal X$ is excellent, it is automatically closed.
For, let $p\in |\mathcal X|\setminus\mathcal X(s)$ be a point.
Since $\mathcal X$ is excellent, we can take an $\mathcal X$-cahrt $(U,\mathcal I,\varphi)$.
We have $p\notin\mathcal X(s)$, so $s\not\ge s(p)$, which implies $U$ cannot intersect with $\mathcal X(s)$.
We obtain $p\in U\subset |\mathcal X|\setminus\mathcal X(s)$ and $\mathcal X(s)\hookrightarrow |\mathcal X|$ is a closed embedding.
\end{remark}

\begin{example}
Let $\mathcal I$ be an arrangement of marked sets, and let $\mathcal E^{\mathcal I}$ be the associated standard arrangement.
Then it is obvious that $\mathcal E^{\mathcal I}$ is an excellent arrangement of manifolds.
\end{example}

\begin{example}
Suppose $S$ be a finite totally ordered set, and consider an arrangement $\mathcal X$ of shape $S$ such that, for each $s\in S$, $\partial X(s)=\varnothing$.
Then, \Cref{prop:immersion-chart} asserts that $\mathcal X$ is already excellent (cf. \Cref{ex:linear-arr}).
In particular, every pair $(X,X')$ of manifolds and submanifolds without boundaries is an example of excellent arrangement, which is exactly what is considered in \cite{Ish98}.
More generally, if each embedding $\mathcal X(s)\hookrightarrow\mathcal X(t)$ for $s\le t\in S$ is smooth in the sense of Joyce \cite{Joy09}, then the same result holds without the assumption $\partial X(s)=\varnothing$.
\end{example}

The following results show how we can construct new examples of excellent arrangements from olds.

\begin{proposition}
\label{prop:excel-newold}
\begin{enumerate}[label={\rm(\arabic*)}]
  \item Let $\mathcal X$ and $\mathcal Y$ be two excellent arrangements of shape $S$ and $T$ respectively.
Then the product arrangement $\mathcal X\times\mathcal Y$ is excellent (see \Cref{ex:arr:product}).
  \item Let $\mathcal X$ be an excellent arrangement of shape $S$, and let $\mu:S'\to S$ be a lattice homomorphism.
Then the restricted arrangement $\mathcal X_\mu$ is also excellent (see \Cref{ex:arr:restrict}).
\end{enumerate}
\end{proposition}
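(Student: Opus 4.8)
Both assertions are local statements about the existence of $\mathcal X$-charts, so the plan is to produce, around a given point of the ambient manifold of the new arrangement, an explicit chart built out of charts for $\mathcal X$ (and $\mathcal Y$). For (1), fix a point $(p,q)\in|\mathcal X|\times|\mathcal Y|=|\mathcal X\times\mathcal Y|$. By excellence of $\mathcal X$ and $\mathcal Y$ we may pick an $\mathcal X$-chart $(U,\mathcal I,\varphi)$ around $p$ and a $\mathcal Y$-chart $(V,\mathcal J,\psi)$ around $q$. The first point to check is that the scope of $(p,q)$ in $\mathcal X\times\mathcal Y$ is exactly $(s(p),s(q))$; this follows from the fact that $(\mathcal X\times\mathcal Y)(s,t)=\mathcal X(s)\times\mathcal Y(t)$ contains $(p,q)$ iff $p\in\mathcal X(s)$ and $q\in\mathcal Y(t)$, i.e. iff $s\ge s(p)$ and $t\ge s(q)$, and then $(S\times T)_{\ge(s(p),s(q))}=S_{\ge s(p)}\times T_{\ge s(q)}$. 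Next I would define the candidate arrangement of marked sets $\mathcal K$ on $S_{\ge s(p)}\times T_{\ge s(q)}$ by $\mathcal K(s,t):=\mathcal I(s)\amalg\mathcal J(t)$ (disjoint union of marked finite sets); one checks $\mathcal K$ preserves meets because $\amalg$ is a biproduct-like operation compatible with the intersections, using $(s,t)\wedge(s',t')=(s\wedge s',t\wedge t')$ and that $\mathcal I,\mathcal J$ preserve meets. Then $\mathbb H^{\mathcal K(s,t)}\cong\mathbb H^{\mathcal I(s)}\times\mathbb H^{\mathcal J(t)}$, and $\varphi\times\psi:U\times V\hookrightarrow\mathbb H^{|\mathcal I|}\times\mathbb H^{|\mathcal J|}\cong\mathbb H^{|\mathcal K|}$ is an open embedding sending $(p,q)$ to $0$. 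The three defining conditions for an $(\mathcal X\times\mathcal Y)$-chart then reduce, conditionwise, to the corresponding conditions for the two given charts: condition (i) because $(U\times V)\cap(\mathcal X(s)\times\mathcal Y(t))\ne\varnothing$ forces both $U\cap\mathcal X(s)\ne\varnothing$ and $V\cap\mathcal Y(t)\ne\varnothing$; conditions (ii) and (iii) because products of pullback squares are pullback squares.

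For (2), fix a point $p\in|\mathcal X_\mu|=\mathcal X(\mu(\max S'))$, which is $|\mathcal X|$ when $\mu$ preserves the maximum — and a lattice homomorphism between finite lattices with maxima does preserve the maximum, so $|\mathcal X_\mu|=|\mathcal X|$ (this is the one small point to record). Choose an $\mathcal X$-chart $(U,\mathcal I,\varphi)$ around $p$ in $\mathcal X$. The scope of $p$ in $\mathcal X_\mu$ is $s'(p)=\bigwedge\{s'\in S'\mid p\in\mathcal X(\mu(s'))\}=\bigwedge\{s'\mid \mu(s')\ge s(p)\}$, where $s(p)$ is the scope in $\mathcal X$; since $\mu$ is a lattice homomorphism, the set $\{s'\mid\mu(s')\ge s(p)\}$ is an up-set closed under meets (because $\mu(s'\wedge t')=\mu(s')\wedge\mu(t')\ge s(p)$), so its infimum $s'(p)$ lies in it, i.e. $\mu(s'(p))\ge s(p)$. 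Hence the restriction of $\mu$ gives an order map $\mu:S'_{\ge s'(p)}\to S_{\ge s(p)}$, and I set $\mathcal I':=\mathcal I\circ\mu$ on $S'_{\ge s'(p)}$; it preserves meets since both $\mathcal I$ and $\mu$ do. Then $(U,\mathcal I',\varphi)$ is the candidate $\mathcal X_\mu$-chart: condition (i) for $\mathcal X_\mu$ says $U\cap\mathcal X(\mu(s'))\ne\varnothing\Rightarrow s'\ge s'(p)$, which follows from condition (i) for the original chart (giving $\mu(s')\ge s(p)$) together with the fact just proved that $\mu^{-1}(S_{\ge s(p)})$ has minimum $s'(p)$; conditions (ii) and (iii) are literally instances of the original conditions at the points $\mu(s')\in S_{\ge s(p)}$, since $\mathbb H^{\mathcal I'(s')}=\mathbb H^{\mathcal I(\mu(s'))}$ and $\mathcal X_\mu(s')=\mathcal X(\mu(s'))$.

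I do not expect a serious obstacle; the proof is a bookkeeping exercise in unwinding the definition of $\mathcal X$-chart. The only places that need genuine care are: (a) identifying the scope of a point in the product and in the restriction correctly — in particular verifying that $\{s'\mid\mu(s')\ge s(p)\}$ really does have a minimum, which is exactly where the meet-preservation of $\mu$ is used and cannot be dropped; and (b) checking that $\mathcal I(s)\amalg\mathcal J(t)$ defines an arrangement of marked finite sets, i.e. that the disjoint-union operation turns the meet-preservation of $\mathcal I$ and $\mathcal J$ into meet-preservation of $\mathcal K$ — this uses that inside $\mathcal K(\max)= |\mathcal I|\amalg|\mathcal J|$ one has $(\mathcal I(s)\amalg\mathcal J(t))\cap(\mathcal I(s')\amalg\mathcal J(t'))=(\mathcal I(s)\cap\mathcal I(s'))\amalg(\mathcal J(t)\cap\mathcal J(t'))=\mathcal I(s\wedge s')\amalg\mathcal J(t\wedge t')$. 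Everything else (products and restrictions of pullback squares being pullback squares, $\mathbb H^{(-)}$ turning $\amalg$ into $\times$) is already available from \Cref{sec:prelim} and \Cref{sec:conf-submfd}.
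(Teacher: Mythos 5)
Part (1) of your proof is correct: the identification of the scope of $(p,q)$ as $(s(p),s(q))$, the marked-set arrangement $\mathcal K(s,t)=\mathcal I(s)\amalg\mathcal J(t)$ with $\mathbb H^{\mathcal K(s,t)}\cong\mathbb H^{\mathcal I(s)}\times\mathbb H^{\mathcal J(t)}$, and the reduction of the chart axioms to products of pullback squares all go through. (The paper offers no proof of this proposition, so there is nothing to compare against; this is presumably the intended argument.)

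Part (2) contains a genuine error. The claim that ``a lattice homomorphism between finite lattices with maxima does preserve the maximum'' is false: such a homomorphism preserves binary meets and joins, hence sends $\max S'=\bigvee S'$ to the maximum of the image $\mu(S')$, which need not be $\max S$. The paper's own use of this proposition is precisely such a case: in \Cref{cor:excel-face} the homomorphism is $\mu:S\to S\times\Gamma_{\mathcal X}$, $s\mapsto(s,\sigma)$, whose value on the maximum is $(\max S,\sigma)$, so that $|\mathcal X_\mu|=\overline\partial_\sigma|\mathcal X|$ is in general a proper submanifold of $|\mathcal X|$. Your candidate chart $(U,\mathcal I\circ\mu,\varphi)$, with $U$ open in $|\mathcal X|$ and $\varphi$ valued in $\mathbb H^{|\mathcal I|}$, is therefore not an $\mathcal X_\mu$-chart when $m:=\mu(\max S')<\max S$: one needs an open subset of $\mathcal X(m)$ and an open embedding into $\mathbb H^{\mathcal I(m)}$. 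The repair is short --- replace $(U,\varphi)$ by $(U\cap\mathcal X(m),\varphi_m)$, where $\varphi_m:U\cap\mathcal X(m)\to\mathbb H^{\mathcal I(m)}$ is the restriction furnished by the pullback square at $s=m$ (an open embedding onto $\varphi(U)\cap\mathbb H^{\mathcal I(m)}$), and observe that the pullback squares over $\mathbb H^{\mathcal I(m)}$ for $s'\ge s'(p)$ follow from those over $\mathbb H^{|\mathcal I|}$ by pasting, while $(U\cap\mathcal X(m))\cap\mathcal X_\mu(s')=U\cap\mathcal X(\mu(s'))$ since $\mu(s')\le m$ --- but as written your proof only covers maximum-preserving $\mu$ and misses the case for which the proposition is actually invoked. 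The remainder of your argument for (2) (that $s'(p)=\bigwedge\{s'\mid\mu(s')\ge s(p)\}$, that this set is a meet-closed up-set and hence contains its infimum, so $\mu(s'(p))\ge s(p)$, and that $\mathcal I\circ\mu$ preserves meets) is correct.
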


\subsection{Vector fields on arrangements}
\label{sec:vecfield-arr}

\begin{definition}
Let $\mathcal X$ be an arrangement of manifolds of shape $S$.
\begin{enumerate}[label={\rm(\arabic*)}]
  \item A vector field on $\mathcal X$ is a vector field $\xi$ on the ambient manifold $|\mathcal X|$ such that $\xi\parallel\mathcal X(s)$ for each $s\in S$.
  \item A vector $\xi$ on $\mathcal X$ is said to be inward-pointing if it restricts to a inward-pointing vector field on $\mathcal X(s)$ for each $s\in S$.
\end{enumerate}
\end{definition}

\begin{remark}
Let $\mathcal X$ be an excellent arrangement of manifolds of shape $S$.
In this case, for each $s,t\in S$ and each $p\in\mathcal X(s\wedge t)$, we have the following pullback square:
\[
\xymatrix{
  T_p\mathcal X(s\wedge t) \ar[r] \ar[d] \ar@{}[dr]|(.4)\pbcorner & T_p\mathcal X(s) \ar[d] \\
  T_p\mathcal X(t) \ar[r] & |\mathcal X| }
\]
It follows that the induced functor
\[
S\to\mathbf{Emb}\,;\quad s \mapsto T\mathcal X(s)\,,
\]
which we denote by $T\mathcal X$, gives rise to an arrangement of manifolds of shape $S$.
From this viewpoint, a vector field $\xi$ on $|\mathcal X|$ is actually that on $\mathcal X$ if and only if $\xi:|\mathcal X|\to T|\mathcal X|=|T\mathcal X|$ is a map over $S$.
\end{remark}

\begin{proposition}
\label{prop:vecfield-arr-flow}
Let $\mathcal X$ be an arrangement of manifolds of shape $S$.
Suppose we have an inward-pointing vector field $\xi$ on $\mathcal X$.
Then there is a unique open submersion $\varphi:\mathcal X\times\mathbb R_+\to\mathcal X$ over $S$ satisfying the following conditions:
\begin{enumerate}[label={\rm(\roman*)}]
  \item For every $p\in |\mathcal X|$ and $t,t'\in\mathbb R_+$, we have
\[
\varphi(p,0)=p\,,\ \text{and}\quad
\varphi(\varphi(p,t),t') = \varphi(p,t+t')\,.
\]
  \item If we write $\partial/\partial t$ the canonical vector field on $\mathbb R_+$, then
\[
\varphi_\ast\frac\partial{\partial t} = \xi\,.
\]
\end{enumerate}
\end{proposition}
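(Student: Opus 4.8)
The plan is to deduce the statement from the classical theory of forward flows of inward-pointing vector fields on a single manifold with corners, using uniqueness of integral curves to control both the arrangement structure and the uniqueness of $\varphi$. Unwinding the definitions, a vector field on $\mathcal X$ is a vector field $\xi\in\mathfrak X(|\mathcal X|)$ with $\xi\parallel\mathcal X(s)$ for every $s\in S$, so in particular $\xi$ restricts to a vector field $\xi_s$ on each $\mathcal X(s)$; and, since $\mathcal X\times\mathbb R_+$ has shape $S\times\mathbf{pt}\cong S$ with $(\mathcal X\times\mathbb R_+)(s)=\mathcal X(s)\times\mathbb R_+$ (\Cref{ex:arr:product}), a smooth map $\mathcal X\times\mathbb R_+\to\mathcal X$ over $S$ is exactly a smooth map $\varphi\colon X\times\mathbb R_+\to X$, $X:=|\mathcal X|$, carrying $\mathcal X(s)\times\mathbb R_+$ into $\mathcal X(s)$ for each $s$. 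So the proof breaks into: (a) constructing $\varphi$ on $X$ with properties (i) and (ii); (b) checking $\varphi$ is a map over $S$; (c) checking $\varphi$ is an open submersion; (d) uniqueness.

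For (a) I would use \Cref{prop:emb-bdless} to fix a closed embedding $X\hookrightarrow\widehat X$ into a boundaryless manifold of the same dimension, together with an extension $\widehat\xi\in\mathfrak X(\widehat X)$ of $\xi$, and then invoke the standard local existence and uniqueness theorem for ODEs to obtain a maximal flow $\widehat\varphi$ on an open set $\widehat{\mathcal D}\subseteq\widehat X\times\mathbb R$ containing $\widehat X\times\{0\}$. The key point is that the forward flow cannot escape $X$: in a chart where $X$ is modeled on $\mathbb H^{\langle m|k\rangle}$, each face is cut out by a coordinate $x_i\ge 0$, inward-pointingness gives $\widehat\xi(x_i)\ge 0$ on $\{x_i=0\}$, and a standard barrier estimate shows $x_i(\widehat\varphi(p,\cdot))$ cannot become negative; since $X$ is closed in $\widehat X$, the integral curve through a point of $X$ stays in $X$ as long as it is defined. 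This yields an open $\mathcal D\subseteq X\times\mathbb R_+$ and a smooth forward flow $\varphi\colon\mathcal D\to X$ satisfying (i) and (ii), leaving one to prove $\mathcal D=X\times\mathbb R_+$.

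For (b), the hypothesis that $\xi$ is inward-pointing on $\mathcal X$ says precisely that each $\xi_s$ is an inward-pointing vector field on $\mathcal X(s)$; its forward flow, built the same way, must agree with $\varphi|_{\mathcal X(s)\times\mathbb R_+}$ by uniqueness of integral curves inside $\widehat X$, so $\varphi$ carries $\mathcal X(s)\times\mathbb R_+$ into $\mathcal X(s)$. For (c), each time-$t$ map $\varphi_t:=\varphi(\cdot,t)$ is the restriction of the diffeomorphism $\widehat\varphi_t$ of $\widehat X$, so $d(\varphi_t)_p$ is an isomorphism; hence $d\varphi_{(p,t)}$ is already surjective on the $T_pX$-summand, so $\varphi$ is a submersion, and openness of $\varphi$ follows by a direct computation in charts around the corner points of $X\times\mathbb R_+$ using $\widehat\varphi$. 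For (d), if $\varphi$ and $\varphi'$ both satisfy (i) and (ii) then, for each $p$, the curves $t\mapsto\varphi(p,t)$ and $t\mapsto\varphi'(p,t)$ are integral curves of $\xi$ through $p$, hence equal.

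The hard part will be the global forward existence in (a), i.e.\ $\mathcal D=X\times\mathbb R_+$. A finite-time breakdown of a forward integral curve that runs off the corner locus of $X$ (or of any $\mathcal X(s)$) is exactly what the inward-pointing hypothesis excludes, via the barrier estimate above; keeping this bookkeeping straight simultaneously for $X$ and for all the $\mathcal X(s)$ is the main technical burden, after which global existence follows from the standard continuation argument for flows.
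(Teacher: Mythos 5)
Your overall strategy is the same as the paper's: the paper disposes of this proposition in two sentences, citing the "very classical" existence of the forward flow of an inward-pointing field on $|\mathcal X|$ and deducing the $S$-map property from uniqueness of integral curves. Your items (b) and (d) are exactly that argument, correctly executed, and your use of \Cref{prop:emb-bdless} plus a barrier estimate in an $\mathbb H^{\langle m|k\rangle}$-chart is a reasonable way to make the "classical" existence step precise near corners.

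There is, however, a genuine gap in the step you yourself single out as the hard part. The barrier estimate shows that a forward integral curve starting in $X$ cannot exit $X$ through a face (each coordinate $x_i$ stays nonnegative), but it says nothing about the curve escaping to infinity in finite time when $|\mathcal X|$ is non-compact. Inward-pointingness does not imply forward completeness: already on $X=\mathbb R$ (no boundary at all, so every vector field is vacuously inward-pointing) the field $\xi=(1+x^2)\,\partial/\partial x$ blows up in finite time, so $\mathcal D\subsetneq X\times\mathbb R_+$ and no map $\varphi:X\times\mathbb R_+\to X$ with property (ii) exists. Your "standard continuation argument" therefore cannot close; continuation requires the forward orbit to stay in a compact set, which is exactly what is missing. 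To make the argument go through you need an additional hypothesis — compactness of $|\mathcal X|$ (which the paper tacitly assumes in its applications to neat arrangements and invokes explicitly later, e.g.\ in the remark after \Cref{lem:collar-diffeo}), or completeness of $\xi$, or a cutoff making $\xi$ compactly supported. The rest of your argument — preservation of each $\mathcal X(s)$ by uniqueness of integral curves of the restricted field $\xi_s$, surjectivity of $d\varphi$ via the time-$t$ diffeomorphisms, and uniqueness of $\varphi$ from uniqueness of integral curves — is sound once the flow is known to exist globally in forward time.
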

\begin{proof}
Since $\xi$ is inward-pointing, the existence of $\varphi:|\mathcal X|\times\mathbb R_+\to|\mathcal X|$ with the two properties is a very classical result, and we omit the proof.
The assertion that $\varphi$ is an $S$-map follows from the assumption that $\xi$ is a vector field on $\mathcal X$ and the uniqueness of the integral curves of vector fields.
\end{proof}

We call $\varphi$ in \Cref{prop:vecfield-arr-flow} the flow of $\xi$.

\begin{lemma}
\label{lem:vecfield-flow-emb}
Let $\mathcal X$ be an excellent arrangement of manifolds, and let $\xi$ be an inward-pointing vector field on $\mathcal X$.
Suppose $N\subset\partial_1|\mathcal X|$ is an open subset such that $\xi\pitchfork N$.
Then the flow $\varphi$ of $\xi$ restricts to an open embedding
\[
\eta:(N\cap\mathcal X)\times\mathbb R_+ \to \mathcal X
\]
of arrangements.
\end{lemma}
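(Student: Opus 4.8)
The plan is to recognise $\eta$ as the restriction of the flow $\varphi$ of \Cref{prop:vecfield-arr-flow} to the sub-arrangement $(N\cap\mathcal X)\times\mathbb R_+$ of $\mathcal X\times\mathbb R_+$, and then to check the three ingredients of ``open embedding of arrangements'': (a) $\eta$ is a local diffeomorphism onto an open set, (b) $\eta$ is injective, and (c) $\eta$ behaves correctly over each $s\in S$. Since $\varphi$ is already an $S$-map over $S$ and $(N\cap\mathcal X(s))\times\mathbb R_+\subset\mathcal X(s)\times\mathbb R_+$ for every $s$ — and each $N\cap\mathcal X(s)$ is a submanifold of $N$, as one verifies in an $\mathcal X$-chart, where $\mathcal X(s)$ is modelled on $\mathbb H^{\mathcal I(s)}$ and $\partial_1|\mathcal X|$ on a union of codimension-one coordinate faces of $\mathbb H^{|\mathcal I|}$ — the map $\eta$ is automatically a well-defined $S$-map; all the content is in (a)--(c).

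For (a), I would first compute $d\eta_{(p,t)}$. Because $\varphi$ is a submersion, $\ker d\varphi_{(p,t)}$ is one-dimensional; the flow identity $\varphi(\varphi(p,\epsilon),t)=\varphi(p,t+\epsilon)$ together with $\varphi_\ast(\partial/\partial t)=\xi$ shows that $d\varphi_{(p,t)}(\xi(p),0)=\xi(\varphi(p,t))=d\varphi_{(p,t)}(0,\partial/\partial t)$, so this kernel is spanned by $(\xi(p),-\partial/\partial t)$. Hence if $(v,a\,\partial/\partial t)\in T_pN\oplus\mathbb R\,\partial/\partial t$ lies in $\ker d\eta_{(p,t)}\subset\ker d\varphi_{(p,t)}$, then $v\in\mathbb R\,\xi(p)\cap T_pN=0$ since $\xi(p)\notin T_pN$ by $\xi\pitchfork N$; thus $d\eta_{(p,t)}$ is injective, and as $\dim\bigl(N\times\mathbb R_+\bigr)=\dim|\mathcal X|$ it is an isomorphism. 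At an interior point this already gives a local diffeomorphism. At a boundary point $(p,0)$, $p\in N\subset\partial_1|\mathcal X|$, I would observe that $\varphi(\cdot,0)=\mathrm{id}$ carries $N\times\{0\}$ into the face $\{u=0\}$ of a corner chart, so $u\circ\eta=t\cdot w$ with $w(p,0)=du\bigl(\xi(p)\bigr)>0$ (again $\xi\pitchfork N$ together with inwardness of $\xi$ on $N$); this is the normal form of a collar, so $\eta$ is a local diffeomorphism of manifolds with corners there as well, and in particular an open map.

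The main obstacle is (b), injectivity. Embedding $|\mathcal X|\hookrightarrow\widehat{|\mathcal X|}$ in a boundaryless manifold (\Cref{prop:emb-bdless}) and extending $\xi$, backward uniqueness of integral curves in $\widehat{|\mathcal X|}$ shows that each time-$t$ map $\varphi_t$ is injective. Now suppose $\varphi(p,t)=\varphi(p',t')$ with $p,p'\in N$ and, without loss, $t\le t'$; by the flow identity $\varphi_t(p)=\varphi_t\bigl(\varphi(p',t'-t)\bigr)$, hence $p=\varphi(p',\delta)$ with $\delta:=t'-t\ge0$. Were $\delta>0$, set $q:=\varphi(p',\delta)=p\in N$ and pick a corner chart at $q$ with inward normal coordinate $u\ge0$ and $\partial_1|\mathcal X|=\{u=0\}$ near $q$; then $\frac{d}{ds}\big|_{s=\delta}u\bigl(\varphi(p',s)\bigr)=du\bigl(\xi(q)\bigr)>0$ by strict inwardness of $\xi$ on $N$, forcing $u\bigl(\varphi(p',s)\bigr)<0$ for $s$ slightly below $\delta$ and contradicting $u\ge0$. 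Therefore $\delta=0$, so $t=t'$ and, by injectivity of $\varphi_t$, $p=p'$. Being an injective open map which is a local diffeomorphism, $\eta$ is then a homeomorphism onto an open subset of $|\mathcal X|$, i.e.\ an open embedding on the ambient manifolds.

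Finally, for (c): for each $s\in S$ the condition $\xi\parallel\mathcal X(s)$ together with uniqueness of integral curves (in the boundaryless extension) forces $\varphi$ to restrict to the flow of $\xi|_{\mathcal X(s)}$ on $\mathcal X(s)$, and the pullback squares supplied by excellence give $T_qN\cap T_q\mathcal X(s)=T_q(N\cap\mathcal X(s))$, whence $\xi|_{\mathcal X(s)}\pitchfork(N\cap\mathcal X(s))$ inside $\mathcal X(s)$; the arguments of (a) and (b) carried out inside $\mathcal X(s)$ then show that $\eta$ restricts to an open embedding $(N\cap\mathcal X(s))\times\mathbb R_+\hookrightarrow\mathcal X(s)$. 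It remains to see $\eta^{-1}(\mathcal X(s))=(N\cap\mathcal X(s))\times\mathbb R_+$: the set $\{\tau\ge0:\varphi(q,\tau)\in\mathcal X(s)\}$ is closed (as $\mathcal X(s)$ is a closed submanifold) and upward closed (forward invariance of $\mathcal X(s)$), and it cannot be of the form $[a,\infty)$ with $a>0$, since at the contact point $\varphi(q,a)$ the local invariance of $\mathcal X(s)$ — read both ways via \Cref{prop:emb-bdless} and ODE uniqueness — would force $\varphi(q,\tau)\in\mathcal X(s)$ for $\tau$ slightly below $a$. I expect the only genuinely delicate part to be making these normal-coordinate and local-invariance arguments rigorous near deep corners; the overall strategy is just the classical flow-box and collaring argument, with the transversality $\xi\pitchfork N$ doing the work at the boundary.
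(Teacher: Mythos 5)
Your proof is correct and follows essentially the same route as the paper's: the transversality $\xi\pitchfork N$ plus a dimension count yields the local diffeomorphism, and uniqueness of integral curves combined with inward-pointing behaviour at $N$ yields injectivity. You are in fact more thorough than the paper, whose proof merely asserts that integral curves starting in $N$ are injective and pairwise disjoint and does not spell out the corner normal form at $t=0$ or the arrangement-level compatibility $\eta^{-1}(\mathcal X(s))=(N\cap\mathcal X(s))\times\mathbb R_+$ that you verify explicitly.
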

\begin{proof}
We first show that $\eta$ is injective.
Note that every integral curve of vector fields is either injective or periodic.
Since we assumed $N\subset\partial_1|\mathcal X|$ and $\xi\pitchfork N$, every integral curve of $\xi$ starting at a point in $N$ is injective and does not intersect those starting at other points in $N$.
This implies that the restriciton map $\eta$ is injective.

Now, to see $\eta$ is an open embedding, it suffices to show that it is a local diffeomorphism.
Since the domain and codomain of $\eta$ are of the same dimension, it suffices to show $\eta$ is a submersion.
Since $\xi\pitchfork N$, for each $p\in N$, we have
\[
T_p|\mathcal X|\simeq T_pN\otimes\mathbb R\xi_p\,.
\]
The properties of the flow $\varphi$ implies that the induced map $\varphi_\ast:T_{(p,t)}(|\mathcal X|\times\mathbb R_+)\to T_{\varphi(p,t)}|\mathcal X|$ maps two vectors $\xi_p$ and $\left.\frac\partial{\partial t}\right|_t$ into the same vector.
On the other hand, $\varphi$ is a submersion, so that the map
\[
\eta_\ast:T_{(p,t)}(N\times\mathbb R_+)
\simeq T_pN\otimes\mathbb R\left.\frac\partial{\partial t}\right|_t
\to T_p|\mathcal X|
\]
is a submersion.
Thus, we obtain the result.
\end{proof}

\section{Treatment of corners}
\label{sec:corner-arr}

One motivation of introducing the notion of arrangements is that we want to treat corners of manifolds better.
For example, it is sometimes a problem to consider the space of smooth maps $F:X\to Y$ between manifolds with corners such that $F$ send each corners of $X$ to designated one of $Y$.
In this section, we discuss it.
We focus in particular on manifolds with faces.
The goal of this section is to see that our relative settings cover the treatment of corners of manifolds with faces and to realize the space of mappings controlled aouund corners as that of relative maps.
Also, in the last section, collarings of manifolds are mentioned.

\subsection{Manifolds with faces}
\label{sec:mfd-face}

At first, we introduce the notion of manifolds with faces, which was originally introduced by J\"anich in \cite{Jan68}.

\begin{definition}[J\"anich, \cite{Jan68}]
Let $X$ be a manifold with corners.
\begin{enumerate}[label={\rm(\arabic*)}]
  \item A connected face of $X$ is the closure of a connected component of $\partial_1X$.
  \item $X$ is called a manifold with faces if every corner $p\in X$ of index $k$ belongs to exactly $k$ connected faces of $X$.
\end{enumerate}
\end{definition}

If $X$ is a manifold with faces, we denote by $\operatorname{bd}X$ the set of connected faces of $X$.
We have a canonical bijection $\pi_0(\partial_1X)\to\operatorname{bd}X$.
We can define manifolds with faces in another way:

\begin{lemma}
\label{lem:mfdface-equiv}
Let $X$ be a manifold with corners.
Then the following two conditions are equivalent:
\begin{enumerate}[label={\rm(\alph*)}]
  \item\label{cond:mfdface-equiv:face} $X$ is a manifold with faces.
  \item\label{cond:mfdface-equiv:nhd} Every point $p\in X$ admits a coordinate open neighborhood $U$ such that the map
\[
\pi_0(\partial_1U) \to \pi_0(\partial_1 X)
\]
is injective.
\end{enumerate}
\end{lemma}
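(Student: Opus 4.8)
The plan is to argue both implications by reducing everything to a coordinate chart $\varphi: U \to \mathbb{H}^{\langle m \mid k \rangle}$ centered at a corner $p$ of codimension $k$, and tracking how the connected components of $\partial_1 U$ sit inside $\partial_1 X$. The key observation, which I would establish first as a local computation, is that for the model space $\mathbb{H}^{\langle m \mid k \rangle} = \mathbb{R}^k_+ \times \mathbb{R}^{m-k}$ (with the first $k$ coordinates nonnegative), the boundary $\partial_1$ of a small enough connected neighborhood $U$ of the origin has exactly $k$ connected components, one for each hyperplane $\{x_i = 0\}$ with $1 \le i \le k$; and these $k$ local faces are pairwise distinct as subsets but a priori may or may not merge into the same connected face of $X$ globally. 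Since the origin is a corner of codimension $k$ by \Cref{lem:mfd-chart}, and it lies in the closure of each of these $k$ local boundary components, the number of connected faces of $X$ through $p$ equals the number of distinct images of these $k$ local components under $\pi_0(\partial_1 U) \to \pi_0(\partial_1 X)$.

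For the direction \ref{cond:mfdface-equiv:nhd} $\Rightarrow$ \ref{cond:mfdface-equiv:face}: if every $p$ has a chart $U$ with $\pi_0(\partial_1 U) \to \pi_0(\partial_1 X)$ injective, then the $k$ local faces at a codimension-$k$ corner $p$ map to $k$ distinct connected faces of $X$, each containing $p$ in its closure. Conversely, any connected face of $X$ containing $p$ must, near $p$, be one of these $k$ local faces (since locally $\partial_1 X$ looks exactly like $\partial_1 U$), so there are exactly $k$ of them. This gives precisely the defining condition for a manifold with faces.

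For \ref{cond:mfdface-equiv:face} $\Rightarrow$ \ref{cond:mfdface-equiv:nhd}: suppose $X$ is a manifold with faces, and let $p$ be a corner of codimension $k$. Pick any connected coordinate chart $U \cong \mathbb{H}^{\langle m \mid k \rangle}$ centered at $p$, with its $k$ local boundary components $B_1, \dots, B_k$. Each $B_i$ lies in a unique connected face $D_i \in \operatorname{bd} X$, and by the manifold-with-faces hypothesis $p$ lies in exactly $k$ distinct connected faces; since each $D_i$ contains $p$ in its closure, the $D_i$ must be exactly those $k$ faces, hence pairwise distinct. If the local components of $\partial_1 U$ were more numerous than $k$ — i.e. if shrinking were needed — one would refine $U$; but in fact for the standard model a connected neighborhood of $0$ already has exactly $k$ boundary components, so no shrinking is necessary and $\pi_0(\partial_1 U) \to \pi_0(\partial_1 X)$, $[B_i] \mapsto [D_i]$, is injective. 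At interior or lower-codimension points the statement is easier (a small enough $U$ has boundary meeting at most the relevant faces), and I would handle those uniformly by the same local model with $k$ possibly smaller.

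The main obstacle I anticipate is the bookkeeping around the claim that a sufficiently small \emph{connected} neighborhood of a codimension-$k$ corner in the model $\mathbb{H}^{\langle m \mid k \rangle}$ has its $\partial_1$ split into exactly $k$ connected pieces — in particular ruling out that two of the coordinate hyperplanes' traces accidentally connect up within $U$, and ensuring the neighborhood can be chosen connected while remaining a coordinate ball. This is elementary point-set topology of $\mathbb{R}^k_+ \times \mathbb{R}^{m-k}$ (e.g. using a product of intervals as $U$), but it is the one place where some care is genuinely required; everything else is a transfer of this local picture through the charts of \Cref{lem:mfd-chart}.
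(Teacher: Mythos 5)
Your proof is correct and follows essentially the same route as the paper's: both reduce to counting the $k$ boundary components of a convex chart around a codimension-$k$ corner and observing that the number of connected faces through $p$ equals the number of distinct images of these components in $\pi_0(\partial_1 X)$, with injectivity as the equality case. One small slip to fix: a merely \emph{connected} neighborhood of $0$ in $\mathbb H^{\langle m|k\rangle}$ can have more than $k$ boundary components (a thin tube can return to the same hyperplane), so you genuinely need convexity --- your product-of-intervals choice, which is exactly the paper's convex-image hypothesis --- both when producing the chart in (a)$\Rightarrow$(b) and when shrinking inside the given chart in (b)$\Rightarrow$(a).
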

\begin{proof}
For each $p\in X$, we denote by $c(p)$ the number of connected faces of $X$ which contain $p$.
Note that $X$ is a manifold with faces if and only if, for each $p\in X$, we have $c(p)=k$ provided $p$ is a corner of index $k$.
On the other hand, if $p$ is a corner of index $k$, we can take a coordinate open neighborhood $U$ centered at $p$ so that $\pi_0(\partial_1 U)$ consists of exactly $k$ elements; e.g. take a coordinate $\varphi:U\to\mathbb H^{\langle m|k\rangle}$ so that the image $\varphi(U)$ is convex.
Now, consider the map $\pi_0(\partial_1U)\to\pi_0(\partial_1X)$, whose image we denote by $A$.
If $C$ is a connected face of $X$ containing $p$, then it is the closure of an element of $A$.
Conversely, since every connected face of $U$ contains $p$, the closure of each element of $A$ contains $p$.
Thus, we have the inequality
\[
c(p) = \#A \le \#\pi_0(\partial_1U) = k\,.
\]
The middle equality holds if and only if the map is injective.
Hence the implication \ref{cond:mfdface-equiv:face} $\Rightarrow$ \ref{cond:mfdface-equiv:nhd} follows.
The converse follows from the argument above and the observation that if $\varphi:U'\to\mathbb H^{\langle m|k\rangle}$ is a coordinate centered at $p$, and $U\subset U'$ is an open neighborhood of $p$ such that $\varphi(U)\subset\mathbb H^{\langle m|k\rangle}$ is convex, then the map $\pi_0(\partial_1U)\to\pi_0(\partial_1 U')$ is injective.
\end{proof}

The next lemma shows that if $X$ is a manifold with faces, then each face of $X$ is actually a submanifold as expected.
For a finite subset $\sigma\subset\operatorname{bd}X$ of connected faces, we write
\[
\overline\partial_\sigma X := \bigcap_{C\in\sigma} C\,.
\]
In particular, we use the convention that $\overline\partial_\varnothing X = X$.

\begin{lemma}
\label{lem:mfdface-cap}
Let $X$ be a manifold with faces.
Suppose $\sigma=\{C_1,\dots,C_r\}\subset\operatorname{bd}X$ is a set of $r$ distinct connected faces of $X$.
Then, for each $p\in\overline\partial_\sigma X$, there are a coordinate $\varphi:U\to\mathbb H^{\langle m|k\rangle}$ centered at $p$ and an injective map
\[
\beta:\sigma\to\{k+1,\dots,m\}
\]
such that the following square is a pullback:
\[
\xymatrix{
  \overline\partial_\sigma X\cap U \ar[r] \ar[d] \ar@{}[dr]|(.4)\pbcorner & U \ar[d]^\varphi \\
  \{x_{\beta(1)}=x_{\beta(2)}=\dots=x_{\beta(r)}0\} \ar@{^(->}[r] & \mathbb H^{\langle m|k\rangle} }
\]
In particular, the subset $\overline\partial_\sigma X\subset X$ is a submanifold of $X$ of codimension $r$.
\end{lemma}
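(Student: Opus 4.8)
The plan is to build, around each $p\in\overline\partial_\sigma X$, a coordinate chart in which the $r$ faces belonging to $\sigma$ become $r$ distinct coordinate hyperplanes, and then to read off the desired pullback square directly from that chart. First I would fix $p\in\overline\partial_\sigma X$ and let $c$ denote its codimension (index). Since $p$ lies on each of the $r$ distinct connected faces $C_1,\dots,C_r$, and since the manifold-with-faces hypothesis forces $p$ to lie on exactly $c$ connected faces of $X$, we have $r\le c$; set $k:=m-c$. Using \Cref{lem:mfdface-equiv} I would choose a coordinate chart $\varphi\colon U\to\mathbb H^{\langle m|k\rangle}$ centered at $p$ with $\varphi(U)$ convex and with $\pi_0(\partial_1U)\to\pi_0(\partial_1X)$ injective; convexity then guarantees that $\partial_1U$ has exactly $c$ connected components $L_{k+1},\dots,L_m$, where $L_i$ is the one on which the $i$-th coordinate of $\varphi$ vanishes identically, and that the closure of $L_i$ in $U$ equals $\varphi^{-1}(\{x_i=0\}\cap\varphi(U))$.

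Next I would match local faces with global faces. Each $L_i$ is connected and contained in $\partial_1X$, hence lies in a unique connected component $D_i$ of $\partial_1X$, whose closure $\overline{D_i}$ is a connected face of $X$ containing $p$. Injectivity of $\pi_0(\partial_1U)\to\pi_0(\partial_1X)$ makes $i\mapsto\overline{D_i}$ an injection of $\{k+1,\dots,m\}$ into the set of connected faces of $X$ through $p$, and since both sets have exactly $c$ elements it is a bijection. In particular, for each $j$ there is a unique index $\beta(C_j)\in\{k+1,\dots,m\}$ with $C_j=\overline{D_{\beta(C_j)}}$, and distinctness of the $C_j$ forces $\beta\colon\sigma\to\{k+1,\dots,m\}$ to be injective. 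The key local identity is $C_j\cap U=\overline{L_{\beta(C_j)}}$, the closure taken in $U$: the inclusion $\supseteq$ is immediate, while for $\subseteq$ one observes that a point of $\overline{D_{\beta(C_j)}}$ lying in the open set $U$ is a limit of a sequence in $D_{\beta(C_j)}$ that is eventually inside $U$, hence inside $D_{\beta(C_j)}\cap U=L_{\beta(C_j)}$. Together with the first paragraph this gives $\varphi(C_j\cap U)=\{x_{\beta(C_j)}=0\}\cap\varphi(U)$.

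Intersecting these identities over $j=1,\dots,r$ yields $\varphi(\overline\partial_\sigma X\cap U)=\{x_{\beta(C_1)}=\dots=x_{\beta(C_r)}=0\}\cap\varphi(U)$, which is exactly the asserted pullback square. Since $\{x_{\beta(C_1)}=\dots=x_{\beta(C_r)}=0\}\cong\mathbb H^{\langle m-r|k\rangle}$ is a submanifold of codimension $r$ in $\mathbb H^{\langle m|k\rangle}$ (the $r$ coordinates dropped are unmarked, as $\beta$ takes values exceeding $k$), and $p$ was arbitrary, it follows that $\overline\partial_\sigma X\subset X$ is a submanifold of codimension $r$ (cf.\ \Cref{cor:pbchart}). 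I expect the bookkeeping of the middle paragraph to be the main obstacle: one must verify that every connected face of $X$ through $p$ corresponds to exactly one local face at $p$ — so that $\beta$ is both well defined and injective — and that passing to the small chart $U$ does not shave off part of a coordinate hyperplane, which is precisely the point where convexity of $\varphi(U)$ enters.
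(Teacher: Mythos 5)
Your proposal is correct and follows essentially the same route as the paper's proof: choose a convex chart with $\pi_0(\partial_1U)\to\pi_0(\partial_1X)$ injective (via \Cref{lem:mfdface-equiv}), identify the coordinate hyperplanes $\{x_i=0\}$ with connected faces through $p$ to define the injective $\beta$, and intersect the resulting face-by-face pullback squares. You simply spell out more carefully the steps the paper compresses (the cardinality argument showing $\sigma$ lies in the image of the index-to-face map, and the identity $C_j\cap U=\overline{L_{\beta(C_j)}}$), which is fine.
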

\begin{proof}
Take a coordinate $\varphi:U\to\mathbb H^{\langle m|k\rangle}$ centered at $p$ such that $\varphi(U)$ is convex, and the map $\pi_0(\partial_1U)\to\pi_0(\partial_1X)$ is injective.
Then we obtain an injective map
\[
C^\varphi:
\{k+1,\dots,m\}
\simeq \pi_0(\partial_1\mathbb H^{\langle m|k\rangle})
\simeq \pi_0(\partial_1U)
\to \pi_0(\partial_1X)
\simeq \operatorname{bd}X\,,
\]
where the first map is given by
\[
i\mapsto \{x_i=0\}\,.
\]
For each $k+1\le i\le m$, we have a pullback square below:
\begin{equation}
\label{sq:prf:mfdface-cap:face}
\vcenter{
  \xymatrix{
    C^\varphi(i)\cap U \ar[r] \ar[d] \ar@{}[dr]|(.4)\pbcorner & U \ar[d]^\varphi \\
    \{x_i=0\} \ar[r] & \mathbb H^{\langle m|k\rangle} } }
\end{equation}
Note that since $p\in\overline\partial_\sigma X=C_1\cap\dots\cap C_r$ and $U$ is an open neighborhood of $p$, the set $\sigma=\{C_1,\dots,C_r\}$ is included in the image of the map $C^\varphi$.
Hence, one can find a map $\beta:\sigma\to\{k+1,\dots,m\}$ so that $C^\varphi\beta=\mathrm{id}_\sigma$.
Then, the required property is veified using the pullback square \ref{sq:prf:mfdface-cap:face}.
\end{proof}

\begin{corollary}
\label{cor:mfdface-uniquecap}
Let $X$ be a manifold with faces, and let $\sigma,\tau\subset\operatorname{bd}X$ be two finite sets of connected faces of $X$ such that $\overline\partial_\sigma X\neq\varnothing$.
Then $\overline\partial_\sigma X\subset\overline\partial_\tau X$ if and only if $\sigma\supset\tau$.
\end{corollary}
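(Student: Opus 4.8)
The plan is to prove the two implications of the biconditional separately. The implication ``$\sigma\supset\tau\Rightarrow\overline\partial_\sigma X\subset\overline\partial_\tau X$'' is immediate from the definition $\overline\partial_\sigma X=\bigcap_{C\in\sigma}C$: enlarging the index set only shrinks the intersection, so $\bigcap_{C\in\sigma}C\subset\bigcap_{C\in\tau}C$ whenever $\tau\subset\sigma$, the convention $\overline\partial_\varnothing X=X$ covering the degenerate cases. This uses nothing beyond the definition.

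For the converse, assume $\overline\partial_\sigma X\subset\overline\partial_\tau X$ and $\overline\partial_\sigma X\neq\varnothing$, and set $r:=\#\sigma$. The key step is to exhibit a point $p\in\overline\partial_\sigma X$ that is a corner of $X$ of index exactly $r$. I would pick any $p_0\in\overline\partial_\sigma X$ and apply \Cref{lem:mfdface-cap}: there are a chart $\varphi\colon U\to\mathbb H^{\langle m|k\rangle}$ centered at $p_0$ and an injective map $\beta\colon\sigma\to\{k+1,\dots,m\}$ realizing $\overline\partial_\sigma X\cap U$ as $\varphi^{-1}\bigl(\{x_{\beta(C)}=0\text{ for all }C\in\sigma\}\bigr)$. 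In the local model $\mathbb H^{\langle m|k\rangle}$ the index of a corner equals the number of vanishing coordinates among $x_{k+1},\dots,x_m$; hence any point of $\{x_{\beta(C)}=0\ \forall C\}\cap\mathbb H^{\langle m|k\rangle}$ at which the remaining boundary coordinates $x_i$, $i\in\{k+1,\dots,m\}\setminus\beta(\sigma)$, are all strictly positive is a corner of index exactly $r$. Pulling such a point back through $\varphi$ yields the desired $p$.

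It then remains to conclude. Because $X$ is a manifold with faces and $p$ has index $r$, exactly $r$ connected faces of $X$ contain $p$. Every $C\in\sigma$ contains $p$ since $p\in\overline\partial_\sigma X=\bigcap_{C\in\sigma}C$, and $\#\sigma=r$, so $\sigma$ is precisely the set of connected faces of $X$ through $p$. From $p\in\overline\partial_\sigma X\subset\overline\partial_\tau X=\bigcap_{C\in\tau}C$ we get $p\in C$ for every $C\in\tau$, so $\tau$ is contained in the set of faces through $p$, i.e.\ $\tau\subset\sigma$, as required.

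The only non-formal ingredient is the production of the index-$r$ point, which is essentially just reading the corner index off the normal form supplied by \Cref{lem:mfdface-cap}; everything else is bookkeeping with the defining counting property of manifolds with faces. I therefore expect no genuine obstacle; the mild care is to ensure the ``generic'' point stays inside the chart domain $U$, which is automatic since the chart of \Cref{lem:mfdface-cap} has convex image and the point can be chosen as close to the origin as we please.
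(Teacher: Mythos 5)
Your proof is correct, but it takes a genuinely different route from the paper's. The paper reduces the converse to the equality case $\overline\partial_\sigma X=\overline\partial_\tau X$ and then argues by codimension counting: adjoining any $D\in\tau$ to $\sigma$ leaves $\overline\partial_{\sigma\cup\{D\}}X=\overline\partial_\sigma X$ nonempty, and since \Cref{lem:mfdface-cap} pins the codimension of a nonempty $\overline\partial_{\sigma'}X$ at exactly $\#\sigma'$, this forces $D\in\sigma$; the general inclusion case then follows by applying the equality case to $\overline\partial_{\sigma\cup\tau}X=\overline\partial_\sigma X$. You instead use \Cref{lem:mfdface-cap} to manufacture a \emph{generic} point $p\in\overline\partial_\sigma X$ of corner index exactly $\#\sigma$ and then invoke the defining counting property of manifolds with faces to identify $\sigma$ with the set of all connected faces through $p$, from which $\tau\subset\sigma$ is immediate. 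Both arguments ultimately rest on the same normal form; yours is more geometric and avoids the two-step reduction, while the paper's is a purely formal codimension/lattice computation that matches the style of the later lattice-theoretic arguments. The one point you rightly flag — that the index-$r$ point can be chosen inside the chart domain — is indeed harmless, since $\varphi(U)$ is an open neighborhood of $0$ in $\mathbb H^{\langle m|k\rangle}$ and points with $x_{\beta(C)}=0$ for $C\in\sigma$ and the remaining boundary coordinates small and positive accumulate at $0$.
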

\begin{proof}
It is directly follows from the definition that $\sigma\supset\tau$ implies $\overline\partial_\sigma X\subset\overline\partial_\tau X$, so we show the converse.
First, consider the case $\overline\partial_\sigma X=\overline\partial_\tau X$.
This equation implies that, for each $D\in\tau$,
\[
\overline\partial_{\sigma\cup\{D\}} X
= D\cap\overline\partial_\sigma X
= D\cap\overline\partial_\tau X
= \overline\partial_\tau X\,.
\]
Comparing the codimensions in $X$, we deduce from \Cref{lem:mfdface-equiv} that $C\in\tau$.
Hence, we obtain $\sigma\subset\tau$.
The same argument also shows $\sigma\supset\tau$, so we obtain $\sigma=\tau$.

Finally, suppose we have $\overline\partial_\sigma X\subset\overline\partial_\tau X$.
Then, we obtain
\[
\overline\partial_{\sigma\cup\tau}X
= \overline\partial_\sigma X \cap \overline\partial_\tau X
= \overline\partial_\sigma X\,.
\]
By the first part, this implies $\sigma\cup\tau=\sigma$; in other words, we have $\sigma\supset\tau$, which is the required result.
\end{proof}

Now, we see manifolds with faces give rise to excellent arrangements.

\begin{definition}
Let $X$ be a manifold with faces.
We define a poset $\Gamma_X$ whose underlying set is the set of subsets of $\operatorname{bd}X$ and ordered by the opposite of the incusions.
We call $\Gamma_X$ the face lattice of $X$.
For an element $\sigma\in\Gamma_X$, we denote by $c(\sigma)$ the cardinality of $\sigma$.
\end{definition}

We consider the assignment
\begin{equation}
\label{eq:tildeX-def}
\widetilde X:\Gamma_X\to\mathbf{Emb}
\ ;\quad \sigma\mapsto\overline\partial_\sigma X\,,
\end{equation}
which is in fact a pre-arrangement of manifolds thanks to \Cref{cor:mfdface-uniquecap}.
Furthermore, \Cref{cor:mfdface-uniquecap} even implies $\widetilde X$ is an arrangement of manifolds.

More generally, we can combine the arrangements of faces with excellent arrangements.
For an excellent arrangement $\mathcal X$, we write $\Gamma_{\mathcal X}:=\Gamma_{|\mathcal X|}$ for short.

\begin{proposition}
\label{prop:excel-bdry}
Let $\mathcal X$ be an excellent arrangement of shape $S$ such that the ambient manifold $|\mathcal X|$ is a manifold with finite faces.
Define a functor $\widetilde{\mathcal X}:S\times\Gamma_{\mathcal X}\to\mathbf{Emb}$ by
\[
\widetilde{\mathcal X}(s,\sigma) := \mathcal X(s)\cap\overline\partial_\sigma|\mathcal X|\,.
\]
Then $\widetilde{\mathcal X}$ is an excellent arrangement of manifolds.
\end{proposition}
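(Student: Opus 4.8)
The plan is to construct, for every point $p\in|\mathcal X|$, a $\widetilde{\mathcal X}$-chart around $p$. Since a $\widetilde{\mathcal X}$-chart exhibits each member $\widetilde{\mathcal X}(s,\sigma)$ locally as the vanishing locus of a set of coordinate functions and makes all the relevant inclusion squares pullbacks, producing such charts at every point will simultaneously show that each $\widetilde{\mathcal X}(s,\sigma)$ is a submanifold of $|\mathcal X|$, that the inclusions are embeddings, that meets are preserved, and that $\widetilde{\mathcal X}$ is excellent. As a preliminary, write $\sigma(p)\subset\operatorname{bd}|\mathcal X|$ for the set of connected faces of $|\mathcal X|$ containing $p$ and $s_{\mathcal X}(p)$ for the scope of $p$ in $\mathcal X$; one checks directly from $p\in\overline\partial_\sigma|\mathcal X|\iff\sigma\subseteq\sigma(p)$ that $p\in\widetilde{\mathcal X}(s,\sigma)$ if and only if $s\ge s_{\mathcal X}(p)$ and $\sigma\subseteq\sigma(p)$. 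Hence the scope of $p$ in $\widetilde{\mathcal X}$ is $(s_{\mathcal X}(p),\sigma(p))$, and the relevant shape poset is $S_{\ge s_{\mathcal X}(p)}\times(\Gamma_{\mathcal X})_{\ge\sigma(p)}$.

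Next I would start from an $\mathcal X$-chart $(U_0,\mathcal I,\varphi_0)$ around $p$, where $\mathcal I\colon S_{\ge s_{\mathcal X}(p)}\to\mathbf{MkFin}_{\mathrm{inj}}$ and $\varphi_0\colon U_0\hookrightarrow\mathbb H^{|\mathcal I|}$ with $\varphi_0(p)=0$, and shrink $U_0$ to an open $U\ni p$ with $\varphi_0(U)$ convex and disjoint from every connected face of $|\mathcal X|$ not containing $p$ (possible since $\operatorname{bd}|\mathcal X|$ is finite); this is still an $\mathcal X$-chart domain, with $\varphi:=\varphi_0|_U$. Because $|\mathcal X|$ is a manifold with faces and $\varphi(U)$ is convex, \Cref{lem:mfdface-equiv} forces $\pi_0(\partial_1U)\to\pi_0(\partial_1|\mathcal X|)$ to be injective, so the argument in the proof of \Cref{lem:mfdface-cap} applies verbatim with $\mathbb H^{|\mathcal I|}$ in place of $\mathbb H^{\langle m|k\rangle}$: the unmarked part $|\mathcal I|_+$ has cardinality the corner index of $p$, i.e.\ $\#\sigma(p)$, and it yields a bijection $C\colon|\mathcal I|_+\xrightarrow{\ \sim\ }\sigma(p)$ with $\overline\partial_\tau|\mathcal X|\cap U=\varphi^{-1}\bigl(\{x_i=0:i\in C^{-1}(\tau)\}\bigr)$ for every $\tau\subseteq\sigma(p)$. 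Combining this with the pullback identity $\mathcal X(s)\cap U=\varphi^{-1}(\mathbb H^{\mathcal I(s)})$ from the $\mathcal X$-chart, and with the fact that $U$ meets $\overline\partial_\sigma|\mathcal X|$ only when $\sigma\subseteq\sigma(p)$, one gets for all $(s,\sigma)$ above the scope
\[
\widetilde{\mathcal X}(s,\sigma)\cap U=\varphi^{-1}\bigl(\mathbb H^{\,\mathcal I(s)\setminus C^{-1}(\sigma)}\bigr),
\]
using the identity $\mathbb H^{A}\cap\{x_i=0:i\in B\}=\mathbb H^{A\setminus B}$ inside $\mathbb H^{|\mathcal I|}$ and the observation that deleting unmarked indices from a marked subset does not change its markings.

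To finish I would package the data: define $\mathcal J(s,\sigma):=\mathcal I(s)\setminus C^{-1}(\sigma)$, with markings inherited from $|\mathcal I|$, on the poset $S_{\ge s_{\mathcal X}(p)}\times(\Gamma_{\mathcal X})_{\ge\sigma(p)}$. Functoriality is immediate ($\mathcal I$ is order-preserving and $C^{-1}$ reverses inclusions), the ambient set is $|\mathcal I|$, and meet-preservation reduces, via the componentwise meet $(s_1\wedge s_2,\sigma_1\cup\sigma_2)$, to the set identity $(A_1\cap A_2)\setminus(B_1\cup B_2)=(A_1\setminus B_1)\cap(A_2\setminus B_2)$ together with $\mathcal I$ preserving meets and $C^{-1}$ carrying unions to unions; so $\mathcal J$ is an arrangement of marked finite sets with $\mathbb H^{|\mathcal J|}=\mathbb H^{|\mathcal I|}$. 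The displayed formula is then exactly the pullback axiom for $(U,\mathcal J,\varphi)$ to be a $\widetilde{\mathcal X}$-chart around $p$, while axioms (i) and (ii) are immediate from the choice of $U$ and from $\varphi\bigl(\widetilde{\mathcal X}(s,\sigma)\cap U\bigr)\subseteq\mathbb H^{\mathcal J(s,\sigma)}$. Since $p$ was arbitrary, $\widetilde{\mathcal X}$ is an excellent arrangement.

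The main obstacle is the compatibility step in the second paragraph: the $\mathcal X$-chart is \emph{a priori} adapted only to the family $\{\mathcal X(s)\}_s$, not to the face structure of $|\mathcal X|$, and the crux is to see that in a manifold with faces any chart with convex image is automatically adapted to all of its faces through the center, so that the two coordinate pictures can be superimposed. This is precisely what the manifold-with-faces hypothesis buys, and it is where \Cref{lem:mfdface-equiv} and \Cref{lem:mfdface-cap} enter; finiteness of $\operatorname{bd}|\mathcal X|$ is needed both to shrink $U$ away from the irrelevant faces and to guarantee that $S\times\Gamma_{\mathcal X}$ is a finite lattice.
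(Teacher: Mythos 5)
Your proof is correct and follows exactly the route the paper has in mind: the paper dismisses this as ``obvious from the local model,'' and what you have written is the local-model argument spelled out, with the one genuinely nontrivial point (that a convex-image restriction of an $\mathcal X$-chart is automatically adapted to the faces through its center, via \Cref{lem:mfdface-equiv} and the proof of \Cref{lem:mfdface-cap}) correctly identified and handled. Nothing to add.
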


The proof is obvious from the local model for excellent arrangements.

\begin{corollary}
\label{cor:bdry-arr-excel}
If $X$ is a manifold with faces, then the boundary arrangement $\widetilde X$ is excellent.
\end{corollary}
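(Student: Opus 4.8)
The plan is to deduce the statement from \Cref{prop:excel-bdry} by taking the excellent arrangement appearing there to have the one-point shape. First one checks that the trivial arrangement $X$ of shape $\mathbf{pt}$ is excellent: for $S=\mathbf{pt}$ one has $s(p)=\ast$ and $S_{\ge s(p)}=\mathbf{pt}$, so an $\mathcal X$-chart around $p$ amounts to nothing more than an ordinary coordinate chart $\varphi:U\to\mathbb H^{I}$ centered at $p$ (conditions (i)--(iii) being trivially satisfied), and such a chart exists by \Cref{lem:mfd-chart}. Applying \Cref{prop:excel-bdry} to this arrangement yields the excellent arrangement
\[
\widetilde{\mathcal X}:\mathbf{pt}\times\Gamma_X\to\mathbf{Emb}\,,\qquad (\ast,\sigma)\mapsto X\cap\overline\partial_\sigma X=\overline\partial_\sigma X\,.
\]
Under the evident isomorphism $\mathbf{pt}\times\Gamma_X\cong\Gamma_X$ this functor is exactly the boundary arrangement $\widetilde X$ of \eqref{eq:tildeX-def}; hence $\widetilde X$ is excellent.

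The one point that deserves a word is that \Cref{prop:excel-bdry} is phrased for manifolds with \emph{finitely many} faces, whereas $\operatorname{bd}X$ may here be infinite. This is harmless because excellence is a strictly local property: given a corner $p$ of codimension $c$, the definition of a manifold with faces says that exactly $c$ connected faces pass through $p$, and these form the scope $s(p)\in\Gamma_X$, with $(\Gamma_X)_{\ge s(p)}=2^{s(p)}$ a finite lattice. Thus only finitely many members of $\widetilde X$ are visible near $p$, and the local model underlying the (obvious) proof of \Cref{prop:excel-bdry} applies verbatim. Explicitly, one chooses a coordinate $\varphi:U\to\mathbb H^{\langle m|k\rangle}$ with $m-k=c$ and an injection $\beta:s(p)\to\{k+1,\dots,m\}$ as in \Cref{lem:mfdface-cap}, and defines $\mathcal I(\tau):=\langle m|k\rangle\setminus\beta(\tau)$ for $\tau\subseteq s(p)$; then $\varphi$ carries each $U\cap\overline\partial_\tau X$ onto $\mathbb H^{\mathcal I(\tau)}$ compatibly with intersections, so $(U,\mathcal I,\varphi)$ is the desired $\widetilde X$-chart.

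There is no genuine difficulty in this argument; the one thing to keep straight is the opposite-inclusion ordering on $\Gamma_X$, under which meets are \emph{unions} of face-sets, so that $\mathcal I$ must convert meets into intersections of coordinate subspaces. The only routine verification is that $U$ can be shrunk so that no connected face outside $s(p)$ meets it — arranged by taking $\varphi$ with convex image and invoking the face-injectivity property of \Cref{lem:mfdface-equiv}, which simultaneously forces condition (i) in the definition of an $\widetilde X$-chart.
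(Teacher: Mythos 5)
Your argument is exactly the paper's: \Cref{cor:bdry-arr-excel} is obtained by specializing \Cref{prop:excel-bdry} to the one-point shape $S=\mathbf{pt}$, and your identification of $\mathbf{pt}\times\Gamma_X$ with $\Gamma_X$ is the intended reading. The extra care you take over the finiteness of $\operatorname{bd}X$ and the explicit local model is a reasonable elaboration of what the paper leaves as ``obvious,'' but it does not change the route.
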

\begin{proof}
It is a special case of \Cref{prop:excel-bdry} for $S=\mathbf{pt}$.
\end{proof}

\begin{corollary}
\label{cor:excel-face}
Let $\mathcal X$ be an excellent arrangement of shape $S$ whose ambient manifold $|\mathcal X|$ is a manifold with finite faces.
Then for $0\le i\le k$, each element $\sigma\in\Gamma_{\mathcal X}$ gives rise to an excellent arrangement
\[
\overline\partial_\sigma\widetilde{\mathcal X}:S\to\mathbf{Emb}
\ ;\quad s\mapsto\mathcal X(s)\cap\overline\partial_\sigma|\mathcal X|\,.
\]
\end{corollary}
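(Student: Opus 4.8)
The plan is to deduce the statement formally from \Cref{prop:excel-bdry} together with part~(2) of \Cref{prop:excel-newold}, by realizing $\overline\partial_\sigma\widetilde{\mathcal X}$ as the restriction of the excellent arrangement $\widetilde{\mathcal X}$ of shape $S\times\Gamma_{\mathcal X}$ along a suitable slice inclusion $S\hookrightarrow S\times\Gamma_{\mathcal X}$.

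First I would invoke \Cref{prop:excel-bdry}: since $|\mathcal X|$ is a manifold with finite faces, it produces an excellent arrangement $\widetilde{\mathcal X}:S\times\Gamma_{\mathcal X}\to\mathbf{Emb}$ with $\widetilde{\mathcal X}(s,\sigma)=\mathcal X(s)\cap\overline\partial_\sigma|\mathcal X|$. Here $S$ is a finite lattice (it is the shape of an arrangement), and $\Gamma_{\mathcal X}=\Gamma_{|\mathcal X|}$ is a finite lattice because $\operatorname{bd}|\mathcal X|$ is finite; hence $S\times\Gamma_{\mathcal X}$ is again a finite lattice.

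Next, for a fixed $\sigma\in\Gamma_{\mathcal X}$ I would consider the map
\[
\mu_\sigma:S\longrightarrow S\times\Gamma_{\mathcal X}\,,\qquad s\longmapsto(s,\sigma)\,,
\]
and check that it is a lattice homomorphism: since the $\Gamma_{\mathcal X}$-coordinate is constant, $\mu_\sigma(s)\wedge\mu_\sigma(t)=(s\wedge t,\sigma)=\mu_\sigma(s\wedge t)$, and likewise $\mu_\sigma$ preserves joins; it is evidently order-preserving and $S$ has a maximum, so \Cref{ex:arr:restrict} applies. Then, applying part~(2) of \Cref{prop:excel-newold} to the excellent arrangement $\widetilde{\mathcal X}$ and the lattice homomorphism $\mu_\sigma$, the restricted arrangement $\widetilde{\mathcal X}_{\mu_\sigma}$ is excellent.

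Finally I would observe that $\widetilde{\mathcal X}_{\mu_\sigma}$ is exactly $\overline\partial_\sigma\widetilde{\mathcal X}$: on objects $\widetilde{\mathcal X}_{\mu_\sigma}(s)=\widetilde{\mathcal X}(s,\sigma)=\mathcal X(s)\cap\overline\partial_\sigma|\mathcal X|$, and on morphisms both functors are the evident restrictions of the structure embeddings of $\mathcal X$; hence $\overline\partial_\sigma\widetilde{\mathcal X}=\widetilde{\mathcal X}_{\mu_\sigma}$ is excellent, as claimed. I do not expect any real obstacle, as the argument is purely formal once the two cited propositions are in hand; the only point that needs minor care is to keep track of the fact that $\Gamma_{\mathcal X}$ carries the reversed-inclusion order, so that the slice $S\times\{\sigma\}$ really is closed under the meet and join of $S\times\Gamma_{\mathcal X}$, i.e.\ that $\mu_\sigma$ is a lattice homomorphism.
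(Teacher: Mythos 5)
Your proof is correct and follows exactly the paper's argument: restrict the excellent arrangement $\widetilde{\mathcal X}$ of shape $S\times\Gamma_{\mathcal X}$ from \Cref{prop:excel-bdry} along the lattice homomorphism $s\mapsto(s,\sigma)$ and apply \Cref{prop:excel-newold}. The extra verification that $\mu_\sigma$ is a lattice homomorphism is a welcome but minor addition.
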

\begin{proof}
Notice that the arrangement $\overline\partial_\sigma\widetilde{\mathcal X}$ can be obtained by restricting the excellent arrangement $\widetilde{\mathcal C}$ defined in \ref{prop:excel-bdry} along the lattice homomorphism
\[
\mu:S\to S\times\Gamma_{\mathcal X}
\ ;\quad s\mapsto (s,\sigma)\,.
\]
Thus, the result follows from \Cref{prop:excel-newold}.
\end{proof}

For a general excellent arrangement $\mathcal X$, say, of shape $S$, the new excellent arrangement $\mathcal X$ might have a superfluous part; for instance, the arrangement $\widetilde{\widetilde {\mathcal X}}:S\times\Gamma_{\mathcal X}\times\Gamma_{\mathcal X}\to\mathbf{Emb}$ has obvious doubles.
This sometimes causes inefficient treatments of arrangement, and to avoid them, we consider additional conditions:

\begin{definition}
An arrangement $\mathcal X$ of manifolds of shape $S$ is said to be neat if the following are satisfied:
\begin{enumerate}[label={\rm(\roman*)}]
  \item $\mathcal X$ is excellent.
  \item For each $s\in S$, $\mathcal X(s)$ is a manifold with finite faces.
  \item For each $s\in S$ and each non-negative integer $k$, we have $\partial_k\mathcal X(s) = \mathcal X(s)\cap\partial_k|\mathcal X|$.
\end{enumerate}
\end{definition}

\begin{lemma}
\label{lem:excel-neat}
Let $\mathcal X$ be an excellent arrangement of manifolds of shape $S$ such that each $\mathcal X(s)$ is a manifold with finite faces for $s\in S$.
Then $\mathcal X$ is a neat arrangement if and only if each $p\in|\mathcal X|$ admits an $\mathcal X$-chart $(U,\mathcal I,\varphi)$ such that we have a pullback square
\[
\xymatrix{
  U\cap\mathcal X(s) \ar[r] \ar[d] \ar@{}[dr]|(.4)\pbcorner & U \ar[d]^\varphi \\
  \mathbb H^{\mathcal I(s)} \ar[r] & \mathbb H^{\langle m|k\rangle} }
\]
with $\mathcal I(s)_+ = \langle m|k\rangle_+ = \{k+1,\dots,m\}$ for each $s\in S_{\ge s(p)}$.
\end{lemma}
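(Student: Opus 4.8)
The plan is to reduce the neatness condition to a purely local statement about $\mathcal X$-charts, so the real content is showing that one can always shrink and modify a given $\mathcal X$-chart to make the defining equations of the members $\mathcal X(s)$ ``respect the corners'' in the precise sense that $\mathcal I(s)_+ = \langle m|k\rangle_+$. The ``if'' direction is essentially immediate: if every point admits an $\mathcal X$-chart of the stated form, then for each $s \ge s(p)$ the pullback square with $\mathbb H^{\mathcal I(s)} \hookrightarrow \mathbb H^{\langle m|k\rangle}$ sending the last $m-k$ coordinates to the last $m-k$ coordinates forces $\partial_j(U\cap\mathcal X(s)) = (U\cap\mathcal X(s))\cap\partial_j U$ for each $j$; since this holds in a neighborhood of every point, we get $\partial_k\mathcal X(s) = \mathcal X(s)\cap\partial_k|\mathcal X|$ globally, which together with the hypotheses that $\mathcal X$ is excellent and each $\mathcal X(s)$ has finite faces is exactly neatness.

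For the ``only if'' direction I would start from an arbitrary $\mathcal X$-chart $(U,\mathcal I,\varphi)$ around $p$, which exists by excellence, with $\varphi : U \hookrightarrow \mathbb H^{|\mathcal I|}$ and $|\mathcal I| \cong \langle m|k\rangle$ where $m = \dim|\mathcal X|$ and $k = m - (\text{codim of the corner } p)$. The condition defining an $\mathcal X$-chart already gives pullback squares $U\cap\mathcal X(s) \to U$ over $\mathbb H^{\mathcal I(s)}\hookrightarrow\mathbb H^{|\mathcal I|}$, but the inclusion of index sets $\mathcal I(s)\hookrightarrow|\mathcal I|$ need not send $\mathcal I(s)_+$ into $|\mathcal I|_+$ compatibly with the corner structure of $|\mathcal X|$. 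The key observation is that the neatness hypothesis $\partial_j\mathcal X(s) = \mathcal X(s)\cap\partial_j|\mathcal X|$, read through the chart, says precisely that the corner locus of $\mathcal X(s)$ inside $U\cap\mathcal X(s)$ is cut out by the coordinates in $|\mathcal I|_+$ restricted to $\mathbb H^{\mathcal I(s)}$; equivalently, the marked part $\mathcal I(s)_0$ consists of exactly the unmarked-in-$|\mathcal I|$ elements of $\mathcal I(s)$, so $\mathcal I(s)_+ = \mathcal I(s)\cap|\mathcal I|_+$. I would then replace $\mathcal I$ by the arrangement of marked finite sets with the same underlying functor but with $\mathcal I(s)$ re-marked so that $\mathcal I(s)_+ := \mathcal I(s)\cap|\mathcal I|_+$; using \Cref{lem:mfd-chart} (uniqueness of the marked set attached to a point) and the fact that $|\mathcal X|$ is a manifold with faces, this re-marking is forced and the pullback squares persist. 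Shrinking $U$ so that $\varphi(U)$ is convex (as in the proofs of \Cref{lem:mfdface-equiv} and \Cref{lem:mfdface-cap}) ensures $\pi_0(\partial_1 U)\to\pi_0(\partial_1|\mathcal X|)$ is injective, which is what lets us identify $|\mathcal I|_+ = \langle m|k\rangle_+$ canonically with the connected faces of $|\mathcal X|$ through $p$ and run the argument uniformly in $s$.

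The main obstacle I expect is bookkeeping the compatibility across the whole poset $S_{\ge s(p)}$ at once: the re-marking must be performed simultaneously for all $s$, and one must check it still defines a \emph{functor} to $\mathbf{MkFin}_{\mathrm{inj}}$ preserving meets, i.e. that $(\mathcal I(s\wedge t))_+ = \mathcal I(s)_+\cap\mathcal I(t)_+$ after re-marking. This follows because meets in $\mathbf{MkFin}_{\mathrm{inj}}$ are computed on underlying sets and the marked parts intersect correctly, but it needs the neatness condition applied to $s$, $t$, and $s\wedge t$ together with the observation that $\overline\partial_\sigma$ is compatible with intersections (\Cref{cor:mfdface-uniquecap}). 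The remaining verification — that the modified triple $(U,\mathcal I,\varphi)$ is still an $\mathcal X$-chart and now additionally satisfies the displayed pullback square with $\mathcal I(s)_+ = \langle m|k\rangle_+$ — is then routine from the pullback pasting lemma, since changing the marking on the codomain side does not affect the underlying diagram of manifolds, only its interpretation.
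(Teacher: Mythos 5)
Your ``if'' direction is essentially the paper's and is fine. The gap is in the ``only if'' direction: the conclusion you actually extract from neatness --- that the marking on $\mathcal I(s)$ is forced to satisfy $\mathcal I(s)_+=\mathcal I(s)\cap|\mathcal I|_+$ --- is strictly weaker than what the lemma asserts, namely $\mathcal I(s)_+=\langle m|k\rangle_+$. The two agree only if $\langle m|k\rangle_+\subseteq\mathcal I(s)$, i.e.\ only if every unmarked coordinate of the ambient chart actually occurs in $\mathcal I(s)$, and nothing in your argument addresses this inclusion. It is not bookkeeping: it is precisely where the condition $\partial_r\mathcal X(s)=\mathcal X(s)\cap\partial_r|\mathcal X|$ gets used at top codimension. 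The paper's route is to show that neatness plus the chart's pullback square forces $\partial_r\mathbb H^{\mathcal I(s)}=\mathbb H^{\mathcal I(s)}\cap\partial_r\mathbb H^{\langle m|k\rangle}$ for every $r$; taking $r=m-k$, the right-hand side contains $0$, so $\partial_{m-k}\mathbb H^{\mathcal I(s)}\neq\varnothing$, which forces $\#\mathcal I(s)_+\ge m-k$ and hence, together with the containment $\mathcal I(s)_+\subseteq\langle m|k\rangle_+$ that you did establish, the desired equality. A concrete check that your weaker conclusion does not suffice: for $|\mathcal X|=\mathbb R\times\mathbb R_+$ and $\mathcal X(s)=\mathbb R\times\{0\}$ one has $\mathcal I(s)=\{1\}$ marked, so $\mathcal I(s)_+=\varnothing=\mathcal I(s)\cap|\mathcal I|_+$, yet $\mathcal I(s)_+\neq\langle 2|1\rangle_+$ and the arrangement is not neat ($\partial_1\mathcal X(s)=\varnothing$ while $\mathcal X(s)\cap\partial_1|\mathcal X|=\mathcal X(s)$).

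A secondary point: the re-marking machinery and the worry about meet-preservation of the modified functor are unnecessary. As you half-observe, the marking is already \emph{forced} on any $\mathcal X$-chart (an unmarked $j\in\mathcal I(s)_+$ with $j\in|\mathcal I|_0$ would give $\mathcal X(s)$ boundary in the interior of $|\mathcal X|$, contradicting $\partial_1\mathcal X(s)=\mathcal X(s)\cap\partial_1|\mathcal X|$), so no new triple is ever constructed; the paper simply verifies that an \emph{arbitrary} $\mathcal X$-chart around $p$ already has the stated property. Once you supply the missing top-codimension step above, your argument collapses to the paper's.
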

\begin{proof}
Suppose first that $\mathcal X$ is a neat arrangement, and let $p\in|\mathcal X|$.
Take an arbitrary $\mathcal X$-chart $(U,\mathcal I,\varphi)$ around $p$ with $|\mathcal I|=\langle m|k\rangle$, so that we have a pullback square
\begin{equation}
\label{diag:prf:excel-neat:pb}
\xymatrix{
  U\cap\mathcal X(s) \ar[r] \ar[d] \ar@{}[dr]|(.4)\pbcorner & U \ar[d]^\varphi \\
  \mathbb H^{\mathcal I(s)} \ar[r] & \mathbb H^{\langle m|k\rangle} }
\end{equation}
for each $s\ge s(p)$.
Then, since $\mathcal X$ is neat, for $1\le r\le m-k$, we have
\[
\partial_r(U\cap\mathcal X(s))
= U\cap\partial_r\mathcal X(s)
= U\cap\mathcal X(s)\cap\partial_r|\mathcal X|
= (\partial_r U)\cap\mathcal X(s)\,.
\]
By the pullback square \eqref{diag:prf:excel-neat:pb}, this implies that
\[
\partial_r\mathbb H^{\mathcal I(s)} = \mathbb H^{\mathcal I(s)}\cap\partial_r\mathbb H^{\langle m|k\rangle}\,.
\]
In particular, we have $\partial_{m-k}\mathbb H^{\mathcal I(s)}=\{0\}$, which implies $\#\mathcal I(s)_+=m-k$ and hence $\mathcal I(s)_+=\langle m|k\rangle_+$.

Conversely, suppose we have an $\mathcal X$-chart $(U,\mathcal I,\varphi)$ around $p\in|\mathcal X|$ such that $|\mathcal I|=\langle m|k\rangle$ and $\mathcal I(s)_+=\langle m|k\rangle_+$ for each $s\in S_{\ge s(p)}$.
In this case, one can easily verify the equation
\[
\partial_r\mathbb H^{\mathcal I(s)}
= \mathbb H^{\mathcal I(s)}\cap\partial_r\mathbb H^{\langle m|k\rangle}
\]
for $1\le r\le m-k$.
By the pullback square \eqref{diag:prf:excel-neat:pb}, we obtain $\partial_r (U\cap\mathcal X(s))=\mathcal X(s)\cap \partial_rU$ for each $s\in S_{\ge s(p)}$ and $0\le r\le m-k$.
Thus, $\mathcal X$ is neat if it is covered by such $\mathcal X$-charts, which is the required result.
\end{proof}

\subsection{Edgings of manifolds}
\label{sec:edge}

Next, we introduce a notion to relate corners of two manifolds.
We make use of this to require maps to send each faces to designated ones.

For this purpose, we consider partial maps between the sets of connected faces.
Recall that, for sets $A$ and $B$, a partial map from $A$ to $B$ is a map from a subset $A'\subset A$ to $B$.
If $A\supset A'\xrightarrow f B$ and $B\supset B'\xrightarrow g C$ are two partial maps, then we have a partial map $gf$ defined on $A'\cap f^{-1}(B)$.
This defines a category; we denote by $\mathbf{Fin}^\partial$ the category of finite sets and partial maps.
For a partial map $f:A\to B\in\mathbf{Fin}^\partial$, we write $D(f)$ the subsets of $A$ where $f$ defined.
On the other hand, we define a category $\mathbf{BLat}^{\mathsf f}_{\mathrm{rex}}$ as follows:
The objects are finite Boolean lattices.
For a finite Boolean lattice $\Lambda$, we define the coheight of each element $\lambda\in\Lambda$ to be the length $l$ of the longest chain $\lambda=\lambda_0<\lambda_1<\dots<\lambda_l$ in $\Lambda$.
Then, the morphisms of $\mathbf{BLat}^{\mathsf f}_{\mathrm{rex}}$ are those maps $\varphi:\Lambda\to\Lambda'$ which preserves all infimums and do not raise the coheights.
We have the following basic result:

\begin{proposition}
\label{prop:partmap-blex}
For a finite set $A$, we denote by $\Gamma_A$ the opposite lattice of the powerset of $A$.
For each partial map $f:A\to B\in\mathbf{Fin}^\partial$, we define a map $\widetilde f:\Gamma_A\to \Gamma_B$ by
\[
\widetilde f(A') := f(A'\cap D(\beta))\,.
\]
Then, the assignment $A\mapsto\Gamma_A$ defines a functor $\mathbf{Fin}^\partial\to\mathbf{BLat}^{\mathsf f}_{\mathrm{rex}}$ which is an equivalence of categories.
\end{proposition}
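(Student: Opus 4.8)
The plan is to handle the three parts of the statement separately: that $\Gamma\colon A\mapsto\Gamma_A$ is a well-defined functor into $\mathbf{BLat}^{\mathsf f}_{\mathrm{rex}}$, that it is essentially surjective, and that it is fully faithful. First I would check that, for a partial map $f\colon A\to B$, the assignment $\widetilde f(A')=f(A'\cap D(f))$ is a morphism of $\mathbf{BLat}^{\mathsf f}_{\mathrm{rex}}$. It preserves all infima because the infimum in $\Gamma_A$ is the union of subsets of $A$, and both $A'\mapsto A'\cap D(f)$ and direct image under $f$ send unions to unions; the empty infimum is the top element $\varnothing$, which maps to $\varnothing$. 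For the coheight condition, note that the coheight of $A'$ in $\Gamma_A$ is exactly $\#A'$ (an ascending chain from $A'$ removes elements one at a time down to $\varnothing$), and $\#\widetilde f(A')=\#f(A'\cap D(f))\le\#(A'\cap D(f))\le\#A'$, so coheights are not raised. Functoriality, $\widetilde{\operatorname{id}}=\operatorname{id}$ and $\widetilde{gf}=\widetilde g\,\widetilde f$, I expect to reduce to the set-theoretic identity $A'\cap D(gf)=\{a\in A'\cap D(f)\mid f(a)\in D(g)\}$ and to cause no trouble.

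For essential surjectivity I would invoke the classical structure theorem: every finite Boolean lattice $\Lambda$ is isomorphic to the powerset lattice of its set $X$ of atoms, and complementation identifies that powerset with its opposite $\Gamma_X$; hence $\Lambda\cong\Gamma_X$ is in the image of $\Gamma$.

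The substantive part is full faithfulness, and the key structural observation is that the coatoms of $\Gamma_A$ are exactly the singletons $\{a\}$, $a\in A$, and that every $A'\in\Gamma_A$ can be written $A'=\bigwedge_{a\in A'}\{a\}$, an infimum of coatoms (with the empty infimum recovering the top $\varnothing$). Given a morphism $\varphi\colon\Gamma_A\to\Gamma_B$ in $\mathbf{BLat}^{\mathsf f}_{\mathrm{rex}}$, the coheight bound forces each $\varphi(\{a\})$ to have coheight at most $1$, hence to be either the top $\varnothing$ or a singleton; this lets me define a partial map $f\colon A\to B$ by $D(f)=\{a\mid\varphi(\{a\})\ne\varnothing\}$ and $\varphi(\{a\})=\{f(a)\}$ for $a\in D(f)$. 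Since $\varphi$ preserves infima, expanding an arbitrary $A'$ as $\bigwedge_{a\in A'}\{a\}$ gives $\varphi(A')=\bigwedge_{a\in A'}\varphi(\{a\})=\bigcup_{a\in A'\cap D(f)}\{f(a)\}=f(A'\cap D(f))=\widetilde f(A')$, so $\varphi=\widetilde f$; and evaluating $\widetilde f$ back on singletons shows $f$ is unique. Combined with the earlier parts this yields the equivalence.

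I expect the only genuinely non-formal input to be the structure theorem for finite Boolean lattices; the step requiring the most care is the reconstruction, where one must keep the order-reversal in $\Gamma_A$ straight (infima are unions, the top is $\varnothing$, coatoms are singletons, coheight is cardinality) and verify that it is precisely the coheight-preservation hypothesis that pins $\varphi$ down on coatoms. Without that hypothesis the infimum-preserving endomorphisms of $\Gamma_A$ would be strictly more general than those coming from partial maps, so this is where the full strength of the definition of $\mathbf{BLat}^{\mathsf f}_{\mathrm{rex}}$ is used.
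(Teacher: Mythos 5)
Your proof is correct and follows essentially the same route as the paper: the key observations — that coheight in $\Gamma_A$ equals cardinality, that infima are unions, and that the coheight bound forces a morphism to send each coatom (singleton) to a coatom or the top, which reconstructs a unique partial map — are exactly those underlying the paper's explicit quasi-inverse functor $\operatorname{coat}$. The only difference is packaging: you verify full faithfulness and essential surjectivity (the latter via the structure theorem for finite Boolean lattices), whereas the paper constructs $\operatorname{coat}$ directly and exhibits the natural isomorphisms $\Gamma_{\operatorname{coat}\Lambda}\cong\Lambda$ and $A\cong\operatorname{coat}\Gamma_A$, which amounts to the same mathematical content.
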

\begin{proof}
We first verify that $\widetilde f:\Gamma_A\to\Gamma_B$ is a morphism of $\mathbf{BLat}^{\mathsf f}_{\mathrm{rex}}$.
Note that, for a subset $A'\subset A$, its coheight in $\Gamma_A$ is nothing but the cardinality of $A'$.
Hence, $\widetilde f$ does not raise the coheight obviously.
In addition, infimums in $\Gamma_A$ can be computed as unions of subsets, so it is also obvious that $\widetilde f$ preserves infimums since the lattice $\Gamma_A$ is distributive.

Now, the functoriality of $\Gamma_{\blankdot}:\mathbf{Fin}^\partial\to\mathbf{BLat}^{\mathsf f}_{\mathrm{rex}}$ is easy.
We show it is an equivalence of categories.
We construct the inverse $\operatorname{coat}:\mathbf{BLat}^{\mathsf f}_{\mathrm{rex}}\to\mathbf{Fin}^\partial$.
For a finite Boolean lattice $\Lambda$, define $\operatorname{coat}(\Lambda)$ to be the subset of $\Lambda$ consisting of elements of coheight $1$.
Then, each morphism $\varphi:\Lambda\to\Lambda'\in\mathbf{BLat}^{\mathsf f}_{\mathrm{rex}}$ induces a partial map $f_\varphi:\operatorname{coat}\Lambda\to\operatorname{coat}\Lambda'\in\mathbf{Fin}^\partial$ such that
\begin{itemize}
  \item $D(f_\varphi):=\{x\in\operatorname{coat}\Lambda\mid \operatorname{coht}(\varphi(x);\Lambda')=1\}$, here $\operatorname{coht}(y;\Lambda')$ is the coheight of $y\in\Lambda'$ in $\Lambda'$;
  \item $D(f_\varphi)(x):=\varphi(x)$.
\end{itemize}
It is verified that the assignment $\varphi\mapsto f_\varphi$ is functorial so that $\operatorname{coat}:\mathbf{BLat}^{\mathsf f}_{\mathrm{rex}}\to\mathbf{Fin}^\partial$.
Finally, there are the following natural isomorphisms:
\[
\begin{gathered}
\Gamma_{\operatorname{coat}\Lambda}\xrightarrow\simeq\Lambda
\ ;\quad S \mapsto \bigwedge S \\
A \xrightarrow\simeq \operatorname{coat}\Gamma_A
\ ;\quad a \mapsto \{a\}
\end{gathered}
\]
Thus, we obtain the result.
\end{proof}

Now, recall that, for a manifold $X$ with finite faces, we have a set $\operatorname{bd}X$ and the associated Boolean lattice $\Gamma_X=\Gamma_{\operatorname{bd}X}$.

\begin{definition}
Let $X$ and $Y$ be manifolds with finite faces.
Then, a partial map from $\operatorname{bd}X$ to $\operatorname{bd}Y$ is called a pre-edging with $Y$ of $X$.
\end{definition}

By virtue of \Cref{prop:partmap-blex}, every pre-edging $\beta$ with $Y$ of $X$ yields an inf-presreving map $\widetilde\beta:\Gamma_X\to\Gamma_Y$.

Note that a pre-edging $\beta$ regards no more geometric information than the number of faces.
Indeed, if $X$ is a manifold with finite faces, then we have a canonical bijection
\[
\operatorname{bd}X
\cong\operatorname{bd}\mathbb R_+^{\pi_0(\partial_1X)}\,.
\]
To use egings $\beta$ to control behaviors of maps around corners, we need to require $\beta$ to care about the intersections of faces at least.
This leads the following notion:

\begin{definition}
Let $X$ and $Y$ be manifolds with faces.
Then, an edging with $Y$ of $X$ is a pre-edging $\beta$ satisfying the condition that if $C_1,\dots,C_r\in D(\beta)\subset\operatorname{bd}X$ are distinct connected faces with $C_1\cap\dots\cap C_r\neq\varnothing\subset X$, then
\begin{enumerate}[label={\rm(\roman*)}]
  \item $\beta(C_1)\cap\dots\cap \beta(C_r)\neq\varnothing\subset Y$.
  \item $\beta(C_1),\dots,\beta(C_r)\in\operatorname{bd}Y$ are all distinct;
\end{enumerate}
\end{definition}

If $X$ is a manifold with faces, the intersection $C_1\cap\dots\cap C_r$ of $r$ distinct connected faces is a set of corners of codimension at least $r$.
Put $\sigma=\{C_1,\dots,C_r\}\in\Gamma_X$, and we have $\overline\partial_\sigma X=C_1\cap\dots\cap C_r$.
In view of \Cref{lem:mfdface-cap}, its codimension equals to the coheight of the element $\sigma=\{C_1,\dots,C_r\}\in\Gamma_X$.
Hence, if $\beta$ is an edging of $X$ with $Y$, the two conditions above guarantees that $\beta$ respects the codimensions of generic corners on faces which $\beta$ is defined on.
More practically, we often use the following characterization:

\begin{lemma}
\label{lem:edgegamma}
Let $X$ and $Y$ be manifolds with finite faces, and let $\beta$ is a pre-edging with $Y$ of $X$.
Then, $\beta$ is an edging if and only if $\widetilde\beta:\Gamma_X\to\Gamma_Y$ satisfies the following properties:
\begin{enumerate}[label={\rm(\roman*)}]
  \item\label{ass:edgegamma:nonemp} For each $\sigma\in\Gamma_X$, $\overline\partial_\sigma X\neq\varnothing$ implies $\overline\partial_{\beta(\sigma)} Y\neq\varnothing$.
  \item\label{ass:edgegamma:join} If $\sigma,\tau\in\Gamma_X$ are two elements with $\overline\partial_{\sigma\wedge\tau} X\neq\varnothing$, then we have
\[
\widetilde\beta(\sigma\vee\tau) = \widetilde\beta(\sigma)\vee\widetilde\beta(\tau)\,.
\]
\end{enumerate}
\end{lemma}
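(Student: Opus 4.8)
The plan is to work entirely through the dictionary provided by \Cref{prop:partmap-blex}. In $\Gamma_X$ (and likewise $\Gamma_Y$) the meet $\sigma\wedge\tau$ is the union of the underlying sets of faces, the join $\sigma\vee\tau$ is their intersection, $\overline\partial_\sigma X=\bigcap_{C\in\sigma}C$ with $\overline\partial_\varnothing X=X$, and $\widetilde\beta(\sigma)=\beta(\sigma\cap D(\beta))$ regarded as a subset of $\operatorname{bd}Y$. Under this dictionary $\widetilde\beta$ automatically preserves meets, since images of maps preserve unions; so the genuine content of condition~\ref{ass:edgegamma:join} is only the reverse inclusion $\widetilde\beta(\sigma)\cap\widetilde\beta(\tau)\subseteq\widetilde\beta(\sigma\cap\tau)$ of subsets of $\operatorname{bd}Y$, and that of \ref{ass:edgegamma:nonemp} is that $\widetilde\beta$ carries a nonempty face-intersection of $X$ to a nonempty one of $Y$. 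With this set-up both implications will reduce to the definition of an edging, essentially its $r=2$ instance combined with meet-preservation.

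First I would treat the forward direction, assuming $\beta$ is an edging. Given $\sigma$ with $\overline\partial_\sigma X\neq\varnothing$, put $\sigma'=\sigma\cap D(\beta)$; as $\sigma'\subseteq\sigma$ one has $\overline\partial_{\sigma'}X\supseteq\overline\partial_\sigma X\neq\varnothing$, so the (pairwise distinct) faces in $\sigma'$ have nonempty total intersection, and the edging axioms give $\overline\partial_{\widetilde\beta(\sigma)}Y=\bigcap_{C\in\sigma'}\beta(C)\neq\varnothing$ (with the convention that this is $Y$ when $\sigma'=\varnothing$), which is \ref{ass:edgegamma:nonemp}. For \ref{ass:edgegamma:join}, take $\sigma,\tau$ with $\overline\partial_{\sigma\wedge\tau}X\neq\varnothing$ and let $d\in\widetilde\beta(\sigma)\cap\widetilde\beta(\tau)$, say $d=\beta(C)=\beta(C')$ with $C\in\sigma\cap D(\beta)$ and $C'\in\tau\cap D(\beta)$. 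Since $C,C'\in\sigma\cup\tau$ we get $C\cap C'\supseteq\overline\partial_{\sigma\wedge\tau}X\neq\varnothing$; were $C\neq C'$, the second edging axiom would force $\beta(C)\neq\beta(C')$, a contradiction, so $C=C'\in\sigma\cap\tau\cap D(\beta)$ and hence $d\in\widetilde\beta(\sigma\cap\tau)$, giving the missing inclusion.

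For the converse I would start from the assumption that $\widetilde\beta$ satisfies \ref{ass:edgegamma:nonemp} and \ref{ass:edgegamma:join}, and take distinct faces $C_1,\dots,C_r\in D(\beta)$ with $C_1\cap\dots\cap C_r\neq\varnothing$. Setting $\sigma=\{C_1,\dots,C_r\}$ we have $\overline\partial_\sigma X\neq\varnothing$ and $\widetilde\beta(\sigma)=\{\beta(C_1),\dots,\beta(C_r)\}$, so \ref{ass:edgegamma:nonemp} yields $\beta(C_1)\cap\dots\cap\beta(C_r)=\overline\partial_{\widetilde\beta(\sigma)}Y\neq\varnothing$, the first edging axiom. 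For the second, suppose $\beta(C_i)=\beta(C_j)$ with $i\neq j$ and apply \ref{ass:edgegamma:join} to $\sigma_0=\{C_i\}$, $\tau_0=\{C_j\}$: here $\overline\partial_{\sigma_0\wedge\tau_0}X=C_i\cap C_j\supseteq C_1\cap\dots\cap C_r\neq\varnothing$ while $\sigma_0\vee\tau_0=\varnothing$, so $\varnothing=\widetilde\beta(\varnothing)=\widetilde\beta(\sigma_0)\vee\widetilde\beta(\tau_0)=\{\beta(C_i)\}\cap\{\beta(C_j)\}=\{\beta(C_i)\}$, which is absurd; hence the $\beta(C_i)$ are pairwise distinct and $\beta$ is an edging. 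I do not expect a real obstacle here: the only points that need attention are the bookkeeping of the empty/top element conventions in $\Gamma_X,\Gamma_Y$ (the cases $\sigma\cap D(\beta)=\varnothing$ and $\sigma_0\vee\tau_0=\varnothing$) and the initial observation that meet-preservation already trims condition~\ref{ass:edgegamma:join} down to a single reverse inclusion.
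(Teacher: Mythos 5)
Your proposal is correct and follows essentially the same route as the paper: both directions reduce to the $r=2$ instance of the edging axioms via the dictionary of \Cref{prop:partmap-blex} (meets are unions, joins are intersections, $\widetilde\beta(\sigma)=\beta(\sigma\cap D(\beta))$), with your element-wise check of the reverse inclusion for joins being just a rephrasing of the paper's observation that $\beta$ is injective on $\sigma\cup\tau$. The only cosmetic difference is that you make explicit up front that meet-preservation is automatic, which the paper leaves implicit in \Cref{prop:partmap-blex}.
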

\begin{proof}
First suppose $\widetilde\beta$ enjoys the properties above.
Let $C_1,\dots,C_r\in D(\beta)$ be distinct connected faces with $C_1\cap\dots\cap C_r\neq\varnothing$.
Then, the property \ref{ass:edgegamma:nonemp} implies
\[
\beta(C_1)\cap\dots\cap\beta(C_r)
= \overline\partial_{\beta(\sigma)}Y
\neq\varnothing\,.
\]
On the other hand, since we have $\varnothing\neq\overline\partial_\sigma X\subset C_i\cap C_j$ for each $1\le i<j\le r$, the property \ref{ass:edgegamma:join} implies
\begin{equation}
\label{eq:prf:edgegamma:facevee}
\widetilde\beta(C_i)\vee\widetilde\beta(C_j)
= \widetilde\beta(C_i\vee C_j)
= \widetilde\beta(X)
= Y\in\Gamma_Y\,.
\end{equation}
If both $C_i$ and $C_j$ belong to the subset $D(\beta)$, $\widetilde\beta(C_i)$ and $\widetilde\beta(C_j)$ are of coheight $1$.
Thus, the equation \ref{eq:prf:edgegamma:facevee} implies $\widetilde\beta(C_i)\neq\widetilde\beta(C_j)$.
It follows that $\beta$ is an edging.

Conversely, suppose $\beta$ is an edging.
The property \ref{ass:edgegamma:nonemp} is obvious, so we show \ref{ass:edgegamma:join}.
Suppose we are given $\sigma,\tau\in\Gamma_X$ with $\overline\partial_{\sigma\wedge\tau}X\neq\varnothing$.
Note that we have
\[
\sigma = \bigwedge_{C\in\sigma} C
\ ,\quad \tau = \bigwedge_{C\in\tau} C
\ ,\quad \sigma\vee\tau = \bigwedge_{C\in\sigma\cap\tau} C \in \Gamma_X\,.
\]
Since $\beta$ is an edging, the condition on $\sigma$ and $\tau$ implies $\beta$ is injective on $\sigma\wedge\tau=\sigma\cup\tau\in\Gamma_X$.
Hence, we obtain
\[
\begin{split}
\widetilde\beta(\sigma\vee\tau)
&= \bigwedge_{C\in\sigma\cap\tau} \widetilde\beta(C) \\
&= \{\beta(C)\mid C\in\sigma\cap\tau\cap D(\beta)\} \\
&= \{\beta(C)\mid C\in\sigma\cap D(\beta)\}\cap\{\beta(C)\mid\in\tau\cap D(\beta)\} \\
&= \widetilde\beta(\sigma)\vee\widetilde\beta(\tau)\,,
\end{split}
\]
which implies the property \ref{ass:edgegamma:join}.
\end{proof}

\begin{corollary}
\label{cor:edge-slicesurj}
Let $X$ and $Y$ be manifolds with finite faces, and let $\beta$ be an edging of $X$ with $Y$.
If $\sigma\in\Gamma_X$ is an element with $\overline\partial_\sigma X\neq\varnothing$, then the induced map
\[
\widetilde\beta_{\ge\sigma}:(\Gamma_X)_{\ge\sigma}\to(\Gamma_Y)_{\ge\widetilde\beta(\sigma)}
\ ;\quad \tau \mapsto \widetilde\beta(\tau)
\]
is a surjective lattice homomorphism.
\end{corollary}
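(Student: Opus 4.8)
The plan is to reduce the statement to elementary bookkeeping in the powerset lattices. Recall that $\Gamma_X$ is the powerset of $\operatorname{bd}X$ ordered by \emph{reverse} inclusion, so that $(\Gamma_X)_{\ge\sigma}$ is precisely the powerset of $\sigma$ with the same (reversed) order, in which the meet of two elements is their union and the join is their intersection as sets of faces; the analogous description holds for $(\Gamma_Y)_{\ge\widetilde\beta(\sigma)}$. Writing $\sigma_0 := \sigma\cap D(\beta)$ and using the formula $\widetilde\beta(\tau)=\beta(\tau\cap D(\beta))$ from \Cref{prop:partmap-blex}, one gets $\widetilde\beta(\tau)=\beta(\tau\cap\sigma_0)$ for every $\tau\subseteq\sigma$, which is the computational heart of the argument.

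First I would establish that the map is well defined, i.e.\ that $\tau\in(\Gamma_X)_{\ge\sigma}$ forces $\widetilde\beta(\tau)\in(\Gamma_Y)_{\ge\widetilde\beta(\sigma)}$: since $\widetilde\beta$ preserves binary infima by \Cref{prop:partmap-blex} it is monotone, so $\tau\ge_\Gamma\sigma$ gives $\widetilde\beta(\tau)\ge_\Gamma\widetilde\beta(\sigma)$, and the nonemptiness of $\overline\partial_{\widetilde\beta(\tau)}Y$ follows from the first condition of \Cref{lem:edgegamma} once one observes $\overline\partial_\tau X\supseteq\overline\partial_\sigma X\neq\varnothing$. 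For the lattice-homomorphism property I would treat meets and joins separately. Meets (unions of face sets) are preserved for free, because $\widetilde\beta$ already preserves all infima. For joins (intersections of face sets) I would invoke the second condition of \Cref{lem:edgegamma}: given $\tau,\tau'\subseteq\sigma$, their $\Gamma$-meet $\tau\wedge\tau'=\tau\cup\tau'$ still satisfies $\overline\partial_{\tau\cup\tau'}X\supseteq\overline\partial_\sigma X\neq\varnothing$, so the hypothesis of that lemma is met and $\widetilde\beta(\tau\vee\tau')=\widetilde\beta(\tau)\vee\widetilde\beta(\tau')$ as required.

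It remains to prove surjectivity. Given $\omega\in(\Gamma_Y)_{\ge\widetilde\beta(\sigma)}$, i.e.\ $\omega\subseteq\widetilde\beta(\sigma)=\beta(\sigma_0)$ as subsets of $\operatorname{bd}Y$, I would put $\tau:=\{C\in\sigma_0\mid\beta(C)\in\omega\}$. Then $\tau\subseteq\sigma_0\subseteq\sigma$, so $\tau\in(\Gamma_X)_{\ge\sigma}$, and $\tau\subseteq D(\beta)$, so $\widetilde\beta(\tau)=\beta(\tau)$; finally $\beta(\tau)=\omega$, since $\beta(\tau)\subseteq\omega$ by construction while every element of $\omega$ lies in $\beta(\sigma_0)$ and hence is $\beta(C)$ for some $C\in\tau$. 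I do not expect a serious obstacle; the only delicate points are keeping the two order conventions straight (reverse inclusion in the $\Gamma$'s, so that ``$\ge\sigma$'' means ``contained in $\sigma$'') and checking at each step that the nonemptiness hypothesis $\overline\partial_\sigma X\neq\varnothing$ descends to every $\tau\subseteq\sigma$, which is exactly what legitimizes the two appeals to \Cref{lem:edgegamma}.
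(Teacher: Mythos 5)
Your proposal is correct and follows essentially the same route as the paper: the lattice-homomorphism part rests on \Cref{lem:edgegamma} applied to pairs $\tau,\tau'\le_{\subseteq}\sigma$ (whose union still contains $\overline\partial_\sigma X\neq\varnothing$), and surjectivity reduces to the surjectivity of the restricted partial map $\beta|_\sigma:\sigma\cap D(\beta)\to\widetilde\beta(\sigma)$. The only difference is cosmetic: the paper invokes the identifications $(\Gamma_X)_{\ge\sigma}\simeq\Gamma_\sigma$ and the functoriality from \Cref{prop:partmap-blex}, whereas you unpack the same fact by exhibiting the explicit preimage $\tau=\{C\in\sigma\cap D(\beta)\mid\beta(C)\in\omega\}$.
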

\begin{proof}
Since $\overline\partial_\sigma X\neq\varnothing$, \Cref{lem:edgegamma} implies that for each $\tau,\tau'\ge\sigma$, we have $\widetilde\beta(\tau\vee\tau')=\widetilde\beta(\tau)\vee\widetilde\beta(\tau')$ as well as $\widetilde\beta(\tau\wedge\tau')=\widetilde\beta(\tau)\wedge\widetilde\beta(\tau')$.
Thus, the restriction $\widetilde\beta_{\ge\sigma}$ is a lattice homomorphism.
We show it is surjective.
Note that the sublattice $(\Gamma_X)_{\ge\sigma}\subset\Gamma_X$ is generated under infimums by connected faces $C$ which belongs to $\sigma$; in other words, there is an isomorphism $(\Gamma_X)_{\ge\sigma}\simeq\Gamma_\sigma$.
Similarly, we have $(\Gamma_Y)_{\ge\widetilde\beta(\sigma)}\simeq\Gamma_{\widetilde\beta(\sigma)}$.
Moreover, the corresponding map $\Gamma_\sigma\to\Gamma_{\widetilde\beta(\sigma)}$ is associated to the restricted partial map $\beta|_\sigma:\sigma\cap D(\beta)\to\widetilde\beta(\sigma)$.
The latter map is clearly surjective, so the result follows.
\end{proof}

\begin{example}
\label{ex:edge-id}
For any manifold $X$ with finite faces, the identity map on $\operatorname{bd}X$ gives an edging of $X$ with itself.
More generally, for every subset $A\subset\operatorname{bd}X$, the partial map $\operatorname{bd}X\supset A\hookrightarrow\operatorname{bd}X$ defines an edging of $X$.
\end{example}

\begin{example}
\label{ex:edge-comp}
Let $\beta$ be an edging of $X$ with $Y$, and let $\gamma$ be an edging of $Y$ with $Z$.
Then, the composed partial map $\gamma\beta:\operatorname{bd}X\to\operatorname{bd}Z\in\mathbf{Fin}^\partial$ defines an edging of $X$ with $Z$.
\end{example}

\begin{example}
\label{ex:edge-paste}
Suppose we have two edgings $\beta_1$ and $\beta_2$ with $Y_1$ and $Y_2$ of $X$ respectively.
If we have $D(\beta_1)\cap D(\beta_2)=\varnothing\subset \operatorname{bd}X$, then the partial map
\[
\operatorname{bd}X
\supset D(\beta_1)\amalg D(\beta_2)
\xrightarrow{\beta_1\amalg\beta_2} \operatorname{bd}Y_1 \amalg \operatorname{bd}Y_2
\simeq \operatorname{bd}(Y_1\times Y_2)
\]
gives an edging with $Y_1\times Y_2$, which we denote by $\beta_1\amalg\beta_2$.
\end{example}

\begin{example}
\label{ex:edge-face}
Let $X$ be a manifold with finite faces with edging $\beta:\Gamma_X\to\Gamma_Y$ with $Y$.
Notice that for $\sigma\in\Gamma_X$, the lattice $\Gamma_{\overline\partial_\sigma X}$ can be identified with the sublattice of $\Gamma_X$ consisting of elements $\sigma'$ with $\sigma'\le\sigma$.
If $\sigma\cap D(\beta)=\varnothing$, then the composition
\[
\beta_\sigma:\Gamma_{\overline\partial_\sigma X}\hookrightarrow\Gamma_X\xrightarrow\beta \Gamma_Y
\]
is an edging of $\overline\partial_\sigma X$ with $Y$.
We have a canonical identification $D(\beta_\sigma)=D(\beta)_{\le\sigma}\subset\Gamma_X$.
\end{example}

\begin{example}
\label{ex:<n>-mfd}
For a non-negative integer $n$, an $\langle n\rangle$-manifold is a manifold with faces equipped with an edging $\beta$ with $\mathbb R^n_+$ such that $D(\beta)=\operatorname{bd}X$.
This notion was originally introduced by J\"nich in \cite{Jan68}.
\end{example}

For an edging $\beta$ of $X$ with $Y$, we denote by $\Gamma^\beta_X\subset\Gamma_X$ the subset consisting of elements $\sigma$ with $\widetilde\beta(\sigma)=Y\in\Gamma_Y$.
It is easily verified that $\Gamma^\beta_X$ is canonically isomorphic to the sublattice generated by connected faces which do not belong to $D(\beta)$.
We have a canonical isomorphism
\[
\Gamma_X\xrightarrow\simeq\Gamma^\beta_X\times\Gamma_{D(\beta)}
\ ;\quad \sigma \mapsto (\sigma\setminus D(\beta),\sigma\cap D(\beta))\,.
\]
We denote by $\widetilde\beta^\perp:\Gamma_X\to\Gamma^\beta_X$ the associated projection; i.e. $\widetilde\beta^\perp(\sigma)=\sigma\setminus D(\beta)$.
With this notation, we obtain a refinement of \Cref{cor:edge-slicesurj}.

\begin{lemma}
\label{lem:edge-sliceisom}
Let $X$ and $Y$ be manifolds with faces, and let $\beta$ be an edging of $X$ with $Y$.
Suppose $\sigma\in\Gamma_X$ is an element with $\overline\partial_\sigma X\neq\varnothing$.
Then, the map
\[
(\Gamma_X)_{\ge\sigma}
\to (\Gamma^\beta_X)_{\ge\sigma}\times(\Gamma_Y)_{\ge\widetilde\beta(\sigma)}
\ ;\quad \sigma'\mapsto (\widetilde\beta^\perp(\sigma'),\widetilde\beta(\sigma'))
\]
is an isomorphism of lattices.
\end{lemma}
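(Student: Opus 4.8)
The plan is to strip the statement down to an elementary fact about powersets by means of the dictionary of \Cref{prop:partmap-blex}, the one substantive input being that $\beta$ is injective on $\sigma\cap D(\beta)$, which is precisely where the hypothesis $\overline\partial_\sigma X\neq\varnothing$ is used.

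First I would fix the identifications. Exactly as in the proof of \Cref{cor:edge-slicesurj} we have $(\Gamma_X)_{\ge\sigma}\simeq\Gamma_\sigma$ and $(\Gamma_Y)_{\ge\widetilde\beta(\sigma)}\simeq\Gamma_{\widetilde\beta(\sigma)}$ with $\widetilde\beta(\sigma)=\beta(\sigma\cap D(\beta))$; and since an element of $\Gamma^\beta_X$ is by definition a subset of $\operatorname{bd}X$ disjoint from $D(\beta)$, likewise $(\Gamma^\beta_X)_{\ge\sigma}\simeq\Gamma_{\sigma\setminus D(\beta)}$. The decomposition $\Gamma_X\simeq\Gamma^\beta_X\times\Gamma_{D(\beta)}$ recorded just above the statement restricts to $(\Gamma_X)_{\ge\sigma}\simeq(\Gamma^\beta_X)_{\ge\sigma}\times(\Gamma_{D(\beta)})_{\ge\sigma\cap D(\beta)}$, which under the previous identifications is nothing but the splitting $\Gamma_\sigma\simeq\Gamma_{\sigma\setminus D(\beta)}\times\Gamma_{\sigma\cap D(\beta)}$ coming from the partition $\sigma=(\sigma\setminus D(\beta))\sqcup(\sigma\cap D(\beta))$. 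Tracing through, the map of the lemma sends $\sigma'\leftrightarrow(\sigma'\setminus D(\beta),\sigma'\cap D(\beta))$ to $(\sigma'\setminus D(\beta),\beta(\sigma'\cap D(\beta)))$; that this lands in the claimed codomain is clear (indeed $\sigma'\setminus D(\beta)\subseteq\sigma'\subseteq\sigma$, and $\widetilde\beta$ is order-preserving), and that it is a lattice homomorphism follows since $\widetilde\beta^\perp$ is a coordinate projection of $\Gamma_X\simeq\Gamma^\beta_X\times\Gamma_{D(\beta)}$ and $\widetilde\beta_{\ge\sigma}$ is a lattice homomorphism by \Cref{cor:edge-slicesurj}.

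Thus the map is the product of the identity of $\Gamma_{\sigma\setminus D(\beta)}$ with the map $\Gamma_{\sigma\cap D(\beta)}\to\Gamma_{\beta(\sigma\cap D(\beta))}$, $\tau\mapsto\beta(\tau)$, and it suffices to show the latter is a lattice isomorphism. Here I would invoke the edging axiom: the elements of $\sigma\cap D(\beta)$ are distinct connected faces of $X$ belonging to $D(\beta)$, and their common intersection $\overline\partial_{\sigma\cap D(\beta)}X$ contains $\overline\partial_\sigma X$, which is nonempty by hypothesis; hence, by condition (ii) in the definition of an edging, the faces $\beta(C)$ for $C\in\sigma\cap D(\beta)$ are pairwise distinct, i.e. $\beta$ restricts to an injection on $\sigma\cap D(\beta)$. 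An injection of finite sets $f\colon A\to B$ induces an isomorphism from the powerset of $A$ onto the powerset of $f(A)$, with inverse $B'\mapsto f^{-1}(B')$, and this is a lattice isomorphism; applying this with $f=\beta|_{\sigma\cap D(\beta)}$ gives that $\tau\mapsto\beta(\tau)$ is a lattice isomorphism, and taking the product with the identity finishes the proof.

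I do not expect a real obstacle: once the powerset model is in place the argument is pure bookkeeping, and the only substantive step—injectivity of $\beta$ on $\sigma\cap D(\beta)$—is a one-line consequence of the edging axiom together with $\overline\partial_\sigma X\subset\overline\partial_{\sigma\cap D(\beta)}X$. The only point requiring a little care is to keep the reverse-inclusion ordering of the lattices $\Gamma_{(-)}$ consistent (so that $\sigma'\ge\sigma$ means $\sigma'\subseteq\sigma$, joins are intersections and meets are unions), so that the descriptions of $(\Gamma^\beta_X)_{\ge\sigma}$ and $(\Gamma_Y)_{\ge\widetilde\beta(\sigma)}$ and the claim that the map is order-preserving come out correctly.
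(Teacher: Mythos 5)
Your proof is correct and follows essentially the same decomposition as the paper: the paper deduces the statement from \Cref{cor:edge-slicesurj} together with the (unproved) general fact that a surjective lattice homomorphism $\varphi:\Lambda\to\Lambda'$ of finite Boolean lattices induces an isomorphism $\Lambda\simeq\varphi^{-1}\{\max\Lambda'\}\times\Lambda'$, and your argument is exactly a concrete verification of that fact in the powerset model, with the key injectivity of $\beta$ on $\sigma\cap D(\beta)$ extracted directly from the edging axiom and the hypothesis $\overline\partial_\sigma X\neq\varnothing$.
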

\begin{proof}
The result follows from \Cref{cor:edge-slicesurj} and the general fact that, for a surjective lattice homomorphism $\varphi:\Lambda\to\Lambda'$ between finite Boolean lattices, the map
\[
\Lambda \to \varphi^{-1}\{\max\Lambda'\}\times\Lambda'
\ ;\quad \lambda\mapsto (\lambda\vee\bigwedge\varphi^{-1}\{\max\Lambda'\},\varphi(\lambda))
\]
is an isomorphism of lattices.
\end{proof}

\begin{notation}
Let $X$ be a manifold with faces equipped with an edging $\beta$ with $Y$.
For each $\tau\in\Gamma_Y$, we will write
\[
\begin{gathered}
\overline\partial^\beta_\tau X := \bigcup_{\widetilde\beta(\sigma)=\tau}\overline\partial_\sigma X \subset X \\
\partial^\beta_\tau X := \overline\partial^\beta_\tau X \setminus \bigcup_{\tau'<\tau}\overline\partial^\beta_{\tau'}X\,.
\end{gathered}
\]
\end{notation}

\begin{proposition}
\label{prop:edge-submfd}
Let $X$ be a manifold with faces equipped with an edging $\beta$ with $Y$.
Then the following hold:
\begin{enumerate}[label={\rm(\arabic*)}]
  \item\label{req:edge-submfd:submfd} For each $\tau\in\Gamma_Y$, the subset $\overline\partial^\beta_\tau X\subset X$ is an embedded submanifold.
  \item\label{req:edge-submfd:wedge} If $\tau,\tau'\in\Gamma_Y$, we have
\[
\overline\partial^\beta_{\tau\wedge\tau'}X = \overline\partial^\beta_\tau X\cap\overline\partial^\beta_{\tau'}X \subset X\,.
\]
In particular, $\tau\le\tau'$ implies $\overline\partial^\beta_\tau X\subset\overline\partial^\beta_{\tau'}X$.
\end{enumerate}
\end{proposition}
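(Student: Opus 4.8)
The plan is to reduce both assertions to one pointwise description of the subset $\overline\partial^\beta_\tau X$. For $p\in X$ set $\sigma_p:=\{C\in\operatorname{bd}X\mid p\in C\}\in\Gamma_X$ (this is the scope of $p$ in the boundary arrangement $\widetilde X$ of \eqref{eq:tildeX-def}); directly from $\overline\partial_\sigma X=\bigcap_{C\in\sigma}C$ one has $p\in\overline\partial_\sigma X$ if and only if $\sigma\subseteq\sigma_p$. Since the formula $\widetilde\beta(\sigma)=\beta(\sigma\cap D(\beta))$ shows $\widetilde\beta$ is monotone for inclusion of subsets, the first step would be to prove the equivalence
\[
p\in\overline\partial^\beta_\tau X \iff \tau\subseteq\widetilde\beta(\sigma_p)\,.
\]
The implication ``$\Rightarrow$'' is immediate from the definition and monotonicity; for ``$\Leftarrow$'' one chooses, for each connected face $D\in\tau$, a face $C_D\in\sigma_p\cap D(\beta)$ with $\beta(C_D)=D$, and observes that $\sigma:=\{C_D\mid D\in\tau\}$ satisfies $\sigma\subseteq\sigma_p$ and $\widetilde\beta(\sigma)=\tau$, so that $p\in\overline\partial_\sigma X\subseteq\overline\partial^\beta_\tau X$.

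Granting this equivalence, \ref{req:edge-submfd:wedge} is immediate: infima in $\Gamma_Y$ are unions of subsets of $\operatorname{bd}Y$, so for every $p$ one has $\tau\subseteq\widetilde\beta(\sigma_p)$ and $\tau'\subseteq\widetilde\beta(\sigma_p)$ simultaneously exactly when $\tau\wedge\tau'=\tau\cup\tau'\subseteq\widetilde\beta(\sigma_p)$. Hence $\overline\partial^\beta_\tau X\cap\overline\partial^\beta_{\tau'}X=\overline\partial^\beta_{\tau\wedge\tau'}X$, and the case $\tau\le\tau'$ (that is, $\tau\supseteq\tau'$, whence $\tau\wedge\tau'=\tau$) gives the stated inclusion $\overline\partial^\beta_\tau X\subseteq\overline\partial^\beta_{\tau'}X$.

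For \ref{req:edge-submfd:submfd} I would argue in local charts. As a finite union of the closed sets $\overline\partial_\sigma X$, the subset $\overline\partial^\beta_\tau X$ is closed, so it suffices to exhibit, near each $p\in\overline\partial^\beta_\tau X$, a chart in which it is a coordinate subspace. Applying \Cref{lem:mfdface-cap} to $\sigma=\sigma_p$ and relabelling, one obtains, as in that proof, a chart $\varphi:U\to\mathbb H^{\langle m|k\rangle}$ centred at $p$ with $\sigma_p=\{C_{k+1},\dots,C_m\}$ and $C_i\cap U=\varphi^{-1}\{x_i=0\}$; choosing $\varphi(U)$ convex one also gets that every connected face of $X$ meeting $U$ is one of $C_{k+1},\dots,C_m$, hence $\sigma_q=\{C_i\mid k+1\le i\le m,\ x_i(q)=0\}$ for all $q\in U$. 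Let $I_0:=\{k+1\le i\le m\mid C_i\in D(\beta)\}$. Because $\overline\partial_{\sigma_p}X\ni p$ is non-empty and the faces $C_i$, $i\in I_0$, pairwise intersect (all contain $p$), the defining condition of an edging forces $\beta(C_i)$, $i\in I_0$, to be pairwise distinct; since $p\in\overline\partial^\beta_\tau X$, the equivalence above gives $\tau\subseteq\{\beta(C_i)\mid i\in I_0\}$, so there is a unique $J_0\subseteq I_0$ with $\{\beta(C_i)\mid i\in J_0\}=\tau$, and then $\#J_0=\#\tau$. Feeding $\sigma_q$ into the equivalence and using distinctness of the $\beta(C_i)$ then yields
\[
\overline\partial^\beta_\tau X\cap U=\varphi^{-1}\bigl(\{x\mid x_i=0\ \text{for all}\ i\in J_0\}\bigr)\,,
\]
a coordinate subspace; hence $\overline\partial^\beta_\tau X$ is an embedded submanifold (of codimension $\#\tau$, the coheight of $\tau$ in $\Gamma_Y$).

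I expect the one genuinely delicate point to be the use of the edging axiom in this last step: distinctness of the faces $\beta(C_i)$ is exactly what ensures that the local defining equations are $x_i=0$ for $i\in J_0$, rather than a union of such hyperplanes taken over several indices with a common $\beta$-image — without it $\overline\partial^\beta_\tau X$ need not be locally a manifold. Everything else reduces to bookkeeping with the face lattices $\Gamma_X$, $\Gamma_Y$ and the explicit formula for $\widetilde\beta$.
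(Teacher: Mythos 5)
Your proof is correct, and it takes a genuinely different route from the paper's. The paper's proof rests on the decomposition $\overline\partial^\beta_\tau X\cong\coprod_{\sigma\in\max\widetilde\beta^{-1}\{\tau\}}\overline\partial_\sigma X$: using \Cref{lem:edgegamma} it shows that two distinct maximal preimages of $\tau$ have disjoint closed faces, so part \ref{req:edge-submfd:submfd} follows because the union is a finite disjoint union of closed embedded submanifolds of codimension $c(\tau)$, and part \ref{req:edge-submfd:wedge} is obtained by matching maximal preimages of $\tau\wedge\tau'$ with pairs of maximal preimages of $\tau$ and $\tau'$ via the lattice isomorphism of \Cref{lem:edge-sliceisom}. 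You instead establish the pointwise criterion $p\in\overline\partial^\beta_\tau X\iff\tau\subseteq\widetilde\beta(\sigma_p)$, which immediately gives \ref{req:edge-submfd:wedge} (and, as a side remark, shows that \ref{req:edge-submfd:wedge} holds for arbitrary pre-edgings, since that step never invokes the edging axioms), and you prove \ref{req:edge-submfd:submfd} by an explicit chart computation, where the edging axiom enters exactly where you flag it: the pairwise distinctness of the $\beta(C_i)$ for the faces through $p$ is what turns the membership condition into the vanishing of a fixed set of coordinates $\{x_i=0,\ i\in J_0\}$ rather than a union of coordinate subspaces. Your argument is more elementary and self-contained; what the paper's approach buys is the disjoint-union formula \eqref{eq:prf:edge-submfd:amalg} as an explicit intermediate, which is reused later (e.g.\ in the proof of \Cref{prop:edge-excel}), whereas in your setup that decomposition would have to be extracted separately from the local description.
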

\begin{proof}
For each $\tau\in\Gamma_Y$, we denote by $\max\widetilde\beta^{-1}\{\tau\}$ the set of elements $\sigma$ which is maximal among those with $\widetilde\beta(\sigma)=\tau$.
If $\sigma,\sigma'\in\max\widetilde\beta^{-1}\{\tau\}$ are two distinct elements, the maximality implies $\beta(\sigma\vee\sigma')\neq\tau$ while $\beta(\sigma)\vee\beta(\sigma')=\tau$.
It follows from \Cref{lem:edgegamma} that
\[
\overline\partial_\sigma X\cap\overline\partial_{\sigma'}X
= \overline\partial_{\sigma\wedge\sigma'}X
= \varnothing\,.
\]
Thus, we obtain
\begin{equation}
\label{eq:prf:edge-submfd:amalg}
\overline\partial^\beta_\tau X
\cong \coprod_{\sigma\in\max\widetilde\beta^{-1}\{\tau\}} \overline\partial_\sigma X\,.
\end{equation}
The part \ref{req:edge-submfd:submfd} is now obvious.
We show \ref{req:edge-submfd:wedge}.
Let $\sigma''\in\widetilde\beta^{-1}\{\tau\wedge\tau'\}$ be an element with $\overline\partial_{\sigma''}X\neq\varnothing$.
Note that the maximality of $\sigma''$ implies that the subset $(\Gamma^\beta_X)_{\ge\sigma''}$ consists of only one element.
Hence, by \Cref{lem:edge-sliceisom}, the map
\[
\widetilde\beta_{\ge\sigma''}:(\Gamma_X)_{\ge\sigma''}\to(\Gamma_Y)_{\ge\tau\wedge\tau'}
\]
is an isomorphism of lattices.
In particular, there is a unique pair $(\sigma,\sigma')\in\widetilde\beta^{-1}\{\tau\}\times\widetilde\beta^{-1}\{\tau'\}$ such that $\sigma\wedge\sigma'=\sigma''$.
This gives rise to a one-to-one correspondence between such $\sigma''$ and pairs $(\sigma,\sigma')\in(\max\widetilde\beta^{-1}\{\tau\})\times(\max\widetilde\beta^{-1}\{\tau'\})$ with $\overline\partial_\sigma X\cap\overline\partial_{\sigma'}X\neq\varnothing$.
Indeed, for such $(\sigma,\sigma')$, \Cref{lem:edge-sliceisom} guarantees $\sigma\wedge\sigma'\in\max\widetilde\beta^{-1}\{\tau\wedge\tau'\}$.
Finally, using the equation \eqref{eq:prf:edge-submfd:amalg}, we obtain
\[
\begin{multlined}
\overline\partial^\beta_\tau X \cap \overline\partial^\beta_{\tau'}X
= \raisedunderop\coprod{\substack{\sigma\in\max\widetilde\beta^{-1}\{\tau\}\cr\sigma'\in\max\widetilde\beta^{-1}\{\tau'\}}} \overline\partial_\sigma X\cap\overline\partial_{\sigma'}X \\[4pt]
 = \coprod_{\sigma''\in\max\widetilde\beta^{-1}\{\tau\wedge\tau'\}} \overline\partial_{\sigma''}X
= \overline\partial^\beta_{\tau\wedge\tau'}X\mathrlap{\,.}
\end{multlined}
\]
\end{proof}

Finally, we see that edgings give rise to excellent arrangements.

\begin{definition}
Let $\mathcal X$ be a neat arrangement of manifolds, and let $Y$ be a manifold with finite faces.
Then, an edging $\beta$ of $\mathcal X$ with $Y$ is that of the ambient manifold $|\mathcal X|$.
\end{definition}

\begin{proposition}
\label{prop:edge-excel}
Let $\mathcal X$ be a neat arrangement of manifolds of shape $S$ equipped with an edging $\beta$ with $Y$.
We denote by $\Gamma^\beta_{\mathcal X}\subset\Gamma_{\mathcal X}$ the subset of elements $\sigma\in\Gamma_{\mathcal X}$ with $\widetilde\beta(\sigma)=Y\in\Gamma_Y$.
Then the assignment
\[
\widetilde{\mathcal X}^\beta:S\times\Gamma^\beta_{\mathcal X}\times\Gamma_Y \to \mathbf{Emb}
\ ;\quad (s;\sigma,\tau) \mapsto \widetilde{\mathcal X}(s,\sigma)\cap\overline\partial^\beta_\tau |\mathcal X|
\]
defines an excellent arrangement whose ambient manifold is $|\mathcal X|$.
\end{proposition}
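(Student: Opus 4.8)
The plan is to verify that $\widetilde{\mathcal X}^\beta$ is an arrangement of manifolds each of whose ambient points admits a chart; since an excellent arrangement is automatically closed, nothing further is required. The shape $S\times\Gamma^\beta_{\mathcal X}\times\Gamma_Y$ is a finite lattice with maximum $(\max S;\varnothing,Y)$, and since $\overline\partial^\beta_Y|\mathcal X|=|\mathcal X|$ the ambient manifold of $\widetilde{\mathcal X}^\beta$ is $|\mathcal X|$, as claimed. I will first produce, for each $p\in|\mathcal X|$, an $\widetilde{\mathcal X}^\beta$-chart around $p$; this simultaneously shows that each value $\widetilde{\mathcal X}^\beta(s;\sigma,\tau)=\mathcal X(s)\cap\overline\partial_\sigma|\mathcal X|\cap\overline\partial^\beta_\tau|\mathcal X|$ is an embedded submanifold and that the structure maps are embeddings, so that $\widetilde{\mathcal X}^\beta$ is a genuine functor into $\mathbf{Emb}$, and it exhibits $\widetilde{\mathcal X}^\beta$ as excellent. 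Granting this, meet-preservation is immediate: $\mathcal X$ preserves meets by hypothesis, the boundary arrangement $\widetilde{|\mathcal X|}$ preserves meets by \Cref{cor:mfdface-uniquecap}, and $\overline\partial^\beta_{\tau\wedge\tau'}|\mathcal X|=\overline\partial^\beta_\tau|\mathcal X|\cap\overline\partial^\beta_{\tau'}|\mathcal X|$ by \Cref{prop:edge-submfd}, so forming meets in the shape amounts to intersecting the corresponding submanifolds of $|\mathcal X|$.

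To build the chart, fix $p\in|\mathcal X|$. As $\mathcal X$ is neat, \Cref{lem:excel-neat} provides an $\mathcal X$-chart $(U,\mathcal I,\varphi)$ with $\varphi\colon U\hookrightarrow\mathbb H^{\langle m|k\rangle}$, $\varphi(p)=0$, and $\mathcal I(s)_+=\{k+1,\dots,m\}$ for every $s\ge s(p)$, together with pullback squares $U\cap\mathcal X(s)=\varphi^{-1}(\mathbb H^{\mathcal I(s)})$. Shrink $U$ so that $\varphi(U)$ is convex and $\pi_0(\partial_1U)\to\operatorname{bd}|\mathcal X|$ is injective; then, by \Cref{lem:mfdface-cap}, I may label the connected faces of $|\mathcal X|$ through $p$ as $C_{k+1},\dots,C_m$ with $C_i\cap U=\varphi^{-1}(\{x_i=0\})$, and since $p$ has index $m-k$ these $\sigma_p:=\{C_{k+1},\dots,C_m\}$ are precisely the faces meeting $U$. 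Write $P=\{i:C_i\in D(\beta)\}$ and $Q=\{k+1,\dots,m\}\setminus P$. The edging axioms force $\beta$ to be injective on $\{C_i:i\in P\}$, all of which contain $p$, and this lets me identify the elements $\sigma$ of $(\Gamma^\beta_{\mathcal X})_{\ge\sigma_p\setminus D(\beta)}$ with their index sets $A_\sigma:=\{i:C_i\in\sigma\}\subseteq Q$, and the elements $\tau$ of $(\Gamma_Y)_{\ge\widetilde\beta(\sigma_p)}$ with index sets $B_\tau\subseteq P$ via $\tau=\{\beta(C_i):i\in B_\tau\}$; moreover \Cref{cor:edge-slicesurj} identifies the scope of $p$ in $\widetilde{\mathcal X}^\beta$ as $(s(p);\sigma_p\setminus D(\beta),\widetilde\beta(\sigma_p))$, so that an $\widetilde{\mathcal X}^\beta$-chart around $p$ must be indexed by $S_{\ge s(p)}\times(\Gamma^\beta_{\mathcal X})_{\ge\sigma_p\setminus D(\beta)}\times(\Gamma_Y)_{\ge\widetilde\beta(\sigma_p)}$.

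On that poset define $\mathcal J(s;\sigma,\tau):=\mathcal I(s)\setminus(A_\sigma\cup B_\tau)$; since the removed indices lie in $\{k+1,\dots,m\}$ this is a sub-marked-set of $\langle m|k\rangle$, its structure maps are injective, and it is readily checked to be an arrangement of marked finite sets because $\mathcal I(s\wedge s')=\mathcal I(s)\cap\mathcal I(s')$ and meets in $\Gamma^\beta_{\mathcal X}$ and $\Gamma_Y$ correspond to unions of index sets. The heart of the verification is the local identity
\[
\overline\partial^\beta_\tau|\mathcal X|\cap U=\varphi^{-1}\bigl(\{x_i=0:i\in B_\tau\}\bigr)\,,
\]
which I extract from the decomposition $\overline\partial^\beta_\tau|\mathcal X|=\coprod_{\sigma\in\max\widetilde\beta^{-1}\{\tau\}}\overline\partial_\sigma|\mathcal X|$ established in the proof of \Cref{prop:edge-submfd} (see \eqref{eq:prf:edge-submfd:amalg}): for $U$ small only one of these components meets $U$ — the one whose faces all pass through $p$ — and injectivity of $\beta$ on $\{C_i:i\in P\}$ identifies it with $\bigcap_{i\in B_\tau}C_i$. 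Feeding this, together with $\varphi(U\cap\mathcal X(s))=\mathbb H^{\mathcal I(s)}\cap\varphi(U)$ and $C_i\cap U=\varphi^{-1}(\{x_i=0\})$, into the definition of $\widetilde{\mathcal X}^\beta(s;\sigma,\tau)$ gives $U\cap\widetilde{\mathcal X}^\beta(s;\sigma,\tau)=\varphi^{-1}(\mathbb H^{\mathcal J(s;\sigma,\tau)})$, which is conditions (ii) and (iii) in the definition of an $\widetilde{\mathcal X}^\beta$-chart; condition (i), that $U$ meets $\widetilde{\mathcal X}^\beta(s;\sigma,\tau)$ only when $(s;\sigma,\tau)$ lies above the scope, follows from the same smallness of $U$, since any face or any $\overline\partial^\beta_\tau$-component meeting $U$ already contains $p$. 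This produces the chart and, with the reductions of the first paragraph, completes the argument.

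The step I expect to be the main obstacle is exactly the local collapse $\overline\partial^\beta_\tau|\mathcal X|\cap U=\varphi^{-1}(\{x_i=0:i\in B_\tau\})$: reconciling the globally union-shaped set $\overline\partial^\beta_\tau|\mathcal X|$ with a single coordinate subspace is where \Cref{prop:edge-submfd} and the edging axioms are genuinely used, and it comes bundled with the bookkeeping needed to organize the triple-indexed shape poset, pin down the scope, and write down $\mathcal J$. Everything else — meet-preservation, and checking the chart axioms once the local picture is in place — is routine.
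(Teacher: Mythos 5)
Your proof is correct and follows essentially the same route as the paper's: both rest on the decomposition $\overline\partial^\beta_\tau|\mathcal X|\cong\coprod_{\sigma'\in\max\widetilde\beta^{-1}\{\tau\}}\overline\partial_{\sigma'}|\mathcal X|$ from \Cref{prop:edge-submfd} to get the arrangement structure, and on \Cref{lem:edge-sliceisom} together with \Cref{lem:excel-neat} to convert a neat $\mathcal X$-chart into an $\widetilde{\mathcal X}^\beta$-chart. The only difference is one of detail: the paper compresses the chart conversion into a single appeal to \Cref{lem:edge-sliceisom}, whereas you write out explicitly the index sets $A_\sigma$, $B_\tau$, the marked-set arrangement $\mathcal J$, and the local identity $\overline\partial^\beta_\tau|\mathcal X|\cap U=\varphi^{-1}(\{x_i=0:i\in B_\tau\})$ that makes that appeal work.
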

\begin{proof}
We first show $\widetilde{\mathcal X}^\beta$ is actually an arrangement of manifolds.
As mentioned in the proof of \Cref{prop:edge-submfd}, we have a canonical diffeomorphism
\[
\overline\partial^\beta_\tau|\mathcal X|
\cong \coprod_{\sigma'\in\max\widetilde\beta^{-1}\{\tau\}} \overline\partial_{\sigma'}|\mathcal X|\,.
\]
Hence, we obtain
\begin{equation}
\label{eq:prf:edge-excel}
\widetilde{\mathcal X}^\beta(s;\sigma,\tau)
\cong \coprod_{\sigma'\in\max\widetilde\beta^{-1}\{\tau\}}\widetilde{\mathcal X}(s;\sigma\wedge\sigma')\,.
\end{equation}
which is clearly a submanifold of $|\mathcal X|$.
It is also clear that $\widetilde{\mathcal X}^\beta$ preserves pullbacks in each variable, so $\widetilde{\mathcal X}^\beta$ is an arrangement.

To see $\widetilde{\mathcal X}^\beta$ is excellent, we have to show that each point $p\in|\mathcal X|$ admits an $\widetilde{\mathcal X}^\beta$-chart around $p$.
In this regard, \Cref{lem:edge-sliceisom} asserts that it is nothing but an $\widetilde{\mathcal X}$-chart around $p$, which always exists since $\widetilde{\mathcal X}$ is excellent by \Cref{lem:excel-neat}.
Thus, the result follows.
\end{proof}

\subsection{Maps along edgings}
\label{sec:mapedge}

In this section, we see that we can actually control maps around corners in terms of edgings.
First of all, we clarify what it means.

\begin{definition}
Let $X$ and $Y$ be manifolds with faces, and let $\beta$ be an edging of $X$ with $Y$.
Then, a smooth map $F:X\to Y$ is said to be along $\beta$ if for each $C\in D(\beta)\subset\operatorname{bd}X$, it satisfies $F(C)\subset\beta(C)$.
We denote by $\mathcal F^\beta(X,Y)\subset C^\infty(X,Y)$ the subset of maps along $\beta$; we regard it to be equipped with the subspace topology.
\end{definition}

To translate it into our language, we introduce some notations:

\begin{notation}
In the situation above, we will write
\[
\widetilde X^\beta_{\sparallel}:\Gamma_Y\to\mathbf{Emb}
\ ;\quad\tau\mapsto\mathcal \overline\partial^\beta_\tau X\\
\]
This is clear a restrictions of the excellent arrangement $\widetilde X^\beta$, so that it is again an excellent arrangement by \Cref{prop:excel-newold}.
\end{notation}

\begin{lemma}
\label{lem:edge-alongLan}
Let $X$, $Y$, and $\beta$ be as above.
Then, we have identifications
\[
\mathcal F^\beta(X,Y)
= C^\infty(\widetilde X,\widetilde Y_{\widetilde\beta})
= C^\infty(\widetilde X^\beta_{\sparallel},\widetilde Y)
= C^\infty(\widetilde X^\beta,\widetilde Y_{\Gamma^\beta_X\times\Gamma_Y})
\]
as subsets of $C^\infty(X,Y)$.
\end{lemma}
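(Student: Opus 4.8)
The plan is to reduce the lemma to a purely combinatorial statement by unwinding all four function spaces into explicit families of set-inclusions, and then to check those families are logically equivalent. First I would record that in each of the three arrangement-pairs the ambient manifolds are $X$ and $Y$: indeed $\widetilde X(\varnothing)=\overline\partial_\varnothing X=X$, $\widetilde X^\beta_{\sparallel}(\varnothing)=\overline\partial^\beta_\varnothing X=X$, $\widetilde X^\beta(\varnothing,\varnothing)=X$, and likewise $\widetilde Y_{\widetilde\beta}(\varnothing)=\widetilde Y(\widetilde\beta(\varnothing))=\widetilde Y(\varnothing)=Y$ and $\widetilde Y_{\Gamma^\beta_X\times\Gamma_Y}(\varnothing,\varnothing)=Y$. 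Hence all four sets are subsets of $C^\infty(X,Y)$, and by the definition of an $S$-map, membership of $F$ unwinds respectively to: $F(\overline\partial_\sigma X)\subset\overline\partial_{\widetilde\beta(\sigma)}Y$ for all $\sigma\in\Gamma_X$ (for $C^\infty(\widetilde X,\widetilde Y_{\widetilde\beta})$); $F(\overline\partial^\beta_\tau X)\subset\overline\partial_\tau Y$ for all $\tau\in\Gamma_Y$ (for $C^\infty(\widetilde X^\beta_{\sparallel},\widetilde Y)$); $F(\overline\partial_\sigma X\cap\overline\partial^\beta_\tau X)\subset\overline\partial_\tau Y$ for all $(\sigma,\tau)\in\Gamma^\beta_X\times\Gamma_Y$ (for $C^\infty(\widetilde X^\beta,\widetilde Y_{\Gamma^\beta_X\times\Gamma_Y})$); while $F\in\mathcal F^\beta(X,Y)$ means $F(C)\subset\beta(C)$ for all $C\in D(\beta)$.

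Next I would run the chain of equivalences. For $\mathcal F^\beta(X,Y)=C^\infty(\widetilde X,\widetilde Y_{\widetilde\beta})$: in the forward direction, fix $\sigma\in\Gamma_X$, use $\overline\partial_\sigma X\subset C$ for each $C\in\sigma$, and combine with the identity $\overline\partial_{\widetilde\beta(\sigma)}Y=\bigcap_{C\in\sigma\cap D(\beta)}\beta(C)$ coming from $\widetilde\beta(\sigma)=\beta(\sigma\cap D(\beta))$ (\Cref{prop:partmap-blex}); the reverse direction is the special case $\sigma=\{C\}$ with $C\in D(\beta)$, where $\overline\partial_{\widetilde\beta(\sigma)}Y=\beta(C)$. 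For $C^\infty(\widetilde X,\widetilde Y_{\widetilde\beta})=C^\infty(\widetilde X^\beta_{\sparallel},\widetilde Y)$: this is immediate from $\overline\partial^\beta_\tau X=\bigcup_{\widetilde\beta(\sigma)=\tau}\overline\partial_\sigma X$ — forward by taking unions, backward because any $\sigma$ satisfies $\overline\partial_\sigma X\subset\overline\partial^\beta_{\widetilde\beta(\sigma)}X$. For $C^\infty(\widetilde X^\beta_{\sparallel},\widetilde Y)=C^\infty(\widetilde X^\beta,\widetilde Y_{\Gamma^\beta_X\times\Gamma_Y})$: the forward implication is trivial since $\overline\partial_\sigma X\cap\overline\partial^\beta_\tau X\subset\overline\partial^\beta_\tau X$; for the backward one, given $\tau\in\Gamma_Y$ and $\sigma\in\Gamma_X$ with $\widetilde\beta(\sigma)=\tau$, set $\sigma':=\sigma\setminus D(\beta)$, observe $\widetilde\beta(\sigma')=\beta(\varnothing)=Y$ so $\sigma'\in\Gamma^\beta_X$, observe $\sigma'\subset\sigma$ as subsets of $\operatorname{bd}X$ hence $\overline\partial_\sigma X\subset\overline\partial_{\sigma'}X$ (recall $\Gamma_X$ carries the reverse-inclusion order, so $\sigma\mapsto\overline\partial_\sigma X$ is monotone in this direction), and combine with $\overline\partial_\sigma X\subset\overline\partial^\beta_\tau X$ to get $\overline\partial_\sigma X\subset\overline\partial_{\sigma'}X\cap\overline\partial^\beta_\tau X=\widetilde X^\beta(\sigma',\tau)$; applying the third condition above to $(\sigma',\tau)$ and taking the union over all $\sigma$ with $\widetilde\beta(\sigma)=\tau$ recovers $F(\overline\partial^\beta_\tau X)\subset\overline\partial_\tau Y$.

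I do not expect a genuine obstacle: the whole argument is bookkeeping. The one place that demands a little care is the reversed variance of $\Gamma_X$ and the role of the projection $\Gamma_X\to\Gamma^\beta_X$, $\sigma\mapsto\sigma\setminus D(\beta)$, when passing between $\widetilde X^\beta$ and $\widetilde X^\beta_{\sparallel}$. I would also point out that \Cref{lem:edge-sliceisom} is not actually needed for this lemma — only the monotonicity of $\sigma\mapsto\overline\partial_\sigma X$ and the formula $\widetilde\beta(\sigma)=\beta(\sigma\cap D(\beta))$.
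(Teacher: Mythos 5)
Your proof is correct; the paper in fact states this lemma without proof, and your reduction of all four membership conditions to explicit families of set inclusions, together with the identity $\overline\partial_{\widetilde\beta(\sigma)}Y=\bigcap_{C\in\sigma\cap D(\beta)}\beta(C)$ and the specialization $\sigma=\{C\}$ for the reverse direction, is exactly the bookkeeping the author leaves to the reader. The only available shortcut is in your final backward step: since $\varnothing\in\Gamma^\beta_X$ and $\widetilde X^\beta(\varnothing,\tau)=X\cap\overline\partial^\beta_\tau X=\overline\partial^\beta_\tau X$, you may take $\sigma'=\varnothing$ outright rather than routing through $\sigma\setminus D(\beta)$ and a union over all $\sigma$ with $\widetilde\beta(\sigma)=\tau$.
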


\begin{corollary}
\label{cor:edgefunc-Baire}
Let $X$, $Y$, and $\beta$ be as in \Cref{lem:edge-alongLan}.
Then, the space $\mathcal F^\beta(X,Y)$ is a Baire space.
\end{corollary}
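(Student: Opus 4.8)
The plan is to transport the Baire property established in \Cref{prop:Cinf-baire} to $\mathcal F^\beta(X,Y)$ via the identifications supplied by \Cref{lem:edge-alongLan}.

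First I would use \Cref{lem:edge-alongLan} to rewrite $\mathcal F^\beta(X,Y)=C^\infty(\widetilde X,\widetilde Y_{\widetilde\beta})$ as subsets of $C^\infty(X,Y)$. Since $\mathcal F^\beta(X,Y)$ is by definition given the subspace topology from $C^\infty(X,Y)$, and $C^\infty(\widetilde X,\widetilde Y_{\widetilde\beta})$ is in turn topologized as a subspace of $C^\infty(|\widetilde X|,|\widetilde Y_{\widetilde\beta}|)=C^\infty(X,Y)$, this identification is a homeomorphism. Here $\widetilde X:\Gamma_X\to\mathbf{Emb}$ is the boundary arrangement of $X$ (see \eqref{eq:tildeX-def}): its shape $\Gamma_X$ is a finite lattice, as $X$ has finitely many connected faces, with maximum $\varnothing$, and its ambient manifold is $\overline\partial_\varnothing X=X$. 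The pre-arrangement $\widetilde Y_{\widetilde\beta}$ is the restriction, in the sense of \Cref{ex:arr:restrict}, of the boundary arrangement $\widetilde Y:\Gamma_Y\to\mathbf{Emb}$ along $\widetilde\beta:\Gamma_X\to\Gamma_Y$; the map $\widetilde\beta$ is inf-preserving, hence order-preserving, and it sends the maximum $\varnothing$ of $\Gamma_X$ to the maximum $\varnothing$ of $\Gamma_Y$, so that $\widetilde Y_{\widetilde\beta}$ is a pre-arrangement of shape $\Gamma_X$ with ambient manifold $Y$. In particular $\widetilde X$ and $\widetilde Y_{\widetilde\beta}$ are pre-arrangements of one and the same shape $\Gamma_X$.

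To invoke \Cref{prop:Cinf-baire} it then remains only to check that $\widetilde X$ and $\widetilde Y_{\widetilde\beta}$ are \emph{closed} pre-arrangements. For $\widetilde X$ this is immediate: by \Cref{cor:bdry-arr-excel} it is an excellent arrangement, and excellent arrangements are automatically closed (one may also read it off directly from \Cref{lem:mfdface-cap} and \Cref{cor:mfdface-uniquecap}, which exhibit each $\overline\partial_\sigma X$ as a closed submanifold of $X$). For $\widetilde Y_{\widetilde\beta}$, note that $\sigma\le\sigma'$ in $\Gamma_X$ forces $\widetilde\beta(\sigma)\le\widetilde\beta(\sigma')$ in $\Gamma_Y$, so the structure embedding $\widetilde Y_{\widetilde\beta}(\sigma)\hookrightarrow\widetilde Y_{\widetilde\beta}(\sigma')$ occurs already as a structure embedding of $\widetilde Y$, which is closed because $\widetilde Y$ is closed (again by \Cref{cor:bdry-arr-excel}); hence $\widetilde Y_{\widetilde\beta}$ is closed as well.

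Putting these together, \Cref{prop:Cinf-baire} applies to the pair $(\widetilde X,\widetilde Y_{\widetilde\beta})$ and gives that $C^\infty(\widetilde X,\widetilde Y_{\widetilde\beta})$ is a Baire space; by the homeomorphism above, so is $\mathcal F^\beta(X,Y)$. I do not expect a genuine obstacle: the real content sits inside \Cref{lem:edge-alongLan} and \Cref{prop:Cinf-baire}, and the only point that needs a moment of care is the closedness check, which is precisely the hypothesis under which \Cref{prop:Cinf-baire} was formulated.
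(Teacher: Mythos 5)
Your proof is correct and follows exactly the route the paper intends: identify $\mathcal F^\beta(X,Y)$ with $C^\infty(\widetilde X,\widetilde Y_{\widetilde\beta})$ via \Cref{lem:edge-alongLan}, verify that both are closed pre-arrangements of the common shape $\Gamma_X$, and apply \Cref{prop:Cinf-baire}. The closedness check you spell out is the only substantive verification, and you handle it correctly.
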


\begin{remark}
If $\mathcal X$ is an excellent arrangement of manifolds of shape $S$, and if $\beta$ is an edging of $\mathcal X$ with $Y$, then we can also define the subspace $\mathcal F^\beta(\mathcal X,Y)\subset C^\infty(\mathcal X,Y_S)$ of maps along $\beta$, where $Y_S$ is the constant arrangement of shape $S$ at $Y$.
We have a similar formula for $\mathcal F^\beta(\mathcal X,Y)$ to \Cref{lem:edge-alongLan}.
More generally, one can verify $\mathcal F^\beta(\mathcal X,Y)=\mathcal F^\beta(|\mathcal X|,Y)$ under the identification $C^\infty(\mathcal X,Y_S)=C^\infty(|\mathcal X|,Y)$.
Thus, most of the results on the space $\mathcal F^\beta(X,Y)$ still hold for $\mathcal F^\beta(\mathcal X,Y)$ for general excellent arrangements $\mathcal X$.
That is why we only discuss $\mathcal F^\beta(X,Y)$ in this section.
\end{remark}

The subspace $\mathcal F^\beta(X,Y)$ admits some stabilities under categorical operations.

\begin{lemma}
\label{lem:edgefunc-comp}
Let $X$, $Y$, and $Z$ be manifolds with finite faces.
Suppose $\beta$ and $\gamma$ be edgings of $X$ and $Y$ with $Y$ and $Z$ respectively, so that we obtain an edging $\gamma\beta$ of $X$ with $Z$ (\Cref{ex:edge-comp})
If $F:X\to Y$ and $G:Y\to Z$ are along $\beta$ and $\gamma$ respectively, then the composition $GF$ is along $\gamma\beta$.
In other words, the composition gives rise to a map
\[
\mathcal F^\gamma(Y,Z)\times\mathcal F^\beta(X,Y)\to\mathcal F^{\gamma\beta}(X,Z)\,.
\]
Moreover, each map $G\in\mathcal F^\gamma(Y,Z)$ and each proper map $F\in\mathcal F^\beta(X,Y)$ induce continuous maps
\[
\begin{gathered}
\mathcal F^\beta(X,Y)\to\mathcal F^{\gamma\beta}(X,Z)
\ ;\quad F'\mapsto GF' \\
\mathcal F^\gamma(Y,Z)\to\mathcal F^{\gamma\beta}(X,Z)
\ ;\quad G'\mapsto G'F\,.
\end{gathered}
\]
\end{lemma}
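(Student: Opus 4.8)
The plan is to handle the three assertions in turn, reducing each to material already in place: the set-theoretic statement that $GF$ is along $\gamma\beta$ is proved by a short chase of inclusions, and the two continuity statements then follow by restricting the continuity results of \Cref{prop:whit-comp} to the relevant subspaces.

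First I would verify the set-theoretic core. By \Cref{ex:edge-comp} the composite partial map $\gamma\beta\colon\operatorname{bd}X\to\operatorname{bd}Z$ is an edging; its domain is $D(\gamma\beta)=D(\beta)\cap\beta^{-1}(D(\gamma))$ and its value is $(\gamma\beta)(C)=\gamma(\beta(C))$ for $C$ in this domain. Given $C\in D(\gamma\beta)$, the hypothesis that $F$ is along $\beta$ gives $F(C)\subset\beta(C)$; since $\beta(C)\in D(\gamma)$ is a connected face of $Y$, the hypothesis that $G$ is along $\gamma$ gives $G(\beta(C))\subset\gamma(\beta(C))$; composing these inclusions yields
\[
(GF)(C)=G\bigl(F(C)\bigr)\subset G\bigl(\beta(C)\bigr)\subset\gamma\bigl(\beta(C)\bigr)=(\gamma\beta)(C)\,.
\]
Hence $GF$ is along $\gamma\beta$, and composition restricts to a well-defined map $\mathcal F^\gamma(Y,Z)\times\mathcal F^\beta(X,Y)\to\mathcal F^{\gamma\beta}(X,Z)$. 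This is the only genuinely new content.

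For the continuity of post-composition by a fixed $G\in\mathcal F^\gamma(Y,Z)$, I would invoke \Cref{prop:whit-comp}\ref{subprop:whit-comp:post}, by which $G_\ast\colon C^\infty(X,Y)\to C^\infty(X,Z)$ is continuous. By the previous paragraph $G_\ast$ carries $\mathcal F^\beta(X,Y)$ into $\mathcal F^{\gamma\beta}(X,Z)$, and since both of these spaces are equipped with the subspace topologies from $C^\infty(X,Y)$ and $C^\infty(X,Z)$ respectively, the induced map $F'\mapsto GF'$ is continuous. Dually, for pre-composition by a fixed \emph{proper} map $F\in\mathcal F^\beta(X,Y)$, properness is exactly what lets me apply \Cref{prop:whit-comp}\ref{subprop:whit-comp:pre} to conclude that $F^\ast\colon C^\infty(Y,Z)\to C^\infty(X,Z)$ is continuous; again $F^\ast$ carries $\mathcal F^\gamma(Y,Z)$ into $\mathcal F^{\gamma\beta}(X,Z)$ by the set-theoretic statement, so restricting to the subspace topologies gives continuity of $G'\mapsto G'F$.

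I do not anticipate a real obstacle. The only points that need attention are bookkeeping ones: checking that the membership $C\in D(\gamma\beta)$ really does unwind to $C\in D(\beta)$ together with $\beta(C)\in D(\gamma)$, so that both ``along'' hypotheses can be applied in sequence; and recalling that the subspace topology is transitive, so that the restriction of a continuous map between ambient mapping spaces to the relevant $\mathcal F^{(-)}$ subspaces remains continuous. Neither of these is more than routine.
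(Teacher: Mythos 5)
Your proposal is correct and follows essentially the same route as the paper, whose proof is a one-line appeal to \Cref{lem:edge-alongLan} and \Cref{prop:whit-comp}: the set-theoretic inclusion chase you write out is exactly the content hidden behind the citation of \Cref{lem:edge-alongLan}, and the two continuity claims are obtained, as you do, by restricting \Cref{prop:whit-comp}\ref{subprop:whit-comp:post} and \ref{subprop:whit-comp:pre} to the subspaces of maps along the edgings.
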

\begin{proof}
The result directly follows from \Cref{lem:edge-alongLan} and \Cref{prop:whit-comp}.
\end{proof}

\begin{remark}
The first map in \Cref{lem:edgefunc-comp} is not continuous in general (with respect to the Whitney $C^\infty$-topology).
This is because $C^\infty(X,Y)$ is not topologically functorial in the first variable.
\end{remark}

\begin{lemma}
\label{lem:edgefunc-prod}
Let $X$ be a manifold with finite faces.
Suppose we have two edgings $\beta_1$ and $\beta_2$ of $X$ with $Y_1$ and $Y_2$ respectively such that $D(\beta_1)\cap D(\beta_2)=\varnothing\subset\operatorname{bd} X$, so we have an edging $\beta_1\amalg\beta_2$ of $X$ with $Y_1\times Y_2$ (see \Cref{ex:edge-paste}).
Then, we have a homeomorphism
\[
\mathcal F^{\beta_1\amalg\beta_2}(X,Y_1\times Y_2)
\cong\mathcal F^{\beta_1}(X,Y_1)\times\mathcal F^{\beta_2}(X,Y_2)\,.
\]
\end{lemma}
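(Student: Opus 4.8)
The plan is to obtain the homeomorphism by restricting the cartesian–product homeomorphism already established in \Cref{cor:Cinf-cartprod}. That corollary provides a homeomorphism $\Phi\colon C^\infty(X,Y_1\times Y_2)\xrightarrow{\simeq}C^\infty(X,Y_1)\times C^\infty(X,Y_2)$ sending a map $F$ to the pair $(\operatorname{pr}_1F,\operatorname{pr}_2F)$ of its components. Since $\mathcal F^{\beta_1\amalg\beta_2}(X,Y_1\times Y_2)$ and the two factors $\mathcal F^{\beta_i}(X,Y_i)$ all carry subspace topologies, and the product topology on $\mathcal F^{\beta_1}(X,Y_1)\times\mathcal F^{\beta_2}(X,Y_2)$ agrees with the subspace topology inherited from $C^\infty(X,Y_1)\times C^\infty(X,Y_2)$, it is enough to prove the purely set-theoretic statement that $\Phi$ carries $\mathcal F^{\beta_1\amalg\beta_2}(X,Y_1\times Y_2)$ bijectively onto $\mathcal F^{\beta_1}(X,Y_1)\times\mathcal F^{\beta_2}(X,Y_2)$; a homeomorphism restricting to a bijection between subspaces is automatically a homeomorphism between them.

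To check that, I would first unwind the identification $\operatorname{bd}(Y_1\times Y_2)\cong\operatorname{bd}Y_1\amalg\operatorname{bd}Y_2$ used to define $\beta_1\amalg\beta_2$ in \Cref{ex:edge-paste}: it comes from $\partial_1(Y_1\times Y_2)=(\partial_1Y_1\times Y_2)\amalg(Y_1\times\partial_1Y_2)$, and it identifies a connected face $D\in\operatorname{bd}Y_1$ with $D\times Y_2\subset Y_1\times Y_2$ and a connected face $D'\in\operatorname{bd}Y_2$ with $Y_1\times D'$. Writing $F_i=\operatorname{pr}_iF$ and using $D(\beta_1\amalg\beta_2)=D(\beta_1)\amalg D(\beta_2)$, for a connected face $C\in D(\beta_1)$ the condition $F(C)\subset(\beta_1\amalg\beta_2)(C)=\beta_1(C)\times Y_2$ is equivalent to $F_1(C)\subset\beta_1(C)$, the $Y_2$-component condition being vacuous, and symmetrically for $C\in D(\beta_2)$. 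Hence $F$ is along $\beta_1\amalg\beta_2$ precisely when $F_1$ is along $\beta_1$ and $F_2$ is along $\beta_2$, which is exactly the bijection wanted.

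I do not anticipate a real obstacle: the argument is bookkeeping on top of \Cref{cor:Cinf-cartprod}, and the only place demanding care is tracking the identification of connected faces so that $(\beta_1\amalg\beta_2)(C)$ is correctly recognized as $\beta_1(C)\times Y_2$ (respectively $Y_1\times\beta_2(C)$). The hypothesis $D(\beta_1)\cap D(\beta_2)=\varnothing$ serves only to make $\beta_1\amalg\beta_2$ well defined (as in \Cref{ex:edge-paste}) and is not otherwise needed in the topological comparison.
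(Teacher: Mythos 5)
Your proposal is correct and follows essentially the same route as the paper: both restrict the homeomorphism of \Cref{cor:Cinf-cartprod} and reduce to the set-theoretic equivalence that $F$ is along $\beta_1\amalg\beta_2$ iff $\pi_1F$ and $\pi_2F$ are along $\beta_1$ and $\beta_2$, via the decomposition $\operatorname{bd}(Y_1\times Y_2)\cong\operatorname{bd}Y_1\amalg\operatorname{bd}Y_2$ with $D\mapsto\pi_1^{-1}(D)$ and $D'\mapsto\pi_2^{-1}(D')$. Your explicit remark that a homeomorphism restricting to a bijection of subspaces is a homeomorphism of subspaces is a small but welcome precision the paper leaves implicit.
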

\begin{proof}
We show the hemoemorphism in \Cref{cor:Cinf-cartprod} restricts to the required one.
Write $\pi_i:Y_1\times Y_2\to Y_i$ the projection for $i=1,2$, then it suffices to show that a map $F:X\to Y_1\times Y_2$ is along $\beta_1\amalg\beta_2$ if and only if the compositions $\pi_1F$ and $\pi_2F$ are along $\beta_1$ and $\beta_2$ respectively.
This follows from the following formula:
\[
\operatorname{bd}(Y_1\times Y_2)
= \{\pi_1^{-1}(D)\mid D\in\operatorname{bd}Y_1\}
\amalg \{\pi_2^{-1}(D')\mid D'\in\operatorname{bd}Y_2\}\,.
\]
\end{proof}

The two results above can be used to construct new maps from olds.
We do not, however, even know whether the space $\mathcal F^\beta(X,Y)$ is empty or not.
It is actually a non-trivial question, and, unfortunately, the space is often empty.
In fact, in order for $\mathcal F^\beta(X,Y)$ to be nonempty, it is necessary that there exists a map $\varphi:\pi_0X\to\pi_0Y$ which commutes the following diagram:
\[
\xymatrix@C=1em{
  \pi_0(\partial_1X) \ar@{=}[r]^-\sim \ar[d] & \operatorname{bd}X \ar[rr]^{\beta}\supset D(\beta) && \operatorname{bd}Y \ar@{=}[r]^-\sim & \pi_0(\partial_1Y) \ar[d] \\
  \pi_0 X \ar@{-->}[rrrr]^\varphi &&&& \pi_0Y }
\]
Using this, one can easily construct examples of $\beta$ so that $\mathcal F^\beta(X,Y)$ is empty.
Note that we want to investigate $\mathcal F^\beta(X,Y)$ to obtain geometric information of $X$, and we are only interested in those $Y$ which is not too complicated.
The good news is that we can show $\mathcal F^\beta(X,Y)$ is nonempty in such cases.

\begin{proposition}
\label{prop:edgefunc-polyhedra}
Let $K$ be a (possibly non-compact) manifold with faces which can be embedded into a Euclidean space $\mathbb R^n$ as a convex polyhedron; i.e. a (not necessarily unbounded) convex subset of $\mathbb R^n$ bounded by finite hyperplanes.
Then, for every manifold $X$ with finite faces, and for every edging $\beta$ of $X$ with $K$, the space $\mathcal F^\beta(X,K)$ is non-empty.
\end{proposition}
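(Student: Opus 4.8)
The plan is to exhibit a single map in $\mathcal F^\beta(X,K)$ directly, by writing it in the ambient Euclidean coordinates as a convex combination of finitely many fixed points of $K$ whose coefficients are smooth functions manufactured from boundary defining functions of the faces of $X$.

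First I would normalise the target: replacing $\mathbb R^n$ by the affine hull of $K$ we may assume $K$ is full-dimensional, so that $K=\bigcap_{j=1}^{N}\{y\mid\ell_j(y)\ge 0\}$ for affine functions $\ell_1,\dots,\ell_N$ whose zero hyperplanes carry the facets of $K$. Since $K$ is a manifold with faces, its connected faces are exactly these facets $F_j=K\cap\{\ell_j=0\}$, and every intersection $\bigcap_{j\in J}F_j$ is a convex (possibly empty) subset of $K$. Write $D:=D(\beta)$, let $j(C)$ denote the index with $\beta(C)=F_{j(C)}$ for $C\in D$, and for $\sigma\subseteq D$ put $F_\sigma:=\bigcap_{C\in\sigma}F_{j(C)}$ (with $F_\varnothing:=K$); call $\sigma$ \emph{admissible} when $\overline\partial_\sigma X\neq\varnothing$. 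The defining conditions of an edging --- equivalently \Cref{lem:edgegamma} --- say precisely that $F_\sigma\neq\varnothing$ for every admissible $\sigma$, so I may fix once and for all a point $q_\sigma\in F_\sigma$ for each admissible $\sigma$, and $q_\varnothing\in K$.

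Next I would prepare the boundary data on $X$. As $X$ is normal with only finitely many connected faces, one can choose open neighbourhoods $U_C\supseteq C$ of the faces with the same nerve as the faces, i.e.\ $\bigcap_{C\in\sigma}U_C=\varnothing$ whenever $\sigma\subseteq\operatorname{bd}X$ is non-admissible (this is the elementary fact that finitely many closed sets with empty intersection admit open neighbourhoods with empty intersection). Applying \Cref{prop:whitney-zero} to the disjoint closed sets $C$ and $X\setminus U_C$ yields smooth functions $\rho_C\colon X\to[0,1]$ with $\rho_C|_C\equiv 0$ and $\rho_C\equiv 1$ off $U_C$. For $\sigma\subseteq D$ set
\[
\mu_\sigma:=\prod_{C\in\sigma}(1-\rho_C)\cdot\prod_{C\in D\setminus\sigma}\rho_C\colon X\longrightarrow[0,1]\,,
\]
so that $\sum_{\sigma\subseteq D}\mu_\sigma\equiv 1$ (expand $\prod_{C\in D}\bigl((1-\rho_C)+\rho_C\bigr)$), while $\mu_\sigma\equiv 0$ for every non-admissible $\sigma$ (some factor $1-\rho_C$ with $C\in\sigma$ vanishes identically, by the choice of the $U_C$) and $\mu_\sigma|_C=0$ whenever $C\in D\setminus\sigma$ (the factor $\rho_C$). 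The candidate map is then $F:=\sum_{\sigma\ \mathrm{admissible}}\mu_\sigma\,q_\sigma\colon X\to\mathbb R^n$.

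The final step is to check that $F$ works, which is now routine. Since the non-admissible coefficients vanish, the admissible ones still sum to $1$, so at every point $F(x)$ is a genuine convex combination of points of the convex set $K$; hence $F(X)\subseteq K$ and $F$ is smooth as a map $X\to K$. If $C\in D$ and $x\in C$, the terms with $C\notin\sigma$ drop out and the surviving coefficients sum to $1-\rho_C(x)=1$, so $F(x)$ is a convex combination of the $q_\sigma\in F_\sigma\subseteq F_{j(C)}$, and convexity of the facet $F_{j(C)}$ gives $F(x)\in F_{j(C)}=\beta(C)$; thus $F$ is along $\beta$ and $\mathcal F^\beta(X,K)\neq\varnothing$. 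I expect the one genuinely delicate point to be the shrinking of the neighbourhoods $U_C$ to the combinatorial pattern of the faces: without it the sum would carry terms $\mu_\sigma q_\sigma$ indexed by sets $\sigma$ of designated faces with $\overline\partial_\sigma X=\varnothing$, for which the edging hypothesis provides no point of $\bigcap_{C\in\sigma}\beta(C)$, and killing exactly those terms is what keeps $F$ both inside $K$ and along $\beta$.
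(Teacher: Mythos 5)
Your proof is correct and follows essentially the same strategy as the paper's: both send $X$ into $K$ by a convex combination $\sum_\sigma\mu_\sigma\,q_\sigma$ of points chosen in the faces $\overline\partial_{\widetilde\beta(\sigma)}K$ (nonempty by the edging axioms), using a partition of unity on $X$ adapted to the combinatorics of its faces and the convexity of $K$ and of its faces. The only difference is bookkeeping: the paper indexes the chosen points by the minimal elements of $\Gamma_X^+$ and takes a partition of unity subordinate to the cover $U_\sigma=X\setminus\bigcup_{\sigma'\not\ge\sigma}\overline\partial_{\sigma'}X$, whereas you index by admissible subsets of $D(\beta)$ and build the coefficients $\mu_\sigma$ explicitly from Urysohn functions together with a swelling of the faces that kills the non-admissible terms.
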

\begin{proof}
By the assumption, we may think of $K\subset\mathbb R^n$ as a convex polyhedron.
A good property is that, for every $\alpha\in\Gamma_K$, $\overline\partial_\alpha K\subset\mathbb R^n$ is, unless empty, again a convex polyhedron.

Suppose $\beta$ is an edging of $X$ with $K$.
Set $\Gamma_X^+\subset\Gamma_X$ to be the subposet consisting of elements $\sigma\in\Gamma_X$ with $\overline\partial_\sigma X\neq\varnothing$.
We denote by $\min(\Gamma^+_X)\subset\Gamma^+_X$ the set of minimal elements.
Then, we can choose a map $v:\min(\Gamma^+_X)\to K$ so that $v(\sigma)\in\overline\partial_{\widetilde\beta(\sigma)} K$.
For each $\sigma\in\min(\Gamma^+_X)$, we set
\[
U_\sigma := X\setminus\bigcup_{\sigma'\not\ge\sigma}\overline\partial_{\sigma'} X\,,
\]
then $U_\sigma$ is an open neighborhood of $\overline\partial_\sigma X$ in $X$.
Note that $\{U_\sigma\mid\sigma\in\min(\Gamma^+_X)\}$ gives an open covering of $X$, so that there is a partition $\{\rho_\sigma\}_\sigma$ of unity subordinate to it.
We define a smooth map $F:X\to K\subset\mathbb R^n$ by the formula
\[
F(p) := \sum_{\sigma\in\min(\Gamma^+_X)} \rho_\sigma(p)v(\sigma)\,.
\]
Then $F$ is well-defined since $K\subset\mathbb R^n$ is convex.
Moreover, for $\sigma,\sigma_1,\dots,\sigma_r\in\min(\Gamma^+_X)$, we have
\[
\overline\partial_{\sigma_1\vee\dots\vee\sigma_r} X\cap U_\sigma \neq \varnothing
\]
only when $\sigma\in\{\sigma_1,\dots,\sigma_r\}$.
In other words, in the other case, $\rho_\sigma$ is identically zero on $\overline\partial_{\sigma_1\vee\dots\vee\sigma_r} X$, and we can conclude that $F$ maps $\overline\partial_{\sigma_1\vee\dots\vee\sigma_r} X$ into the convex hull in $\mathbb R^n$ spanned by $v(\sigma_1),\dots,v(\sigma_r)$.
Thus, we obtain
\[
F(\overline\partial_{\sigma_1\vee\dots\vee\sigma_r} X)
\subset \operatorname{conv}\{v(\sigma_1),\dots,v(\sigma_r)\}
\subset \overline\partial_{\widetilde\beta(\sigma_1)\vee\dots\vee\widetilde\beta(\sigma_r)} K\,.
\]
This implies $F$ is along $\beta$ and $F\in\mathcal F^\beta(X,K)$.
\end{proof}

\subsection{Collarings of arrangements}
\label{sec:collar}

In this section, we will prove a relative analogue of a classical result: the Collar Neighborhood Theorem.
Let $\mathcal X$ be a neat arrangement of shape $S$ equipped with an edging $\beta:\operatorname{bd}\mathcal X\to\operatorname{bd}Y$ with $Y$.
By \Cref{prop:edge-excel}, we have an excellent arrangement $\widetilde{\mathcal X}^\beta_{\sparallel}:S\times\Gamma_Y\to\mathbf{Emb}$.
For each $\tau\in\Gamma_Y$, it restricts to an excellent arrangement
\[
\overline\partial^\beta_\tau\mathcal X:S\to\mathbf{Emb}
\ ;\quad s\mapsto\widetilde{\mathcal X}^\beta_{\sparallel}(s,\tau)=\mathcal X(s)\cap\overline\partial^\beta_\tau|\mathcal X|\,.
\]
One can easily veirfy that $\overline\partial^\beta_\tau\mathcal X$ is a neat arrangement.
Then, the relative version of the Collar Neighborhood Theorem will, roughly, assert that the neat arrangement $\mathcal X$ is of the form $\mathbb R^{c(\tau)}_+\times\overline\partial^\beta_\tau\mathcal X$ near $\overline\partial^\beta_\tau|\mathcal X|$ as arrangements.
The goal of this section is to prove this theorem in a more precise form.
Note that some of our proofs are base on those in the paper \cite{Lau00}.

First, we see that some special vector fields give rise to ``collars'' around faces.

\begin{definition}
Let $\mathcal X$ be a neat arrangement of manifolds of shape $S$, and let $C\in\operatorname{bd}\mathcal X$ be a connected face of the ambient manifold.
Then a vector field $\xi$ on $\mathcal X$ is said to be $C$-collaring if $\xi$ is inward-pointing, and we have $\xi\pitchfork C$ and $\xi\parallel C'$ for $C'\in\operatorname{bd}\mathcal X$ with $C'\neq C$.
\end{definition}

If $\xi$ is a $C$-collaring vector field, we obtain an open embedding
\[
\mathbb R_+\times \overline\partial_C\mathcal X\to\mathcal X
\]
using \Cref{prop:vecfield-arr-flow}.
Hence, construction of collars can be reduced to construction of collaring vector fields.
We have the following result:

\begin{lemma}
\label{lem:Ccol-vecfield}
Let $\mathcal X$ be a neat arrangement of manifolds of shape $S$.
Then, for every connected face $C$ of $|\mathcal X|$, there is a $C$-collaring vector fields on $\mathcal X$.
\end{lemma}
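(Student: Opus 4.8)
The plan is to build $\xi$ by gluing explicit local model vector fields with a smooth partition of unity, relying on the fact that all three conditions defining a $C$-collaring vector field are stable under non-negative linear combinations. First I would set up the local models. By \Cref{lem:excel-neat} every $q\in|\mathcal X|$ has a neat $\mathcal X$-chart $(U_q,\mathcal I_q,\varphi_q)$; shrinking $U_q$ we may assume $\varphi_q(U_q)\subset\mathbb H^{\langle m|k\rangle}$ is convex, which keeps the chart neat and, since $|\mathcal X|=\mathcal X(\max S)$ is a manifold with faces, lets the proof of \Cref{lem:mfdface-cap} identify the connected faces of $|\mathcal X|$ meeting $U_q$ with the coordinate hyperplanes $\{x_i=0\}$, $k+1\le i\le m$, all of which contain $q$. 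If $q\notin C$, then, $C$ being closed, I shrink $U_q$ to be disjoint from $C$ and put $\xi_q:=0$. If $q\in C$, let $i_0\in\{k+1,\dots,m\}$ be the index with $C\cap U_q=\varphi_q^{-1}(\{x_{i_0}=0\})$ and put $\xi_q:=(\varphi_q^{-1})_\ast(\partial/\partial x_{i_0})$. The essential point is that, because neatness forces $\mathcal I_q(s)_+=\langle m|k\rangle_+$ and hence $i_0\in\mathcal I_q(s)$ for every $s\ge s(q)$, the vector field $\partial/\partial x_{i_0}$ is transversal to $\{x_{i_0}=0\}$, tangent to every other coordinate hyperplane, tangent to every $\mathbb H^{\mathcal I_q(s)}$, and has all its boundary-direction components equal to $0$ or $1$, so it restricts to an inward-pointing vector field on each $\mathbb H^{\mathcal I_q(s)}$.

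Next I would pick a smooth partition of unity $\{\rho_\alpha\}$ subordinate to $\{U_{q_\alpha}\}$ (available since manifolds with corners are paracompact) and set $\xi:=\sum_\alpha\rho_\alpha\,\xi_{q_\alpha}$. Smoothness of $\xi$ and the fact that it is a vector field on $|\mathcal X|$ are clear. Since, for each fixed $p$ and each $s$, the set of vectors in $T_p|\mathcal X|$ lying in the image of $T_p\mathcal X(s)$ is a linear subspace and the set of inward-pointing tangent vectors of $\mathcal X(s)$ at $p$ is a convex cone, and since every $\xi_{q_\alpha}$ lies in both (trivially when $\xi_{q_\alpha}=0$, and by the chart computation above otherwise), the non-negative combination $\xi$ restricts to an inward-pointing vector field on each $\mathcal X(s)$; in particular $\xi$ is an inward-pointing vector field on $\mathcal X$. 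The same linearity gives $\xi\parallel C'$ for every connected face $C'\ne C$: at a point $p\in C'$ the contributing $\xi_{q_\alpha}$ are either $0$ or, in their chart at $q_\alpha$, a coordinate vector field $\partial/\partial x_{i_0^{(\alpha)}}$ whose zero hyperplane cuts out $C$, hence is distinct from the one cutting out $C'$, so each $\xi_{q_\alpha}(p)$ is tangent to $C'$.

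It remains to check $\xi\pitchfork C$, and this is the step that needs care, because transversality is not a linear condition. I would work near an arbitrary $q_0\in C$: choose a neat convex chart $\varphi_{q_0}$ at $q_0$ with $C\cap U_{q_0}=\varphi_{q_0}^{-1}(\{x_m=0\})$, so that $\lambda:=x_m\circ\varphi_{q_0}\ge 0$ is a defining function for $C$ on $U_{q_0}$ with $\ker d_q\lambda=T_qC$ for $q\in C\cap U_{q_0}$. For such a $q$, only the $\alpha$ with $q\in U_{q_\alpha}$ contribute to $\xi(q)$, and for each of them $q_\alpha\in C$ (otherwise $U_{q_\alpha}$ was chosen disjoint from $C$), so $\xi_{q_\alpha}$ is inward-pointing and transversal to $C$ at $q$. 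Realising $\xi_{q_\alpha}(q)$ by a curve $\gamma:\mathbb R_+\to|\mathcal X|$ with $\gamma(0)=q$, the composite $\lambda\circ\gamma$ is non-negative near $0$ and vanishes at $0$, so $d_q\lambda(\xi_{q_\alpha}(q))\ge 0$, and then transversality ($\xi_{q_\alpha}(q)\notin T_qC$) upgrades this to $d_q\lambda(\xi_{q_\alpha}(q))>0$. Therefore $d_q\lambda(\xi(q))=\sum_\alpha\rho_\alpha(q)\,d_q\lambda(\xi_{q_\alpha}(q))>0$, whence $\xi(q)\notin T_qC$ and $T_q|\mathcal X|=T_qC\oplus\mathbb R\,\xi(q)$; as $q_0$ was arbitrary, $\xi\pitchfork C$. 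I expect this last verification — ensuring that near points of $C$ every local summand is genuinely $C$-collaring, and encoding transversality through the sign of $d\lambda$ — to be the only delicate part, the rest being the convex-cone and linear-subspace stability of the conditions together with the explicit neat local model, where neatness is precisely what makes $\partial/\partial x_{i_0}$ simultaneously tangent to all of the $\mathcal X(s)$.
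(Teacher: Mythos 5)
Your proof is correct and follows essentially the same route as the paper's: pull back the coordinate vector field $\partial/\partial x_{i_0}$ through neat $\mathcal X$-charts (where neatness is exactly what makes it tangent to every $\mathbb H^{\mathcal I(s)}$ and inward-pointing) and glue with a partition of unity, your zero fields away from $C$ playing the role of the paper's extra cover element $X\setminus C$. The only difference is that you spell out why transversality to $C$ survives the convex combination via the sign of $d\lambda$ on the local summands, a point the paper states without proof.
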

\begin{proof}
Take an family $\{(U_\alpha,\mathcal I_\alpha,\varphi_\alpha)\}_{\alpha\in A}$ of $\mathcal X$-charts so that $\{U_\alpha\}_\alpha$ covers $C$, $|\mathcal I_\alpha|$ is of the form $\langle m|k_\alpha\rangle$, and we have the following pullback square:
\begin{equation}
\label{sq:prf:Ccol-vecfield:C}
\xymatrix{
  U_\alpha\cap C \ar[r] \ar[d] \ar@{}[dr]|(.4)\pbcorner & U_\alpha \ar[d]^{\varphi_\alpha} \\
  \{x_m=0\} \ar[r] & \mathbb H^{|\mathcal I_\alpha|} }
\end{equation}
We construct a vector field $\xi_\alpha$ on each $U_\alpha$ as the pullback of the vector field $\frac\partial{\partial x_m}$ on $\mathbb H^{|\mathcal I_\alpha|}$ by $\varphi_\alpha$.
Note that since we have $\mathcal X(s)\pitchfork C$ and $(U_\alpha,\mathcal I_\alpha,\varphi_\alpha)$ is an $\mathcal X$-chart, we obtain that $\frac\partial{\partial x_m}$ is a vector field on $\mathcal E^{\mathcal I_\alpha}$.
Thus, $\xi_\alpha$ is actually a vector field on the arrangement $U_\alpha\cap\mathcal X$.
Choose a partition of unity $\{\chi\}\cup\{\rho_\alpha\}_{\alpha\in A'}$ subordinate to $\{X\setminus C\}\cup\{U_\alpha\}_\alpha$, where $A'\subset A$, and define
\[
\xi := \sum_{\alpha\in A'} \rho_\alpha\cdot \xi_\alpha\,.
\]
It is clearly that $\xi$ is an inward-pointing vector field on $\mathcal X$ such that $\xi\pitchfork C$.
Moreover, for $C'\in\operatorname{bd}\mathcal X$ with $C'\neq C$, we have $\xi_\alpha\parallel (U_\alpha\cap C')$ by \eqref{sq:prf:Ccol-vecfield:C}.
Therefore, we conclude that $\xi$ is a $C$-collaring vector field.
\end{proof}

Actually, we can formulate our collaring theorem on neat arrangements in more sophisticated way.
Let $\mathcal X$ be a neat arrangement of shape $S$ equipped with an edging $\beta$ with $Y$.
Recall that elements of $\Gamma_Y$ are subsets of $\operatorname{bd}Y$, and $\Gamma_Y$ is ordered by the opposite of the inclusions.
For $\tau,\tau'\in\Gamma_Y$, we define
\[
\mathcal C_Y(\tau,\tau')
:=
\begin{cases}
\mathbb R^{\tau'\setminus\tau}_+ \quad&\text{if $\tau\le\tau'\in\Gamma_Y$,} \\
\varnothing \quad&\text{otherwise.}
\end{cases}
\]
Then, we obtain $\mathcal C_Y(\tau,\tau) = \{0\}$ and, for $\tau,\tau',\tau''\in\Gamma_Y$, a canonical map
\[
\mathcal C_Y(\tau',\tau'')\times\mathcal C_Y(\tau,\tau')
\to\mathcal C_Y(\tau,\tau'')
\]
which is a diffeomorphism when $\tau\le\tau'\le\tau''$.
It follows that the functor $\mathcal C_Y:\Gamma_Y^\opposite\times\Gamma_Y\to\mathbf{Mfd}$ gives rise to an enrichment of the category $\Gamma_Y$ over $\mathbf{Mfd}$ (with respect to the cartesian product).
Note that, by \Cref{cor:mfdface-uniquecap}, for $\tau\le\tau'\in\Gamma_Y$, we have
\[
\dim (\mathcal C_{\mathcal X}(\tau,\tau')\times\overline\partial^\beta_\tau\mathcal X(s))
= \dim \overline\partial^\beta_{\tau'}\mathcal X(s)
\]
for every $s\in S$.

\begin{definition}
Let $\mathcal X$ be a neat arrangement of shape $S$, and let $\beta$ be an edging of $\mathcal X$ with a manifold $Y$ with finite faces.
Then a $\beta$-collaring $\gamma$ of $\mathcal X$ is a family of open embeddings
\[
\gamma^\tau_{\tau'}:\mathcal C_Y(\tau,\tau')\times\overline\partial_{\tau}\mathcal X\to\overline\partial_{\tau'}\mathcal X
\]
of arrangements for $\tau,\tau'\in\Gamma_Y$ such that, for each chain $\tau\le\tau'\le\tau''\in\Gamma_Y$, the following square is commutative:
\[
\xymatrix{
  \mathcal C_Y(\tau',\tau'')\times\mathcal C_Y(\tau,\tau')\times\overline\partial^\beta_\tau\mathcal X \ar[r]^-\simeq \ar[d]_{\mathrm{id}\times\gamma^\tau_{\tau'}} & \mathcal C_Y(\tau,\tau'')\times\overline\partial^\beta_\tau\mathcal X \ar[d]^{\gamma^\tau_{\tau''}} \\
  \mathcal C_Y(\tau',\tau'')\times\overline\partial^\beta_{\tau'}\mathcal X \ar[r]^{\gamma^{\tau'}_{\tau''}} & \overline\partial^\beta_{\tau''}\mathcal X }
\]
\end{definition}

\begin{remark}
In view that $\mathcal C_Y$ is an enriched category over $\mathbf{Mfd}$, a $\beta$-collaring is a left actions of $\mathcal C_Y$ to the family $\{\overline\partial^\beta_\tau\mathcal X\}_{\tau\in\Gamma_Y}$ of arrangements.
Hence, we will often omit the indices and write
\[
\gamma=\gamma^\tau_{\tau'}:
\mathcal C_Y(\tau,\tau')\times\overline\partial^\beta_\tau\mathcal X
\to \overline\partial^\beta_{\tau'}\mathcal X
\]
for a $\beta$-collaring $\gamma$.
\end{remark}

If $\gamma$ is a $\beta$-collaring of $\mathcal X$, then for each $\tau\in\Gamma_Y$, we have an open embedding
\[
\mathbb R^{c(\tau)}_+\times\overline\partial^\beta_\sigma\mathcal X
\simeq \mathcal C_Y(\tau,Y)\times\overline\partial^\beta_\tau\mathcal X
\xrightarrow{\gamma} \mathcal X\,,
\]
of arrangements of manifolds.
This roughly says that a $\beta$-collaring gives an open neighborhood diffeomorphic to a product with a half open interval for each connected face respecting the arrangement.

The next is the main theorem in this section:

\begin{theorem}[Collar Neighborhood Theorem]
\label{thm:collaring}
Every neat arrangement $\mathcal X$ admits a $\beta$-collaring for every edging $\beta$ of $\mathcal X$.
\end{theorem}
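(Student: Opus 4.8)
The plan is to reduce the theorem to the construction of a compatible system of collaring vector fields, and to build such a system by induction on the number of faces.

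\emph{Reduction.} For each connected face $D$ of $Y$ lying in the image of $\beta$, the edging axiom forces the faces in $\beta^{-1}\{D\}\subseteq\operatorname{bd}\mathcal X$ to be pairwise disjoint, so $\overline\partial^\beta_{\{D\}}|\mathcal X|=\coprod_{C\in\beta^{-1}\{D\}}C$ is a clopen union of connected faces of $|\mathcal X|$. I will produce, for each such $D$, a vector field $\xi_D$ on the arrangement $\mathcal X$ which is inward-pointing, satisfies $\xi_D\pitchfork C$ for $C\in\beta^{-1}\{D\}$ and $\xi_D\parallel C'$ for every other connected face $C'$ of $|\mathcal X|$, and moreover satisfies $[\xi_D,\xi_{D'}]=0$ on a neighbourhood of $\overline\partial^\beta_{\{D,D'\}}|\mathcal X|$ for all $D\ne D'$. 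Granting this, fix $\tau'\in\Gamma_Y$ and $D\notin\tau'$ with $\overline\partial^\beta_{\tau'\cup\{D\}}|\mathcal X|\ne\varnothing$. Since $D\notin\tau'$, the field $\xi_D$ is tangent to $\overline\partial^\beta_{\tau'}|\mathcal X|$ (this submanifold is a union of intersections of faces belonging to $\beta^{-1}(\tau')$, to all of which $\xi_D$ is parallel), hence restricts to an inward-pointing vector field on the neat arrangement $\overline\partial^\beta_{\tau'}\mathcal X$; there it is transverse to the clopen union of faces $\overline\partial^\beta_{\tau'\cup\{D\}}|\mathcal X|$, so by \Cref{prop:vecfield-arr-flow,lem:vecfield-flow-emb} its flow yields an open embedding $\gamma^{\tau'\cup\{D\}}_{\tau'}\colon \mathcal C_Y(\tau'\cup\{D\},\tau')\times\overline\partial^\beta_{\tau'\cup\{D\}}\mathcal X\to\overline\partial^\beta_{\tau'}\mathcal X$ of arrangements. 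For a general pair $\tau\le\tau'$ in $\Gamma_Y$ one defines $\gamma^{\tau}_{\tau'}$ as the composite of such elementary collars along a maximal chain in the interval $[\tau,\tau']$; because $\overline\partial^\beta_{\tau}|\mathcal X|\subseteq\overline\partial^\beta_{\{D,D'\}}|\mathcal X|$ whenever $D,D'\in\tau$, the flows involved pairwise commute on a neighbourhood of $\overline\partial^\beta_\tau|\mathcal X|$, so $\gamma^\tau_{\tau'}$ is independent of the chosen chain and the coherence square required of a $\beta$-collaring commutes by construction.

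\emph{First construction of the $\xi_D$.} For a single connected face $C$ of $|\mathcal X|$, \Cref{lem:Ccol-vecfield} supplies a $C$-collaring vector field on $\mathcal X$. Since the faces $C\in\beta^{-1}\{D\}$ are pairwise disjoint closed subsets, a subordinate partition of unity patches the corresponding $C$-collaring fields into a single vector field $\xi_D$ on $\mathcal X$ with $\xi_D\pitchfork C$ for $C\in\beta^{-1}\{D\}$ and $\xi_D\parallel C'$ for every other face $C'$; it remains inward-pointing, because at each point the inward-pointing vectors form a convex cone while tangency to a fixed face is a linear condition. This yields a family satisfying all the requirements except the commutation relations.

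\emph{Making the fields commute.} Enumerate the faces of $Y$ hit by $\beta$ as $D_1,\dots,D_N$ and modify the $\xi_{D_i}$ inductively, so that after the $k$-th step $[\xi_{D_i},\xi_{D_j}]=0$ near $\overline\partial^\beta_{\{D_i,D_j\}}|\mathcal X|$ for all $i<j\le k$. In passing from $k$ to $k+1$ only $\xi_{D_{k+1}}$ is altered, and this is done by treating $p=1,\dots,k$ in turn: at stage $p$ one uses the elementary collar $\mathbb R_+\times\overline\partial^\beta_{\{D_p\}}\mathcal X\to\mathcal X$ determined by $\xi_{D_p}$ to introduce coordinates $(t,y)$ in which $\xi_{D_p}=\partial/\partial t$, rewrites $\xi_{D_{k+1}}$ in these coordinates — its $\partial/\partial t$-component vanishes along $\{t=0\}$ since $\xi_{D_{k+1}}\parallel\overline\partial^\beta_{\{D_p\}}$ — replaces it on a neighbourhood of $\overline\partial^\beta_{\{D_p\}}|\mathcal X|$ by its $t$-independent model $(t,y)\mapsto\xi_{D_{k+1}}(0,y)$, and interpolates back to the previous field with a cutoff away from a smaller neighbourhood. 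One checks that inward-pointingness and the transversality $\xi_{D_{k+1}}\pitchfork\overline\partial^\beta_{\{D_{k+1}\}}$ persist, because ``inward-pointing and transverse to a given face'' cuts out a convex subset of each tangent space. The decisive point is that, since $\xi_{D_p}$ was already made to commute with $\xi_{D_{p'}}$ near $\overline\partial^\beta_{\{D_p,D_{p'}\}}|\mathcal X|$ for $p'<p$, the field $\xi_{D_{p'}}$ is itself $t$-independent in the above collar coordinates near $\overline\partial^\beta_{\{D_p,D_{p'}\}}$, so the averaging performed at stage $p$ does not undo the relation $[\xi_{D_{k+1}},\xi_{D_{p'}}]=0$ obtained at stage $p'$. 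Running $p$ from $1$ to $k$ therefore produces $\xi_{D_{k+1}}$ with $[\xi_{D_{k+1}},\xi_{D_p}]=0$ near $\overline\partial^\beta_{\{D_p,D_{k+1}\}}|\mathcal X|$ for all $p\le k$, which closes the induction.

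I expect this last step to be the real obstacle: one must keep precise track of the neighbourhoods on which each commutation relation has been established and verify that the successive averagings do not spoil the earlier ones, in addition to the routine but somewhat lengthy checks that the modified fields stay inward-pointing and appropriately transverse, and that the restrictions of the $\xi_D$ to the sub-arrangements $\overline\partial^\beta_{\tau'}\mathcal X$ inherit all these properties. The argument follows the line of Laures \cite{Lau00} for the compact target; the extra arrangement datum costs nothing, since every modification above is carried out inside $\mathcal X$-charts and hence automatically respects the arrangement.
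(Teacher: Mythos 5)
Your overall strategy coincides with the paper's: reduce the theorem to a family of mutually commuting collaring vector fields (the paper's \Cref{lem:Ccol-vecfield}, \Cref{lem:colvec-ext} and \Cref{prop:colvec}), then define $\gamma^\tau_{\tau'}$ by composing the flows, with order-independence coming from the vanishing brackets. Your inner induction (averaging the new field into its $t$-independent model in the collar coordinates of an earlier field, and using the already-established commutations to see that earlier relations are not destroyed) is the same mechanism as the paper's \Cref{lem:colvec-ext}, merely organized as a double induction over faces rather than an induction over the dimension of the strata $\overline\partial_\sigma|\mathcal X|$. Grouping the fields by $D\in\operatorname{bd}Y$ at the outset rather than at the end (the paper sets $\xi^\beta_D=\sum_{\beta(C)=D}\xi_C$ only in the final step) is harmless, since the faces in $\beta^{-1}\{D\}$ are indeed pairwise disjoint.

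There is, however, one concrete gap. You only arrange $[\xi_D,\xi_{D'}]=0$ \emph{on a neighbourhood of} $\overline\partial^\beta_{\{D,D'\}}|\mathcal X|$, but a $\beta$-collaring is parametrized by $\mathcal C_Y(\tau,\tau')=\mathbb R_+^{\tau'\setminus\tau}$, i.e.\ by \emph{all} nonnegative flow times. The coherence square for $\tau\le\tau'\le\tau''$, applied to the various intermediate $\tau'$, forces $\varphi^{D}_{t}\varphi^{D'}_{t'}=\varphi^{D'}_{t'}\varphi^{D}_{t}$ for arbitrary $t,t'\ge 0$ on $\overline\partial^\beta_{\{D,D'\}}|\mathcal X|$; once the trajectories leave the neighbourhood where the bracket vanishes, the two compositions can diverge, so your $\gamma^\tau_{\tau''}$ would depend on the chosen maximal chain. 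The paper closes exactly this gap in \Cref{prop:colvec}: each $\xi_C$ is multiplied by a cutoff $\rho^C$ supported in its own collar $V_{C,\varepsilon}$ and constant along the flows of the other fields, after which a short computation shows $[\xi_C,\xi_{C'}]\equiv 0$ on all of $|\mathcal X|$ (and, as a bonus, disjoint faces get fields with disjoint supports, which is what makes the sums $\xi^\beta_D$ behave). You need to add this support-shrinking step (or else reparametrize $\mathbb R_+$ into the region where commutation holds, which amounts to the same thing); without it the last paragraph of your reduction does not go through as stated.
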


For the proof of \Cref{thm:collaring}, we essentially follow \cite{Lau00} though it does not contains construction of vector fields.
We use some results on collaring vector fields.

\begin{lemma}
\label{lem:colvec-ext}
Let $\mathcal X$ be a neat arrangement of manifolds.
Suppose we have a subset $\mathcal A\subset\operatorname{bd}\mathcal X$ together with a family $\{\xi_C\}_{C\in\mathcal A}$ such that
\begin{enumerate}[label={\rm(\alph*)}]
  \item\label{ass:colvec-ext:colv} $\xi_C$ is a $C$-collaring vector field on $\mathcal X$;
  \item\label{ass:colvec-ext:lie0} $[\xi_C,\xi_{C'}]\equiv 0$ on a neighborhood of $C\cap C'$ for each $C,C'\in\mathcal A$.
\end{enumerate}
Then, each connected face $D\in\operatorname{bd}\mathcal X\setminus\mathcal A$ admits a $D$-collaring vector field $\zeta$ such that
\[
[\zeta,\xi_C]\equiv0
\]
on a neighborhood of $C$ for each $C\in\mathcal A$.
\end{lemma}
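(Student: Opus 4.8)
The plan is to build $\zeta$ from local models and then glue them together with a partition of unity that is invariant along the flows of the $\xi_C$ near the faces $C$, so that the Leibniz terms produced by the gluing do not spoil the bracket condition.

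\emph{Local models.} Fix $p\in|\mathcal X|$ and put $\mathcal A_p:=\{C\in\mathcal A\mid p\in C\}$. By hypothesis \ref{ass:colvec-ext:lie0} the finitely many fields $\{\xi_C\}_{C\in\mathcal A_p}$ pairwise commute on a neighbourhood of $p$, so, using the flows $\varphi^C$ from \Cref{prop:vecfield-arr-flow}, the assignment $\bigl((t_C)_{C\in\mathcal A_p},q\bigr)\mapsto\bigl(\prod_{C\in\mathcal A_p}\varphi^C_{t_C}\bigr)(q)$ is a well-defined, order-independent local diffeomorphism from a neighbourhood of $(0,p)$ in $\mathbb R_+^{\mathcal A_p}\times\bigcap_{C\in\mathcal A_p}C$ onto a neighbourhood $U_p$ of $p$; it carries $\partial/\partial t_C$ to $\xi_C$, and since each $\xi_C$ is parallel to every face other than $C$ and to every $\mathcal X(s)$ (so its flow preserves them), it identifies $U_p\cap\mathcal X$ with $\mathbb R_+^{\mathcal A_p}$ times the arrangement induced on $\bigcap_{C\in\mathcal A_p}C$ (which is neat by \Cref{lem:excel-neat}), with $D$ corresponding to $\mathbb R_+^{\mathcal A_p}$ times one of its coordinate faces. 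Choosing on $\bigcap_{C\in\mathcal A_p}C$ near $p$ a vector field compatible with that arrangement, inward-pointing, transversal to the face coming from $D$ when $p\in D$ and parallel to every other face --- for instance a coordinate field of a neat $\mathcal X$-chart, as in \Cref{lem:Ccol-vecfield} --- and pushing it forward $t$-independently yields a vector field $\zeta_p$ on $U_p\cap\mathcal X$ which is $D$-collaring on $U_p$ and satisfies $[\zeta_p,\xi_C]\equiv0$ on $U_p$ for every $C\in\mathcal A_p$. (If $\mathcal A_p=\varnothing$ we just take any local $D$-collaring field on a small $U_p$ disjoint from $\bigcup_{C\in\mathcal A}C$.) Shrinking, we may assume each $U_p$ meets a connected face of $|\mathcal X|$ only if $p$ lies on it.

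\emph{An invariant partition of unity.} Using $\xi_C\pitchfork C$ and inward-pointingness, the flow $\varphi^C$ restricts to an open embedding $C\times[0,\delta_C)\hookrightarrow|\mathcal X|$ onto a neighbourhood $W_C$ of the whole face $C$; taking the $\delta_C$ small enough, for $C,C'\in\mathcal A$ the overlap $W_C\cap W_{C'}$ lies in the region where $[\xi_C,\xi_{C'}]\equiv0$, where the two flows commute and preserve one another's collar coordinates. Pick smooth functions $\beta_C\colon|\mathcal X|\to\mathbb R_{\ge0}$ supported in $W_C$, bounded above by the available backward flow time, and equal to the collar coordinate on a neighbourhood of $C$; then $r:=\bigl(\prod_{C\in\mathcal A}\varphi^C_{-\beta_C}\bigr)$ is a well-defined smooth self-map of $|\mathcal X|$, equal to the identity off $\bigcup_{C\in\mathcal A}W_C$, constant along every $\xi_C$-orbit on a neighbourhood of $C$, and contracting a neighbourhood of $\bigcup_{C\in\mathcal A}C$ into it. Take a locally finite refinement $\{U_\alpha\}$ of $\{U_p\}$, with those $U_\alpha$ centred on members of $\mathcal A$ chosen as coordinate boxes of the local models and compatibly with the collars, and a subordinate partition of unity $\{\rho^0_\alpha\}$; then $\rho_\alpha:=\rho^0_\alpha\circ r$, tapered back to $\rho^0_\alpha$ away from $\bigcup_{C\in\mathcal A}C$, is a partition of unity subordinate to $\{U_\alpha\}$ with $\xi_C\rho_\alpha\equiv0$ on a neighbourhood of $C$ for every $C\in\mathcal A$.

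\emph{Conclusion and main obstacle.} Set $\zeta:=\sum_\alpha\rho_\alpha\zeta_\alpha$. Being inward-pointing, being parallel to a fixed submanifold, being a vector field on $\mathcal X$, and having strictly positive normal component along $D$ are all preserved under convex combination (and on $D$ only the $\zeta_\alpha$ centred on $D$ contribute), so $\zeta$ is a $D$-collaring vector field. For $C\in\mathcal A$ the identity $[f\eta,\xi]=f[\eta,\xi]-(\xi f)\eta$ gives, on a neighbourhood of $C$,
\[
[\zeta,\xi_C]=\sum_\alpha\rho_\alpha[\zeta_\alpha,\xi_C]-\sum_\alpha(\xi_C\rho_\alpha)\zeta_\alpha=0 ,
\]
the first sum vanishing because any $\alpha$ contributing near $C$ has $U_\alpha$ centred on $C$, so there $\xi_C=\partial/\partial t_C$ and $\zeta_\alpha$ is $t_C$-independent, and the second vanishing by the choice of $\{\rho_\alpha\}$. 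The hard part is the second step: producing $r$ --- equivalently, a genuine partition of unity invariant along all the $\xi_C$ near their faces --- on a full neighbourhood of $\bigcup_{C\in\mathcal A}C$, \emph{including} the corners where several members of $\mathcal A$ meet; it is precisely here that commutativity \ref{ass:colvec-ext:lie0} is needed, so that the order of flowing back is immaterial, and it forces the collars $W_C$ and the cover $\{U_\alpha\}$ to be chosen compatibly. Everything else is a routine verification once the local models of the first step and this invariant partition of unity are in hand.
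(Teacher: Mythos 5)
Your strategy is sound and genuinely different in organization from the paper's, though it rests on the same two mechanisms: commuting flow-box coordinates near the corners (your local models are exactly the paper's maps $\eta^\sigma_{\sigma'}$), and killing the Leibniz term $(\xi_C\rho_\alpha)\zeta_\alpha$ by making the cutoffs flow-invariant near the faces. Where you diverge is in doing the gluing in one shot with a globally flow-invariant partition of unity built from the backward-flow retraction $r=\prod_C\varphi^C_{-\beta_C}$, whereas the paper runs an induction on the dimension of the strata $\overline\partial_\sigma|\mathcal X|$: at stage $d$ it corrects $\zeta^{(d-1)}$ only on pairwise disjoint flow-saturated neighborhoods $W_\sigma$ of the strata of dimension $d-1$, using an interpolation $(1-\lambda_\sigma)\zeta^{(d-1)}+\lambda_\sigma\zeta^\sigma$ with $\xi_C\lambda_\sigma\equiv0$ near the stratum. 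The payoff of the paper's route is that it never has to define a global composite of backward flows: at each stage the supports are disjoint, so order-independence and well-definedness are local statements inside a single flow box. Your route is conceptually cleaner (one partition of unity, one sum) but concentrates all the difficulty into $r$, which you sketch rather than prove. The points that still need verification there are: (i) order-independence of the composite $\prod_C\varphi^C_{-\beta_C}$ away from the regions where hypothesis (b) gives commutativity --- this reduces, using compactness of $|\mathcal X|$, to arranging $W_C\cap W_{C'}$ inside the commuting neighborhood of $C\cap C'$ and checking that the truncated collar coordinates $\beta_C$ are themselves invariant under the other flows $\varphi^{C'}$ there (otherwise $r_C$ and $r_{C'}$ do not commute even where $[\xi_C,\xi_{C'}]=0$); (ii) that $\rho^0_\alpha\circ r$ remains subordinate to $\{U_\alpha\}$, which forces the $U_\alpha$ meeting $\bigcup_{C\in\mathcal A}C$ to be chosen flow-saturated; and (iii) that $r$ lands back in the domain of each subsequent backward flow. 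None of these looks fatal, but they are precisely the content of the lemma, so a complete write-up would have to carry them out; if you find them unwieldy, the paper's stratum-by-stratum induction is the standard way to sidestep them.
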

\begin{proof}
Let $\{\varphi^C_t\}_{t\ge0}$ be the one-parameter family of $\xi_C$ for each $C\in\mathcal A$, and we put
\[
\eta^C:\mathbb R_+\times C \to |\mathcal X|
\ \;\quad (t;p) \mapsto \varphi^C_t(p)\,,
\]
which is smooth and, by \Cref{lem:vecfield-flow-emb}, actually defines an open embedding $\mathbb R_+\times\overline\partial_C\mathcal X\to\mathcal X$ of arrangements over $S$.
Set $V_{C,\varepsilon}:=\eta^C([0,\varepsilon)\times C)$ for each positive number $\varepsilon>0$.
The condition \ref{ass:colvec-ext:lie0} implies that, for a sufficiently small $\varepsilon>0$, we have
\[
\operatorname{supp}{[\xi_C,\xi_{C'}]} \cap\overline V_{C,\varepsilon}\cap\overline V_{C',\varepsilon} = \varnothing\,.
\]
In particular, for every $p\in C\cap C'$ and $0\le t,t'<\varepsilon$, we have
\begin{equation}
\label{eq:prf:colvec-ext:vcom}
\varphi^C_t\circ\varphi^{C'}_{t'}(p) = \varphi^{C'}_{t'}\circ\varphi^C_t(p)\,.
\end{equation}
We denote by $\Gamma^{\mathcal A}_{\mathcal X}$ the sublattice of $\Gamma_{\mathcal X}$ generated by $\mathcal A\subset\operatorname{bd}\mathcal X$.
For $\sigma\le\sigma'\in\Gamma^{\mathcal A}_{\mathcal X}$, put
\[
\mathcal C^\varepsilon_{\mathcal X}(\sigma,\sigma') := [0,\varepsilon)^{\sigma'\setminus\sigma}\,.
\]
Say $\sigma'\setminus\sigma = \{C_1,\dots,C_r\}$, we define
\[
\eta^\sigma_{\sigma'}:\mathcal C^\varepsilon_{\mathcal X}(\sigma,\sigma')\times\overline\partial_\sigma|\mathcal X|
\to \overline\partial_{\sigma'}|\mathcal X|
\ ;\quad (t_1,\dots,t_r;p) \mapsto \varphi^{C_1}_{t_1}\circ\dots\circ\varphi^{C_r}_{t_r}(p)\,.
\]
Clearly, this defines an open embedding $\mathcal C^\varepsilon_{\mathcal X}(\sigma,\sigma')\times\overline\partial_\sigma\mathcal X\to\overline\partial_{\sigma'}\mathcal X$ of arrangements.
Notice that, thanks to \eqref{eq:prf:colvec-ext:vcom}, the defining formula of $\eta^\sigma_{\sigma'}$ does not depend on the choice of the order of elements of $\sigma'\setminus\sigma$.
Thus, if $\sigma\le\sigma'\le\sigma''\in\Gamma^{\mathcal A}_{\mathcal X}$, the following diagram of arrangements is commutative:
\begin{equation}
\label{diag:prf:colvec-ext:epscol}
\vcenter{
  \xymatrix{
    \mathcal C^\varepsilon_{\mathcal X}(\sigma',\sigma'')\times\mathcal C^\varepsilon_{\mathcal X}(\sigma,\sigma')\times\overline\partial_\sigma\mathcal X \ar@{=}[r] \ar[d]_{\mathrm{id}\times\eta^\sigma_{\sigma'}} & \mathcal C^\varepsilon_{\mathcal X}(\sigma,\sigma'')\times\overline\partial_\sigma\mathcal X \ar[d]^{\eta^\sigma_{\sigma''}} \\
    \mathcal C^\varepsilon_{\mathcal X}(\sigma',\sigma'')\times\overline\partial_{\sigma'}\mathcal X \ar[r]^{\eta^{\sigma'}_{\sigma''}} & \overline\partial_{\sigma''}\mathcal X }}
\end{equation}
We write $\eta^\sigma:=\eta^\sigma_{|\mathcal X|}$ and $V_{\sigma,\varepsilon}:=\eta^\sigma\left(\mathcal C^\varepsilon_{\mathcal X}(\sigma,|\mathcal X|)\times|\overline\partial_\sigma\mathcal X|\right)$, so \eqref{diag:prf:colvec-ext:epscol} implies
\begin{equation}
\label{eq:prf:colvec-ext:VCepsl}
V_{(C_1\cap\dots\cap C_r),\varepsilon} = V_{C_1,\varepsilon}\cap\dots\cap V_{C_r,\varepsilon}\,.
\end{equation}

Put $m=\dim|\mathcal X|$.
To prove the result, we construct a sequence $\{\zeta^{(d)}\}_{d=0}^m$ of $D$-collaring vector fields such that for each $\sigma\in\Gamma^{\mathcal A}_{\mathcal X}$ with $\dim \overline\partial_\sigma|\mathcal X|< d$ and each $C\in\sigma$, we have
\begin{equation}
\label{eq:prf:colvec-ext:dcomm}
[\zeta^{(d)},\xi_C]\equiv 0
\end{equation}
on a neighborhood of $\overline\partial_\sigma|\mathcal X|\subset|\mathcal X|$.
We shall construct $\{\zeta^{(d)}\}$ inductively on $d$; begining with an arbitrary $D$-collaring vector field $\xi^{(0)}$ taken by using \Cref{lem:Ccol-vecfield}.
Suppose we already have $\zeta^{(d-1)}$.
The construction of $\zeta^{(d)}$ is divided into some parts.
We denote by $\mathcal A^{(k)}$ the subset of $\Gamma^{\mathcal A}_{\mathcal X}$ consisting of $\sigma\in\Gamma^{\mathcal A}_{\mathcal X}$ with $\dim\overline\partial_\sigma|\mathcal X|= k-1$.

\emph{Step1:}
The induction hypothesis implies that, taking a sufficiently small $\varepsilon>0$, we have
\[
\operatorname{supp}{[\zeta^{(d-1)},\xi_C]}\cap\overline V_{\rho,\varepsilon} = \varnothing
\]
for each $\rho\in\mathcal A^{(d-1)}$ and $C\in\rho$.
This implies that that, for $\sigma\in\mathcal A^{(d)}$, we have
\begin{equation}
\label{eq:prf:colvec-ext:Wsigma}
\operatorname{supp}{[\zeta^{(d)},\xi_C]}\cap V_{\sigma,\varepsilon}
\subset V_{\sigma,\varepsilon}\setminus\bigcup_{C'\in\mathcal A\setminus\sigma} \overline V_{C',\varepsilon} =: W_\sigma\,.
\end{equation}
The construction immediately implies $W_\sigma\cap W_{\sigma'}=\varnothing$ whenever $\sigma\neq\sigma'$ for $\sigma,\sigma'\in\Gamma^{\mathcal A}_{\mathcal X}$.
Note that, by the square \eqref{diag:prf:colvec-ext:epscol}, putting $T_\sigma := \overline\partial_\sigma|\mathcal X|\cap W_\sigma$, we obtain a diffeomorphism
\begin{equation}
\label{eq:prf:colvec-ext:eta}
\mathbb R^{c(\sigma)}_+\times T_\sigma
\cong \mathcal C^\varepsilon_{\mathcal X}(\sigma,|\mathcal X|)\times T_\sigma
\xrightarrow{\substack{\eta^\sigma\cr\smash\simeq}} W_\sigma\,.
\end{equation}

\emph{Step2:}
We define a vector field $\zeta^\sigma$ on $W_\sigma$ so that the following square is commutative.
\[
\xymatrix@C=6em{
  \mathcal C^\varepsilon_{\mathcal X}(\sigma,|\mathcal X|)\times T_\sigma \ar[r]^-{0\oplus\zeta^{(d-1)}|_{T_\sigma}} \ar[d]^\simeq_{\eta^\sigma} & T\mathcal C^\varepsilon_{\mathcal X}(\sigma,|\mathcal X|)\oplus TT_\sigma \ar[d]^{\eta^\sigma_\ast}_\simeq \\
  W_\sigma \ar[r]^{\zeta^\sigma} & TW_\sigma }
\]
It follows from the construction of $\eta^\sigma$ that this defines a $D\cap W_\sigma$-collaring vector field on the arrangement $\mathcal X\cap W_\sigma$.
Moreover, with respect to the coordinate $((t_C)_C;p)\in \mathcal C^\varepsilon_{\mathcal X}(\sigma,|\mathcal X|)\times T_\sigma$, we have
\begin{equation}
\label{eq:prf:colvec-ext:zetasigma}
[\zeta^\sigma,\xi_C] = \bigr[\zeta|_{T_\sigma},\frac\partial{\partial t_C}\bigl] = 0
\end{equation}
on $W_\sigma$.

\emph{Step3:}
Define a smooth function $\lambda_\sigma:|\mathcal X|\to [0,1]$ as follows:
Since $|\mathcal X|$ is compact, using the diffeomorphism \eqref{eq:prf:colvec-ext:eta}, we can take a compact subset $K_\sigma\subset T_\sigma$ so that
\[
\bigcup_{C\in\mathcal A\setminus\sigma} \operatorname{supp}{[\zeta^{(d-1)},\xi_C]} \cap \overline V_{\sigma,\varepsilon/2}
\subset \eta^\sigma(\mathcal C^\varepsilon_{\mathcal X}(\sigma,|\mathcal X|)\times K_\sigma)\,.
\]
Take smooth funcitons $\lambda'_\sigma:T_\sigma\to[0,1]$ and $\lambda''_\varepsilon:[0,\varepsilon)\to[0,1]$ with compact supports such that
\[
\lambda'_\sigma \equiv 1\ \text{on}\ K_\sigma
\ ,\quad \lambda''_\varepsilon\equiv1\ \text{on}\ [0,\frac\varepsilon2]\,.
\]
Then, we set
\[
\lambda_\sigma(p) :=
\begin{cases}
\lambda'_\sigma(p')\prod_{C\in\sigma}\lambda''_\varepsilon(t_C) \quad& \text{if}\ p=\eta^\sigma((t_C)_C;p') \\
0 \quad& \text{otherwise}\,.
\end{cases}
\]
By construction, we have $\lambda_\sigma\equiv 1$ on $\operatorname{supp}{[\zeta^{(d-1)},\xi_C]}\cap\overline V_{\sigma,\varepsilon/2}$ for each $C\in\sigma$ and
\begin{equation}
\xi_C\lambda_\sigma(\eta^\sigma((t_{C'})_{C'};p)
= \frac{d\lambda''_\varepsilon}{dt}(t_C)\lambda'_\sigma(p')\prod_{C'\neq C}\lambda''_\varepsilon(t_{C'})
\end{equation}
for $C\in\sigma$, which is identically zero on $T_\sigma\times[0,\varepsilon/2]$.
In other words, $\xi_C\lambda_\sigma\equiv 0$ on $V_{\sigma,\varepsilon/2}$.

\emph{Step4:}
We finally define $\zeta^{(d)}$ by the following formula:
\[
\zeta^{(d)}
:= \left(1-\sum_{\sigma\in\mathcal A^{(d)}}\lambda_\sigma\right)\zeta^{(d-1)} + \sum_{\sigma\in\mathcal A^{(d)}}\lambda_\sigma\zeta^\sigma\,.
\]
Since supports of $\lambda_\sigma$ are all disjoint, $\zeta^{(d)}$ is in fact $D$-collaring vector field on the arrangement $\mathcal X$.
Moreover, on the open subset $V_{\sigma,\varepsilon/2}$, $\zeta^{(d)}$ agrees with the vector field
\[
(1-\lambda_\sigma)\zeta^{(d-1)} + \lambda_\sigma\zeta^\sigma\,.
\]
On the other hand, we have
\begin{equation}
\label{eq:prf:colvec-ext:zetacom}
\begin{split}
&[(1-\lambda_\sigma)\zeta^{(d-1)}+\lambda_\sigma\zeta^\sigma,\xi_C] \\
&= (1-\lambda_\sigma)[\zeta^{(d-1)},\xi_C] + \lambda_\sigma[\zeta^\sigma,\xi_C] + (\xi_C\lambda_\sigma)(\zeta^{(d-1)} - \zeta^\sigma)\,.
\end{split}
\end{equation}
By \eqref{eq:prf:colvec-ext:zetasigma} and the properties of $\lambda_\sigma$, all the three terms in \eqref{eq:prf:colvec-ext:zetacom} vanish on $V_{\sigma,\varepsilon/2}$.
Thus, we conclude that $\zeta^{(d)}$ has a required vector field.
\end{proof}

\begin{proposition}
\label{prop:colvec}
Let $\mathcal X$ be a neat arrangement of manifolds.
Suppose we have a subset $\mathcal A\subset\operatorname{bd}\mathcal X$ together with a family $\{\xi'_C\}_{C\in\mathcal A}$ such that
\begin{enumerate}[label={\rm(\alph*)}]
  \item\label{ass:colvec:colv} $\xi'_C$ is a $C$-collaring vector field on $\mathcal X$;
  \item\label{ass:colvec:lie0} $[\xi'_C,\xi'_{C'}]\equiv 0$ on a neighborhood of $C\cap C'$ for each $C,C'\in\mathcal A$.
\end{enumerate}
Then, there is a family $\{\xi_C\}_{C\in\operatorname{bd}\mathcal X}$ such that
\begin{enumerate}[label={\rm(\roman*)}]
  \item\label{req:colvec:colv} each $\xi_C$ is a $C$-collaring vector field on $\mathcal X$ which agrees with $\xi'_C$ near $C$ if $C\in\mathcal A$;
  \item\label{req:colvec:disj} if $C\cap C'=\varnothing$, $\operatorname{supp}\xi_C\cap\operatorname{supp}\xi_{C'}=\varnothing$;
  \item\label{req:colvec:lie0} for every $C,C'\in\operatorname{bd}\mathcal X$, $[\xi_C,\xi_{C'}]\equiv 0$ on whole $|\mathcal X|$.
\end{enumerate}
\end{proposition}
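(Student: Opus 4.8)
The plan is to first enlarge the given family over all of $\operatorname{bd}\mathcal{X}$ while keeping commutation near intersections, and then to multiply each vector field by a cutoff that is constant along every competing flow, so that shrinking the supports does not revive any bracket.

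\textbf{Extension.} List $\operatorname{bd}\mathcal{X}\setminus\mathcal{A}=\{D_1,\dots,D_N\}$ and apply \Cref{lem:colvec-ext} successively: with $D_1$-, \dots, $D_{i-1}$-collaring fields $\zeta_1,\dots,\zeta_{i-1}$ already chosen, run \Cref{lem:colvec-ext} for $D_i$ with $\mathcal{A}_i:=\mathcal{A}\cup\{D_1,\dots,D_{i-1}\}$ and the family $\{\xi'_C\}_{C\in\mathcal{A}}\cup\{\zeta_j\}_{j<i}$, obtaining a $D_i$-collaring $\zeta_i$ that commutes with every member of that family on a neighborhood of that member's face. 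The hypotheses of \Cref{lem:colvec-ext} hold at each stage: (a) is clear, and (b) holds because at every earlier stage the bracket of a pair was made to vanish on a whole neighborhood of one of the two faces, hence on a neighborhood of their intersection. Relabeling, we obtain a family $\{\widehat\xi_C\}_{C\in\operatorname{bd}\mathcal{X}}$ of collaring vector fields with $\widehat\xi_C=\xi'_C$ near $C$ for $C\in\mathcal{A}$ and $[\widehat\xi_C,\widehat\xi_{C'}]\equiv0$ on a neighborhood of $C\cap C'$ for all $C,C'$.

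\textbf{Shrinking with invariant cutoffs.} For each $C$, the flow $\varphi^C$ of $\widehat\xi_C$ gives, by \Cref{lem:vecfield-flow-emb}, an open collar $V_C\subset|\mathcal{X}|$ of $C$ with a height function $t_C\colon V_C\to[0,\varepsilon)$ satisfying $\widehat\xi_C t_C\equiv1$; shrinking all widths, I arrange that $V_C\cap V_{C'}=\varnothing$ whenever $C\cap C'=\varnothing$ (disjoint closed faces), that $[\widehat\xi_C,\widehat\xi_{C'}]\equiv0$ on $V_C\cap V_{C'}$, and --- using that there $\varphi^C$ and $\varphi^{C'}$ commute while $\varphi^{C'}$ preserves $C$, hence the level sets of $t_C$ --- that $\widehat\xi_{C'}t_C\equiv0$ on $V_C\cap V_{C'}$. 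Fix $\chi\colon[0,\varepsilon)\to[0,1]$ with $\chi\equiv1$ near $0$ and support in $[0,\varepsilon/2]$, and set $\xi_C:=\rho_C\widehat\xi_C$ with $\rho_C:=\chi(t_C)$ on $V_C$ and $\rho_C:=0$ elsewhere (in the non-compact case one first rescales $t_C$ by a positive function on $\overline\partial_C|\mathcal{X}|$ chosen invariant along each $\widehat\xi_{C'}|_{\overline\partial_C\mathcal{X}}$). Then $\rho_C$ is smooth, equals $1$ near $C$, is supported in $V_C$, and $\widehat\xi_{C'}\rho_C\equiv0$ on $V_C\cap V_{C'}$ for all $C'\neq C$.

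\textbf{Verification.} Near $C$ we have $\xi_C=\widehat\xi_C$, so $\xi_C\pitchfork C$ (and $\xi_C=\xi'_C$ if $C\in\mathcal{A}$); elsewhere $\xi_C$ is a non-negative multiple of $\widehat\xi_C$, hence still inward-pointing and $\parallel C'$ for $C'\neq C$, so $\xi_C$ is $C$-collaring: this gives (i). Condition (ii) is immediate since $\operatorname{supp}\xi_C\subset V_C$. For (iii),
\[
[\xi_C,\xi_{C'}]=\rho_C\rho_{C'}[\widehat\xi_C,\widehat\xi_{C'}]+\rho_C(\widehat\xi_C\rho_{C'})\widehat\xi_{C'}-\rho_{C'}(\widehat\xi_{C'}\rho_C)\widehat\xi_C,
\]
where the first summand vanishes because $\rho_C\rho_{C'}\neq0$ only on $V_C\cap V_{C'}$, where the bracket is zero, and each of the last two vanishes because $\widehat\xi_C\rho_{C'}$ (resp.\ $\widehat\xi_{C'}\rho_C$) is supported in $V_{C'}$ (resp.\ $V_C$), so the whole product is supported in $V_C\cap V_{C'}$, where $\widehat\xi_C\rho_{C'}=0$ (resp.\ $\widehat\xi_{C'}\rho_C=0$). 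Hence every bracket vanishes on $|\mathcal{X}|$.

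\textbf{Main obstacle.} The crux is that naively cutting the $\widehat\xi_C$ down to thin collars destroys the vanishing of brackets; one must choose each $\rho_C$ constant along every competing flow on the overlap $V_C\cap V_{C'}$, which is precisely why $\rho_C$ is taken to depend only on the collar height $t_C$ (already annihilated by $\widehat\xi_{C'}$ there). The remaining technical point is the non-compact case, where the collar \emph{width} must itself be chosen flow-invariant; this is handled by induction on $\dim|\mathcal{X}|$, applied to the lower-dimensional neat arrangements $\overline\partial_C\mathcal{X}$ carrying the restricted fields $\widehat\xi_{C'}|_{\overline\partial_C\mathcal{X}}$.
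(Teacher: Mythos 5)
Your proof is correct and follows essentially the same route as the paper's: extend the family over all of $\operatorname{bd}\mathcal X$ by iterating \Cref{lem:colvec-ext}, then cut each field off by a function of its own collar height, which the competing fields annihilate on overlaps because the flows commute and preserve the faces. The only differences are your treatment of the non-compact case (unnecessary here, since neat arrangements are taken to have compact ambient manifolds) and your correctly oriented cutoff $\chi\equiv1$ near $0$, whereas the paper's $\rho$ has its prescribed values on $[0,\varepsilon/3]$ and $[2\varepsilon/3,\varepsilon)$ accidentally swapped.
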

\begin{proof}
Suppose $\{\xi'_C\}_{C\in\mathcal A}$ is a family as in the assumption.
Using \Cref{lem:colvec-ext}, we can extend the family to $\{\xi'_C\}_{C\in\operatorname{bd}\mathcal X}$ indexed by all the connected faces of $|\mathcal X|$ keeping the conditions \ref{ass:colvec:colv} and \ref{ass:colvec:lie0}.
Take the one-parameter family $\{\varphi^C_t\}_t$ of the vector field $\xi'_C$ for each $C\in\operatorname{bd}\mathcal X$.
Using \Cref{lem:vecfield-flow-emb}, we obtain an open embedding
\[
\eta^C:\mathbb R_+\times\overline\partial_C\mathcal X\to \mathcal X
\ ;\quad (t;p) \mapsto \varphi^C_t(p)
\]
of arrangements.
For a positive number $\varepsilon$, we put $V_{C,\varepsilon}:=\eta^C([0,\varepsilon)\times C)$.
Then, by condition \ref{ass:colvec:lie0}, for a sufficiently small number $\varepsilon>0$, we have
\begin{equation}
\label{eq:prf:colvec:suppVV}
\operatorname{supp}{[\xi'_C,\xi'_{C'}]}\cap\overline V_{C,\varepsilon}\cap\overline V_{C',\varepsilon} = \varnothing\,.
\end{equation}
Take a smooth function $\rho:[0,\varepsilon)\to[0,1]$ which values identically $0$ and $1$ on $[0,\varepsilon/3]$ and $[2\varepsilon/3,\varepsilon)$ respectively.
We define a function $\rho^C$ by
\[
\rho^C(p)
:= \begin{cases}
\rho(t) \quad&\text{if}\ p=\eta^C(t;p_0)\in V_{C,\varepsilon} \\
0 &\text{otherwise,}
\end{cases}
\]
which is claerly smooth.
For $C,C'\in\operatorname{bd}\mathcal X$, by virtue of \eqref{eq:prf:colvec:suppVV}, we have
\[
\begin{split}
[\xi'_{C'}(\rho^C)](\eta_C(t;p))
&= \left.\frac{d}{ds}\right|_{s=0} \rho^C\varphi^{C'}_s\varphi^C_t(p) \\
&= \left.\frac{d}{ds}\right|_{s=0} \rho^C\varphi^C_t\varphi^{C'}_s(p) \\
&= \left.\frac{d}{ds}\right|_{s=0} \rho(t) \\
&= 0
\end{split}
\]
whenever $\eta_C(p,t)\in V_{C,\varepsilon}\cap V_{C',\varepsilon}$.
In other words, we obtain
\begin{equation}
\label{eq:prf:colvec:xirho}
\xi'_{C'}(\rho^C)\equiv 0\ \text{on}\ V_{C,\varepsilon}\cap V_{C',\varepsilon}\,.
\end{equation}
Finally, we define $\xi_C$ by
\[
\xi_C := \rho^C\cdot\xi'_C
\]
for each $C\in\operatorname{bd}\mathcal X$.
Clearly $\xi_C$ is a $C$-collaring vector field which agrees with $\xi'_C$ on $V_{C,\varepsilon/3}$.
Moreover, since the support of $\xi_C$ is contained in $V_{\sigma,\varepsilon}$, the equations \eqref{eq:prf:colvec:suppVV} and \eqref{eq:prf:colvec:xirho} implies the properties \ref{req:colvec:disj} and \ref{req:colvec:lie0} for sufficiently small $\varepsilon>0$.
Therefore, $\{\xi_C\}_C$ is in fact a required family.
\end{proof}

\begin{proof}[Proof of \Cref{thm:collaring}]
Let $\mathcal X$ be a neat arrangement of shape $S$, and let $\beta$ is an edging of $\mathcal X$ with $Y$.
Apply \Cref{prop:colvec} to $\mathcal A=\varnothing\subset\operatorname{bd}\mathcal X$, we obtain a family $\{\xi_C\}_C$ satisfying the conditions \ref{req:colvec:colv}, \ref{req:colvec:disj}, and \ref{req:colvec:lie0} in the proposition.
For a connected face $D\in\Gamma_Y$, we define
\[
\xi^\beta_D:=\sum_{\substack{C\in D(\beta)\cr\beta(C)=D}} \xi_C\,.
\]
Since connected faces $C$ with $C\in D(\beta)$ and $\beta(C)=D$ are disjoint, the vector field $\xi^\beta_D$ satisfies the following properties:
\begin{enumerate}[label={\rm(\roman*')}]
  \item\label{cond:collaring:colv} $\xi^\beta_D$ is an inward-pointing vector field on the arrangement $\mathcal X$ so that $\xi^\beta_D\pitchfork\overline\partial^\beta_D|\mathcal X|$ and $\xi^\beta_D\parallel C$ for $C\in\operatorname{bd}\mathcal X\setminus\beta^{-1}\{D\}$;
  \item\label{cond:collaring:lie0} $[\xi^\beta_D,\xi^\beta_{D'}]\equiv 0$ on whole $|\mathcal X|$ for every $D,D'\in\operatorname{bd}Y$.
\end{enumerate}
Let $\{\varphi^D_t\}_t$ be the one-parameter family associated with $\xi^\beta_D$.
Then, for $\tau\le\tau'\in\Gamma_Y$, say $\tau'\setminus\tau=\{D_1,\dots,D_r\}$, we define
\[
\gamma^\tau_{\tau'}:\mathcal C_Y(\tau,\tau')\times\overline\partial^\beta_\tau|\mathcal X|
\to \overline\partial^\beta_{\tau'}|\mathcal X|
\ ;\quad (t_1,\dots,t_r;p) \mapsto \varphi^{D_1}_{t_1}\dots\varphi^{D_r}_{t_r}(p)\,.
\]
By the property \ref{cond:collaring:colv}, $\gamma^\tau_{\tau'}$ actually defines an open embedding
\[
\gamma^\tau_{\tau'}:\mathcal C_Y(\tau,\tau')\times\overline\partial^\beta_\tau\mathcal X
\to\overline\partial^\beta_{\tau'}\mathcal X
\]
of arrangements.
Moreover, the property \ref{cond:collaring:lie0} implies that the map $\gamma^\tau_{\tau'}$ is independent of the choice of orderings on the set $\tau'\setminus\tau$.
This guarantees that, for $\tau\le\tau'\le\tau''\in\Gamma_Y$, the following square is commutative:
\[
\xymatrix{
  \mathcal C_Y(\tau',\tau'')\times\mathcal C_Y(\tau,\tau')\times\overline\partial^\beta_\tau|\mathcal X| \ar[r]^-\cong \ar[d]_{\mathrm{id}\times\gamma^\tau_{\tau'}} & \mathcal C_Y(\tau,\tau'')\times\overline\partial^\beta_\tau|\mathcal X| \ar[d]^{\gamma^\tau_{\tau''}} \\
  \mathcal C_Y(\tau',\tau'')\times\overline\partial^\beta_{\tau'}|\mathcal X| \ar[r]^{\gamma^{\tau'}_{\tau''}} & \overline\partial^\beta_{\tau''}|\mathcal X| }
\]
Thus, we obtain a $\beta$-collaring $\gamma=\{\gamma^\tau_{\tau'}\}_{\tau\le\tau'}$ of $\mathcal X$.
\end{proof}

To end this section, we see that collarings admit uniqueness in some sense.
The results below are essentially due to Munkres (Section 6 in \cite{Mun66}) though we give a slightly different proof, especially in the last part.

\begin{lemma}
\label{lem:collar-diffeo}
Let $\mathcal X$ be a neat arrangement of manifolds of shape $S$.
Suppose we have an open neighborhood $W\subset\mathbb R_+\times|\mathcal X|$ of $\{0\}\times|\mathcal X|$ and an open embedding $\eta:W\to\mathbb R_+\times|\mathcal X|$ which agrees with the canonical embedding on $\{0\}\times|\mathcal X|\subset W$.
Then, for every open neighborhood $V\subset W$ of $\{0\}\times|\mathcal X|$, there is a diffeomorphism $\Phi:(\mathbb R_+\times\mathcal X)\cap W\xrightarrow\sim (\mathbb R_+\times\mathcal X)\cap W$ of arrangements satisfying the following properties:
\begin{enumerate}[label={\rm(\roman*)}]
  \item The support $\operatorname{supp}\Phi$ is contained in the open subset $V\cap\eta(V)$; here $\operatorname{supp}\Phi$ is the closure of the set $\{p\in W\mid p\neq\Phi(p)\}$ in $W$.
  \item The restriction of $\Phi$ to $\{0\}\times|\mathcal X|$ is the identity.
  \item The open embedding $\eta\Phi^{-1}:W\to\mathbb R_+\times|\mathcal X|$ agrees with the canonical embedding $W\hookrightarrow\mathbb R_+\times|\mathcal X|$ on a neighrborhood of $\{0\}\times|\mathcal X|$.
\end{enumerate}
\end{lemma}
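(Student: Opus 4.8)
The statement is the differential-topology folklore on uniqueness of collars (cf.\ Munkres, Section 6 in \cite{Mun66}), and the plan is to run the classical argument while keeping track of the arrangement and corner structures. Here $\eta$ is to be read as an open embedding of the arrangement $(\mathbb R_+\times\mathcal X)\cap W$ (this is forced: by condition (iii) we will have $\Phi=\eta$ near $\{0\}\times|\mathcal X|$, so $\eta$ must respect the arrangement there). The strategy has two stages: first produce a smooth isotopy from $\eta$ to the identity, relative to $\{0\}\times|\mathcal X|$, through open embeddings of arrangements; then damp that isotopy by a cutoff function to obtain $\Phi$ with the prescribed support.

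For the first stage I would \emph{linearize $\eta$ along the collar}. Write $\eta(t,p)=(a(t,p),b(t,p))$ with $a\colon W\to\mathbb R_+$ and $b\colon W\to|\mathcal X|$; the hypothesis gives $a(0,p)=0$ and $b(0,p)=p$ for all $p$. Extending $a$ to a boundaryless neighbourhood via \Cref{prop:emb-bdless} and applying Hadamard's lemma yields $a(t,p)=t\,\widetilde a(t,p)$ for a smooth $\widetilde a$; since $a\ge 0$ one has $\widetilde a(0,p)\ge 0$, and computing $d\eta_{(0,p)}$ in a chart (its matrix in the basis $\partial_t,\partial_{x_i}$ is lower triangular with diagonal $\widetilde a(0,p),1,\dots,1$) shows $\widetilde a(0,p)\neq 0$ because $\eta$ is an immersion there, so $\widetilde a(0,p)>0$. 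Hence
\[
\eta_s(t,p):=\bigl(t\,\widetilde a(st,p),\,b(st,p)\bigr),\qquad s\in[0,1],
\]
is smooth in $(s,t,p)$ \emph{including at} $s=0$, with $\eta_1=\eta$ and $\eta_0(t,p)=(t\,\widetilde a(0,p),p)$; each $\eta_s$ fixes $\{0\}\times|\mathcal X|$ pointwise, and each respects the arrangement (rescaling the $\mathbb R_+$-coordinate preserves $\mathbb R_+\times\mathcal X(s)$, and precomposing with $b$ does too because $b$ does). After shrinking $W$ to a neighbourhood of $\{0\}\times|\mathcal X|$ on which the rescalings $(t,p)\mapsto(st,p)$ for $s\in[0,1]$ are defined and on which each $\eta_s$ maps back into $W$, every $\eta_s$ is an open embedding of arrangements; concatenating (and smoothing) with the straightening $\eta_0^u(t,p)=\bigl(t((1-u)\widetilde a(0,p)+u),p\bigr)$, $u\in[0,1]$, of $\eta_0$ to the identity gives a smooth isotopy $\{\psi_r\}_{r\in[0,1]}$ of open embeddings of arrangements, all fixing $\{0\}\times|\mathcal X|$ pointwise, with $\psi_0=\mathrm{id}$ and $\psi_1=\eta$ near $\{0\}\times|\mathcal X|$.

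For the second stage I would \emph{localize the isotopy}. Both $V$ and $\eta(V)$ are open neighbourhoods of $\{0\}\times|\mathcal X|$, so $V\cap\eta(V)$ is too; choose a smooth $\lambda\colon(\mathbb R_+\times\mathcal X)\cap W\to[0,1]$ with $\lambda\equiv 1$ on a neighbourhood $V_1$ of $\{0\}\times|\mathcal X|$ and $\operatorname{supp}\lambda\subset V\cap\eta(V)$ contained in the domain of $\{\psi_r\}$. Let $X_r$ be the time-dependent vector field generating $\{\psi_r\}$; it is tangent to every $\mathbb R_+\times\mathcal X(s)$ and to the corners of $\mathbb R_+\times|\mathcal X|$, and vanishes along $\{0\}\times|\mathcal X|$. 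Integrating the damped field $\lambda X_r$ from $r=0$ to $r=1$ (which, for a suitable $\lambda$, is defined on all of $[0,1]$) produces $\Phi$: a point outside $\operatorname{supp}\lambda$ is never moved, so $\operatorname{supp}\Phi\subset\operatorname{supp}\lambda\subset V\cap\eta(V)$, which is (i); the field vanishes on $\{0\}\times|\mathcal X|$, which is (ii); on $V_1$ the damped flow equals $\{\psi_r\}$, so $\Phi=\psi_1=\eta$ near $\{0\}\times|\mathcal X|$, whence $\eta\Phi^{-1}=\mathrm{id}$ there, which is (iii). That $\Phi$ is a diffeomorphism of arrangements is automatic: $\lambda X_r$ is still tangent to each $\mathbb R_+\times\mathcal X(s)$, so its flow restricts to that submanifold (the remark following the definition of $\xi\parallel N$), and tangency to the corners keeps the flow inside $W$.

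The main obstacle is the linearization step: verifying that $\{\eta_s\}$ is a \emph{smooth} family of \emph{embeddings} across $s=0$. This is exactly where one must peel off the factor $t$ from the $\mathbb R_+$-component (Hadamard) and use immersivity of $\eta$ at boundary points to see $\widetilde a(0,p)>0$; without the latter $\eta_0$ would degenerate and no straightening would exist. The remaining work — shrinking $W$ so the rescalings, the $\eta_s$, and the damped flow are all defined and stay inside $W$, and checking that ``map of arrangements'' and ``preserves corners'' survive the scaling, the straightening, the damping and the integration — is routine, but needs a little care when $|\mathcal X|$ is non-compact so that the cutoff $\lambda$ and the flow of $\lambda X_r$ exist globally.
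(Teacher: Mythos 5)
Your proposal is correct in outline, but it is a genuinely different proof from the one in the paper, so let me compare. You run the classical isotopy-extension argument: Hadamard's lemma peels off the factor $t$ from the $\mathbb R_+$-component, immersivity at $\{0\}\times|\mathcal X|$ gives $\widetilde a(0,p)>0$, the rescaled family $\eta_s$ plus the linear straightening of $\eta_0$ gives an isotopy $\psi_r$ from $\mathrm{id}$ to $\eta$ fixing $\{0\}\times|\mathcal X|$, and $\Phi$ is the time-$1$ flow of the damped generating field $\lambda X_r$. The paper never solves an ODE: it builds $\Phi$ by explicit closed-form interpolations in two stages. First it normalizes only the $\mathbb R_+$-coordinate, composing with a reparametrization $\theta(t)=\widetilde\varepsilon(t)t$ chosen so that $\tau(\theta(t),p)<t$ and then taking the convex combination $H(t,p)=((1-\lambda(t))\tau(\theta(t),p)+\lambda(t)t,\,p)$, whose $t$-derivative is checked to be positive by hand; this reduces to the case $\pi_{\mathbb R_+}\eta(t,p)=t$ near $t=0$. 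Second, it handles the $|\mathcal X|$-coordinate by the formula $\Phi(t,p)=(t,F(\rho(t)t,p))$ with $F=\pi_{\mathcal X}\eta$ --- i.e.\ the ``damping'' is done by reparametrizing the collar parameter fed into $F$, not by scaling a vector field --- and diffeomorphy is read off from the Jacobian. The trade-off: the paper's formulas make the support condition, the restriction to $\{0\}\times|\mathcal X|$, and compatibility with the arrangement and the corner strata immediate (each step only post- or pre-composes the arrangement map $F$ with a reparametrization of $t$), whereas your route is more conceptual but owes several verifications that the explicit formulas sidestep: that the damped flow exists on $[0,1]$ and stays in $W$, that $\lambda X_r$ is tangent to every $\mathbb R_+\times\mathcal X(s)$ and to the corner strata, and --- the one point you state a little too quickly --- that the damped flow agrees with $\psi_1=\eta$ near $\{0\}\times|\mathcal X|$, which requires the $\psi_r$-trajectories of a smaller neighborhood to remain in $\{\lambda\equiv 1\}$ for all $r$ (available here by continuity and compactness of $|\mathcal X|$, which the paper assumes for neat arrangements). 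Both proofs ultimately rest on the same analytic fact, namely $\partial_t(\pi_{\mathbb R_+}\eta)(0,p)=\widetilde a(0,p)>0$.
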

\begin{proof}
For simplicity, let us write $\mathcal W:=(\mathbb R_+\times\mathcal X)\cap W$ the arrangement.
We also write
\[
\begin{gathered}
\pi_{\mathbb R_+}:\mathbb R_+\times|\mathcal X|\twoheadrightarrow\mathbb R_+ \\
\pi_{\mathcal X}:\mathbb R_+\times|\mathcal X|\twoheadrightarrow|\mathcal X|
\end{gathered}
\]
the canonical projections.
Put $\tau:=\pi_{\mathbb R_+}\eta:W\to\mathbb R_+$ and $F:=\pi_{\mathcal X}\eta:W\to|\mathcal X|$.
Note that $F$ is actually a map $F:\mathcal W\to\mathcal X$ of arrangements.

\emph{Step1:}
We first construct a diffeomorphism $\Psi:\mathcal W\to \mathcal W$ of arrangements such that
\begin{enumerate}[label={\rm(\roman*')}]
  \item $\operatorname{supp}\Psi$ is compact and contained in the open set $\subset V\cap\eta(V)$;
  \item The restriction of $\Psi$ to $\{0\}\times|\mathcal X|$ is the identity.
  \item $\eta\Psi^{-1}(t,p)=t$ for any sufficiently small $t\ge0$ and for every $p\in|\mathcal X|$.
\end{enumerate}
Since we assumed $|\mathcal X|$ is compact, and since $\eta$ is an open embedding, we can take a positive number $\delta>0$ so that $[0,\delta]\times|\mathcal X| \subset V\cap \eta(V)$ and
\begin{equation}
\label{eq:prf:collar-diffeo:taumono}
\frac{\partial[\pi_{\mathbb R_+}\eta}{\partial t}(p,t):W\to\mathbb R
\end{equation}
is positive on $[0,\delta]\times|\mathcal X|$.
Choose a sufficiently small number $\varepsilon>0$ so that
\[
\varepsilon\left|\frac{\partial\tau}{\partial t}(t,p)\right| < 1
\]
for every $(t,p)\in[0,\delta]\times|\mathcal X|$.
By virtue of the Mean Value Theorem, this implies that
\begin{equation}
\label{eq:prf:collar-diffeo:tauepsilon}
0\le \tau(\varepsilon t,p) < t
\end{equation}
for every $(t,p)\in[0,\delta]\times|\mathcal X|$.
Choose a smooth monotonic function $\widetilde\varepsilon:\mathbb R_+\to[\varepsilon,1]$ which values identically $\varepsilon$ and $1$ on $[0,2\delta/3]$ and on $[\delta,+\infty)$ respecitvely.
We define a map $\theta:\mathbb R_+\to\mathbb R_+$ by
\[
\theta(t) := \widetilde\varepsilon(t)t\,.
\]
Then $\theta$ is a diffeomorphism with $\theta(0)=0$ which is the identity on $[\delta,+\infty)$, and we have $(\theta\times\mathrm{id})(W) = W$.
In addition, \eqref{eq:prf:collar-diffeo:tauepsilon} implies that
\begin{equation}
\label{eq:prf:collar-diffeo:thetat}
\tau(\theta(t),p) = \tau(\varepsilon t,p) < t
\end{equation}
provided $0\le t\le 2\delta/3$.
Also, choose a smooth monotonic function $\lambda:\mathbb R_+\to[0,1]$ which values identically $0$ and $1$ on $[0,\delta/3]$ and on $[2\delta/3,+\infty)$ respectively, and define
\[
H:W\to W
\ ;\quad (t,p) \mapsto ((1-\lambda(t))\tau(p,\theta(t)) + \lambda(t)t,p)\,.
\]
We have $H(t,p)=(t,p)$ for $(t,p)$ with $t\ge 2\delta/3$ and
\[
\frac{\partial[\pi_{\mathbb R_+}H]}{\partial t}(t,p)
= (1-\lambda(t))\frac{\partial\tau}{\partial t}(p,\theta(t))\frac{d\theta}{dt}(t) + \lambda(t) + \frac{d\lambda}{dt}(t)(t-\tau(p,\theta(t)))\,,
\]
which is positive if $t\le 2\delta/3$ by \eqref{eq:prf:collar-diffeo:thetat}.
It follows that $H$ is a diffeomorphism.
Now, for $t\le\delta/3$ and $p\in|\mathcal X|$, we obtain
\begin{equation}
\label{eq:prf:collar-diffeo:etaH}
\pi_{\mathbb R_+}H(t,p)
= \tau(\theta(t),p)
= \pi_{\mathbb R_+}\eta(\theta\times\mathrm{id})(t,p)\,.
\end{equation}
We set $\Psi:=(\theta\times\mathrm{id})^{-1}H:W\to W$.
Then, we obtain
\[
\begin{gathered}
\Psi|_{\{0\}\times|\mathcal X|} = \mathrm{id}
\ ,\quad \Psi|_{[\delta,+\infty)\times|\mathcal X|} = \mathrm{id} \\
\end{gathered}
\]
from the construction while \eqref{eq:prf:collar-diffeo:etaH} implies
\[
\eta\Psi^{-1}(t,p) = t
\]
for each $(t,p)\in H([0,\delta/3]\times|\mathcal X|)$.
In addition, an easy calculus shows that $\pi_{\mathcal X}\Psi = \pi_{\mathcal X}:W\to|\mathcal X|$, so $\Psi$ actually defines a diffeomorphism $\mathcal W\to\mathcal W$ of arrangements.
Thus, we conclude that $\Psi$ is a required diffeomorphism.

\emph{Step2:}
In view of the previous step, to show the result, we may assume that there is a positive number $\delta>0$ such that $[0,\delta]\times|\mathcal X|\subset V\cap\eta(V)$ and
\[
\tau(t,p) = \pi_{\mathbb R_+}\eta(t,p) = t
\]
for every $(t,p)\in[0,\delta]\times|\mathcal X|$.
Take a smooth function $\rho:\mathbb R_+\to[0,1]$ which values identically $1$ on $[0,\delta/2]$ and whose support is contained in $[0,\delta)$.
Writing $F:=\pi_{\mathcal X}\eta:W\to|\mathcal X|$, we define $\Phi:W\to W$ by
\[
\Phi(t,p) := (t,F(\rho(t)t,p))\,.
\]
Since $F$ is actually a map $\mathcal W\to\mathcal X$ of arrangements, $\Phi$ also defines a map $\Phi:\mathcal W\to\mathcal W$ of arrangements.
In addition, one can see that $\Phi$ is bijecitve and the Jacobian determinant of it at $(t,p)\in W$ equals to that of $\eta$ at $(\rho(t)t,p)$ which vanishes nowhere.
These imply that $\Phi$ is a diffeomorphism $\mathcal W\to\mathcal W$ of arrangements.
We show $\Phi$ is in fact a required one.
If $t=0$ or $t\ge\delta$, we have $\Phi(t,p) = (t,F(0,p)) = (t,p)$, which implies $\Phi|_{\{0\}\times|\mathcal X|}=\mathrm{id}$ and $\operatorname{supp}\Phi\subset [0,\delta]\times|\mathcal X|\subset V\cap\eta(V)$.
On the other hand, if $0\le t\le\delta/2$, we have
\[
\Phi(t,p)
= (t,F(t,p))
= \eta(t,p)\,.
\]
Thus, in this case, we obtain that $\eta\Phi^{-1}$ is the identity on the open subset $\Phi([0,\delta/2)\times|\mathcal X|)$ which contains $\{0\}\times|\mathcal X|$.
Therefore, $\Phi$ satisfies all the required properties, which completes the proof.
\end{proof}

\begin{remark}
Recall that we required neat arrangements to have compact ambient manifolds.
That is why the proof of \Cref{lem:collar-diffeo} is much simpler than Lemma 6.1 in \cite{Mun66}.
Note that we can also relax the compactness in \Cref{lem:collar-diffeo}.
This does not, however, really help our purpose while it requires more works.
\end{remark}

\begin{proposition}
\label{prop:col-unique}
Let $\mathcal X$ and $\mathcal X'$ be two neat arrangements of manifolds of the same shape $S$.
Suppose we have open embeddings
\[
\gamma:\mathbb R_+\times\overline\partial_C\mathcal X \to \mathcal X
\ ,\quad \gamma':\mathbb R_+\times\overline\partial_{C'}\mathcal X'\to\mathcal X'
\]
for connected faces $C\in\operatorname{bd}\mathcal X$ and $C'\in\operatorname{bd}\mathcal X'$.
Then, for every diffeomorphism $H:\mathcal X\to\mathcal X'$ and an open neighborhood $V\subset\mathbb R_+\times C$ of $\{0\}\times C$, there is a diffeomorphism $\Phi:\mathbb R_+\times\overline\partial_C\mathcal X\xrightarrow\sim\mathbb R_+\times\overline\partial_C\mathcal X$ of arrangements such that
\begin{enumerate}[label={\rm(\alph*)}]
  \item the support of $\Phi$ is contained in $V$;
  \item two maps $H\gamma\Phi,\gamma'(\mathrm{id}\times H|_C):\mathbb R_+\times\overline\partial_C\mathcal X\to\mathcal X'$ coincide with one another on a neighborhood of $\{0\}\times C$.
\end{enumerate}
\end{proposition}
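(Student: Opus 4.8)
The plan is to conjugate $H$ through the two given collars so as to reduce everything to \Cref{lem:collar-diffeo}. First I would record that the statement only makes sense once $H(C)=C'$ — a diffeomorphism of arrangements carries connected faces to connected faces, and $\gamma'(\mathrm{id}\times H|_C)$ is defined precisely when $H|_C$ maps $\overline\partial_C\mathcal X$ into $\overline\partial_{C'}\mathcal X'$ — so I would take this as part of the hypothesis. Then $H$ restricts to a diffeomorphism of arrangements $H|_C\colon\overline\partial_C\mathcal X\to\overline\partial_{C'}\mathcal X'$, and $\mathrm{id}_{\mathbb R_+}\times H|_C$ is a diffeomorphism of arrangements $\mathbb R_+\times\overline\partial_C\mathcal X\to\mathbb R_+\times\overline\partial_{C'}\mathcal X'$. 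I would set
\[
W:=(H\gamma)^{-1}\bigl(\gamma'(\mathbb R_+\times\overline\partial_{C'}\mathcal X')\bigr)\subset\mathbb R_+\times\overline\partial_C\mathcal X,
\]
which is open and contains $\{0\}\times\overline\partial_C\mathcal X$ since $H\gamma(\{0\}\times\overline\partial_C\mathcal X)=H(C)=C'$ lies in the image of $\gamma'$, and on $W$ I would form
\[
\eta:=(\mathrm{id}_{\mathbb R_+}\times H|_C)^{-1}\circ(\gamma')^{-1}\circ H\circ\gamma\colon W\longrightarrow\mathbb R_+\times\overline\partial_C\mathcal X.
\]
Being a composite of open embeddings and (restrictions of) diffeomorphisms of arrangements, $\eta$ is an open embedding of arrangements; and since $\gamma$, $\gamma'$ restrict to the canonical inclusion on $\{0\}\times(-)$, a direct check gives $\eta(0,p)=(0,p)$ for $p\in\overline\partial_C\mathcal X$ — indeed $\gamma(0,p)=p\in C$, $H(p)\in C'$, $(\gamma')^{-1}(H(p))=(0,H|_C(p))$, and $(\mathrm{id}\times H|_C)^{-1}(0,H|_C(p))=(0,p)$. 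Hence $\eta$ satisfies the hypotheses of \Cref{lem:collar-diffeo} for the neat arrangement $\overline\partial_C\mathcal X$, whose ambient manifold $C$ is compact.

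Next I would apply \Cref{lem:collar-diffeo}. Before doing so I would shrink $V$: since $\{0\}\times C$ is compact and lies in the open set $V\cap W$, the tube lemma provides $\delta>0$ with $[0,\delta]\times C\subset V\cap W$, and I would use $V':=[0,\delta/2)\times C$ in place of $V$, which is still an open neighborhood of $\{0\}\times C$ but now has compact closure inside $W$. The lemma then yields a diffeomorphism of arrangements $\Phi_0$ of $(\mathbb R_+\times\overline\partial_C\mathcal X)\cap W$ with $\operatorname{supp}\Phi_0\subset V'\cap\eta(V')\subset V$, with $\Phi_0|_{\{0\}\times C}=\mathrm{id}$, and with $\eta\Phi_0^{-1}$ equal to the inclusion $W\hookrightarrow\mathbb R_+\times\overline\partial_C\mathcal X$ near $\{0\}\times C$. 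Since $\operatorname{supp}\Phi_0$ lies in the compact set $\overline{V'}\subset W$, I would extend $\Phi_0$ by the identity to a diffeomorphism of arrangements of the whole collar $\mathbb R_+\times\overline\partial_C\mathcal X$, and then set $\Phi:=\Phi_0^{-1}$. This gives $\operatorname{supp}\Phi=\operatorname{supp}\Phi_0\subset V$, which is (a); and near $\{0\}\times C$ the relation $\eta\Phi=\mathrm{incl}$ unwinds, via the definition of $\eta$, into $(\gamma')^{-1}H\gamma\Phi=\mathrm{id}\times H|_C$, i.e.\ $H\gamma\Phi=\gamma'(\mathrm{id}\times H|_C)$, which is (b).

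Essentially all of the geometric content lives in \Cref{lem:collar-diffeo} (which in turn, as remarked, uses compactness of the ambient manifold crucially, hence the neatness hypothesis on $\mathcal X$, $\mathcal X'$). The only delicate points here are the verification that $\eta$ is genuinely a morphism of arrangements — so that the lemma's output is a diffeomorphism \emph{of arrangements} — the harmless shrinking of $V$ so that the cutoff produced by the lemma has compact support and can be spread by the identity over the whole collar, and remembering to pass to $\Phi_0^{-1}$ in order to land on the side of the identity asserted in (b). I do not expect a genuine obstacle beyond this bookkeeping.
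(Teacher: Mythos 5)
Your proof is correct and follows essentially the same route as the paper's: both conjugate $H$ through the two collars to produce a self-embedding of the collar $\mathbb R_+\times\overline\partial_C\mathcal X$ fixing $\{0\}\times C$, apply \Cref{lem:collar-diffeo} to the compact arrangement $\overline\partial_C\mathcal X$, and extend the resulting compactly supported diffeomorphism by the identity. The only difference is that your comparison map $\eta$ is the inverse of the paper's (hence your final $\Phi=\Phi_0^{-1}$), which is purely cosmetic.
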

\begin{proof}
Consider the composition map
\[
\widetilde\eta:\mathbb R_+\times\overline\partial_C\mathcal X
\xrightarrow{\mathrm{id}\times (H|_C)} \mathbb R_+\times\overline\partial_{C'}\mathcal X'
\xrightarrow{\gamma'} \mathcal X'
\xrightarrow{H^{-1}} \mathcal X\,,
\]
which is clearly an open embedding and restricts to the canonical inclusion $\{0\}\times\overline\partial_C\mathcal X\hookrightarrow\mathcal X$.
Since $\gamma:\mathbb R_+\times\overline\partial_C\mathcal X\to\mathcal X$ is an open embedding onto an neighborhood of $C=|\overline\partial_C\mathcal X|$, we can find a positive number $\delta>0$ such that $[0,\delta]\times C$ and $\widetilde\eta([0,\delta]\times C)$ are contained in $V$ and in the image of $\gamma$ respectively.
Then, there is a unique open embedding $\eta:[0,\delta)\times\overline\partial_C\mathcal X\to\mathbb R_+\times\overline\partial_C\mathcal X$ so that $\gamma\eta=\widetilde\eta$.
Applying \Cref{lem:collar-diffeo}, we obtain an auto-diffeomorphism $\Phi'$ on the arrangement $[0,\delta)\times\overline\partial_C\mathcal X$ such that
\begin{itemize}
  \item $\operatorname{supp}\Phi'\subset[0,\delta)\times|\overline\partial_C\mathcal X|$ is compact;
  \item the composition $\eta\Phi'^{-1}:[0,\delta)\times\overline\partial_C\mathcal X\to\mathbb R_+\times\overline\partial_C\mathcal X$ agrees with the canonical inclusion on a neighborhood of $\{0\}\times C$.
\end{itemize}
One can verify that $\Phi'$ extends to an auto-diffeomorphism $\Phi$ on $\mathbb R_+\times\overline\partial_C\mathcal X$ which is the identity outside $[0,\delta)\times\overline\partial_C\mathcal X$, so that $\Phi$ has a compact support contained in $[0,\delta)\times C$, which is a subset of $V$.
Moreover, the second condition implies that, on a neighborhood of $\{0\}\times C$, the map $\widetilde\eta\Phi^{-1}=\gamma\eta\Phi^{-1}$ agrees with $\gamma$.
Thus, we obtain $\widetilde\eta=\gamma\Phi$, which is what we want.
\end{proof}

\section{Relative transversality theorem}
\label{sec:rel-transversality}

\subsection{Parametric Transversality Theorem}
\label{sec:transv}

To begin with, we review the classical results on transversality.
For a fixed closed submanifold $W\subset Y$, we consider the subset
\[
\mathcal T_W := \{F\in C^\infty(X,Y)\mid F\pitchfork W\}\,\subset C^\infty(X,Y)\,.
\]
Since some important properties of smooth maps can be written as transversalities, it is important to investigate the set $\mathcal T_W$.
Fortunately, we have Whitney $C^\infty$-topology, so we can discuss topological properties of the set.
For example, we can see the transversality is essentially an open condition:

\begin{proposition}[cf. Proposition II.4.5 in \cite{GG73}]
\label{prop:transv-open}
Let $X$ and $Y$ be smooth manifolds with corners, and let $W\subset Y$ be a submanifold.
For every compact subset $A\subset Y$, the set
\[
\mathcal T_{W,A} := \{F\in C^\infty(X,Y)\mid F\pitchfork W\ \text{on}\  W\cap A\}
\]
is open in $C^\infty(X,Y)$ with respect to the Whitney $C^1$ (thus, also $C^\infty$) topology.
\end{proposition}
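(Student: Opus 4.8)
The plan is to express the condition ``$F\pitchfork W$ on $W\cap A$'' as a condition on the $1$-jet $j^1F$, to exhibit the locus of ``bad'' $1$-jets as a \emph{closed} subset of $J^1(X,Y)$, and then to conclude by the very definition of the Whitney $C^1$-topology (which the $C^\infty$-topology refines).

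First I would record the jet description. By \Cref{prop:tangent-J1} there is a natural isomorphism $J^1(X,Y)\cong\mathpzc{Hom}_{\mathbb R}(TX,TY)$ of bundles over $X\times Y$ under which $j^1F(p)$ corresponds to $d_pF\colon T_pX\to T_{F(p)}Y$. Since $W\subset Y$ is an embedded submanifold, $TW$ is a subbundle of $TY|_W$, so $\nu_W:=(TY|_W)/TW$ is a vector bundle over $W$; let $\phi$ be the continuous bundle morphism over $X\times W$ sending a $1$-jet $z\in J^1(X,Y)$ lying over a point $(p,q)$ with $q\in W$ to the composite $T_pX\xrightarrow{\,z\,}T_qY\twoheadrightarrow\nu_q$. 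By the definition of transversality in \Cref{sec:submfds}, a map $F$ with $F(p)\in W$ satisfies $F\pitchfork W$ at $p$ precisely when $\phi\big(j^1F(p)\big)$ is surjective. Hence, setting
\[
\Sigma:=\big\{\,z\in J^1(X,Y)\big|_{X\times(W\cap A)}\ \big|\ \phi(z)\ \text{is not surjective}\,\big\},
\]
one gets the reformulation $\mathcal T_{W,A}=\{F\in C^\infty(X,Y)\mid j^1F(X)\cap\Sigma=\varnothing\}=M\big(J^1(X,Y)\setminus\Sigma\big)$.

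It then suffices to show that $\Sigma$ is closed in $J^1(X,Y)$: for then $J^1(X,Y)\setminus\Sigma$ is open, $M\big(J^1(X,Y)\setminus\Sigma\big)$ is a basic open set of the Whitney $C^1$-topology, and so $\mathcal T_{W,A}$ is open in the Whitney $C^1$-topology and a fortiori in the $C^\infty$-topology. Since $A$ is compact and $W$ is closed in $Y$, the set $W\cap A$ is compact, so $J^1(X,Y)|_{X\times(W\cap A)}$ is closed in $J^1(X,Y)$; and inside it the locus $\{\phi\ \text{not surjective}\}$ is closed, because the set of non-surjective linear maps between two fixed finite-dimensional vector spaces is closed and $\phi$ is continuous — one checks this in local trivializations of $TX$, $\nu_W$ and $J^1(X,Y)$ over pieces of $X\times(W\cap A)$ and glues. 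The corners of $X$, $Y$, $W$ cause no trouble here, since the tangent spaces are ``full'' even at corner points (see \Cref{sec:prelim}), so $TX$, $TW$, $\nu_W$ have locally constant rank and the argument runs exactly as in the boundaryless case.

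The hard part is thus precisely the closedness of $\Sigma$, and the hypotheses that genuinely enter there are that $W$ be \emph{closed} in $Y$ (which is what makes $W\cap A$ compact) and an \emph{embedded} submanifold (so that $\nu_W$ is a genuine vector bundle and $\phi$ a bona fide continuous bundle map); if $W$ is merely locally closed one can in fact produce examples in which $\mathcal T_{W,A}$ fails to be open. Everything else is a formal consequence of the jet description of transversality and of the definition of the Whitney topology.
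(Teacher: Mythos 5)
The paper does not actually prove this proposition --- it is quoted from Proposition II.4.5 of \cite{GG73} --- so there is no in-paper argument to compare against; your route (rewrite $\mathcal T_{W,A}$ as $M\bigl(J^1(X,Y)\setminus\Sigma\bigr)$ for a closed ``bad'' locus $\Sigma$ of non-surjective jets, using $J^1(X,Y)\cong\mathpzc{Hom}_{\mathbb R}(TX,TY)$) is exactly the canonical one, and the reduction to ``$\Sigma$ is closed'' together with the local-trivialization check that non-surjectivity is a closed condition is correct, corners included.

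The one genuine issue is where you write that $W\cap A$ is compact ``since $A$ is compact and $W$ is closed in $Y$'': closedness of $W$ is \emph{not} among the stated hypotheses, and it is precisely what your argument needs, since $J^1(X,Y)|_{X\times(W\cap A)}$ is closed only when $W\cap A$ is closed in $Y$. Without it the statement as literally written is false: take $Y=\mathbb R^2$, $W=(0,1)\times\{0\}$, $A=[0,1]^2$ and $X$ a point (so that $F\pitchfork W$ on $W\cap A$ just means $F(\mathrm{pt})\notin W\cap A$); the complement of $(0,1)\times\{0\}$ is not open, so $\mathcal T_{W,A}$ is not open. You do flag at the end that closedness of $W$ is the hypothesis that ``genuinely enters'' and that counterexamples exist otherwise, which is the right diagnosis --- so the defect lies in the statement (it should assume $W$ closed, as in \cite{GG73}, or equivalently that $W\cap A$ is compact) rather than in your proof of the corrected statement. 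Just be aware that you are silently proving a repaired proposition, not the literal one.
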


In the later sections, we will discuss perturbation of smooth maps respecting some transversal conditions.
In the purpose, we want ``sufficiently many'' maps satisfying transversal conditions near arbitrary maps.
The following result provides a good criterion for this:

\begin{theorem}[Parametric Transversality Theorem, cf. Lemma 6.4 in \cite{Mic80}]
\label{theo:param-transv}
Let $B$, $X$ and $Y$ be manifolds with corners, and let $W\subset Y$ be a submanifold.
Suppose we have a smooth map $\Phi:B\times X\to Y$ with $\Phi\pitchfork W$.
For each $b\in B$, we write $\Phi_b=\Phi(b,\blankdot):X\to Y$.
Then the set
\[
\{b\in B\mid \Phi_b\pitchfork W\}
\]
is a dense subset of $B$.
\end{theorem}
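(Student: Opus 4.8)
The plan is to adapt the classical proof of the transversality density theorem --- Sard's theorem applied to the projection of the ``preimage of $W$'' onto the parameter space --- with \Cref{prop:transinv-cover} supplying the replacement for the assertion ``$\Phi^{-1}(W)$ is a submanifold'', which fails once corners are present.

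First I would reduce to the case where $B$ has no boundary. The interior $\operatorname{int}B$ is open and dense in $B$, the restriction $\Phi|_{\operatorname{int}B\times X}$ is still transversal to $W$ because transversality is a local condition, and $(\Phi|_{\operatorname{int}B\times X})_b=\Phi_b$ for $b\in\operatorname{int}B$; hence it suffices to prove density of $\{b\in\operatorname{int}B\mid\Phi_b\pitchfork W\}$ in $\operatorname{int}B$. Next, using \Cref{prop:emb-bdless}, embed $X$ as a closed subset of a boundaryless manifold $\widehat X$ of the same dimension, so that $B\times X$ sits as a closed subset of the boundaryless manifold $B\times\widehat X$ of the same dimension. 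Applying \Cref{prop:transinv-cover} to $\Phi\colon B\times X\to Y$ and $W$ yields a countable family $\{N_i\}_{i\ge 1}$ of boundaryless codimension-$k$ submanifolds of $B\times\widehat X$ with $\Phi^{-1}(W)\subset\bigcup_i N_i$, and such that for every $p\in N_i\cap(B\times X)$ the differential $d_p\Phi$ maps $T_pN_i\subset T_p(B\times\widehat X)\cong T_p(B\times X)$ into $T_{\Phi(p)}W$.

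Then I would run the Sard argument. For each $i$ let $\pi_i\colon N_i\to B$ be the restriction of the projection $B\times\widehat X\to B$; this is a smooth map between boundaryless second-countable manifolds, so by Sard's theorem its set $R_i$ of regular values is residual (the critical locus is closed, hence $\sigma$-compact, so its image is a measure-zero $F_\sigma$ set), in particular dense in $B$. Since $B$ is a Baire space, $R:=\bigcap_i R_i$ is residual, hence dense, in $B$. It remains to check $R\subset\{b\mid\Phi_b\pitchfork W\}$. Fix $b\in R$ and $x\in X$ with $y:=\Phi_b(x)\in W$; then $p:=(b,x)\in\Phi^{-1}(W)$, so $p\in N_i$ for some $i$, and $b=\pi_i(p)$ is a regular value, i.e.\ the projection $T_pN_i\to T_bB$ is onto. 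Given $w\in T_yY$, the hypothesis $\Phi\pitchfork W$ at $p$ lets us write $w=d_p\Phi(u,v)+t$ with $u\in T_bB$, $v\in T_xX$, $t\in T_yW$; surjectivity provides $(u,v')\in T_pN_i$, so $d_p\Phi(u,v')\in T_yW$ by the tangent condition above, and
\[
w=\bigl(d_p\Phi(u,v')+t\bigr)+d\Phi_b(v-v')\in T_yW+d\Phi_b(T_xX).
\]
As $w$ and $x$ were arbitrary, $\Phi_b\pitchfork W$, which completes the argument.

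The two points that need care --- rather than a single hard obstacle --- are: (i) the identification $T_pN_i\subset T_p(B\times\widehat X)\cong T_p(B\times X)$ at corner points of $X$, which is precisely what the invariant description of tangent spaces and the last clause of \Cref{prop:transinv-cover} are designed to provide; and (ii) the bookkeeping in the Sard step, ensuring one obtains a genuinely residual (not merely ``full-measure'') set of parameters so that the countable intersection over $i$ remains dense. Everything else is the routine linear-algebra chase.
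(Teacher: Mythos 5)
Your argument is correct. Note first that the paper itself offers no proof of this theorem: it is stated with a citation to Lemma~6.4 of \cite{Mic80} and used as a black box. Your write-up is therefore a reconstruction rather than a parallel to anything in the text, and it is the right reconstruction: it is exactly the classical Sard-based argument, with \Cref{prop:transinv-cover} (itself Michor's Lemma~6.3) doing the work that ``\,$\Phi^{-1}(W)$ is a submanifold'' does in the boundaryless case, which is how the cited source proceeds as well. The individual steps all check out: the reduction to $\operatorname{int}B$ is legitimate (density in a dense open subset gives density in $B$, and the set of transversal parameters can only grow when boundary parameters are allowed back in); $\operatorname{int}B\times X$ is closed in the boundaryless, equidimensional $\operatorname{int}B\times\widehat X$ supplied by \Cref{prop:emb-bdless}, so \Cref{prop:transinv-cover} applies; the ``regular values are residual, not just co-null'' refinement of Sard via $\sigma$-compactness of the critical locus is exactly what is needed to intersect over countably many $N_i$ in the Baire space $\operatorname{int}B$; and the final linear-algebra chase correctly uses that $v'$ lives in $T_x\widehat X\cong T_xX$, which is where the paper's convention that tangent spaces at corners are full-dimensional earns its keep. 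The degenerate case $\dim N_i<\dim B$ is also covered, since there the regular values are the complement of the (measure-zero, $\sigma$-compact) image and the same residuality argument goes through.
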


This is a key result to prove our main result.
Precisely, we use it in the following form:

\begin{corollary}
\label{cor:param-perturb}
Let $X$ and $Y$ be manifolds with corners, and let $B$ be a topological space.
Suppose we have an open subset $U\subset B$ which is a manifold and a continuous map $\Phi:B\times X\to Y$ whose restriction to $U$ is a smooth submersion $U\times X\to Y$.
Then, for every submanifold $W\subset Y$ and every point $b\in \overline U$ in the closure of $U$ in $B$, there is a sequence $\{b_1,b_2,\dots\}\subset U$ which converges to $b$ in $B$ and $\Phi_{b_i}\pitchfork W$ for each $i$.
\end{corollary}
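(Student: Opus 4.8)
The plan is to reduce everything to the Parametric Transversality Theorem (\Cref{theo:param-transv}) via the elementary remark that a submersion meets \emph{any} submanifold transversally. First I would observe that $\Phi|_{U\times X}\colon U\times X\to Y$, being a smooth submersion, automatically satisfies $\Phi|_{U\times X}\pitchfork W$: at a point $(u,x)$ with $\Phi(u,x)\in W$ the differential $T_{(u,x)}(U\times X)\to T_{\Phi(u,x)}Y$ is already an epimorphism, so a fortiori the map $T_{(u,x)}(U\times X)\oplus T_{\Phi(u,x)}W\to T_{\Phi(u,x)}Y$ is one. Since $U$ is itself a manifold with corners, \Cref{theo:param-transv} then applies to $\Phi|_{U\times X}$ with parameter manifold $U$, and gives that
\[
T:=\{u\in U\mid \Phi_u\pitchfork W\}
\]
is a dense subset of $U$.

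It then remains to approximate $b$ by points of $T$. Because $U$ is open in $B$, any neighborhood $V$ of $b$ in $B$ meets $U$ in an open subset of $U$, which is non-empty since $b\in\overline U$ and hence meets the dense set $T$. Choosing a decreasing countable neighborhood basis $V_1\supset V_2\supset\cdots$ of $b$ in $B$ and picking $b_i\in V_i\cap T$ yields a sequence $\{b_i\}_{i\ge 1}\subset U$ converging to $b$ in $B$ with $\Phi_{b_i}\pitchfork W$ for every $i$, which is exactly the assertion.

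The one step that needs care is this last one: it uses that $b$ admits a countable neighborhood basis in $B$. This costs nothing in practice, since in every situation where \Cref{cor:param-perturb} is invoked the parameter space $B$ is (a subset of) a finite-dimensional space of polynomial perturbations — in particular metrizable — and so is first countable; I would either build first countability into the hypotheses or simply note it in the proof. Beyond this point there is no real obstacle: the substance of the statement is imported directly from \Cref{theo:param-transv}, and the sole purpose of the submersion observation at the start is to make that theorem applicable here without assuming any transversality of $\Phi$ itself, only that it is a submersion on $U\times X$.
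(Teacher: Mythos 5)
Your proposal is correct and follows essentially the same route as the paper: observe that the submersion hypothesis gives $\Phi|_{U\times X}\pitchfork W$ for free, invoke \Cref{theo:param-transv} to get density in $U$, and then pass to $b\in\overline U$. Your extra remark about needing a countable neighborhood basis at $b$ to extract an actual convergent sequence is a legitimate point that the paper's one-line conclusion glosses over, and it is harmless in practice since $B$ is always (a subset of) a finite-dimensional polynomial space in the applications.
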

\begin{proof}
Since the smooth map $\Phi_{U\times X}:U\times X\to Y$ is a submersion, we have $\Phi\pitchfork W$ for every submanifold $W\subset Y$.
Applying \Cref{theo:param-transv}, we obatin a dense subset of $U$ where we have $\Phi_b\pitchfork W$.
This dense subset is also dense in the closure $\overline U$ in $B$, so we obtain the result.
\end{proof}

\subsection{Relative jets on arrangements}
\label{sec:multjet-config}

In this section, we want to discuss jet bundles and transversality theorems in more general relative settings than usual (or those in \cite{Ish98}).
We define multijet bundles over excellent arrangements.

\begin{definition}
Let $\mathcal X$ and $\mathcal Y$ be excellent arrangements of shape $S$.
We denote by $0$ the minimum element of $S$.
Then for each non-negative integer $r\ge 0$, we define the $r$-th relative jet space $J^r(\mathcal X,\mathcal Y)$ to be the image of the composition
\[
\begin{split}
C^\infty(\mathcal X,\mathcal Y)\times \mathcal X(0)
&\hookrightarrow C^\infty(|\mathcal X|,|\mathcal Y|)\times |\mathcal X| \\
&\xrightarrow{j^r\times |\mathcal X|} C^\infty(|\mathcal X|,J^r(|\mathcal X|,|\mathcal Y|))\times|\mathcal X| \\
&\xrightarrow{\mathrm{eval}} J^r(|\mathcal X|,|\mathcal Y|)\,.
\end{split}
\]
\end{definition}

We have a canonical map $\pi:J^r(\mathcal X,\mathcal Y)\to\mathcal X(0)\times\mathcal Y(0)$.
It is, however, not obvious from the definition that $J^r(\mathcal X,\mathcal Y)$ is a manifold (with corners).
To see this, we give more algebraic definition for $J^r(\mathcal X,\mathcal Y)$.

\begin{definition}
Let $X'\hookrightarrow X$ be an embedding of submanifold, and let $p\in X'$.
Then we have an $\mathbb R$-algebra homomorphism $C^\infty_p(X)\to C^\infty_p(X')$.
We define an ideal $\mathfrak v_p(X')\subset C^\infty_p(X)$ by
\[
\mathfrak v_p(X') := \operatorname{ker}\left( C^\infty_p(X)\to C^\infty_p(X')\right)\,.
\]
\end{definition}

In other words, $\mathfrak v_p(X')\subset C^\infty_p(X)$ consists of functions defined near $p$ which are identically zero on the submanifold $X'$.
The ideal $\mathfrak v_p(X')$ locally determines the submanifold $X'$ around $p$.
The following lemma is a key observation:

\begin{lemma}
\label{lem:def-ideal-Cp}
Let $X'\subset X$ and $Y'\subset Y$ be submanifolds of manifolds with corners.
Let $F\in C^\infty_p(X,Y)$ be a smooth function locally defined around $p\in X$ and $q:=F(p)\in Y$.
Then $F$ maps a neighborhood of $p$ on $X'$ into $Y'$ if and only if we have
\[
F^\ast\mathfrak v_q(Y') \subset \mathfrak v_p(X')\,.
\]
\end{lemma}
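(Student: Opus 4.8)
The statement to prove is \Cref{lem:def-ideal-Cp}: for submanifolds $X'\subset X$ and $Y'\subset Y$ and a germ $F\in C^\infty_p(X,Y)$ with $q=F(p)\in Y$, the germ $F$ sends a neighborhood of $p$ in $X'$ into $Y'$ if and only if $F^\ast\mathfrak v_q(Y')\subset\mathfrak v_p(X')$. The natural approach is to unwind both conditions into statements about functions vanishing on submanifold germs and use \Cref{cor:submfd-defzero} (every submanifold is locally a zero set of a smooth function) to get the converse implication, which is the substantive direction.

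\emph{The easy direction} ($\Rightarrow$). Suppose $F$ maps a neighborhood $U$ of $p$ in $X'$ into $Y'$; shrink so that $F$ is defined on a neighborhood $V$ of $p$ in $X$ with $F(V\cap X')\subset Y'$. Take $g\in\mathfrak v_q(Y')$, i.e. $g$ is defined near $q$ and vanishes on $Y'$ near $q$. Then $F^\ast g=g\circ F$ is defined near $p$, and for any $x\in V\cap X'$ we have $F(x)\in Y'$, hence $(g\circ F)(x)=0$. Thus $F^\ast g$ vanishes on $X'$ near $p$, i.e. $F^\ast g\in\mathfrak v_p(X')$. This is immediate from the definitions of $\mathfrak v_p$ and of $F^\ast$ on germs; no corner subtleties arise since everything is phrased set-theoretically on the submanifold.

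\emph{The substantive direction} ($\Leftarrow$). Assume $F^\ast\mathfrak v_q(Y')\subset\mathfrak v_p(X')$, and suppose for contradiction (or directly) we want to show $F(U\cap X')\subset Y'$ for a small neighborhood $U$ of $p$. By \Cref{cor:submfd-defzero} applied to $Y'\subset Y$ at $q$, there is an open neighborhood $W$ of $q$ in $Y$ and a smooth $\lambda:W\to[0,1]$ with $Y'\cap W=\{\lambda=0\}$. The germ of $\lambda$ at $q$ lies in $\mathfrak v_q(Y')$ (it vanishes on $Y'$ near $q$), so by hypothesis $F^\ast\lambda=\lambda\circ F\in\mathfrak v_p(X')$, i.e. $\lambda\circ F$ vanishes on $X'$ near $p$. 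Choose a neighborhood $U$ of $p$ small enough that $F(U)\subset W$ and $\lambda\circ F\equiv 0$ on $U\cap X'$. Then for $x\in U\cap X'$ we get $\lambda(F(x))=0$, hence $F(x)\in Y'\cap W\subset Y'$. This gives exactly $F(U\cap X')\subset Y'$, as required.

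\emph{Main obstacle.} There is essentially no hard analytic step: the only nontrivial ingredient is the existence of a local defining function $\lambda$ for $Y'$, which is precisely \Cref{cor:submfd-defzero} (in turn resting on Whitney's theorem, \Cref{prop:whitney-zero}, and the embedding into a boundaryless manifold, \Cref{prop:emb-bdless}). The one point that deserves a sentence of care is that we only used a \emph{single} element $\lambda\in\mathfrak v_q(Y')$ in the $\Leftarrow$ direction — this is fine because $\lambda$ already cuts out $Y'$ locally — and that shrinking neighborhoods to make $F^\ast\mathfrak v_q(Y')\subset\mathfrak v_p(X')$ meaningful as an inclusion of germ ideals is harmless. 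The presence of corners on $X$, $X'$, $Y$, $Y'$ causes no difficulty here since the argument is entirely in terms of germs of functions and their vanishing loci, for which the algebraic formalism of $C^\infty_p$ set up earlier in \Cref{sec:prelim} applies verbatim.
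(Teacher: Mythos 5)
Your proof is correct and follows essentially the same route as the paper: the forward direction is immediate from the definitions, and the converse uses the local defining function $\lambda$ supplied by \Cref{cor:submfd-defzero} to cut out $Y'$ and pull it back along $F$. No gaps.
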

\begin{proof}
First suppose that $F$ maps a neighborhood of $p$ on $X'$ into $Y'$.
We show $F^\ast\mathfrak v_q(Y')\subset\mathfrak v_p(X')$.
This is equivalent to that for each function $\lambda$ defined near $q$ on $Y$ which is identically zero on $Y'$, the composition $\lambda F$ is also identically zero on $X'$.
This is obvious from the assumption of $F$.

Conversely, suppose we have $F^\ast\mathfrak v_q(Y')\subset\mathfrak v_p(X')$.
By \Cref{cor:submfd-defzero}, there is a neighborhood $V$ of $q$ on $Y$ together with a smooth function $\lambda:V\to[0,1]$ such that $Y'\cap V = \{\lambda=0\}$.
Take a sufficiently small neighborhood $U$ of $p$ on $X$ so that $f$ is defined on $U$.
Since we can take $V$ as small as needed, we may assume $F^{-1}(V)\subset U$.
Then, by the assumption $F^\ast\mathfrak v_q(Y')\subset\mathfrak v_p(X')$, the function $\lambda F:U\to\mathbb [0,1]$ is identically zero on a neighborhood, say $U_0$, of $p$ on $X'$.
This implies $\lambda$ is identically zero on $F(U_0)$, and we obtain $F(U_0)\subset \{\lambda=0\}\subset Y'$.
\end{proof}

Let $\mathcal X$ and $\mathcal Y$ be excellent arrangements of shape $S$ with the minimum $0$.
For each $(p,q)\in\mathcal X(0)\times\mathcal Y(0)$, we define an $\mathbb R$-vector space $J^r(\mathcal X,\mathcal Y)_{p,q}$ to be the set of $\mathbb R$-algebra homomorphisms
\[
f:C^\infty_q(|\mathcal Y|)/\mathfrak m_q^{r+1}\to C^\infty_p(|\mathcal X|)/\mathfrak m_p^{r+1}
\]
such that $f$ restricts to a homomorphism
\[
\left(\mathfrak v_q(\mathcal Y(s))+\mathfrak m_q^{r+1}\right)\big/\mathfrak m_q^{r+1}
\to \left(\mathfrak v_p(\mathcal X(s))+\mathfrak m_p^{r+1}\right)\big/\mathfrak m_p^{r+1}
\]
for each $s\in S$.
Hence, $J^r(\mathcal X,\mathcal Y)_{(p,q)}$ is a subset of $J^r(|\mathcal X|,|\mathcal Y|)_{(p,q)}$.
We have a coordinate-dependent description.

\begin{lemma}
\label{lem:rel-jet-coord}
Let $\mathcal X$ be an excellent arrangement of manifolds of shape $S$, say $0\in S$ is the minimum.
Suppose we have an $\mathcal X$-chart $(U,\varphi,\mathcal I)$ around $p\in\mathcal X(0)$ with $|\mathcal I|=\langle m|k\rangle$.
Then, the induced homomorphism
\[
\varphi_!: C^\infty_p(|\mathcal X|) \to \mathbb R\double[x_1,\dots,x_m\double]
\]
maps each ideal $\mathfrak v_p(\mathcal X(s))$ onto the ideal
\[
(x_i\mid i\notin\mathcal I(s)) \subset \mathbb R\double[x_1,\dots,x_m\double]\,.
\]
\end{lemma}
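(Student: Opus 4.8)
The plan is to reduce the assertion to the standard local model $\mathbb H^{\langle m|k\rangle}$ and then to an elementary computation with Taylor series. First I would unwind the definition of $\varphi_!$. Since $\varphi\colon U\to\mathbb H^{|\mathcal I|}$ is an open embedding which is a local diffeomorphism with $\varphi(p)=0$, precomposition with $\varphi^{-1}$ is an isomorphism of $\mathbb R$-algebras $C^\infty_p(|\mathcal X|)\xrightarrow{\sim}C^\infty_0(\mathbb H^{\langle m|k\rangle})$, and by construction $\varphi_!$ factors as this isomorphism followed by the Taylor-expansion homomorphism $T_0\colon C^\infty_0(\mathbb H^{\langle m|k\rangle})\to\mathbb R[\![x_1,\dots,x_m]\!]$. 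The pullback condition in the definition of an $\mathcal X$-chart says precisely that $U\cap\mathcal X(s)=\varphi^{-1}(\mathbb H^{\mathcal I(s)})$, so $\varphi(U\cap\mathcal X(s))=\varphi(U)\cap\mathbb H^{\mathcal I(s)}$ is an open neighbourhood of $0$ in the coordinate sub-quadrant $\mathbb H^{\mathcal I(s)}=\{\,x_j=0\mid j\in\langle m|k\rangle\setminus\mathcal I(s)\,\}$; consequently the isomorphism above carries $\mathfrak v_p(\mathcal X(s))$ onto $\mathfrak v_0(\mathbb H^{\mathcal I(s)})$, the ideal of germs at $0$ of smooth functions on $\mathbb H^{\langle m|k\rangle}$ that vanish near $0$ on $\mathbb H^{\mathcal I(s)}$. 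Thus it remains to show $T_0(\mathfrak v_0(\mathbb H^{\mathcal I(s)}))=(x_j\mid j\notin\mathcal I(s))$ in $\mathbb R[\![x_1,\dots,x_m]\!]$.

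For the inclusion ``$\supseteq$'' I would invoke Borel's theorem: $T_0$ is surjective, so it maps the ideal $\mathfrak v_0(\mathbb H^{\mathcal I(s)})$ onto an ideal of $\mathbb R[\![x_1,\dots,x_m]\!]$, and for each $j\notin\mathcal I(s)$ the coordinate function $x_j$ lies in $\mathfrak v_0(\mathbb H^{\mathcal I(s)})$ with $T_0(x_j)=x_j$, so the image contains all generators of the right-hand ideal. For ``$\subseteq$'': given $h\in\mathfrak v_0(\mathbb H^{\mathcal I(s)})$, choose a smooth representative on a neighbourhood of $0$ in $\mathbb R^m$ and restrict it to the linear subspace $L:=\{x_j=0\mid j\notin\mathcal I(s)\}\cong\mathbb R^{\mathcal I(s)}$. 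The restriction $h|_L$ is smooth near $0$ and vanishes on $\mathbb H^{\mathcal I(s)}$, which inside $L$ is the closed quadrant $\{x_i\ge 0\mid i\in\mathcal I(s)_+\}$; this quadrant has non-empty interior in $L$ with $0$ in its closure, so $h|_L$ vanishes on an open set accumulating at $0$, whence all partial derivatives of $h|_L$ at $0$ vanish, i.e.\ $T_0(h|_L)=0$. Since $T_0(h|_L)$ is exactly the image of $T_0(h)$ under the $\mathbb R$-algebra retraction $\mathbb R[\![x_1,\dots,x_m]\!]\to\mathbb R[\![x_i\mid i\in\mathcal I(s)]\!]$ obtained by setting $x_j=0$ for $j\notin\mathcal I(s)$, and the kernel of that retraction is $(x_j\mid j\notin\mathcal I(s))$, we conclude $T_0(h)\in(x_j\mid j\notin\mathcal I(s))$, which finishes the proof.

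I expect the only delicate point to be the corner bookkeeping in the last step: one must be sure that a smooth function in the manifold-with-corners sense (locally the restriction of a function smooth on an open subset of $\mathbb R^m$) which vanishes near $0$ on the quadrant $\mathbb H^{\mathcal I(s)}$ automatically has vanishing jet at $0$ in the directions spanning $\mathcal I(s)$ — this is exactly where one uses that $\mathbb H^{\mathcal I(s)}$ has non-empty interior inside its linear span rather than being a thin stratum, and it is the reason the statement is about the sub-quadrant $\mathbb H^{\mathcal I(s)}$ and not merely about some lower-dimensional corner. Everything else — the factorization of $\varphi_!$ through $T_0$, the transport of $\mathfrak v_p(\mathcal X(s))$ through the chart via the pullback axiom, and the fact that a surjective ring homomorphism sends ideals to ideals — is formal.
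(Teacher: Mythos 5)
Your proof is correct and follows essentially the same route as the paper's: transport $\mathfrak v_p(\mathcal X(s))$ to the local model via the chart's pullback axiom, observe that vanishing on the sub-quadrant $\mathbb H^{\mathcal I(s)}$ (which has nonempty interior in its linear span) kills all Taylor coefficients in the $\mathcal I(s)$-directions, and obtain surjectivity from the coordinate functions $x_j$, $j\notin\mathcal I(s)$. If anything, your version is more careful than the paper's, which only records the vanishing of the zeroth and first derivatives and leaves the higher-order vanishing and the Borel-surjectivity step implicit.
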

\begin{proof}
Since $(U,\varphi,\mathcal I)$ is an $\mathcal X$-chart with $|\mathcal I|=\langle m|k\rangle$, we have a pullback square
\begin{equation}
\label{diag:prf:rel-jet-coord:chartpb}
\xymatrix{
  \mathcal X(s)\cap U \ar[d] \ar[r] \ar@{}[dr]|(.4)\pbcorner & U \ar[d]^{\varphi} \\
  \mathbb H^{\mathcal I(s)} \ar[r] & \mathbb H^{\langle m|k\rangle} }
\end{equation}
for each $s\in S$.
For a smooth function $f\in C^\infty(U)$, using the standard coordinate $(x_1,\dots,x_m)\in\mathbb H^{\langle m|k\rangle}$, we have
\[
\varphi_!(f) = \sum_\alpha \frac{\partial^{|\alpha|} f\varphi^{-1}}{\partial^\alpha x}(0)\frac{x^\alpha}{\alpha!}\,.
\]
By the pullback square \eqref{diag:prf:rel-jet-coord:chartpb}, if $f$ belongs to the ideal $\mathfrak v_p(\mathcal X(s))$, then the map $f\varphi^{-1}:\varphi(U)\to\mathbb R$ vanishes on the subspace $\varphi(U)\cap\mathbb H^{\mathcal I(s)}$.
Hence, we obtain $f\varphi^{-1}(0)=0$ and
\[
\frac{\partial f\varphi^{-1}}{\partial x_i}(0)=0
\]
for every $i\in\mathcal I(s)$.
It follows that the homomorphism $\varphi_!:C^\infty_p(|\mathcal X|)\to\mathbb R\double[x_1,\dots,x_m\double]$ restricts to
\[
\mathfrak v_p(\mathcal X(s))\to (x_i\mid i\notin\mathcal I(s))\,.
\]
This map is onto since, for each $i\notin\mathcal I(s)$, the map $\mathbb H^{\langle m|k\rangle}\ni x\mapsto x_i\in\mathbb R$ gives an element of $\mathfrak v_p(\mathcal X(s))$ through $\varphi$.
\end{proof}

\begin{corollary}
\label{cor:reljet-fib}
Let $\mathcal X$ and $\mathcal Y$ be excellent arrangements of the same shape $S$, say $0\in S$ is the minimum.
Suppose we have an $\mathcal X$-chart $(U,\varphi,\mathcal I)$ and a $\mathcal Y$-chart $(V,\psi,\mathcal J)$ around $p\in\mathcal X(0)$ and $q\in\mathcal Y(0)$ respectively.
Then they induce a canonical isomorphism
\[
J^r(\mathcal X,\mathcal Y)_{(p,q)}
\simeq P^r(\mathcal I,\mathcal J)_0\,,
\]
where $P^r(\mathcal I,\mathcal J)_0$ is the set of polynomial maps $G:\mathbb R^{|\mathcal I|}\to\mathbb R^{|\mathcal J|}$ of degree at most $r$ such that $G(0)=0$ and $G(\mathbb R^{\mathcal I(s)})\subset\mathbb R^{\mathcal J(s)}$ for each $s\in S$.
\end{corollary}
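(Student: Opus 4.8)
The plan is to reduce the statement to the coordinate description of ordinary jets and then translate the ``relative'' conditions into conditions on coordinate subspaces by means of \Cref{lem:rel-jet-coord}. First I would invoke \Cref{lem:jet-germ}: the chart $\varphi$ gives an $\mathbb R$-algebra isomorphism $\widetilde\varphi_p\colon J^r_p(|\mathcal X|)\xrightarrow{\sim}P^r(|\mathcal I|)$, and $\psi$ gives $\widetilde\psi_q\colon J^r_q(|\mathcal Y|)\xrightarrow{\sim}P^r(|\mathcal J|)$. Conjugating with these two isomorphisms produces a bijection
\[
J^r(|\mathcal X|,|\mathcal Y|)_{(p,q)}
= \mathrm{Hom}_{\mathbb R\mathchar`-\mathrm{Alg}}(J^r_q(|\mathcal Y|),J^r_p(|\mathcal X|))
\xrightarrow{\ \sim\ }
\mathrm{Hom}_{\mathbb R\mathchar`-\mathrm{Alg}}(P^r(|\mathcal J|),P^r(|\mathcal I|))\,,
\]
and the target is identified, exactly as in \Cref{sec:jet-bdl}, with $P^r(|\mathcal I|,|\mathcal J|)_0$ by sending an algebra homomorphism $g$ to the polynomial map $G=(g(y_j))_{j\in|\mathcal J|}$ (of degree $\le r$, with $G(0)=0$). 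So everything comes down to identifying the image of the subset $J^r(\mathcal X,\mathcal Y)_{(p,q)}\subset J^r(|\mathcal X|,|\mathcal Y|)_{(p,q)}$ under this bijection.

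By definition, $f$ lies in $J^r(\mathcal X,\mathcal Y)_{(p,q)}$ iff, for every $s\in S$, it maps $(\mathfrak v_q(\mathcal Y(s))+\mathfrak m_q^{r+1})/\mathfrak m_q^{r+1}$ into $(\mathfrak v_p(\mathcal X(s))+\mathfrak m_p^{r+1})/\mathfrak m_p^{r+1}$. The former submodule is exactly the image of $\mathfrak v_q(\mathcal Y(s))$ under the quotient map $C^\infty_q(|\mathcal Y|)\twoheadrightarrow J^r_q(|\mathcal Y|)$, so \Cref{lem:rel-jet-coord} (applied through $\psi$, resp.\ $\varphi$) shows that $\widetilde\psi_q$ carries it onto the ideal $\mathfrak J_s:=(y_j\mid j\notin\mathcal J(s))\cdot P^r(|\mathcal J|)$, and likewise the $\mathcal X$-side submodule corresponds under $\widetilde\varphi_p$ to $\mathfrak I_s:=(x_i\mid i\notin\mathcal I(s))\cdot P^r(|\mathcal I|)$. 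Therefore, under the bijection above, $J^r(\mathcal X,\mathcal Y)_{(p,q)}$ becomes the set of algebra homomorphisms $g$ satisfying $g(\mathfrak J_s)\subseteq\mathfrak I_s$ for all $s\in S$.

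Finally I would convert this ideal condition into the geometric one. Since $\mathfrak J_s$ is generated as an ideal by the variables $y_j$ with $j\notin\mathcal J(s)$, and $g$ is a ring homomorphism, $g(\mathfrak J_s)\subseteq\mathfrak I_s$ holds iff $G_j=g(y_j)\in\mathfrak I_s$ for each $j\notin\mathcal J(s)$. Here I use the elementary observation that a polynomial $h$ of degree $\le r$ belongs to the monomial ideal $(x_i\mid i\notin\mathcal I(s))$ precisely when $h$ vanishes identically on the coordinate subspace $\{x_i=0:i\notin\mathcal I(s)\}=\mathbb R^{\mathcal I(s)}\subseteq\mathbb R^{|\mathcal I|}$ (each monomial of $h$ must then involve some $x_i$ with $i\notin\mathcal I(s)$). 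Applying this to each component $G_j$, $j\notin\mathcal J(s)$, shows that $g(\mathfrak J_s)\subseteq\mathfrak I_s$ for all $s$ is equivalent to $G(\mathbb R^{\mathcal I(s)})\subseteq\mathbb R^{\mathcal J(s)}$ for all $s$, i.e.\ to $G\in P^r(\mathcal I,\mathcal J)_0$; tracing the bijection back then gives the asserted isomorphism, which depends only on the chosen charts. I do not anticipate a serious obstacle: the only delicate point is this last commutative-algebra translation, and the $\mathbb R$-vector space structures automatically match, since on the polynomial-map side they are the pointwise ones, which is precisely how the linear structure on $J^r(\mathcal X,\mathcal Y)_{(p,q)}$ is defined.
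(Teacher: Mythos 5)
Your proof is correct and takes essentially the route the paper intends: \Cref{cor:reljet-fib} is stated there without an explicit argument, as an immediate consequence of \Cref{lem:rel-jet-coord} together with the identification of $\mathbb R$-algebra homomorphisms $P^r(|\mathcal J|)\to P^r(|\mathcal I|)$ with origin-preserving polynomial maps from \Cref{sec:jet-bdl}. Your write-up simply makes that deduction explicit, including the (correct) commutative-algebra translation of the ideal condition into vanishing on coordinate subspaces.
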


\begin{remark}
Under the identification $P^1(|\mathcal I|,|\mathcal J|)_0\cong\mathrm{Hom}_{\mathbb R}(\mathbb R^{|\mathcal I|},\mathbb R^{|\mathcal J|})$, we have
\[
P^1(\mathcal I,\mathcal J)_0
\cong \left\{f\in\mathrm{Hom}_{\mathbb R}(\mathbb R^{|\mathcal I|},\mathbb R^{|\mathcal J|})\mid\forall s\in S:f(\mathbb R^{\mathcal I(s)})\subset\mathbb R^{\mathcal J(s)}\right\}\,.
\]
\end{remark}

As a consequence of \Cref{cor:reljet-fib}, we obtain the required result:

\begin{proposition}
\label{prop:rel-jetbdl}
Let $\mathcal X$ and $\mathcal Y$ be excellent arrangements of shape $S$.
Then, for each $r\ge 0$, there is a smooth fiber bundle
\[
J^r(\mathcal X,\mathcal Y)\to\mathcal X(0)\times\mathcal Y(0)
\]
which is a subbundle of (the restriction of) the usual $r$-th jet bundle
\[
J^r(|\mathcal X|,|\mathcal Y|)\times_{|\mathcal X|\times|\mathcal Y|}(\mathcal X(0)\times\mathcal Y(0))
\to \mathcal X(0)\times\mathcal Y(0)
\]
with fiber $P^r(\mathcal I,\mathcal J)_0\subset P^r(|\mathcal I|,|\mathcal J|)_0$.
\end{proposition}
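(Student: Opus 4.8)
The plan is to reduce everything to the local coordinate descriptions provided by \Cref{lem:rel-jet-coord} and \Cref{cor:reljet-fib}. Since the ordinary jet bundle $J^r(|\mathcal X|,|\mathcal Y|)\to|\mathcal X|\times|\mathcal Y|$ is already a smooth fiber bundle, so is its restriction over the submanifold $\mathcal X(0)\times\mathcal Y(0)$, and hence it is enough to show that over a product $(U\cap\mathcal X(0))\times(V\cap\mathcal Y(0))$, for an $\mathcal X$-chart $(U,\varphi,\mathcal I)$ around a point of $\mathcal X(0)$ and a $\mathcal Y$-chart $(V,\psi,\mathcal J)$ around a point of $\mathcal Y(0)$, there is a trivialization of the ambient bundle carrying $J^r(\mathcal X,\mathcal Y)$ onto the product with a fixed linear subspace $P^r(\mathcal I,\mathcal J)_0\subset P^r(|\mathcal I|,|\mathcal J|)_0$, together with a check that these local trivializations are mutually compatible.

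The first task is to identify the underlying set. One shows that the image set $J^r(\mathcal X,\mathcal Y)$ meets the fiber of $J^r(|\mathcal X|,|\mathcal Y|)$ over a point $(p,q)\in\mathcal X(0)\times\mathcal Y(0)$ exactly in the algebraically defined subspace $J^r(\mathcal X,\mathcal Y)_{(p,q)}$. The inclusion ``$\subset$'' is immediate from \Cref{lem:def-ideal-Cp}: for $F\in C^\infty(\mathcal X,\mathcal Y)$ the map $F$ carries $\mathcal X(s)$ into $\mathcal Y(s)$, whence $F^\ast\mathfrak v_q(\mathcal Y(s))\subset\mathfrak v_p(\mathcal X(s))$ for every $s\in S$, and passing to the quotients modulo $\mathfrak m^{r+1}$ shows that $j^rF(p)$ restricts to the homomorphisms required in the definition of $J^r(\mathcal X,\mathcal Y)_{(p,q)}$. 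For ``$\supset$'' one must realize an arbitrary element of $J^r(\mathcal X,\mathcal Y)_{(p,q)}$ as $j^rF(p)$ for a global $S$-map $F$; using the two charts and \Cref{cor:reljet-fib} this reduces to realizing a given $G\in P^r(\mathcal I,\mathcal J)_0$ as the $r$-jet at $0$ of an $S$-map germ $\mathcal E^{\mathcal I}\to\mathcal E^{\mathcal J}$ and then transporting it to $U$ and extending to all of $|\mathcal X|$ by a bump function, gluing to a constant map with value in $\mathcal Y(0)$.

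Next comes the local trivialization. Over the product of the two charts, \Cref{lem:rel-jet-coord} identifies $\mathfrak v_p(\mathcal X(s))$, uniformly for $p\in U$, with the coordinate ideal $(x_i\mid i\notin\mathcal I(s))$ of $C^\infty_p(|\mathcal X|)$, and similarly for $\mathcal Y$; feeding this into \Cref{cor:reljet-fib} yields an isomorphism $J^r(\mathcal X,\mathcal Y)|_{(U\cap\mathcal X(0))\times(V\cap\mathcal Y(0))}\cong(U\cap\mathcal X(0))\times(V\cap\mathcal Y(0))\times P^r(\mathcal I,\mathcal J)_0$ which is nothing but the restriction of the standard trivialization of $J^r(|\mathcal X|,|\mathcal Y|)$ associated with $(\varphi,\psi)$. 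Because the conditions $G(0)=0$ and $G(\mathbb R^{\mathcal I(s)})\subset\mathbb R^{\mathcal J(s)}$ defining $P^r(\mathcal I,\mathcal J)_0$ are linear in the coefficients of $G$, this set is a linear, hence closed and smooth, subspace of the Euclidean space $P^r(|\mathcal I|,|\mathcal J|)_0$. For compatibility one observes that the transition map between two such trivializations is induced by the coordinate changes of the underlying $\mathcal X$- and $\mathcal Y$-charts, and these are $S$-maps because they are pullback-compatible with the inclusions $\mathbb H^{\mathcal I(s)}\hookrightarrow\mathbb H^{|\mathcal I|}$; the induced linear automorphisms of the jet fibers therefore preserve the restriction conditions and carry $P^r(\mathcal I,\mathcal J)_0$ onto $P^r(\mathcal I',\mathcal J')_0$. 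Restricting the smooth bundle atlas of $J^r(|\mathcal X|,|\mathcal Y|)|_{\mathcal X(0)\times\mathcal Y(0)}$ to these subspaces then exhibits $J^r(\mathcal X,\mathcal Y)$ as a smooth subbundle with fiber $P^r(\mathcal I,\mathcal J)_0$.

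I expect the main obstacle to lie in the ``$\supset$'' step: realizing a prescribed $G\in P^r(\mathcal I,\mathcal J)_0$ by an honest $S$-map germ into $\mathbb H^{|\mathcal J|}$ (not merely into $\mathbb R^{|\mathcal J|}$) requires care with the positivity of the unmarked coordinates, and globalizing that germ without disturbing the arrangement requires an adapted bump-function argument near $\mathcal X(0)$; one must also confirm that the combinatorial data $(\mathcal I,\mathcal J)$, hence the model fiber $P^r(\mathcal I,\mathcal J)_0$, is locally constant over $\mathcal X(0)\times\mathcal Y(0)$ so that the local pieces patch into a single bundle. The remaining points — linearity of the local model and the smoothness and overlap-compatibility of the trivializations — are routine manipulations with the coordinate descriptions already established.
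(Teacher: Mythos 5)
Your local-trivialization argument is sound and is essentially the proof the paper intends: \Cref{cor:reljet-fib} identifies each fiber $J^r(\mathcal X,\mathcal Y)_{(p,q)}$ with $P^r(\mathcal I,\mathcal J)_0$, the defining conditions $G(0)=0$ and $G(\mathbb R^{\mathcal I(s)})\subset\mathbb R^{\mathcal J(s)}$ are linear in the coefficients of $G$, and the transition maps of the ambient bundle $J^r(|\mathcal X|,|\mathcal Y|)$ preserve these linear subspaces because the chart changes are compatible with the inclusions $\mathbb H^{\mathcal I(s)}\hookrightarrow\mathbb H^{|\mathcal I|}$. The genuine gap is in the step you single out as the main obstacle, the inclusion ``$\supset$'': it is not merely delicate, it is false at corners. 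Take $S=\mathbf{pt}$, $\mathcal X=\mathcal Y=\mathbb R_+$, $p=q=0$, $r=1$, and $G(x)=-x\in P^1(\langle 1|0\rangle,\langle 1|0\rangle)_0$. Any smooth $F:\mathbb R_+\to\mathbb R_+$ with $F(0)=0$ satisfies $F'(0)=\lim_{t\to 0^+}F(t)/t\ge 0$, so $G$ is the $1$-jet of no honest map; no bump-function or extension argument can repair this, since the obstruction is the sign of a derivative forced by the target constraint at the corner, not a globalization issue. Consequently the image of the evaluation map has, over such base points, a fiber that is a proper (and non-linear, indeed non-manifold) subset of $P^r(\mathcal I,\mathcal J)_0$, and the proposition would be false for the image-based set.

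The correct reading --- the one the paper silently adopts when it announces a ``more algebraic definition'' --- is that $J^r(\mathcal X,\mathcal Y)$ \emph{is} the union of the algebraically defined fibers $J^r(\mathcal X,\mathcal Y)_{(p,q)}$, superseding the image-based description; this mirrors the ordinary jet bundle of manifolds with corners, which already contains jets at boundary points realized by no actual map. With that definition, your ``$\subset$'' observation via \Cref{lem:def-ideal-Cp} is all that is needed (it shows $j^rF$ lands in the bundle, which is what later sections use), the surjectivity you were trying to establish is neither needed nor true, and the remainder of your argument completes the proof.
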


The dimension of the manifold $J^r(\mathcal X,\mathcal Y)$ is complicated in general.
We only mention the case $r=1$.

\begin{lemma}
\label{lem:rel1jet-dim}
Let $\mathcal X$ and $\mathcal Y$ be excellent arrangements of shape $S$.
Then, for $p\in\mathcal X(0)$ and $q\in\mathcal Y(0)$, we have a canonical identification
\[
J^1(\mathcal X,\mathcal Y)_{(p,q)}
\cong\left\{\theta\in\mathrm{Hom}_{\mathbb R}(T_p|\mathcal X|,T_q|\mathcal Y|)\mid\forall s\in S:\theta(T_p\mathcal X(s))\subset T_q\mathcal Y(s)\right\} \\
\]
Consequently, we have
\[
\dim J^1(\mathcal X,\mathcal Y)_{(p,q)}
= \sum_{s\in S} \dim\left(T_p\mathcal X(s)/\sum_{s'<s}T_p\mathcal X(s')\right)\cdot\dim\mathcal Y(s)\,.
\]
\end{lemma}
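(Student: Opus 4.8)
The plan is to establish the two assertions in order: first the canonical identification of the fiber $J^1(\mathcal X,\mathcal Y)_{(p,q)}$, then deduce the dimension formula by a combinatorial count.

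For the first part, I would start from \Cref{cor:reljet-fib}: choosing an $\mathcal X$-chart $(U,\varphi,\mathcal I)$ around $p$ and a $\mathcal Y$-chart $(V,\psi,\mathcal J)$ around $q$, the fiber $J^1(\mathcal X,\mathcal Y)_{(p,q)}$ is identified with $P^1(\mathcal I,\mathcal J)_0$. By the remark following that corollary, $P^1(\mathcal I,\mathcal J)_0$ is the space of linear maps $f:\mathbb R^{|\mathcal I|}\to\mathbb R^{|\mathcal J|}$ with $f(\mathbb R^{\mathcal I(s)})\subset\mathbb R^{\mathcal J(s)}$ for every $s\in S$. Next I would invoke \Cref{prop:tangent-J1}, which identifies $J^1(|\mathcal X|,|\mathcal Y|)_{(p,q)}$ with $\mathrm{Hom}_{\mathbb R}(T_p|\mathcal X|,T_q|\mathcal Y|)$ in a way compatible with coordinate charts; under the isomorphisms $\varphi_\ast:T_p\mathcal X(s)\xrightarrow{\sim}T_0\mathbb H^{\mathcal I(s)}\cong\mathbb R^{\mathcal I(s)}$ and $\psi_\ast:T_q\mathcal Y(s)\xrightarrow{\sim}\mathbb R^{\mathcal J(s)}$ (which exist because these are $\mathcal X$- resp.\ $\mathcal Y$-charts, so the relevant squares are pullbacks and the tangent spaces sit as the coordinate subspaces), the condition $f(\mathbb R^{\mathcal I(s)})\subset\mathbb R^{\mathcal J(s)}$ translates precisely to $\theta(T_p\mathcal X(s))\subset T_q\mathcal Y(s)$. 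One should check that the resulting subspace of $\mathrm{Hom}_{\mathbb R}(T_p|\mathcal X|,T_q|\mathcal Y|)$ is independent of the choice of charts, which follows because the conditions are stated intrinsically in terms of the tangent subspaces $T_p\mathcal X(s)$ and $T_q\mathcal Y(s)$.

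For the dimension count, write $V = T_p|\mathcal X|$, $V_s = T_p\mathcal X(s)$, $W = T_q|\mathcal Y|$, $W_s = T_q\mathcal Y(s)$. Since $\mathcal X$ is excellent, the remark in \Cref{sec:vecfield-arr} gives that $s\mapsto V_s$ is again an arrangement, so $V_{s\wedge t} = V_s\cap V_t$; the same holds for $W$. I want $\dim$ of the space $H$ of $\theta\in\mathrm{Hom}_{\mathbb R}(V,W)$ with $\theta(V_s)\subset W_s$ for all $s$. The idea is to stratify the domain: pick a basis of $V$ adapted to the flag/arrangement $\{V_s\}$, i.e.\ partition a basis so that each basis vector $e$ has a well-defined \emph{minimal} $s$ with $e\in V_s$ — concretely, choose for each $s$ a complement of $\sum_{s'<s}V_{s'}$ inside $V_s$, of dimension $\dim(V_s/\sum_{s'<s}V_{s'})$. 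A linear map $\theta\in H$ is then freely determined by where it sends each such basis vector, and a vector coming from the $s$-stratum may be sent anywhere in $W_s$ (and this is the only constraint, because $\theta(V_s)\subset W_s$ is equivalent to $\theta$ mapping each stratum $s'\le s$ into $W_{s'}\subset W_s$). Summing the contributions gives $\dim H = \sum_{s\in S}\dim\!\bigl(V_s/\sum_{s'<s}V_{s'}\bigr)\cdot\dim W_s$, which is the claimed formula once one notes $\dim W_s = \dim\mathcal Y(s)$.

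The main obstacle is the verification that the constraints $\theta(V_s)\subset W_s$ decouple cleanly over the chosen stratification of $V$ — i.e.\ that imposing the conditions for all $s$ simultaneously is exactly the same as requiring, stratum by stratum, that the $s$-stratum lands in $W_s$. This uses both that $\{V_s\}$ is closed under meets (so the strata are well-defined and a basis vector has a genuine minimal scope) and that $\{W_s\}$ is closed under meets (so that $\theta(V_s)\subset W_s$ for all $s\ge s_0$ is implied by the stratum conditions without over-counting); the join-of-strata argument should be spelled out with a little care, but it is essentially the observation that $V_s = \sum_{s'\le s}(\text{$s'$-stratum})$ and $\bigcap$ of the $W_{s}$ over $s\ge$ a fixed stratum index equals $W$ of that index. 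Everything else is routine linear algebra.
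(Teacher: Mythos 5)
Your proposal is correct and follows essentially the same route as the paper: the identification of the fiber is read off from the remark preceding \Cref{prop:rel-jetbdl}, and the dimension count is done by choosing a basis of $T_p|\mathcal X|$ adapted to the subspaces $T_p\mathcal X(s)$ and observing that a constrained $\theta$ is freely determined by sending each basis vector of minimal scope $s$ into $T_q\mathcal Y(s)$ --- the paper's version of your ``adapted basis'' is exactly the chart's coordinate vectors $\xi_i=\varphi^\ast(\partial/\partial x_i)$ with $s_{\mathcal I}(i)=\min\{s:i\in\mathcal I(s)\}$. One small caution: the existence of a simultaneous adapted basis is \emph{not} a consequence of meet-closure of the family $\{T_p\mathcal X(s)\}$ alone (three distinct lines in a plane are meet-closed but admit no adapted basis); it comes from the excellent $\mathcal X$-chart, which you have already invoked, so your argument stands as long as you take the basis from the chart rather than from the abstract lattice structure.
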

\begin{proof}
The first assertion follows from the remark just before \Cref{prop:rel-jetbdl}.
To show the latter, take an $\mathcal X$-chart $(U,\varphi,\mathcal I)$ around $p\in\mathcal X(0)$.
Say $\xi_i:=\varphi^\ast(\partial/\partial_i)$ for each $i\in|\mathcal I|$, and write
\[
s_{\mathcal I}(i):=\min\{s\in S:i\in\mathcal I(s)\}\,.
\]
Then, for $\theta\in\mathrm{Hom}_{\mathbb R}(T_p|\mathcal X|,T_q|\mathcal Y|)$, $\theta\in J^1(\mathcal X,\mathcal Y)_{(p,q)}$ if and only if
\[
\theta(\xi_i)\in T_q\mathcal Y(s_{\mathcal I}(i))\,.
\]
Hence, we obtain isomorphisms
\[
J^1(\mathcal X,\mathcal Y)_{(p,q)}
\cong \bigoplus_{i\in|\mathcal I|} T_q\mathcal Y(s_{\mathcal I}(i))
\cong \bigoplus_{s\in S} (T_q\mathcal Y(s))^{\oplus \#\{i:s_{\mathcal I}(i)=s\}}\,.
\]
On the other hand, it is easily verified that
\[
\dim\left(T_p\mathcal X(s)/\sum_{s'<s}T_p\mathcal X(s')\right)
= \{i:s_{\mathcal I}(i)=s\}\,.
\]
so the result follows.
\end{proof}

The bundle has a functorial property on the second variable:

\begin{lemma}
\label{lem:mjet-funcemb}
Let $\mathcal X$, $\mathcal Y$, and $\mathcal Z$ be excellent arrangements of the same shape $S$.
Then every $S$-map $F\in C^\infty(\mathcal Y,\mathcal Z)$ induces a smooth map
\[
J^r(\mathcal X,\mathcal Y) \to J^r(\mathcal X,\mathcal Z)
\]
for each $r\in\mathbb Z_{\ge0}$, which is an embedding provided so is $F$.
\end{lemma}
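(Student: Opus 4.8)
The plan is to reduce the statement to the non-relative functoriality of jet bundles established earlier, namely \Cref{lem:jet-functor}, together with the fiberwise description of relative jet bundles from \Cref{cor:reljet-fib} and \Cref{prop:rel-jetbdl}. Recall that $F\in C^\infty(\mathcal Y,\mathcal Z)$ is, by definition, a smooth map $|F|:|\mathcal Y|\to|\mathcal Z|$ restricting to $\mathcal Y(s)\to\mathcal Z(s)$ for each $s\in S$. The map in question should be the restriction of the usual map $|F|_\ast:J^r(|\mathcal X|,|\mathcal Y|)\to J^r(|\mathcal X|,|\mathcal Z|)$ of \eqref{eq:jet-functor}, after passing to the base locus $\mathcal X(0)$; so the first step is to verify that $|F|_\ast$ really sends the subbundle $J^r(\mathcal X,\mathcal Y)\subset J^r(|\mathcal X|,|\mathcal Y|)|_{\mathcal X(0)\times\mathcal Y(0)}$ into $J^r(\mathcal X,\mathcal Z)$.

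\textbf{Step 1 (the sub-bundle condition is preserved).} Fix $(p,q)\in\mathcal X(0)\times\mathcal Y(0)$ and write $q'=F(q)\in\mathcal Z(0)$. On fibers, $|F|_\ast$ is postcomposition with the $\mathbb R$-algebra homomorphism $F^\ast_q:C^\infty_{q'}(|\mathcal Z|)/\mathfrak m_{q'}^{r+1}\to C^\infty_q(|\mathcal Y|)/\mathfrak m_q^{r+1}$. By \Cref{lem:def-ideal-Cp} applied to $F:|\mathcal Y|\to|\mathcal Z|$ and the submanifolds $\mathcal Y(s)\subset|\mathcal Y|$, $\mathcal Z(s)\subset|\mathcal Z|$ (using that $F(\mathcal Y(s))\subset\mathcal Z(s)$), we have $F^\ast_q\bigl(\mathfrak v_{q'}(\mathcal Z(s))\bigr)\subset\mathfrak v_q(\mathcal Y(s))$, hence $F^\ast_q$ carries $(\mathfrak v_{q'}(\mathcal Z(s))+\mathfrak m_{q'}^{r+1})/\mathfrak m_{q'}^{r+1}$ into $(\mathfrak v_q(\mathcal Y(s))+\mathfrak m_q^{r+1})/\mathfrak m_q^{r+1}$. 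Therefore, if $g\in J^r(\mathcal X,\mathcal Y)_{(p,q)}$, i.e. $g$ sends the $\mathcal Y(s)$-ideal into the $\mathcal X(s)$-ideal, then $g\circ F^\ast_q$ sends the $\mathcal Z(s)$-ideal into the $\mathcal X(s)$-ideal, so $g\circ F^\ast_q\in J^r(\mathcal X,\mathcal Z)_{(p,q')}$. This gives a well-defined set map $F_\ast:J^r(\mathcal X,\mathcal Y)\to J^r(\mathcal X,\mathcal Z)$ over $\mathcal X(0)\times(\text{graph of }F|_{\mathcal Y(0)})\to\mathcal X(0)\times\mathcal Z(0)$.

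\textbf{Step 2 (smoothness).} Smoothness is local and follows from \Cref{lem:jet-functor}: $F_\ast$ is the restriction of the smooth map $|F|_\ast$ of \eqref{eq:jet-functor} to the closed embedded subbundles produced by \Cref{prop:rel-jetbdl}, composed with the inclusion $\mathcal X(0)\times\mathcal Y(0)\hookrightarrow|\mathcal X|\times|\mathcal Y|$, which is a (closed) embedding. Hence $F_\ast:J^r(\mathcal X,\mathcal Y)\to J^r(\mathcal X,\mathcal Z)$ is smooth.

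\textbf{Step 3 (embedding when $F$ is).} Suppose $F\in C^\infty(\mathcal Y,\mathcal Z)$ is an embedding of arrangements, meaning (at least) that $|F|:|\mathcal Y|\to|\mathcal Z|$ is an embedding. By \Cref{lem:jet-functor}, $|F|_\ast:J^r(|\mathcal X|,|\mathcal Y|)\to J^r(|\mathcal X|,|\mathcal Z|)$ is then an embedding of manifolds. The relative map $F_\ast$ is obtained from it by restricting to the embedded submanifolds $J^r(\mathcal X,\mathcal Y)$ and $J^r(\mathcal X,\mathcal Z)$ and further restricting the base to $\mathcal X(0)$; a restriction of an embedding to a submanifold of the source, landing in a submanifold of the target, is again an embedding (it is injective, an immersion, and a topological embedding as a restriction of one). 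Thus $F_\ast$ is an embedding.

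\textbf{The main obstacle} I anticipate is purely bookkeeping: making sure the fiberwise argument of Step 1 is compatible with the ``$\mathfrak m^{r+1}$-truncation'' in the definition of $J^r(\mathcal X,\mathcal Y)_{(p,q)}$ — i.e. that \Cref{lem:def-ideal-Cp}, which is stated for germs, descends correctly to the quotients modulo $\mathfrak m^{r+1}$ and that the composite homomorphism is still unital and $\mathbb R$-linear — and tracking that the base of the bundle map is the graph of $F|_{\mathcal Y(0)}$ rather than all of $\mathcal Z(0)$, so one should phrase the target as a bundle over $\mathcal X(0)\times\mathcal Y(0)$ pulled back along $\mathrm{id}\times F$. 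None of this is deep; the content is entirely carried by \Cref{lem:def-ideal-Cp} and \Cref{lem:jet-functor}.
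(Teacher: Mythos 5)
Your proposal is correct and follows essentially the same route as the paper's proof: the paper likewise obtains the map as the restriction of $|F|_\ast$ from \Cref{lem:jet-functor} to the subbundles over $\mathcal X(0)\times\mathcal Y(0)$, and verifies fiberwise via \Cref{lem:def-ideal-Cp} that postcomposition with $F^\ast_q$ carries $J^r(\mathcal X,\mathcal Y)_{(p,q)}$ into $J^r(\mathcal X,\mathcal Z)_{(p,F(q))}$, with the embedding statement inherited from \Cref{lem:jet-functor}. The bookkeeping points you flag (truncation modulo $\mathfrak m^{r+1}$, the base being the graph of $F|_{\mathcal Y(0)}$) are handled implicitly in the paper in the same way.
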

\begin{proof}
Say $0\in S$ is the minimum element.
For simplicity, we write
\[
\widetilde J^r(\mathcal X,\mathcal Y) := J^r(|\mathcal X|,|\mathcal Y|)\times_{(|\mathcal X|\times|\mathcal Y|)}(\mathcal X(0)\times\mathcal Y(0))\,.
\]
Then, since $J^r(\mathcal X,\mathcal Y)$ is a subbundle of $\widetilde J^r(\mathcal X,\mathcal Y)$, by \Cref{lem:jet-functor}, we have a smooth map
\begin{equation}
\label{eq:prf:mjet-funcemb:indF}
J^r(\mathcal X,\mathcal Y)
\hookrightarrow \widetilde J^r(\mathcal X,\mathcal Y)
\xrightarrow{F_\ast} \widetilde J^r(\mathcal X,\mathcal Z)\,,
\end{equation}
which is an embedding provided so is $F$ by \Cref{lem:jet-functor}.
Thus, to obtain the result, it suffices to show that the map \eqref{eq:prf:mjet-funcemb:indF} factors through the submanifold $J^r(\mathcal X,\mathcal Z)\subset \widetilde J^r(\mathcal X,\mathcal Z)$.
We can check this fiberwisely; we show the map
\[
F_\ast:J^r(|\mathcal X|,|\mathcal Y|)_{(p,q)}\to J^r(|\mathcal X|,|\mathcal Z|)_{(p,F(q))}
\]
induced by $F$ restricts to a map $J^r(\mathcal X,\mathcal Y)_{(p,q)}\to J^r(\mathcal X,\mathcal Z)_{(p,F(q))}$ for each $(p,q)\in\mathcal X(0)\times\mathcal Y(0)$.
Note that since $F$ is an $S$-map, for each $q\in\mathcal Y(0)$, the induced homomorphism
\[
J^r(|\mathcal Z|)\to J^r(|\mathcal Y|)
\]
maps $\mathfrak v_{F(q)}(\mathcal Z(s))$ into $\mathfrak v_q(\mathcal Y(s))$ for each $s\in S$ by \Cref{lem:def-ideal-Cp}.
Thus, if $g:J^r(|\mathcal Y|)_q\to J^r(|\mathcal X|)_p$ is a $\mathbb R$-algebra homomorphism with $g(\mathfrak v_q(\mathcal Y(s)))\subset\mathfrak v_p(\mathcal X(s))$, which is hence an element of $J^r(\mathcal X,\mathcal Y)_{(p,q)}$, then the homomorphism
\[
F_\ast(g):J^r(|\mathcal Z|)_{F(q)} \to J^r(|\mathcal X|)_p
\]
restricts to the composition
\[
\mathfrak v_{F(q)}(\mathcal Z(s))
\xrightarrow{F^\ast} \mathfrak v_q(\mathcal Y(s))
\xrightarrow{g} \mathfrak v_p(\mathcal X(s))
\]
for each $s\in S$.
This is just what we want to show.
\end{proof}

Note that, by definition, we have a well-defined map
\begin{equation}
\label{eq:def-rel-jr}
\begin{array}{rccc}
j^r:& C^\infty(\mathcal X,\mathcal Y) & \to & C^\infty(\mathcal X(0),J^r(\mathcal X,\mathcal Y)) \\
& f & \mapsto & j^r(f)|_{\mathcal X(0)}
\end{array}
\end{equation}

\begin{lemma}
\label{lem:rel-jr-conti}
Let $\mathcal X$ and $\mathcal Y$ be excellent arrangements.
Then the map $j^r$ defined in \eqref{eq:def-rel-jr} is continuous (with respect to the Whitney $C^\infty$-topology).
\end{lemma}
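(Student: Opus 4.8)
The plan is to express the relative jet map of \eqref{eq:def-rel-jr} as a composition of maps already known to be continuous. First I would verify that \eqref{eq:def-rel-jr} really takes values in $C^\infty(\mathcal X(0),J^r(\mathcal X,\mathcal Y))$. Given $f\in C^\infty(\mathcal X,\mathcal Y)$ and $p\in\mathcal X(0)$, the jet $j^r(f)(p)$ is an $\mathbb R$-algebra homomorphism $J^r_{f(p)}(|\mathcal Y|)\to J^r_p(|\mathcal X|)$; since $f$ is an $S$-map, \Cref{lem:def-ideal-Cp} gives $f^\ast\mathfrak v_{f(p)}(\mathcal Y(s))\subset\mathfrak v_p(\mathcal X(s))$ for every $s\in S$, which is exactly the condition defining the fibre $J^r(\mathcal X,\mathcal Y)_{(p,f(p))}$ inside $J^r(|\mathcal X|,|\mathcal Y|)_{(p,f(p))}$. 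Hence the smooth map $j^r(f)|_{\mathcal X(0)}\colon\mathcal X(0)\to J^r(|\mathcal X|,|\mathcal Y|)$ factors through the submanifold $J^r(\mathcal X,\mathcal Y)$.

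Next I would reduce to continuity of the corresponding map into the ambient jet space. By \Cref{prop:rel-jetbdl}, $J^r(\mathcal X,\mathcal Y)$ is a subbundle of the restriction of $J^r(|\mathcal X|,|\mathcal Y|)$ over $\mathcal X(0)\times\mathcal Y(0)$, and since excellent arrangements are closed, $\mathcal X(0)$ and $\mathcal Y(0)$ are closed submanifolds; it follows that the inclusion $\iota\colon J^r(\mathcal X,\mathcal Y)\hookrightarrow J^r(|\mathcal X|,|\mathcal Y|)$ is an embedding of manifolds with corners. By part \ref{subprop:whit-comp:post} of \Cref{prop:whit-comp}, the induced map
\[
\iota_\ast\colon C^\infty(\mathcal X(0),J^r(\mathcal X,\mathcal Y))\longrightarrow C^\infty(\mathcal X(0),J^r(|\mathcal X|,|\mathcal Y|))
\]
is a topological embedding. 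Therefore it suffices to show that $\iota_\ast\circ j^r$, that is, the map $f\mapsto j^r(f)|_{\mathcal X(0)}$ regarded as landing in $C^\infty(\mathcal X(0),J^r(|\mathcal X|,|\mathcal Y|))$, is continuous.

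Finally I would display $\iota_\ast\circ j^r$ as a composition of three continuous maps: the inclusion $C^\infty(\mathcal X,\mathcal Y)\hookrightarrow C^\infty(|\mathcal X|,|\mathcal Y|)$, which is continuous since $C^\infty(\mathcal X,\mathcal Y)$ carries the subspace topology; the classical jet operator $j^r\colon C^\infty(|\mathcal X|,|\mathcal Y|)\to C^\infty(|\mathcal X|,J^r(|\mathcal X|,|\mathcal Y|))$, which is continuous by Proposition II.3.4 of \cite{GG73}; and the restriction map $C^\infty(|\mathcal X|,J^r(|\mathcal X|,|\mathcal Y|))\to C^\infty(\mathcal X(0),J^r(|\mathcal X|,|\mathcal Y|))$ along the closed submanifold $\mathcal X(0)\subset|\mathcal X|$. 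The last map is continuous by \Cref{lem:arr-whit-submfd} applied to the closed pre-arrangements $\mathcal X$ and $W_S$, where $W_S$ denotes the constant arrangement of shape $S$ at $W:=J^r(|\mathcal X|,|\mathcal Y|)$: since $C^\infty(\mathcal X,W_S)=C^\infty(|\mathcal X|,W)$, the map $\mathrm{res}_0$ of that lemma is precisely the restriction in question. Composing the three continuous maps yields the result. No step here presents a genuine difficulty; the only point that must be handled carefully is that $j^r(f)|_{\mathcal X(0)}$ means the $r$-jet of the ambient map $f\colon|\mathcal X|\to|\mathcal Y|$ sampled at the points of $\mathcal X(0)$ — recording derivatives in all directions of $|\mathcal X|$, so that it lies in $J^r(\mathcal X,\mathcal Y)\subset J^r(|\mathcal X|,|\mathcal Y|)$ — and not the $r$-jet of the restriction $f|_{\mathcal X(0)}$.
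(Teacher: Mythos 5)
Your proof is correct and follows essentially the same route as the paper's: reduce via the topological embedding $\iota_\ast\colon C^\infty(\mathcal X(0),J^r(\mathcal X,\mathcal Y))\to C^\infty(\mathcal X(0),J^r(|\mathcal X|,|\mathcal Y|))$ using \ref{subprop:whit-comp:post} of \Cref{prop:whit-comp}, then factor the resulting map as the inclusion, the classical jet operator, and the restriction to $\mathcal X(0)$. The only cosmetic difference is that the paper justifies the continuity of the final restriction by \ref{subprop:whit-comp:pre} of \Cref{prop:whit-comp} (precomposition with the proper closed embedding $\mathcal X(0)\hookrightarrow|\mathcal X|$), whereas you invoke \Cref{lem:arr-whit-submfd} with a constant target arrangement; both are valid, and your explicit well-definedness check via \Cref{lem:def-ideal-Cp} is a harmless addition.
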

\begin{proof}
Since we have an embedding
\[
J^r(\mathcal X,\mathcal Y) \to J^r(|\mathcal X|,|\mathcal Y|)\,,
\]
by \ref{subprop:whit-comp:post} in \Cref{prop:whit-comp}, it suffices to show that the composition
\[
C^\infty(\mathcal X,\mathcal Y)
\to C^\infty(\mathcal X(0),J^r(\mathcal X,\mathcal Y))
\to C^\infty(\mathcal X(0),J^r(|\mathcal X|,|\mathcal Y|))
\]
is continuous.
Note that it can be obtained in another way as
\[
\begin{split}
C^\infty(\mathcal X,\mathcal Y)
&\hookrightarrow C^\infty(|\mathcal X|,|\mathcal Y|) \\
&\xrightarrow{j^r} C^\infty(|\mathcal X|,J^r(|\mathcal X|,|\mathcal Y|)) \\
&\xrightarrow{\iota^\ast} C^\infty(\mathcal X(0),J^r(|\mathcal X|,|\mathcal Y|))\,.
\end{split}
\]
The first arrow is the embedding, the second arrow is continuous by Proposition II.3.4 in \cite{GG73}.
Moreover, since $\mathcal X$ is an excellent arrangement so that $\mathcal X(0)\to|\mathcal X|$ is a closed embedding, the last arrow is also continuous by \ref{subprop:whit-comp:pre} in \Cref{prop:whit-comp}.
Therefore, the result follows.
\end{proof}

Now, we define ``multi-relative'' jet bundles.
Before doint this, we prepare some notations.
For a finite lattice $S$, we set
\[
S^{[1]} := \left\{ (s,t)\in S\times S\ \middle|\ s\le t\right\}
\]
and give an ordering by $(s,t)\le(s',t')\iff s\le s'$ and $t\le t'$.
One can easily check that $S^{[1]}$ is again a finite lattice.
For $\kappa=(s,t)\in S^{[1]}$, we will write $\kappa_0:=s$ and $\kappa_1:=t$.
Note that we can identify an element $\kappa\in S^{[1]}$ with an interval $[\kappa_0,\kappa_1]$ of $S$, that is, a sublattice of $S$ consisting elements $s\in S$ with $\kappa_0\le s\le \kappa_1$.

For an excellent arrangement $\mathcal X$ of manifolds of shape $S$, and for each $\kappa\in S^{[1]}$, we denote by $\mathcal X_\kappa$ the restriction of $\mathcal X$ to the interval $[\kappa_0,\kappa_1]$ (see \Cref{ex:arr:restrict}).
For example, the ambient manifold of $\mathcal X_\kappa$ equals $\mathcal X(\kappa_1)$.
Moreover, for each $s\in S$, we define
\[
\mathcal X\double(s\double) := \mathcal X(s)\setminus{\textstyle\bigcup_{t<s}}\mathcal X(t)\,.
\]
In other words, $\mathcal X\double(s\double)$ is the subset of $\mathcal X(s)$ consisting of points of scope $s$.
It is clear that $\mathcal X\double(s\double)$ is open in $\mathcal X(s)$ so that a submanifold.
Notice that we have the relative jet bundle
\[
J^r(\mathcal X_\kappa,\mathcal Y_\kappa) \to \mathcal X(\kappa_0)\times\mathcal Y(\kappa_0)\,.
\]

Finally,for a map $\mathbf n:S^{[1]}\to\mathbb Z_{\ge 0}$, we set
\[
\begin{gathered}
\mathcal X^{\mathbf n} := \raisedunderop{\prod}{\kappa\in\smash{S^{[1]}}} \mathcal X(\kappa_0)^{\times\mathbf n(\kappa)}\,, \\
\mathcal X^{(\mathbf n)} := |\mathcal X|^{(|\mathbf n|)}\cap\raisedunderop{\prod}{\kappa\in\smash{S^{[1]}}} \mathcal X\double(\kappa_0\double)^{\times\mathbf n(\kappa)}\,,
\end{gathered}
\]
where $|\mathbf n|:=\{(\kappa,i)\in S^{[1]}\times\mathbb N\mid 1\le i\le\mathbf n(\kappa)\}$ and
\[
X^{(A)}
:= \left\{(x_a)_{a\in A}\in X^{A}\ \middle|\ x_a\neq x_b\ \text{for}\ a\neq b\in A\right\}
\]
for any set $A$.

\begin{definition}
Let $\mathcal X$ and $\mathcal Y$ be excellent arrangements of manifolds of shape $S$.
Let $\mathbf n:S^{[1]}\to\mathbb Z_{\ge 0}$ be an arbitrary map and $r\ge 0$ be a non-negative integer.
Then the $r$-th $\mathbf n$-multijet bundle $J^r_{\mathbf n}(\mathcal X,\mathcal Y)$ is defined by the following pullback square:
\begin{equation}
\label{diag:def:multijet}
\renewcommand{\objectstyle}{\displaystyle}
\xymatrix{
  J^r_{\mathbf{n}}(\mathcal X,\mathcal Y) \ar[r] \ar[d] \ar@{}[dr]|(.4){\pbcorner} & \raisedunderop\prod{\kappa\in\smash{S^{[1]}}} J^r(\mathcal X_\kappa,\mathcal Y_\kappa)^{\times\mathbf n(\kappa)} \ar@{->>}[d] \\
  \mathcal X^{(\mathbf n)}\times \mathcal Y^{\mathbf n} \ar@{^(->}[r] & \mathcal X^{\mathbf n}\times \mathcal Y^{\mathbf n} }
\end{equation}
\end{definition}

Since, by \Cref{prop:rel-jetbdl}, the right vertical arrow of the square \eqref{diag:def:multijet} is a smooth fiber bundle, as its pullback, the left vertical arrow also exhibits $J^r_{\mathbf n}(\mathcal X,\mathcal Y)$ as a smooth fiber bundle over the manifold $\mathcal X^{(\mathbf n)}\times\mathcal Y^{\mathbf n}$ with corners.
Moreover, similarly to the usual multijet bundle, we have a canonical map
\[
j^r_{\mathbf n}:C^\infty(\mathcal X,\mathcal Y)\to C^\infty(\mathcal X^{(\mathbf n)},J^r_{\mathbf n}(\mathcal X,\mathcal Y))
\]
as the map applying $j^r$ defined in \eqref{eq:def-rel-jr} in each component; i.e.
\begin{equation}
\label{eq:multi-jet}
\begin{multlined}
j^r_{\mathbf n}(f)\left( (x^\nu)_{\nu\in |\mathbf n|}\right) \\
= \left(j^r(f|_{\mathcal X\double(\kappa_0(\nu)\double)})(x^\nu)\right)_{\nu\in|\mathbf n|}
\,\in J^r_{\mathbf n}(\mathcal X,\mathcal Y)\,.
\end{multlined}
\end{equation}
For a map $F\in C^\infty(\mathcal X,\mathcal Y)$, we call the smooth map $j^r_{\mathbf n}F:\mathcal X^{(\mathbf n)}\to J^r_{\mathbf n}(\mathcal X,\mathcal Y)$ the $r$-th $\mathbf n$-multijet of $F$.

\begin{notation}
For excellent arrangements $\mathcal X$ and $\mathcal Y$ of shape $S$, it is often convenient to consider an $S^{[1]}$-indexed family
\[
\mathcal J^r(\mathcal X,\mathcal Y)
:= \left\{ J^r(\mathcal X_\kappa,\mathcal Y_\kappa)\right\}_{\kappa\in\smash{S^{[1]}}}\,.
\]
of manifolds.
Although $\mathcal J^r(\mathcal X,\mathcal Y)$ is not an arrangement of manifolds in the precise sense, we will also write
\[
\mathcal J^r(\mathcal X,\mathcal Y)^{\mathbf n}
:= \raisedunderop\prod{\kappa\in\smash{S^{[1]}}} J^r(\mathcal X_\kappa,\mathcal Y_\kappa)^{\mathbf n(\kappa)}\,.
\]
\end{notation}

\begin{lemma}
\label{lem:multijet-conti}
Let $\mathcal X$ and $\mathcal Y$ be closed $S$-arrangements on manifolds $X$ and $Y$ with corners respectively.
Then for every $r\ge 0$ and $\mathbf n:S\to\mathbb Z_{\ge 0}$, the map
\[
j'^r_{\mathbf n}:C^\infty(\mathcal X,\mathcal Y)
\to C^\infty(\mathcal X^{\mathbf n},\mathcal J^r(\mathcal X,\mathcal Y)^{\mathbf n})
\]
by the same formula as \eqref{eq:multi-jet} is continuous (with respect to the Whitney $C^\infty$-topology).
\end{lemma}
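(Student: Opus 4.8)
The plan is to realise $j'^r_{\mathbf n}$ as a composite of maps already shown continuous, the key point being that — unlike the genuine multijet map $j^r_{\mathbf n}$, whose target $J^r_{\mathbf n}(\mathcal X,\mathcal Y)$ involves both the removal of the fat diagonal from $\mathcal X^{\mathbf n}$ and a fibred product — the map $j'^r_{\mathbf n}$ is literally a \emph{product} of copies of single relative jet operators. Index by the finite set $|\mathbf n|=\{(\kappa,i):\kappa\in S^{[1]},\ 1\le i\le\mathbf n(\kappa)\}$ (finite because $S$ is finite and $\mathbf n$ is $\mathbb{Z}_{\ge 0}$-valued). For $\kappa\in S^{[1]}$, the inclusion of the interval $[\kappa_0,\kappa_1]\hookrightarrow S$ is a lattice homomorphism, so $\mathcal X_\kappa$ and $\mathcal Y_\kappa$ are again excellent arrangements by \Cref{prop:excel-newold} and $J^r(\mathcal X_\kappa,\mathcal Y_\kappa)$ is a manifold over $\mathcal X(\kappa_0)\times\mathcal Y(\kappa_0)$ by \Cref{prop:rel-jetbdl}. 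Write $j^r_\kappa$ for the composite
\[
C^\infty(\mathcal X,\mathcal Y)\xrightarrow{\ \mathrm{res}_\kappa\ }C^\infty(\mathcal X_\kappa,\mathcal Y_\kappa)\xrightarrow{\ j^r\ }C^\infty\bigl(\mathcal X(\kappa_0),J^r(\mathcal X_\kappa,\mathcal Y_\kappa)\bigr),
\]
with $j^r$ the relative jet map of \eqref{eq:def-rel-jr} for the arrangement $\mathcal X_\kappa$ (of shape $[\kappa_0,\kappa_1]$, minimum $\kappa_0$). Under the identifications $\mathcal X^{\mathbf n}=\prod_{(\kappa,i)\in|\mathbf n|}\mathcal X(\kappa_0)$ and $\mathcal J^r(\mathcal X,\mathcal Y)^{\mathbf n}=\prod_{(\kappa,i)\in|\mathbf n|}J^r(\mathcal X_\kappa,\mathcal Y_\kappa)$, the formula defining $j'^r_{\mathbf n}$ sends $f$ and $(x^{(\kappa,i)})_{(\kappa,i)}$ to $\bigl(j^r_\kappa(f)(x^{(\kappa,i)})\bigr)_{(\kappa,i)}$, i.e. $j'^r_{\mathbf n}(f)=\prod_{(\kappa,i)\in|\mathbf n|}j^r_\kappa(f)$.

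Next I would check that each factor $j^r_\kappa$ is continuous: $\mathrm{res}_\kappa$ is continuous by \Cref{cor:arr-res} applied to the order-preserving map $[\kappa_0,\kappa_1]\hookrightarrow S$ (using that $\mathcal X,\mathcal Y$ are closed), and $j^r$ is continuous by \Cref{lem:rel-jr-conti} (using that $\mathcal X_\kappa,\mathcal Y_\kappa$ are excellent). By the universal property of the product topology it follows that
\[
C^\infty(\mathcal X,\mathcal Y)\longrightarrow\prod_{(\kappa,i)\in|\mathbf n|}C^\infty\bigl(\mathcal X(\kappa_0),J^r(\mathcal X_\kappa,\mathcal Y_\kappa)\bigr),\qquad f\mapsto\bigl(j^r_\kappa(f)\bigr)_{(\kappa,i)},
\]
is continuous. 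Composing with the product-assembly map of \Cref{prop:Cinf-prod-contin},
\[
\prod_{(\kappa,i)\in|\mathbf n|}C^\infty\bigl(\mathcal X(\kappa_0),J^r(\mathcal X_\kappa,\mathcal Y_\kappa)\bigr)\longrightarrow C^\infty\bigl(\mathcal X^{\mathbf n},\mathcal J^r(\mathcal X,\mathcal Y)^{\mathbf n}\bigr),\qquad (G_{(\kappa,i)})\mapsto\prod G_{(\kappa,i)},
\]
which is continuous by \Cref{prop:Cinf-prod-contin}, yields exactly $j'^r_{\mathbf n}$, which is therefore continuous.

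The only subtlety worth flagging — and the reason for routing through \Cref{prop:Cinf-prod-contin} rather than doing something more direct — is that one must resist the naive argument of projecting $\mathcal X^{\mathbf n}$ onto a single factor $\mathcal X(\kappa_0)$ and pulling a relative jet back along that projection: the projection $\mathcal X^{\mathbf n}\to\mathcal X(\kappa_0)$ is not proper in general (the complementary factors need not be compact), so \Cref{prop:whit-comp} gives no continuity for such a pullback, and indeed pullback along a non-proper map typically fails to be continuous for the Whitney $C^\infty$-topology. Treating all factors simultaneously via \Cref{prop:Cinf-prod-contin} circumvents this entirely; the remaining work is the routine bookkeeping of the two product identifications above and the verification that the defining formula of $j'^r_{\mathbf n}$ is exactly $\prod_{(\kappa,i)}j^r_\kappa(f)$.
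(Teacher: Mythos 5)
Your proof is correct and follows essentially the same route as the paper: factor $j'^r_{\mathbf n}$ through the product $\prod_{(\kappa,i)}C^\infty(\mathcal X(\kappa_0),J^r(\mathcal X_\kappa,\mathcal Y_\kappa))$ and then apply the product-assembly map of \Cref{prop:Cinf-prod-contin}. The only (cosmetic) difference is that the paper cites \Cref{cor:submfd-jet} for the continuity of the first factor, whereas you route it through \Cref{cor:arr-res} and \Cref{lem:rel-jr-conti}; since the target really is the \emph{relative} jet bundle $J^r(\mathcal X_\kappa,\mathcal Y_\kappa)$ rather than $J^r(\mathcal X(\kappa_0),\mathcal Y(\kappa_0))$, your citation is in fact the more accurate one.
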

\begin{proof}
We have a facotorization of the map as
\[
\begin{split}
C^\infty(\mathcal X,\mathcal Y)
&\to \raisedunderop\prod{\kappa\in\smash{S^{[1]}}} C^\infty(\mathcal X(\kappa_0),J^r(\mathcal X_\kappa,\mathcal Y_\kappa))^{\times\mathbf n(s)} \\
&\to C^\infty(\mathcal{X}^{\mathbf{n}},\mathcal{J}^r(\mathcal{X},\mathcal{Y})^{\mathbf{n}})
\end{split}
\]
The first map is continuous by \Cref{cor:submfd-jet}, and the second is also continuous by \Cref{prop:Cinf-prod-contin}.
Hence we obtain the result.
\end{proof}

\subsection{Polynomial perturbation}
\label{sec:poly-puturb}

\begin{definition}
\begin{enumerate}[label={\rm(\arabic*)}]
  \item For finite sets $I$ and $J$, we denote by $P^r(I,J)$ the set of polynomial mappings $\mathbb R^I\to\mathbb R^J$ of degree at most $r$.
We also use the same notation when $I$ and $J$ are marked sets.
  \item For arrangements $\mathcal I$ and $\mathcal J$ of finite sets of shape $S$, we denote by $P^r(\mathcal I,\mathcal J)$ the subset of $P^r(|\mathcal I|,|\mathcal J|)$ consisting of polynomial mappings $f$ such that $f(\mathbb R^{\mathcal I(s)})\subset\mathbb R^{\mathcal J(s)}$ for each $s\in S$.
  \item For marked finite sets $I$ and $J$, and for a subset $A\subset\mathbb H^I$, we denote by $P^r_A(I,J)$ the subset of $P^r(I,J)$ consisting of $f\in P^r(I,J)$ such that $f(A)\subset\mathbb H^J$.
  \item For arrangements $\mathcal I$ and $\mathcal J$ of marked finite sets of shape $S$, and for a subset $A\subset\mathbb H^{|\mathcal I|}$, we denote by $P^r_A(\mathcal I,\mathcal J)$ the subset of $P^r(\mathcal I,\mathcal J)$ consisting of $f$ with $f(A\cap\mathbb H^{\mathcal I(s)})\subset\mathbb H^{\mathcal J(s)}$ for each $s\in S$.
\end{enumerate}
\end{definition}

Recall that if $\#I = m$ and $\#J=n$, then there is an isomorphism
\[
P^r(I,J) \simeq \left(\mathbb R[X_1,\dots,X_m]/(X_1,\dots,X_m)^{r+1}\right)^n\,,
\]
and they are diffeomorphic to the Euclidean space of dimension
\[
n\cdot\begin{pmatrix} r+m \\ m \end{pmatrix}\,.
\]
Moreover, $P^r(\mathcal I,\mathcal J)$ is an $\mathbb R$-linear subspace of $P^r(I,J)$.
Indeed, an element $f=(f_j)_{j\in J}\in P^r(I,J)$, say $f_j(x)=\sum_{|\alpha|\le r} a_{j\alpha} x^\alpha$, belongs to $P^r(\mathcal I,\mathcal J)$ if and only if we have
\[
\forall s\in S\quad\forall j\notin\mathcal J(s)\quad\forall\alpha\in\mathbb N^{\mathcal I(s)}\,:\, a_{j\alpha}=0\,.
\]

We give $P^r(I,J)$ and $P^r(\mathcal I,\mathcal J)$ the standard topologies of finite dimensional $\mathbb R$-linear spaces.
We also give $P^r_A(I,J)$ and $P^r_A(\mathcal I,\mathcal J)$ the relative topologies.

\begin{lemma}
\label{lem:mkH-preserve}
Let $\mathcal I$ and $\mathcal J$ be arrangements of marked finite sets.
Let $A\subset \mathbb H^{|\mathcal I|}$ be a bounded subset (with respect to the standard metric).
Then there is an open subset $T\subset P^r(\mathcal I,\mathcal J)$ such that
\begin{enumerate}[label={\rm(\alph*)}]
  \item $T\subset P^r_A(\mathcal I,\mathcal J)$;
  \item the closure $\overline T$ in $P^r(\mathcal I,\mathcal J)$ contains the zero polynomial $0\in P^r(\mathcal I,\mathcal J)$.
\end{enumerate}
\end{lemma}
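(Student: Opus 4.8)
The plan is to recognise $P^r_A(\mathcal I,\mathcal J)$ as a closed convex cone inside the finite–dimensional real vector space $P^r(\mathcal I,\mathcal J)$ and to take for $T$ (essentially) the interior of that cone. The first step is to unwind the defining conditions of $P^r_A(\mathcal I,\mathcal J)$. Since every $f\in P^r(\mathcal I,\mathcal J)$ already satisfies $f(\mathbb R^{\mathcal I(s)})\subset\mathbb R^{\mathcal J(s)}$, for $x\in A\cap\mathbb H^{\mathcal I(s)}\subset\mathbb R^{\mathcal I(s)}$ the components $f(x)_j$ with $j\notin\mathcal J(s)$ vanish automatically, so the requirement $f(A\cap\mathbb H^{\mathcal I(s)})\subset\mathbb H^{\mathcal J(s)}$ is equivalent to the collection of linear inequalities $f(x)_j\ge 0$ for $x\in A\cap\mathbb H^{\mathcal I(s)}$ and $j\in\mathcal J(s)_+$. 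Hence $P^r_A(\mathcal I,\mathcal J)$ is the intersection of the linear subspace $P^r(\mathcal I,\mathcal J)$ with a (possibly infinite) family of closed half–spaces; it is a closed convex cone and it contains the zero polynomial.

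Granting this, the conclusion becomes almost formal provided $P^r_A(\mathcal I,\mathcal J)$ has nonempty interior relative to $P^r(\mathcal I,\mathcal J)$: let $T$ be that interior. Then $T$ is open and $T\subset P^r_A(\mathcal I,\mathcal J)$, and since a convex set with nonempty interior is contained in the closure of its interior, $\overline T=\overline{P^r_A(\mathcal I,\mathcal J)}\ni 0$. (In the degenerate case $P^r(\mathcal I,\mathcal J)=\{0\}$ one simply takes $T=\{0\}$.) So the substance of the lemma is producing an interior point, i.e.\ a polynomial map $f_0\in P^r(\mathcal I,\mathcal J)$ admitting a $P^r(\mathcal I,\mathcal J)$–neighbourhood all of whose members carry each $A\cap\mathbb H^{\mathcal I(s)}$ into $\mathbb H^{\mathcal J(s)}$.

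To build such an $f_0$ I would work through the explicit monomial description. The condition $f\in P^r(\mathcal I,\mathcal J)$ decouples over the target coordinates: writing $f=(f_j)_{j\in|\mathcal J|}$, it says that $f_j$ vanishes on $\mathbb R^{\mathcal I(t)}$ for every $t$ with $j\notin\mathcal J(t)$, i.e.\ that $f_j$ involves no monomial whose support lies inside such an $\mathcal I(t)$. Thus $P^r(\mathcal I,\mathcal J)=\bigoplus_{j\in|\mathcal J|}V_j$, where $V_j$ is the span of the monomials $x^\alpha$ ($\alpha$ a multi–index on $|\mathcal I|$, $|\alpha|\le r$) whose support is contained in no $\mathcal I(t)$ with $j\notin\mathcal J(t)$. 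For $j\in|\mathcal J|_0$ there is no positivity constraint; for $j\in|\mathcal J|_+$ one needs $f_j\ge 0$ on $A$, and I would take $f_j$ to be a strictly positive combination of those monomials in $V_j$ all of whose exponents at marked variables are even — such monomials are $\ge 0$ on all of $\mathbb H^{|\mathcal I|}$. Using that $A$ is bounded, hence $\overline A$ compact, this $f_j$ is bounded below by a positive constant off its zero set, so small perturbations of it inside $V_j$ stay $\ge 0$ there; on the zero set one must check that the remaining monomials of $V_j$ also vanish, whence such perturbations remain $\ge 0$ on all of $A$. Assembling the components yields $f_0$ together with a neighbourhood that works coordinatewise.

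I expect the genuinely delicate point to be precisely that last verification — that $P^r_A(\mathcal I,\mathcal J)$ really has nonempty interior, equivalently that $f_0$ is robust in every available direction. The obstruction is that a monomial $x^\alpha$ occurring in $V_j$ which changes sign on $\mathbb H^{|\mathcal I|}$ (an odd exponent at a marked variable) can only be controlled at points of $A$ where the chosen $f_j$ already vanishes, so one must argue that every such $x^\alpha$ vanishes on that zero locus as well; this is a combinatorial statement about how the supports of the surviving monomials meet the sets $\mathcal I(t)$, and it is exactly where the explicit monomial description of $P^r(\mathcal I,\mathcal J)$ and the boundedness of $A$ (only used to replace ``positive on $A$'' by ``bounded below by a positive constant off the zero set'', via compactness of $\overline A$) are brought to bear. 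Low–degree or heavily marked configurations are the ones to watch, and they may require a separate treatment on the subspace the cone actually spans.
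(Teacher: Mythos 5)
Your reduction is sound as far as it goes---$P^r_A(\mathcal I,\mathcal J)$ is indeed the intersection of the linear space $P^r(\mathcal I,\mathcal J)$ with the closed half-spaces $\{f\mid f_j(x)\ge 0\}$ for $x\in A\cap\mathbb H^{\mathcal I(s)}$ and $j\in\mathcal J(s)_+$, hence a closed convex cone containing $0$---but the argument stops exactly where the lemma begins. Everything after ``let $T$ be that interior'' is conditional on $P^r_A(\mathcal I,\mathcal J)$ having nonempty interior, and you concede at the end that you have not verified this. That verification \emph{is} the content of the statement: since both $P^r_A(\mathcal I,\mathcal J)$ and any candidate $T$ one would naturally write down are stable under multiplication by positive scalars, the condition $0\in\overline T$ is automatic once $T\neq\varnothing$, so the lemma is equivalent to producing a nonempty open subset. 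Your proposal therefore reformulates the problem rather than solving it. Moreover the reformulation you chose is stronger than necessary (the lemma asks for \emph{some} open $T$, not the full interior), and your fallback of working in ``the subspace the cone actually spans'' cannot rescue it: a relatively open set that is not open in $P^r(\mathcal I,\mathcal J)$ fails condition (a).

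The paper's proof dispenses with the convex-geometry framing and exhibits $T$ explicitly, and this is where the boundedness of $A$ actually enters. Pick $\delta>0$ with $\|x\|<\delta$ for all $x\in A$; then any polynomial $g=\sum_\alpha b_\alpha x^\alpha$ satisfies $g(x)\ge b_0-\sum_{0<|\alpha|\le r}|b_\alpha|\,\delta^{|\alpha|}$ on $A$. Define $T$ to be the set of $f=(f_j)_j\in P^r(\mathcal I,\mathcal J)$, $f_j=\sum_\alpha a_{j\alpha}x^\alpha$, such that $a_{j0}>\sum_{0<|\alpha|\le r}|a_{j\alpha}|\,\delta^{|\alpha|}$ for every $j$ lying in $\mathcal J(s)_+$ for some $s\in S$. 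This is open (finitely many strict inequalities, continuous in the coefficients), it sits inside $P^r_A(\mathcal I,\mathcal J)$ by the displayed bound, and it is a cone, so $0\in\overline T$ as soon as it is nonempty; a small positive constant in each constrained component and $0$ elsewhere supplies a point of $T$. In other words, the only monomial one needs to make positive on $A$ is the constant one, and $\delta$ is used to make the constant term dominate everything else---no analysis of the zero locus of sign-changing monomials is required. (The caveat you sensed is real but should be stated precisely: if some $j\in\mathcal J(s)_+$ does not belong to $\mathcal J(\min S)=\bigcap_t\mathcal J(t)$, then every $f\in P^r(\mathcal I,\mathcal J)$ has $a_{j0}=f_j(0)=0$, the constant monomial is unavailable in that component, and both the set $T$ above and the interior you want can be empty; this degenerate configuration needs a separate argument or an added hypothesis. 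Identifying that difficulty, however, is not the same as resolving the nondegenerate case, which your write-up also leaves open.)
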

\begin{proof}
Since $A$ is bounded, we can take $\delta > 0$ so that for each $x\in A$, we have $\|x\| < \delta$.
In this case, for every polynomial $g(x)=\sum_\alpha b_\alpha x^\alpha\in\mathbb R[y_i:i\in I]$, we have
\[
g(x)\ge a_0 - \sum_\alpha |b_\alpha|\cdot\delta^{|\alpha|}
\]
for every $x\in A$.
For $f=(f_j)_{j\in J}\in P^r(\mathcal{I},\mathcal{J})$ with $f_j(x)=\sum_{|\alpha|\le r} a_{j\alpha}x^\alpha$, consider the following condition:
\[
\forall s\in S\quad\forall j\notin\mathcal J_+(s)\,:\, a_{j0} - \sum_{|\alpha|\le r} |a_{j\alpha}|\cdot\delta^{|\alpha|} > 0\,.
\]
Clearly, such polynomial functions $f\in P^r(\mathcal I,\mathcal J)$ form an open subset $T\subset P^r(\mathcal I,\mathcal J)$.
It is also easily verified that $T$ is a required open subset.
\end{proof}

\begin{lemma}
\label{lem:poly-perturb-subm}
Let $\mathcal I$ and $\mathcal J$ be arrangements of marked finite sets of shape $S$.
Let $U\subset\mathbb H^{|\mathcal I|}$ be a bounded open subset.
Say $0\in S$ is the minimum, and put $U_0:=U\cap\mathbb H^{\mathcal I(0)}$.
Suppose moreover that we have an open subset $T\subset P^r(\mathcal I,\mathcal J)$ such that $T\subset P^r_U(\mathcal I,\mathcal J)$.
Then for every $s\in S$ and every smooth map $f\in C^\infty(\mathbb H^{\mathcal I}\cap U,\mathbb H^{\mathcal J})$ between the canonical arrangements, the map
\[
G:T\times U_0\to J^r(\mathbb H^{\mathcal I}_{\le s}\cap U,\mathbb H^{\mathcal J}_{\le s})\,;\ (h,x)\mapsto j^r(f+h)(x)
\]
is a submersion.
\end{lemma}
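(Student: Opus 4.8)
The plan is to check that the differential $dG$ is surjective at every point $(h_0,x_0)\in T\times U_0$. Since $T$ is open in the finite dimensional vector space $P^r(\mathcal I,\mathcal J)$, we have $T_{h_0}T=P^r(\mathcal I,\mathcal J)$, so $G$ is differentiated by moving $x$ inside $U_0$ and by perturbing $h$ linearly within $P^r(\mathcal I,\mathcal J)$; its smoothness follows from \Cref{lem:jet-smooth} applied to $(h,x)\mapsto f(x)+h(x)$. First one should record that $G$ is well defined: by hypothesis $T\subset P^r_U(\mathcal I,\mathcal J)$, so for $h\in T$ the map $f+h$ carries $\mathbb H^{\mathcal I(s')}\cap U$ into $\mathbb H^{\mathcal J(s')}$ for every $s'\in S$; hence $f+h$ is a map of arrangements on $U$ and, by \Cref{lem:def-ideal-Cp}, $j^r(f+h)(x)$ lies in $J^r(\mathbb H^{\mathcal I}_{\le s}\cap U,\mathbb H^{\mathcal J}_{\le s})$.

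Next I would use the fibre bundle $\pi\colon J^r(\mathbb H^{\mathcal I}_{\le s}\cap U,\mathbb H^{\mathcal J}_{\le s})\to U_0\times\mathbb H^{\mathcal J(0)}$ of \Cref{prop:rel-jetbdl}, whose fibre is the vector space $P^r(\mathcal I_{\le s},\mathcal J_{\le s})_0$ by \Cref{cor:reljet-fib}. The composite $\pi\circ G$ is $(h,x)\mapsto(x,(f+h)(x))$, and it is a submersion: moving $x$ surjects onto the $U_0$ direction, and adding to $h$ a constant polynomial with value in $\mathbb R^{\mathcal J(0)}$ surjects onto the $\mathbb H^{\mathcal J(0)}$ direction — such a constant lies in $P^r(\mathcal I,\mathcal J)$ because $\mathbb R^{\mathcal J(0)}\subseteq\mathbb R^{\mathcal J(s')}$ for every $s'$, since $0=\min S$ gives $\mathcal J(0)\subseteq\mathcal J(s')$. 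As $\pi$ is itself a submersion, $G$ will be a submersion once we show that $dG_{(h_0,x_0)}$ maps $\ker d(\pi\circ G)_{(h_0,x_0)}=\{(\dot h,0)\mid\dot h(x_0)=0\}$ onto the vertical space $P^r(\mathcal I_{\le s},\mathcal J_{\le s})_0$. For $\dot h\in P^r(\mathcal I,\mathcal J)$ with $\dot h(x_0)=0$, the curve $t\mapsto j^r(f+h_0+t\dot h)(x_0)$ keeps its basepoint fixed to first order, so its derivative is vertical; reading the jet in a chart centred at $x_0$ (the Taylor identification of \Cref{lem:jet-germ}, which is exact on the degree-$\le r$ polynomial $\dot h$) identifies this vertical vector with the polynomial $z\mapsto\dot h(x_0+z)$.

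The lemma therefore reduces to the combinatorial claim that $\dot h\mapsto\bigl(z\mapsto\dot h(x_0+z)\bigr)$ maps $\{\dot h\in P^r(\mathcal I,\mathcal J)\mid\dot h(x_0)=0\}$ onto $P^r(\mathcal I_{\le s},\mathcal J_{\le s})_0$. Given $g\in P^r(\mathcal I_{\le s},\mathcal J_{\le s})_0$, the natural candidate is $\dot h:=\bigl(y\mapsto g(y-x_0)\bigr)\circ\pi_s$, where $\pi_s\colon\mathbb R^{|\mathcal I|}\twoheadrightarrow\mathbb R^{\mathcal I(s)}$ is the coordinate projection: it has degree $\le r$, satisfies $\dot h(x_0)=g(0)=0$, and $\dot h(x_0+z)=g(z)$ on $\mathbb R^{\mathcal I(s)}$ because $x_0\in\mathbb R^{\mathcal I(0)}\subseteq\mathbb R^{\mathcal I(s)}$. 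The one step that needs genuine care — and the main, though routine, obstacle of the proof — is verifying $\dot h\in P^r(\mathcal I,\mathcal J)$, i.e. $\dot h(\mathbb R^{\mathcal I(s')})\subseteq\mathbb R^{\mathcal J(s')}$ for \emph{all} $s'\in S$, not just $s'\le s$. This uses the meet-preservation of $\mathcal I$: $\pi_s$ sends $\mathbb R^{\mathcal I(s')}$ into $\mathbb R^{\mathcal I(s')\cap\mathcal I(s)}=\mathbb R^{\mathcal I(s'\wedge s)}$; then $y\mapsto g(y-x_0)$ sends this into $\mathbb R^{\mathcal J(s'\wedge s)}$ because $s'\wedge s\le s$ and $x_0\in\mathbb R^{\mathcal I(0)}\subseteq\mathbb R^{\mathcal I(s'\wedge s)}$; finally $\mathbb R^{\mathcal J(s'\wedge s)}\subseteq\mathbb R^{\mathcal J(s')}$ since $s'\wedge s\le s'$. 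Chaining the three inclusions gives $\dot h\in P^r(\mathcal I,\mathcal J)$, hence the surjectivity, hence that $G$ is a submersion.
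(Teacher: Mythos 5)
Your proof is correct and follows essentially the same route as the paper's: reduce to the fibre over $x_0$, observe that there the map is translation by the $r$-jet of $f$ followed by the restriction map $P^r(\mathcal I,\mathcal J)\to P^r(\mathcal I_{\le s},\mathcal J_{\le s})$, and conclude from the surjectivity of the latter. You are in fact more explicit than the paper, which simply calls that restriction ``the projection'' and takes its surjectivity for granted, whereas you verify it via the section $g\mapsto g(\pi_s(\cdot)-x_0)$ using the meet-preservation of $\mathcal I$.
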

\begin{proof}
Since $G$ is a bundle map over $U_0$, it suffices to show that $G$ induces submersions on fibers.
Notice that, for $x_0\in U_0$, the fiber of the bundle $J^r(\mathbb H^{\mathcal I}_{\le s}\cap U,\mathbb H^{\mathcal J}_{\le s})$ over $x_0$ is computed as
\[
J^r(\mathbb H^{\mathcal I}_{\le s}\cap U,\mathbb H^{\mathcal J}_{\le s})_{x_0}
\simeq \mathbb H^{\mathcal J(0)}\times P^r(\mathcal I_{\le s},\mathcal J_{\le s})_0\,,
\]
where $P^r(\mathcal I_{\le s},\mathcal J_{\le s})_0\subset P^r(\mathcal I_{\le s},\mathcal J_{\le s})$ is the subset of polynomial mappings preserving the origin.
In particular, there is a canonical embedding
\[
J^r(\mathbb H^{\mathcal I}_{\le s}\cap U,\mathbb H^{\mathcal J}_{\le s})_{x_0}
\to P^r(\mathcal I_{\le s},\mathcal J_{\le s})\,.
\]
Therefore, to see the induced map $G_{x_0}:T\to J^r(\mathbb H^{\mathcal I}_{\le s}\cap U,\mathbb H^{\mathcal J}_{\le s})_{x_0}$ is a submersion, it suffices to see the composition
\[
G_{x_0}:T
\to J^r(\mathbb H^{\mathcal I}_{\le s}\cap U,\mathbb H^{\mathcal J}_{\le s})_{x_0}
\to P^r(\mathcal I_{\le s},\mathcal J_{\le s})
\]
is a submersion.
This map factors through a restriction of the submersion
\[
P^r(\mathcal I,\mathcal J)\ni h
\mapsto \sum_{|\alpha|\le r}\frac1{\alpha!}\frac{\partial^{|\alpha|} f}{\partial^\alpha x}(x_0)x^\alpha + h
\in P^r(\mathcal I,\mathcal J)\,.
\]
to the open subset $T\subset P^r(\mathcal I,\mathcal J)$ followed by the projection
\[
P^r(\mathcal I,\mathcal J) \twoheadrightarrow P^r(\mathcal I_{\le s},\mathcal J_{\le s})\,.
\]
Thus the result follows.
\end{proof}

We want to perturb functions in $C^\infty(\mathcal X,\mathcal Y)$ by polynomials in $P^r(\mathcal I,\mathcal J)$.
A problem is that if $U\subset\mathbb H^{|\mathcal I|}$ is an open subset, the map
\[
P^r(\mathcal I,\mathcal J) \to C^\infty(\mathbb H^{\mathcal I}\cap U,\mathbb R^{\mathcal J})
\]
is not continuous.
In fact, we can only perturb functions ``on compact supports'':

\begin{lemma}
\label{lem:conti-perturb}
Let $\mathcal I$ and $\mathcal J$ be arrangements of marked finite sets of shape $S$.
Suppose we are given an open subset $U\subset \mathbb H^{|\mathcal I|}$ and a smooth function $\rho:U\to\mathbb R$ with compact support.
Then the map
\[
\begin{array}{rccc}
\rho_!: & P^r(\mathcal I,\mathcal J) &\to& C^\infty(\mathbb H^{\mathcal I}\cap U,\mathbb R^{\mathcal J}) \\
& b(x) &\mapsto& \left[ x \mapsto \rho(x)b(x)\right]
\end{array}
\]
is continuous.
\end{lemma}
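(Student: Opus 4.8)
The plan is to use that, by the definition of the Whitney $C^\infty$-topology recalled in \Cref{sec:whitney-top}, the sets $M(V)$ with $V\subset J^k(W,\mathbb R^{\mathcal J})$ open and $k\ge 0$ form a basis for the Whitney $C^\infty$-topology on $C^\infty(W,\mathbb R^{\mathcal J})$, where we abbreviate $W:=\mathbb H^{|\mathcal I|}\cap U$. Thus it suffices to fix $b_0\in P^r(\mathcal I,\mathcal J)$ and such an $M(V)$ with $\rho_!(b_0)\in M(V)$, and to produce a neighbourhood $\mathcal V$ of $b_0$ in $P^r(\mathcal I,\mathcal J)$ with $\rho_!(\mathcal V)\subset M(V)$.

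First I would realize $\rho_!$ through a jet of a family. Set $K:=\operatorname{supp}\rho\subset W$, which is compact by hypothesis, and consider
\[
F\colon P^r(\mathcal I,\mathcal J)\times W\to\mathbb R^{\mathcal J}\,,\qquad F(b,x):=\rho(x)\,b(x)\,,
\]
which is smooth, being a polynomial in $x$ whose coefficients depend linearly on $b\in P^r(\mathcal I,\mathcal J)$, multiplied by the smooth function $\rho$. Since $P^r(\mathcal I,\mathcal J)$ is a finite-dimensional Euclidean space, hence a manifold with corners, \Cref{lem:jet-smooth} applies and shows that
\[
\Theta\colon P^r(\mathcal I,\mathcal J)\times W\to J^k(W,\mathbb R^{\mathcal J})\,,\qquad \Theta(b,x):=j^kF_b(x)=j^k\bigl(\rho_!(b)\bigr)(x)\,,
\]
is smooth, in particular continuous, for every $k\ge 0$.

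The key observation is a rigidity away from the support: if $x\in W\setminus K$, then $\rho$ vanishes identically on a neighbourhood of $x$, so $\rho_!(b)$ vanishes near $x$ for every $b$, and therefore $\Theta(b,x)$ is the zero $k$-jet at $x$; in particular $\Theta(b,x)=\Theta(b_0,x)$ for all $b$. Now assume $\rho_!(b_0)\in M(V)$, i.e. $\Theta(\{b_0\}\times W)\subset V$. Then $\Theta^{-1}(V)$ is an open subset of $P^r(\mathcal I,\mathcal J)\times W$ containing the compact set $\{b_0\}\times K$, so by the tube lemma there is an open neighbourhood $\mathcal V$ of $b_0$ in $P^r(\mathcal I,\mathcal J)$ with $\mathcal V\times K\subset\Theta^{-1}(V)$. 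Given $b\in\mathcal V$ and $x\in W$: if $x\in K$ then $\Theta(b,x)\in V$; and if $x\in W\setminus K$ then $\Theta(b,x)=\Theta(b_0,x)\in V$ by the rigidity together with the assumption on $b_0$. Hence $j^k(\rho_!(b))(W)\subset V$, that is $\rho_!(b)\in M(V)$, so $\rho_!(\mathcal V)\subset M(V)$; since $b_0$, $k$ and $V$ were arbitrary, $\rho_!$ is continuous.

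I do not expect a real obstacle here; the only two points needing attention are the legitimacy of invoking \Cref{lem:jet-smooth} with the parameter manifold $B=P^r(\mathcal I,\mathcal J)$, and the elementary but decisive remark that outside the compact set $K$ the jet $\Theta(b,x)$ is independent of $b$ — this is exactly what lets a single application of the tube lemma over $K$ control $j^k(\rho_!(b))$ on all of $W$ and thereby force $\rho_!(b)$ into $M(V)$.
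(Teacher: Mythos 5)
Your proof is correct and follows essentially the same route as the paper's: reduce to basic opens $M(V)$, observe that the parametrized jet map $(b,x)\mapsto j^k(\rho_!(b))(x)$ is continuous, apply the tube lemma over the compact support of $\rho$, and use that the jet is independent of $b$ (indeed zero) off that support. The only cosmetic difference is that you justify continuity of the jet-family map by citing \Cref{lem:jet-smooth}, whereas the paper writes out the explicit coordinate formula for $j^q$; both are adequate.
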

\begin{proof}
We have to show that for each polynomial $b(x)\in P^r(\mathcal I,\mathcal J)$ and each open subset $\Phi\subset J^q(U,\mathbb R^{|\mathcal J|})$ with $\rho_!(b(x))\in M(\Phi)$, there is an open neighborhood $B$ of $b(x)$ on $P^r(\mathcal I,\mathcal J)$ such that $\rho_!(B)\subset M(\Phi)$.
Notice that we have a canonical identification
\[
J^q(U,\mathbb R^{|\mathcal J|}) \simeq U\times P^q(|\mathcal I|,|\mathcal J|)
\]
under which the map
\[
j^q:C^\infty(U\cap\mathbb H^{\mathcal I},\mathbb R^{\mathcal J})
\to C^\infty(U,J^q(U,\mathbb R^{|\mathcal J|}))
\]
is described as
\[
j^q(f)(x)
= \left(x,\bigg(\raisedunderop\sum{|\alpha|\le q}\frac1{\alpha!}\frac{\partial^{|\alpha|}f_j}{\partial x^\alpha}(x)X^\alpha\bigg)_{j\in|\mathcal J|}\right)\,.
\]
In particular, it is easily verified that the following composition is continuous:
\[
\varphi:P^r(\mathcal I,\mathcal J)\times U
\xrightarrow{\rho_!\times U} C^\infty(U\cap\mathbb H^{\mathcal I},\mathbb R^{\mathcal J})\times U
\xrightarrow{j^q} J^q(U,\mathbb R^{|\mathcal J|})
\]
Now suppose $\rho_!(b(x))\in M(\Phi)$, or equivalently $\varphi(\{b(x)\}\times U)\subset \Phi$.
Let us denote by $K:=\operatorname{supp}\rho$.
Then we have $\varphi(\{b(x)\}\times K)\subset \Phi$, and since $\varphi$ is continuous and $K$ is compact, there is an open neighborhood $B$ of $b(x)$ on $P^r(\mathcal I,\mathcal J)$ such that $\varphi(B\times K)\subset \Phi$.
Note that if $x_0\in U\setminus K$, we have $\varphi(b(x),x_0)=0$, which implies we also have
\[
\varphi(B\times(U\setminus K)) = \varphi(\{b(x)\}\times (U\setminus K)) \subset\Phi\,.
\]
Therefore, we obtain $\varphi(B\times U)\subset \Phi$, which implies $B$ is a required neighborhood of $b(x)\in P^r(\mathcal I,\mathcal J)$.
\end{proof}

\begin{lemma}
\label{lem:poly-perturb}
Let $\mathcal X$ and $\mathcal Y$ be excellent arrangements of manifolds of shape $S$, and let $r\ge 0$ and $\kappa=(\kappa_0,\kappa_1)\in S^{[1]}$.
Suppose we are given the following data:
\begin{itemize}
  \item An $\mathcal X$-chart $(U,\varphi,\mathcal I)$ on $|\mathcal X|$ of scope $\kappa_0$.
  \item An $\mathcal Y_{\ge\kappa_0}$-chart $(V,\psi,\mathcal J)$ on $|\mathcal Y|$ of scope $\kappa_0$.
  \item A compact subset $K\subset J^r(U\cap\mathcal X_\kappa,V\cap\mathcal Y_\kappa)$.
\end{itemize}
Then, for every smooth map $F\in C^\infty(\mathcal X,\mathcal Y)$, there are an open subset $B\subset P^r(\mathcal I,\mathcal J)$ and a smooth map $G:U_F\times B\to V$, here $U_F:=U\cap F^{-1}(V)$, satisfying the following properties:
\begin{enumerate}[label={\rm(\roman*)}]
  \item\label{sublem:poly-perturb:conti} $G$ defines a continuous map $\widetilde G:B\to C^\infty(U_F\cap\mathcal X,V\cap\mathcal Y)$.
  \item\label{sublem:poly-perturb:zero} The closure $\overline B\subset P^r(\mathcal I,\mathcal J)$ contains $0$, and $\widetilde G$ extends continuously to $0\in\overline B$ by $\widetilde G(0)=F$.
  \item\label{sublem:poly-perturb:origin} There is a compact subset of $U_F$ except where the smooth map $\widetilde G(b):U_F\to V$ agrees with $F$ for each $b\in B$.
  \item\label{sublem:poly-perturb:subm} The smooth map
\[
j_G: U_F\times B \ni (p,b) \mapsto j^r(\widetilde G(b)|_{\mathcal X(\kappa_0)})(p)\in J^r(U_F\cap\mathcal X_\kappa,V\cap\mathcal Y_\kappa)
\]
is a submersion on $j^{-1}_G(K)$.
\end{enumerate}
\end{lemma}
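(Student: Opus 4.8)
The plan is to realize the perturbation inside the given charts. Put $U_F=U\cap F^{-1}(V)$ and let $\overline F:=\psi F\varphi^{-1}\colon\varphi(U_F)\to\psi(V)$; by the pullback axioms defining $\mathcal X$- and $\mathcal Y$-charts this is a map of the standard arrangements $\mathcal E^{\mathcal I}|_{\varphi(U_F)}\to\mathcal E^{\mathcal J}$ of shape $S_{\ge\kappa_0}$. For a polynomial $b\in P^r(\mathcal I,\mathcal J)$ and a bump function $\rho$ supported in $\varphi(U_F)$, consider $\overline G_b:=\overline F+\rho\cdot b$ and set $\widetilde G(b):=\psi^{-1}\overline G_b\varphi$. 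The parameter space will be $B:=T\cap B_\varepsilon(0)$ for a small $\varepsilon>0$, where $T\subset P^r(\mathcal I,\mathcal J)$ is the open set supplied by \Cref{lem:mkH-preserve} applied to a bounded open $U'\supset\operatorname{supp}\rho$ with $\overline{U'}\subset\varphi(U_F)$ compact, so that $T\subset P^r_{\overline{U'}}(\mathcal I,\mathcal J)$ and $0\in\overline T$. The point of passing to $T$ is that, while every $b\in P^r(\mathcal I,\mathcal J)$ already preserves the linear subspaces $\mathbb R^{\mathcal I(s)}$ — which is all that the relative jet bundle sees — membership in $P^r_{\overline{U'}}$ ensures in addition that $b$ carries $\overline{U'}\cap\mathbb H^{\mathcal I(s)}$ into $\mathbb H^{\mathcal J(s)}$; since $\mathbb H^{\mathcal J(s)}$ is a convex cone, $\rho\ge 0$, and $\overline F$ is an honest arrangement map, $\overline G_b$ then carries $\mathbb H^{\mathcal I(s)}\cap\varphi(U_F)$ into $\mathbb H^{\mathcal J(s)}$, and — shrinking $\varepsilon$ so that $\overline G_b$ stays within the open set $\psi(V)$, using that $\overline F(\operatorname{supp}\rho)$ is compact in $\psi(V)$ — $\widetilde G(b)$ is a genuine element of $C^\infty(U_F\cap\mathcal X,V\cap\mathcal Y)$.

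Once $\widetilde G$ is set up this way, conclusions \ref{sublem:poly-perturb:conti}--\ref{sublem:poly-perturb:origin} are bookkeeping. For \ref{sublem:poly-perturb:conti}: by \Cref{lem:conti-perturb} the map $b\mapsto\rho_!(b)$ is continuous into $C^\infty(\varphi(U_F)\cap\mathbb H^{\mathcal I},\mathbb R^{\mathcal J})$ — this is exactly where compactness of $\operatorname{supp}\rho$ is needed — and composing with translation by $\overline F$ and with $\psi^{-1}_\ast$ and $\varphi^\ast$ (continuous by \Cref{prop:whit-comp}, since $\varphi$ is a diffeomorphism onto its image, hence proper) gives a continuous map into $C^\infty(U_F,V)$ whose image lies in the subspace $C^\infty(U_F\cap\mathcal X,V\cap\mathcal Y)$. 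For \ref{sublem:poly-perturb:zero}: $\rho_!(b)\to 0$ as $b\to 0$, so $\widetilde G$ extends continuously to $\overline B\ni 0$ by $\widetilde G(0)=F$, using $0\in\overline T$. For \ref{sublem:poly-perturb:origin}: off the compact set $L':=\varphi^{-1}(\operatorname{supp}\rho)\subset U_F$ one has $\rho\circ\varphi\equiv 0$, hence $\widetilde G(b)=F$ there for every $b$.

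The substantive point is \ref{sublem:poly-perturb:subm}. \Cref{lem:poly-perturb-subm}, applied with shape $S_{\ge\kappa_0}$, parameter $s=\kappa_1$, the bounded open set $U'$, and the arrangement map $\overline F|_{U'}$, shows that $(b,x)\mapsto j^r(\overline F+b)(x)$ is a submersion onto $J^r(\mathbb H^{\mathcal I}_{\le\kappa_1}\cap U',\mathbb H^{\mathcal J}_{\le\kappa_1})$; via the identifications of relative jet bundles furnished by the charts (\Cref{prop:rel-jetbdl}), over the relevant region this is precisely the bundle $J^r(U_F\cap\mathcal X_\kappa,V\cap\mathcal Y_\kappa)$. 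On the open set $\{\rho\circ\varphi\equiv 1\}$ we have $\widetilde G(b)=\psi^{-1}(\overline F+b)\varphi$ locally, so there $j_G$ coincides with the above submersion up to the jet-bundle diffeomorphism induced by $(\varphi,\psi)$. Hence it suffices to arrange that $j_G^{-1}(K)\subset\{\rho\circ\varphi\equiv 1\}\times B$.

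This last containment is the actual obstacle, and it forces us to fix the data in the right order. Let $K_X\subset\mathcal X(\kappa_0)$ and $K_Y\subset\mathcal Y(\kappa_0)$ be the images of the compact set $K$ under the bundle projection, both compact. First fix $\varepsilon_0>0$ so small that the closed $\varepsilon_0$-neighbourhood $K_Y'$ of $K_Y$, measured in the chart $\psi$, is a compact subset of $V$; then $C':=K_X\cap F^{-1}(K_Y')$ is a compact subset of $U_F\cap\mathcal X(\kappa_0)$, so $\varphi(C')$ is compact in $\varphi(U_F)$. Next choose $\rho$ to be identically $1$ on an open neighbourhood of $\varphi(C')$ (with support as above), and only after that choose $\varepsilon$ — hence $B=T\cap B_\varepsilon(0)$ — small enough that, in addition to the earlier smallness requirements, $\sup_{\operatorname{supp}\rho}|b|<\varepsilon_0$ for all $b\in B$. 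Now if $(p,b)\in j_G^{-1}(K)$, the base point of the jet $j_G(p,b)$ is $(p,\widetilde G(b)(p))\in K_X\times K_Y$; since $\widetilde G(b)$ differs from $F$ by less than $\varepsilon_0$ on $\operatorname{supp}\rho$ and equals $F$ elsewhere, $F(p)\in K_Y'$, i.e. $p\in C'$, so $\rho(\varphi(p))=1$ — exactly what was needed. Combining this with the previous paragraph completes the proof; the only genuine work, as indicated, is this bookkeeping ensuring that $K$ is hit only over the region where the perturbation is the full polynomial one, after which \Cref{lem:poly-perturb-subm} does the rest.
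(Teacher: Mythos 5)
Your proof is correct and follows essentially the same route as the paper's: a compactly supported polynomial perturbation in the charts, \Cref{lem:mkH-preserve} to stay in the half-spaces, \Cref{lem:conti-perturb} for continuity, and \Cref{lem:poly-perturb-subm} once one shows $j_G^{-1}(K)$ lies over the plateau of the cutoff. The only (harmless) difference is cosmetic: the paper uses the product cutoff $\rho_U(p)\rho_V(F(p))$ and the region $N_U\cap F^{-1}(N_V)$, whereas you use a single source-side cutoff whose plateau is chosen in advance as a neighborhood of $\varphi\bigl(K_X\cap F^{-1}(K_Y')\bigr)$ and then shrink $B$ so that $\lvert b\rvert<\varepsilon_0$ on $\operatorname{supp}\rho$ — the two devices accomplish exactly the same containment.
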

\begin{proof}
We may assume $\varphi(U)\subset\mathbb H^{|\mathcal I|}$ is a bounded open subset.
Hence, by \Cref{lem:mkH-preserve}, there is an open subset $B'\subset P^r(\mathcal I,\mathcal J)$ with $B'\subset P^r_{\varphi(U)}(\mathcal I,\mathcal J)$ and $0\in\overline{B'}$.
We write $E:=J^r(U\cap\mathcal X_\kappa,V\cap\mathcal Y_\kappa)$, and set
\[
\begin{gathered}
\alpha:E\to U\cap\mathcal X(\kappa_0)\hookrightarrow U \\
\beta:E\to V\cap\mathcal Y(\kappa_0)\hookrightarrow V
\end{gathered}
\]
to be the projection maps.
Take smooth functions $\rho_U:U\to[0,1]$ and $\rho_V:V\to[0,1]$ whoose supports are compact and which value identically $1$ near compact subsets $\alpha(K)\subset U$ and $\beta(K)\subset V$ respectively.
We define a smooth function
\[
G':U_F\times B'\to\mathbb H^{|\mathcal J|}
\]
by
\begin{equation}
\label{eq:prf:poly-perturb:perturb}
G'(p,b) = \psi F(p) + \rho_U(p)\rho_V(F(p)) b(\varphi(p))\,.
\end{equation}
We define a subset $A\subset U_F$ to be the support of the composition
\[
U_F
\xrightarrow{(\mathrm{id},F)} U\times V
\xrightarrow{\rho_U\cdot\rho_V} [0,1]\,.
\]
Since the map $(\mathrm{id},F):U_F\to U\times V$ is a closed embedding, $A$ is compact.
We have
\[
G'(A \times \{0\}) = \psi F(A) \subset \psi(V)\,,
\]
so that we can choose an open subset $B''\subset B'\subset P^r(\mathcal I,\mathcal J)$ such that $0\in\overline{B''}$ and $G'(A\times B'')\subset\psi(V)$.
Notice that we also have
\[
G'((U_F\setminus A)\times B'')
\subset \psi F(U_F)
\subset \psi(V)\,.
\]
Hence, $G'$ restricts to a smooth function
\[
G'':U_F\times B'' \to \psi(V)\,.
\]
Finally, take a sufficiently small $\varepsilon>0$ so that $\rho_V\psi^{-1}:\psi(V)\to[0,1]$ values $1$ identically on the $\varepsilon$-neighborhood of $\psi\beta(K)\subset \psi(V)\subset\mathbb H^{|\mathcal J|}$, and define
\[
\begin{gathered}
B := \left\{b\in B''\mid \forall x\in\varphi(A):|b(x)|<\varepsilon \right\} \\
G := \psi^{-1}G''|_{U_F\times B} :U_F\times B\to V\,.
\end{gathered}
\]

We show that $B$ and $G$ satisfy the required properties.
First, it is easily verified that $G$ induces a well-defined map
\[
\widetilde G:B\to C^\infty(U_F\cap\mathcal X,V\cap\mathcal Y)\,.
\]
Notice that, by \Cref{prop:whit-comp}, the maps
\[
\begin{split}
C^\infty(U_F\cap\mathcal X,V\cap\mathcal Y)
&\simeq C^\infty(\varphi(U_F)\cap\mathbb H^{\mathcal I},\psi(V)\cap\mathbb H^{\mathcal J}) \\
&\hookrightarrow C^\infty(\varphi(U_F),\mathbb R^{|\mathcal J|})\,,
\end{split}
\]
are topological embeddings.
Thus, to see $\widetilde G$ is continuous (that is \ref{sublem:poly-perturb:conti}), it suffices to show the composition
\[
B
\xrightarrow{\widetilde G} C^\infty(U_F\cap\mathcal X,V\cap\mathcal Y)
\hookrightarrow C^\infty(\varphi(U_F),\mathbb R^{|\mathcal J|})
\]
is continuous.
Since the addition on $C^\infty(\varphi(U_F),\mathbb R^{|\mathcal J|})$ is a continuous operation, this follows from the formula \eqref{eq:prf:poly-perturb:perturb} and \Cref{lem:conti-perturb}.
In the same point of view, we can also verify \eqref{sublem:poly-perturb:zero} while \ref{sublem:poly-perturb:origin} immediately follow from the construction of $B$ and $G$.

Finally, we verify \ref{sublem:poly-perturb:subm}.
Take an open neighborhood $N_U\subset U$ of $\alpha(K)$ so that we have $\rho_U|_{N_U}\equiv 1$.
We also set
\[
N_V := \{q\in V\mid d(\psi(q),\psi\beta(K)) < \varepsilon \}\,,
\]
where $\varepsilon>0$ is the number appearing in the construction of $B$, and $d$ is the standard metric on $\mathbb H^{|\mathcal J|}$.
Notice that, by the definition of $\varepsilon$, we have $\rho_V|_{N_V}\equiv 1$.
Moreover, by the definition of $B$, $G(p,b)\in\beta(K)$ implies $F(p)\in N_V$; indeed, we have
\[
d(\psi G(p,b),\psi F(p)) \le |b(p)| < \varepsilon
\]
if $p\in A$, and $G(p,b)= F(p)$ otherwise.
It follows that for the map
\[
j_G:U_F\times B \ni (p,b) \mapsto j^r(\widetilde G(b))(p)\in  J^r(U_F\cap\mathcal X_\kappa, V\cap\mathcal Y_\kappa)\,,
\]
we have
\[
j_G^{-1}(K)\subset (\alpha(K)\cap F^{-1}(N_V))\times B \subset (N_U\cap F^{-1}(N_V))\times B\,.
\]
Now, for each $(p,b)\in (N_U\cap F^{-1}(N_V))\times B$, the map $G$ is given by
\[
G(p,b) = \psi^{-1}(\psi F(p) + h(\varphi(p)))\,,
\]
so that \ref{sublem:poly-perturb:subm} follows from \Cref{lem:poly-perturb-subm}.
\end{proof}

\subsection{Relative transversality theorem}
\label{sec:rel-transv-thm}

In this section, we prove the following theorem:

\thmJetTransversal

We first prove the last part of the theorem.
The following lemma is a generalization of Lemma II.4.14 in \cite{GG73}.

\begin{lemma}
\label{lem:multijet-open}
In the setting of \Cref{thm:multijet-transv}, suppose $W\subset J^r_{\mathbf n}(\mathcal X,\mathcal Y)$ is a submanifold whose image under $J^r_{\mathbf n}(\mathcal X,\mathcal Y)\twoheadrightarrow \mathcal X^{(\mathbf n)}$ is compact.
Then the subset
\[
\mathcal T_W :=\{F\in C^\infty(\mathcal X,\mathcal Y)\mid j^r_{\mathbf n}F\pitchfork W\} \subset C^\infty(\mathcal X,\mathcal Y)
\]
is open.
\end{lemma}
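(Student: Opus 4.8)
The plan is to deduce this from the classical openness statement \Cref{prop:transv-open} after localising the transversality condition to a fixed compact set. The key structural remark is that, for every $F\in C^\infty(\mathcal X,\mathcal Y)$, the multijet $j^r_{\mathbf n}F:\mathcal X^{(\mathbf n)}\to J^r_{\mathbf n}(\mathcal X,\mathcal Y)$ is a section of the canonical projection $\pi:J^r_{\mathbf n}(\mathcal X,\mathcal Y)\to\mathcal X^{(\mathbf n)}$ of the pullback square \eqref{diag:def:multijet}: by the defining formula \eqref{eq:multi-jet} the multijet sits over its own base point. Consequently, writing $A:=\pi(W)\subset\mathcal X^{(\mathbf n)}$ --- which is compact by hypothesis --- we have $(j^r_{\mathbf n}F)^{-1}(W)\subset A$ for every $F$, so the failure of $j^r_{\mathbf n}F\pitchfork W$ can only occur over the \emph{fixed} compact set $A$, whichever $F$ we take.

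First I would fix $F_0\in\mathcal T_W$. Since $j^r_{\mathbf n}F_0(A)$ is compact, I choose a relatively compact open neighbourhood $\mathcal N$ of it in $J^r_{\mathbf n}(\mathcal X,\mathcal Y)$ and set $\widetilde A:=\overline{\mathcal N}$, a compact subset of $J^r_{\mathbf n}(\mathcal X,\mathcal Y)$. Because $A$ is closed in $\mathcal X^{(\mathbf n)}$, the subset $\{g\in C^\infty(\mathcal X^{(\mathbf n)},J^r_{\mathbf n}(\mathcal X,\mathcal Y))\mid g(A)\subset\mathcal N\}$ is open in the Whitney topology --- it equals $M(U)$ for the open subset $U=\bigl(\mathcal X^{(\mathbf n)}\times J^r_{\mathbf n}(\mathcal X,\mathcal Y)\bigr)\setminus\bigl(A\times(J^r_{\mathbf n}(\mathcal X,\mathcal Y)\setminus\mathcal N)\bigr)$ of the $0$-jet bundle. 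Hence, by continuity of the multijet map $j^r_{\mathbf n}:C^\infty(\mathcal X,\mathcal Y)\to C^\infty(\mathcal X^{(\mathbf n)},J^r_{\mathbf n}(\mathcal X,\mathcal Y))$ (extracted from \Cref{lem:multijet-conti}), the set $\mathcal U_1:=\{F\mid j^r_{\mathbf n}F(A)\subset\mathcal N\}$ is an open neighbourhood of $F_0$ in $C^\infty(\mathcal X,\mathcal Y)$. Next, applying \Cref{prop:transv-open} to the manifolds with corners $\mathcal X^{(\mathbf n)}$ and $J^r_{\mathbf n}(\mathcal X,\mathcal Y)$, the submanifold $W$, and the compact set $\widetilde A$, the set $\mathcal V:=\{g\mid g\pitchfork W\text{ on }W\cap\widetilde A\}$ is open in the Whitney $C^1$- (hence $C^\infty$-) topology and contains $j^r_{\mathbf n}F_0$; thus $\mathcal U_2:=(j^r_{\mathbf n})^{-1}(\mathcal V)$ is an open neighbourhood of $F_0$.

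It then remains to check $\mathcal U_1\cap\mathcal U_2\subset\mathcal T_W$, which is immediate: if $F\in\mathcal U_1\cap\mathcal U_2$ and $x\in(j^r_{\mathbf n}F)^{-1}(W)$, then $x\in A$, so $j^r_{\mathbf n}F(x)\in\mathcal N\cap W\subset W\cap\widetilde A$, and therefore $j^r_{\mathbf n}F\pitchfork W$ at $x$ since $F\in\mathcal U_2$; as $x$ is arbitrary this gives $j^r_{\mathbf n}F\pitchfork W$, i.e.\ $F\in\mathcal T_W$. Since $F_0\in\mathcal T_W$ was arbitrary, $\mathcal T_W$ is open.

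I expect the only real difficulty to be the bookkeeping forced by non-compactness, not any deep point. On one hand $\mathcal X^{(\mathbf n)}$ is non-compact and the multijet involves restriction to the diagonal complement, so one must be careful that the relevant multijet map really is continuous into $C^\infty(\mathcal X^{(\mathbf n)},J^r_{\mathbf n}(\mathcal X,\mathcal Y))$ and not merely into the family space of \Cref{lem:multijet-conti}. On the other hand $W$ itself need not be compact (only $\pi(W)$ is), so one genuinely needs the auxiliary compact target set $\widetilde A$ to capture every value $j^r_{\mathbf n}F(A)$ for $F$ near $F_0$ --- which is exactly what the Whitney topology supplies over the compact base $A$. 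Once these two points are handled, everything reduces routinely to \Cref{prop:transv-open}.
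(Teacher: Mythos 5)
Your overall strategy --- localize the transversality condition to a compact set and pull back the open set supplied by \Cref{prop:transv-open} along a continuous jet map --- is the same as the paper's, and your extra step with $\mathcal U_1$ and $\widetilde A$ is a legitimate (indeed, slightly more careful) way to apply \Cref{prop:transv-open} as literally stated when $W$ is non-compact. But there is one genuine gap, and it is exactly the point you flag in your last paragraph without resolving: both $\mathcal U_1$ and $\mathcal U_2$ are defined as preimages under the map
\[
j^r_{\mathbf n}:C^\infty(\mathcal X,\mathcal Y)\to C^\infty(\mathcal X^{(\mathbf n)},J^r_{\mathbf n}(\mathcal X,\mathcal Y))\,,
\]
and you never establish that this map is continuous. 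It does \emph{not} follow from \Cref{lem:multijet-conti}: that lemma gives continuity of $j'^r_{\mathbf n}$ into $C^\infty(\mathcal X^{\mathbf n},\mathcal J^r(\mathcal X,\mathcal Y)^{\mathbf n})$, i.e.\ over the full product, and to pass from there to your map you would need the restriction $C^\infty(\mathcal X^{\mathbf n},-)\to C^\infty(\mathcal X^{(\mathbf n)},-)$ along the open inclusion $\mathcal X^{(\mathbf n)}\hookrightarrow\mathcal X^{\mathbf n}$. That inclusion is not proper, and by \ref{subprop:whit-comp:pre} of \Cref{prop:whit-comp} precomposition is only known to be Whitney-continuous along proper maps; restriction to a non-closed open subset genuinely fails to be continuous in general. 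This is precisely the difficulty the paper's proof is built around: it covers $\alpha(W)$ by finitely many compact ``product boxes'' $A_i=\prod_\nu\overline{U'(\nu)}$ that are compact in $\mathcal X^{\mathbf n}$ (not just in $\mathcal X^{(\mathbf n)}$), defines the open sets $\mathcal T'_{W,i}$ inside $C^\infty(\mathcal X^{\mathbf n},\mathcal J^r(\mathcal X,\mathcal Y)^{\mathbf n})$, and pulls them back along the map $j'^r_{\mathbf n}$ whose continuity \emph{is} available.

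The repair is short and preserves your argument's shape. Since $A=\pi(W)$ is compact and contained in the open subset $\mathcal X^{(\mathbf n)}\subset\mathcal X^{\mathbf n}$, it is also compact in $\mathcal X^{\mathbf n}$; and $J^r_{\mathbf n}(\mathcal X,\mathcal Y)$ is the restriction of the bundle $\mathcal J^r(\mathcal X,\mathcal Y)^{\mathbf n}$ over the open subset $\mathcal X^{(\mathbf n)}\times\mathcal Y^{\mathbf n}$, hence an open subset of it, so $W$ is a submanifold of $\mathcal J^r(\mathcal X,\mathcal Y)^{\mathbf n}$ as well. Because $j'^r_{\mathbf n}F$ is a section of $\alpha'$ and $W$ lies over $A$, one still has $(j'^r_{\mathbf n}F)^{-1}(W)\subset A$, and over $A$ the maps $j'^r_{\mathbf n}F$ and $j^r_{\mathbf n}F$ coincide. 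Running your two-step argument verbatim with $\mathcal X^{\mathbf n}$, $\mathcal J^r(\mathcal X,\mathcal Y)^{\mathbf n}$, and $j'^r_{\mathbf n}$ in place of $\mathcal X^{(\mathbf n)}$, $J^r_{\mathbf n}(\mathcal X,\mathcal Y)$, and $j^r_{\mathbf n}$ then closes the gap, and in fact yields a proof that applies \Cref{prop:transv-open} exactly as stated (compact set in the target), whereas the paper invokes it with the non-compact set $\alpha'^{-1}(A_i)$ and implicitly relies on the source-compact variant.
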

\begin{proof}
For each $\vec p=(p^\nu)_{\nu\in|\mathbf n|}\in \mathcal X^{(\mathbf n)}$, choose a family
\[
\left\{ U(\nu)\ \middle|\ \nu\in|\mathbf n| \right\}
\]
of open neighborhoods $U(\nu)\subset |\mathcal X|$ of $p^\nu$.
In particular, since points $p^\nu$ are all distinct, we can choose the family so that $U(\nu)\cap U(\nu')=\varnothing$ whenever $\nu\neq\nu'$.
Also, choose an open neighborhood $U'(\nu)\subset\mathcal X\double(\kappa_0(\nu)\double)$ of each $p^\nu$ so that $U'(\nu)$ has the compact closure in $U(\nu)$.
We set
\[
A_{\vec p} := \raisedunderop\prod{\nu\in|\mathbf n|} \overline{U'(\nu)} \subset\mathcal X^{(\mathbf{n})}\,.
\]
Clearly $A_{\vec p}$ is a compact subset of $\mathcal X^{\mathbf n}$, and the interiors of $A_{\vec p}$ cover the whole $\mathcal X^{(\mathbf{n})}$ when $\vec p$ runs all over $\mathcal X^{(\mathbf n)}$.
Let us denote by $\alpha:J^r_{\mathbf{n}}(\mathcal X,\mathcal Y)\to\mathcal X^{(\mathbf n)}$ the source map.
Since $\alpha(W)$ is compact by the assumption, we can choose a finite sequence $\vec p_1,\dots,\vec p_k\in \mathcal X^{(\mathbf n)}$ such that
\[
\alpha(W) \subset \bigcup_{i=1}^k \operatorname{int} A_{p_i}\,.
\]
We put $A_i := A_{\vec p_i}$ for $1\le i\le k$, and define
\[
\mathcal T_{W,i} := \left\{F\in C^\infty(\mathcal X,\mathcal Y)\mid j^r_{\mathbf n}F\pitchfork W\,\text{on}\, W\cap\alpha^{-1}(A_i) \right\}\,.
\]
Then, since $W \subset \bigcup_i\alpha^{-1}(A_i)$, we obtain
\[
\mathcal T_W = \bigcap_{i=1}^k \mathcal T_{W,i}\,.
\]
It follows that, in order to obtain the result, it suffices to show each $\mathcal T_{W,i}$ is open in $C^\infty(\mathcal X,\mathcal Y)$.

We shall show $\mathcal T_{W,i}$ is open.
Consider the set
\[
\mathcal T'_{W,i} := \left\{ G\in C^\infty(\mathcal X^{\mathbf n},\mathcal J^r(\mathcal X,\mathcal Y)^{\mathbf n})\ \middle|\ G\pitchfork W\,\text{on}\, W\cap\alpha'^{-1}(A_i)\right\}\,,
\]
where $\alpha':\mathcal J^r(\mathcal X,\mathcal Y)^{\mathbf n}\to \mathcal X^{\mathbf n}$ is the canonical surjection.
Then since $A_p$ is a compact subset of $\mathcal X^{\mathbf n}$, $\mathcal T'_{W,i}$ is open in $C^\infty(\mathcal X^{\mathbf n},\mathcal J^r(\mathcal X,\mathcal Y)^{\mathbf n})$ by \Cref{prop:transv-open}.
Recall that we have a continuous map
\[
j'^r_{\mathbf n}:C^\infty(\mathcal X,\mathcal Y) \to C^\infty(\mathcal X^{\mathbf n},\mathcal J^r(\mathcal X,\mathcal Y)^{\mathbf n})
\]
defined in \Cref{lem:multijet-conti}.
Since $\alpha'^{-1}(A_i)$ lies in $J^r_{\mathbf n}(\mathcal X,\mathcal Y)\subset \mathcal J^r(\mathcal X,\mathcal Y)^{\mathbf n}$, we obtain
\[
T_{W,i} = (j'^r_{\mathbf n})^{-1}(T'_{W,i})\,,
\]
which is open.
\end{proof}

\begin{proof}[Proof of \Cref{thm:multijet-transv}]
Let us denote by $\pi:J^r_{\mathbf n}(\mathcal X,\mathcal Y)\to\mathcal X^{(\mathbf n)}\times\mathcal Y^{\mathbf n}$ the projection of the fibration.
Since $\mathcal X$ and $\mathcal Y$ be excellent arrangements, we can choose data
\begin{itemize}
  \item a countable open cover $\{W_\mu\}_\mu$ of $W$ with compact closure $\overline W_\mu$ in $J^r_{\mathbf n}(\mathcal X,\mathcal Y)$;
  \item a family $\left\{(U^\nu_\mu,\varphi^\nu_\mu,\mathcal I^\nu_\mu)\ \middle|\ \nu\in|\mathbf n|,\mu\in\mathbb N\right\}$ of $\mathcal X$-charts such that each $(U^\nu_\mu,\varphi^\nu_\mu,\mathcal I^\nu_\mu)$ is of scope $\kappa_0(\nu)$;
  \item a family $\left\{(V^\nu_\mu,\psi^\nu_\mu,\mathcal J^\nu_\mu)\ \middle|\ \nu\in|\mathbf n|,\mu\in\mathbb N\right\}$ of $\mathcal Y$-charts such that each $(V^\nu_\mu,\psi^\nu_\mu,\mathcal J^\nu_\mu)$ is of scope at most $\kappa_0(\nu)$;
\end{itemize}
so as to satisfy the following properties:
\begin{enumerate}[label={\rm(\alph*)}]
  \item\label{cond:prf:mjtransv:disj} $U^\nu_\mu\cap U^{\nu'}_\mu=\varnothing$ unless $\nu=\nu'\in|\mathbf n|$.
  \item\label{cond:prf:mjtransv:prod} $\overline W_\mu\subset W\cap\pi^{-1}(U^{\mathbf n}_\mu\times V^{\mathbf n}_\mu)$, here we write
\[
\begin{split}
U^{\mathbf n}_\mu
&:=\raisedunderop\prod\nu(U^\nu_\mu\cap \mathcal X(\kappa_0(\nu)))\subset\mathcal X^{(\mathbf n)}\,,\\
V^{\mathbf n}_\mu
&:=\raisedunderop\prod\nu(V^\nu_\mu\cap \mathcal Y(\kappa_0(\nu)))\subset\mathcal Y^{\mathbf n}\,;
\end{split}
\]
  \item\label{cond:prf:mjtransv:cpt} The closure $\overline U^\nu_\mu\subset |\mathcal X|$ is compact.
\end{enumerate}
Put
\[
\mathcal T_\mu
:= \left\{f\in C^\infty(\mathcal X,\mathcal Y)\mid j^r_{\mathbf n}f\pitchfork W \,\text{on}\, \overline W_\mu \right\}\,.
\]
Then, since we have $\mathcal T_W = \bigcap_\mu \mathcal T_\mu$, to prove $\mathcal T_W$ is residual, it suffices to show each $\mathcal T_\mu$ is open and dense.
Notice that if $W$ is compact, we can choose $\{W_\mu\}_\mu$ to be a finite open covering of $W$, from which the last assertion follows.

Since \Cref{lem:multijet-open} implies $\mathcal T_\mu$ is open, it remains to show $\mathcal T_\mu$ is dense.
Suppose we are given $f\in C^\infty(\mathcal X,\mathcal Y)$.
We show $f$ can be approximated by functions $g$ with $g\pitchfork W$ on $\overline W_\mu$.
Let us denote by $\overline W^\nu_\mu$ the image of $\overline W_\mu$ under the projection $J^r_{\mathbf n}(\mathcal X,\mathcal Y)\twoheadrightarrow J^r(\mathcal X_{\kappa(\nu)},\mathcal Y_{\kappa(\nu)})$ to the component of $\nu\in|\mathbf n|$.
Then, by \Cref{lem:poly-perturb}, we obtain an open subset $B^\nu_\mu\subset P^r(\mathcal I^\nu_\mu,\mathcal J^\nu_\mu)$ and a smooth map
\[
G^\nu_\mu:(U^\nu_\mu\cap f^{-1}V^\nu_\mu)\times B^\nu_\mu \to V^\nu_\mu
\]
satisfying the properties in \Cref{lem:poly-perturb} for $K=\overline W^\nu_\mu\subset J^r(U^\nu_\mu\cap\mathcal X_{\ge\kappa_0(\nu)},V^\nu_\mu\cap\mathcal Y_{\ge\kappa_0(\nu)})$.
We put $B^{\mathbf n}_\mu:=\prod_\nu B^\nu_\mu$, and define a map $G_\mu:|\mathcal X|\times B^{\mathbf n}_\mu \to |\mathcal Y|$ by
\[
G_\mu(p,\vec{b}):=
\begin{cases}
G^\nu_\mu(p,b^\nu) &\quad\text{when}\ p\in U^\nu_\mu\cap f^{-1}(V^\nu_\mu) \\
f(p) &\quad\text{otherwise}.
\end{cases}
\]
The map $G_\mu$ is well-defined by \ref{cond:prf:mjtransv:disj} above and smooth by the property \ref{sublem:poly-perturb:origin} in \Cref{lem:poly-perturb}.
Moreover, it induces a well-defined map $\widetilde G_\mu:B^{\mathbf n}_\mu\to C^\infty(\mathcal X,\mathcal Y)$ by \ref{sublem:poly-perturb:conti}.
Note that the property \ref{sublem:poly-perturb:conti} also guarantees that $\widetilde G_\mu$ is continuous.
Indeed, for each $\vec h_0=(h^\nu_0)_\nu\in B^{\mathbf n}_\mu$ and an open subset $\Phi\subset J^q(|\mathcal X|,|\mathcal Y|)$ with $\widetilde G_\mu(\vec h_0)\in M(\Phi)\subset C^\infty(\mathcal X,\mathcal Y)$, we can consider an open subset
\[
\Phi^\nu := \Phi\cap J^q(U^\nu_\mu,|\mathcal Y|)\subset J^r(|\mathcal X|,|\mathcal Y|)
\]
for each $\nu\in|\mathbf n|$.
Since the function $\widetilde G^\nu_\mu:B^\nu_\mu\to C^\infty(U^\nu_\mu\cap\mathcal X,V^\nu_\mu\cap\mathcal Y)$ in \ref{sublem:poly-perturb:conti} of \Cref{lem:poly-perturb} is continuous, we can choose a neighborhood $N^\nu$ of $h^\nu_0$ on $B^\nu_\mu$ so that $G^\nu_\mu(N^\nu)\subset M(\Phi^\nu)$.
We set $N^{\mathbf n}:=\prod_\nu N^\nu$, then we have
\[
\widetilde G_\mu(N^{\mathbf n})
\subset\left\{ g\in C^\infty(\mathcal X,\mathcal Y)\ \middle|\ g|_{U^\nu_\mu}\in M(\Phi^\nu)\,, g|_{\text{else}}\equiv f|_{\text{else}}\equiv \widetilde G_\mu(\vec b_0)|_{\text{else}}\right\}\,.
\]
This directly implies $N^{\mathbf n}$ is a neighborhood of $\vec h_0$ on $B^{\mathbf n}_\mu$ with $\widetilde G_\mu(N^{\mathbf n})\subset \widetilde M(\Phi)$ so that $\widetilde G_\mu$ is continuous.
Note also that $B^{\mathbf n}_\mu$ has $0$ as an accumulation point, and $\widetilde G_\mu$ extends to $B^{\mathbf n}_\mu\cup\{0\}$ by $\widetilde G_\mu(0):= f$.

In order to construct an approximation of $f$ in $\mathcal T_\mu$, we want to use \Cref{theo:param-transv}.
To do this, we define a map $j_G:\mathcal X^{(\mathbf n)}\times B^{\mathbf n}_\mu\to J^r_{\mathbf n}(\mathcal X,\mathcal Y)$ by $j_G(\vec p,\vec b)=\left(j^\nu_G(\vec p,\vec b)\right)_\nu$ with
\[
j^\nu_G(\vec p,\vec b) := \begin{cases}
j^r(\widetilde G^\nu_\mu(b^\nu))(p^\nu) &\quad\text{when}\ p^\nu\in U^\nu_\mu\cap f^{-1}(V^\nu_\mu) \\
j^r(f)(p^\nu) &\quad\text{otherwise}\,.
\end{cases}
\]
Clearly we have $j_G^{-1}(\overline W_\mu)\subset (U^{\mathbf n}_\mu\cap f^{-1}(V^{\mathbf n}_\mu))\times B^{\mathbf n}$.
The restriction $j_G|_{U^{\mathbf n}_\mu\times B^{\mathbf n}}$ is obtained as the product of maps
\[
j_{G^\nu_\mu}:(U^\nu_\mu\cap f^{-1}(V^\nu_\mu))\times B^\nu \to J^r(U^\nu_\mu\cap f^{-1}(V^\nu_\mu)\cap\mathcal X_{\kappa(\nu)},V^\nu_\mu\cap\mathcal Y_{\kappa(\nu)}\,.
\]
Hence, the property \ref{sublem:poly-perturb:subm} in \Cref{lem:poly-perturb} implies that $j_G$ is a submersion on $j_G^{-1}(\overline W_\mu)$.
Now, $j_G$ is the adjoint map of the composition
\[
B^{\mathbf n}
\xrightarrow{\widetilde G} C^\infty(\mathcal X,\mathcal Y)
\xrightarrow{j^r_{\mathbf n}} C^\infty(\mathcal X^{\mathbf n},J^r_{\mathbf n}(\mathcal X,\mathcal Y))\,.
\]
Thus, \Cref{theo:param-transv} implies that the subset
\[
\mathcal B:=\left\{\vec b\in B^{\mathbf n}\ \middle|\ j^r_{\mathbf n}\widetilde G(\vec b)\pitchfork W\ \text{on}\ \overline W_\mu\right\}
\]
is dense in $B^{\mathbf n}$.
Since $0$ is an accumulation point of $B$ and $\widetilde G:B\cup\{0\}\to C^\infty(\mathcal X,\mathcal Y)$ is continuous with $\widetilde G(0)=f$, it follows that $\widetilde G(0)=f$ is arbitrarily approximated by functions in $\{\widetilde G(\vec b)\mid\vec b\in\mathcal B\}\subset\mathcal T_\mu$ as required.
\end{proof}

\section{Application: embedding theorem}
\label{sec:appemb}

In this last section, we discuss an application of \Cref{thm:multijet-transv}; the embedding theorem of manifolds with faces.
This was considered and proved for compact $\langle n\rangle$-manifolds in Proposition 2.1.7 in the paper \cite{Lau00}; the proof is due to a construction of actual embeddings by hand.
In contrast, we take a more systematic approach to the theorem.
We will investigate the space $C^\infty(X,\mathbb H^I)$ and, using \Cref{thm:multijet-transv}, deduce that it contains the nonempty subset of embeddings.
As a result, we will obtain more or less general version of the embedding theorem; non-compact cases and general edgings will be covered.

\subsection{Admissible maps}
\label{sec:adm-map}

Before stating our embedding theorem, we need to discuss an additional condition on maps along edgings.
This is because it is not easy to require maps directly to be immersive around corners in terms of transversality.
For example, take $X=Y=I^2$, the unit square in the Euclidean plane.
We have a canonical identification:
\[
\begin{array}{ccc}
\operatorname{bd} I^2 & \cong & \{(i,a)\mid i=1,2\ \text{and}\ a=0,1\} \\
\{(x_1,x_2)\in I^2\mid x_i=a\} & \leftrightarrow & (i,a)\,.
\end{array}
\]
Put $A:=\{(1,a)\mid a=0,1\}\subset\operatorname{bd}I^2$, and consider the following edging $\beta$ of $X=I^2$ with $I^2$:
\[
\beta:\operatorname{bd}X=\operatorname{bd}I^2\supset \{(1,a)\mid a=0,1\}
\hookrightarrow\operatorname{bd}I^2=\operatorname{bd}Y
\]
Then, a map $F:X\to Y$ is along $\beta$ if and only if it preserves ``the left and right edges'' of the square.
Let $F:X\to Y$ be a smooth map defined below:
\[
F(x_1,x_2) := \left(x_1^2, \frac{x_1}2 (1-x_2) + x_2\right)
\]
Clearly, $F$ is along $\beta$ above, and the image of $F$ is depicted in Figure.~\ref{fig:nonadmis}:
\begin{figure}
\centering
\includegraphics[width=4cm]{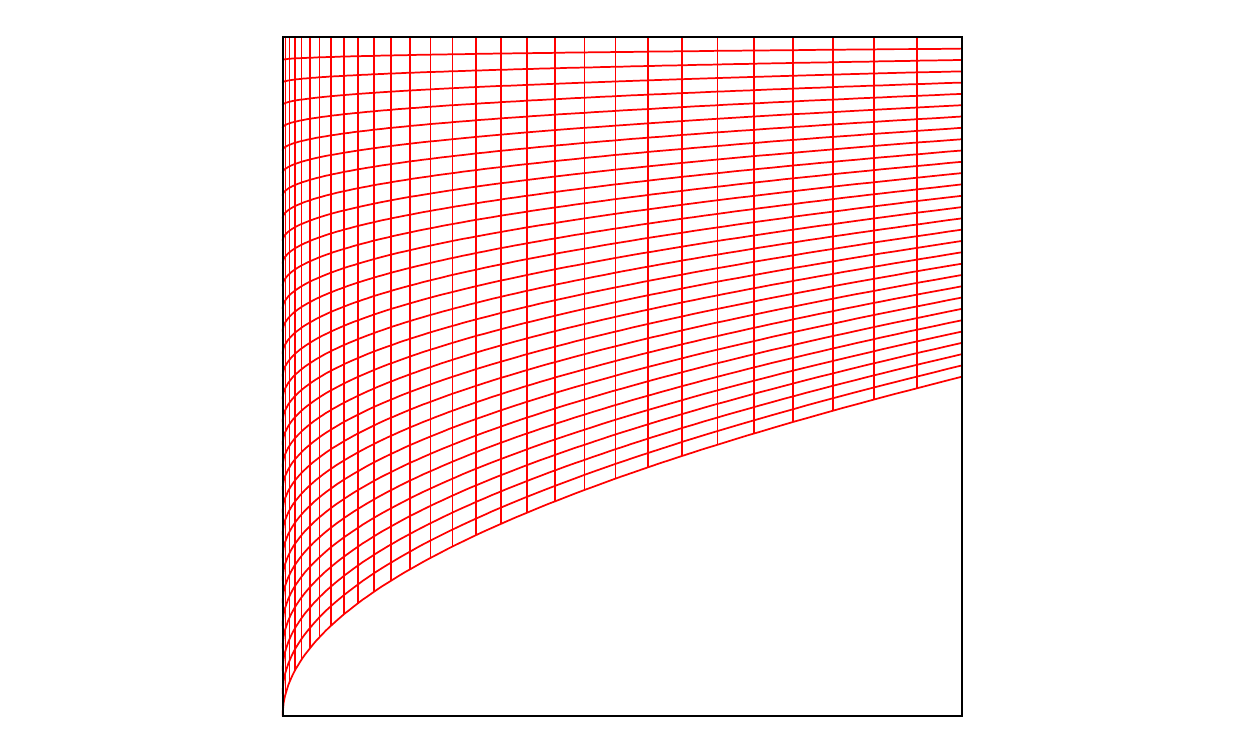} 
\caption{The image of the map $F:I^2\to I^2$}
\label{fig:nonadmis}
\end{figure}
The problem is that $F$ itself is not immersive at $(0,0)\in X=I^2$ while the restriction $F|_{0\times I}:0\times I\to 0\times I$ is.
Indeed, recall that the immersivity of a map involves its first jet.
In particular, since we think of $F$ as a relative map, we need to consider relative jets rather than the usual ones.
Put $C:=0\times I\in\operatorname{bd}I^2$ and $\kappa:=[C,I^2]\in(\Gamma_{I^2})^{[1]}$.
Then, the immersivity of $F$ at $0\times I$ is investigate by seeing the first relative jet
\[
j^1_\kappa F:0\times I\to J^1_\kappa(\widetilde X^\beta_{\sparallel},\widetilde Y)\,.
\]
The difficulty arise here; the fiber of the bundle is not so simple that it is not obvious that jets of a specified corank form a submanifold of $J^1_\kappa(\widetilde X^\beta_{\sparallel},\widetilde Y)$.
This is an obstruction to use a parallel argument of the classical embedding theorem.

To avoid it, we consider an additional condition which guarantees maps ``behave well'' around corners.

\begin{definition}
Let $X$ be a manifold with finite faces equipped with an edging $\beta$ with $Y$.
Then, a smooth map $F:X\to Y$ is said to be admissible along $\beta$ if it satisfies the following conditions:
\begin{enumerate}[label={\rm(\roman*)}]
  \item $F$ is along $\beta$; i.e. $F\in\mathcal F^\beta(X,Y)$.
  \item For each $\sigma\in\Gamma^\beta_X$ and for every $\tau\in\Gamma_Y$, we have $(F|_{\overline\partial_\sigma X})\pitchfork\overline\partial_\tau Y$ in $Y$.
\end{enumerate}
\end{definition}

As expected, for an admissible map, we can obtain its corank by seeing its restriction to any corners.

\begin{lemma}
\label{lem:adm-bdcorank}
Let $X$ and $Y$ be manifolds with finite faces, and let $\beta$ be an edging of $X$ with $Y$.
Suppose we are given an admissible map $F:X\to Y$ along $\beta$.
Then, for each $p\in X$, the corank, i.e. the minimum of the dimensions of the kernel and the cokernel, of the map
\begin{equation}
\label{eq:adm-bdcorank}
d_p(F|_{\overline\partial^\beta_\tau X}):T_p\overline\partial^\beta_\tau X\to T_{F(p)}\overline\partial_\tau Y
\end{equation}
does not depend on $\tau\in\Gamma_Y$ with $p\in\overline\partial^\beta_\tau X$.
\end{lemma}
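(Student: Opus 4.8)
The strategy is to reduce the statement to a local computation and then use the admissibility hypothesis to identify the relevant tangent spaces at a corner. Fix a point $p\in X$ and let $\sigma=s(p)$ be the smallest element of $\Gamma_X$ with $p\in\overline\partial_\sigma X$; write $\sigma^\perp:=\widetilde\beta^\perp(\sigma)\in\Gamma^\beta_X$, so that $p\in\overline\partial_{\sigma^\perp}X$ as well. The first observation is that, by the definition of $\partial^\beta_\tau X$ and by \Cref{prop:edge-submfd}, the condition $p\in\overline\partial^\beta_\tau X$ is equivalent to $\tau\le\widetilde\beta(\sigma)$ in $\Gamma_Y$; hence it suffices to show the corank of \eqref{eq:adm-bdcorank} at $p$ is the same for $\tau=\widetilde\beta(\sigma)$ (the smallest such $\tau$) and for $\tau=Y$ (the largest), and more generally constant along the chain, and a monotonicity argument between comparable $\tau\le\tau'$ will do.

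\textbf{Key steps.} First I would take a coordinate chart on $X$ centered at $p$ adapted to the faces through $p$ (using \Cref{lem:mfdface-cap}), and a chart on $Y$ centered at $F(p)$ adapted to the faces through $F(p)$, compatible with the edging $\beta$ via \Cref{lem:edge-sliceisom}: the lattice $(\Gamma_X)_{\ge\sigma}$ splits as $(\Gamma^\beta_X)_{\ge\sigma}\times(\Gamma_Y)_{\ge\widetilde\beta(\sigma)}$. In these coordinates $\overline\partial^\beta_\tau X$ is the common zero locus of a subset of the coordinate functions, and likewise for $\overline\partial_\tau Y$; thus $T_p\overline\partial^\beta_\tau X$ is a coordinate subspace of $T_pX$ and similarly on the target side. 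Second, the admissibility hypothesis says precisely that $F|_{\overline\partial_{\sigma^\perp}X}\pitchfork\overline\partial_\tau Y$ for every $\tau\in\Gamma_Y$; combined with \Cref{prop:transv-invim} (applied after the boundaryless embedding of \Cref{prop:emb-bdless}) this gives $\overline\partial^\beta_\tau X = (F|_{\overline\partial_{\sigma^\perp}X})^{-1}(\overline\partial_\tau Y)$ near $p$ and, crucially, that the differential $d_pF$ induces an \emph{isomorphism} of normal spaces
\[
T_p\overline\partial_{\sigma^\perp}X\big/T_p\overline\partial^\beta_\tau X \;\xrightarrow{\ \sim\ }\; T_{F(p)}Y\big/T_{F(p)}\overline\partial_\tau Y
\]
for every $\tau$. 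Third, for comparable $\tau\le\tau'$, I would feed this into the snake-lemma-type diagram whose rows are the short exact sequences of tangent spaces of the nested submanifolds $\overline\partial^\beta_\tau X\subset\overline\partial^\beta_{\tau'}X$ on the source and $\overline\partial_\tau Y\subset\overline\partial_{\tau'}Y$ on the target; since the induced map on the normal quotients (a subquotient of the normal-space isomorphism above) is an isomorphism, the map on kernels and the map on cokernels of $d_p(F|_{\overline\partial^\beta_\tau X})$ and $d_p(F|_{\overline\partial^\beta_{\tau'}X})$ are identified, so the coranks agree.

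\textbf{Main obstacle.} The routine part is the coordinate bookkeeping; the genuinely delicate point is verifying that the normal-space map induced by $d_pF$ is an isomorphism (not merely surjective) at the corner $p$ — this is exactly where one needs the restriction $F|_{\overline\partial_{\sigma^\perp}X}$, and not $F$ itself, to be transverse to \emph{all} the target faces $\overline\partial_\tau Y$, and where the splitting of \Cref{lem:edge-sliceisom} is used to match up which source coordinates are cut out by $\beta$-faces versus $\Gamma^\beta_X$-faces. Once that isomorphism is in hand, the corank statement follows formally by the diagram chase, and the general case $p\in\overline\partial^\beta_\tau X$ arbitrary reduces to the comparable pair $\tau\le Y$ since every such $\tau$ is $\le\widetilde\beta(\sigma)\le Y$. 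I would also remark that, as a by-product, the common value of the corank equals the corank of the unrestricted differential $d_pF:T_pX\to T_{F(p)}Y$, which is the form in which the lemma will presumably be applied in the proof of \Cref{thm:imm-thm}.
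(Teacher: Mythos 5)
Your proposal is correct and is essentially the paper's own argument: the paper also sets up the map of short exact sequences of tangent spaces, notes that the induced map on normal quotients is surjective by admissibility and an isomorphism because $\overline\partial^\beta_\tau X\hookrightarrow X$ and $\overline\partial_\tau Y\hookrightarrow Y$ have equal codimension, and concludes by the snake lemma. The only cosmetic difference is that the paper compares each restricted differential directly to the unrestricted $d_pF$ (the fact you note as a ``by-product'') rather than comparing comparable pairs $\tau\le\tau'$.
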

\begin{proof}
We show the corank of the map \eqref{eq:adm-bdcorank} equals that of the map
\[
d_pF:T_pX\to T_{F(p)}Y\,.
\]
Note that, in general, the corank of a linear map $f:V\to W$ between vector spaces equals the minimum of the dimensions of $\kernel f$ and $\coker f$.
Hence, it suffices to prove that the two maps have isomorphic kernels and cokernels.
We have the following map of short exact sequences:
\[
\xymatrix{
  T_p\overline\partial^\beta_\tau X \ar[d]_{d_p(F|_{\overline\partial^\beta_\tau X})} \ar@{>->}[]+R+(1,0);[r] & T_p X \ar@{->>}[r] \ar[d]_{d_pF} & T_p X/T_p\overline\partial^\beta_\tau X \ar[d] \\
  T_{F(p)}\overline\partial_\tau Y \ar@{>->}[]+R+(1,0);[r] & T_{F(p)}Y \ar@{->>}[r] & T_{F(p)}Y/T_{F(p)}\overline\partial_\tau Y }
\]
The right vertical map is an epimorphism since $F$ is admissible.
Moreover, it is actually an isomorphism; indeed, one can verify two embeddings $\overline\partial^\beta_\tau X\hookrightarrow X$ and $\overline\partial_\tau Y\hookrightarrow Y$ are of the same codimension.
Therefore, the result follows from the snake lemma.
\end{proof}

Here are typical examples of admissible maps:

\begin{example}
Let $X$ be a manifold with finite faces, and consider an edging $\beta$ of $X$ with the unit interval $I$.
Then, a function $f:X\to I$ along $\beta$ is admissible if and only if it has no critical value on the boundary $\partial I$.
\end{example}

\begin{example}
\label{ex:proj-admis}
Let $X$ and $Y$ be two manifolds with finite faces.
Define a partial map $\beta_Y:\operatorname{bd}(X\times Y)\to\operatorname{bd}Y$ to be the composition
\[
\beta_Y:\operatorname{bd}(X\times Y)
\cong (\operatorname{bd}X)\amalg(\operatorname{bd}Y)
\supset\operatorname{bd}Y
\xrightarrow{=}\operatorname{bd}Y\,,
\]
which defines an edging of $X\times Y$ with $Y$.
Then, the projection $X\times Y\to Y$ is admissible along $\beta_Y$.
\end{example}

For admissibility, we have the following useful criterion:

\begin{proposition}
\label{prop:corner-transv}
Let $X$ be a manifold with finite faces equipped with an edging $\beta$ with $Y$.
Then, a smooth map $F:X\to Y$ is admissible along $\beta$ if and only if it satisfies the following conditions:
\begin{enumerate}[label={\rm(\roman*)}]
  \item\label{cond:corner-transv:transv} For each connected face $D\in\operatorname{bd}Y$, we have $F\pitchfork D$.
  \item\label{cond:corner-transv:prescodim} $F$ carries $\partial^\beta_\tau X$ into $\partial_\tau Y$ for each $\tau\in\Gamma_Y$.
\end{enumerate}
\end{proposition}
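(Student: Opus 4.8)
The plan is to reduce both implications to a local computation in charts adapted to the faces of $X$ and $Y$ (\Cref{lem:mfdface-cap}). First I would record some reformulations. For $p\in X$ write $\sigma_p\in\Gamma_X$ for the set of connected faces of $X$ through $p$, i.e.\ the $\le$-least element of $\Gamma_X$ with $p\in\overline\partial_{\sigma_p}X$, and for $q=F(p)$ define $\tau_q\in\Gamma_Y$ likewise. Since $\overline\partial_{\sigma_p}X\neq\varnothing$, \Cref{cor:edge-slicesurj} shows that $\widetilde\beta$ restricts to a surjection $(\Gamma_X)_{\ge\sigma_p}\to(\Gamma_Y)_{\ge\widetilde\beta(\sigma_p)}$; hence $p\in\partial^\beta_\tau X$ iff $\tau=\widetilde\beta(\sigma_p)$, and dually $q\in\partial_\tau Y$ iff $\tau=\tau_q$. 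Thus $\{\partial^\beta_\tau X\}_\tau$ and $\{\partial_\tau Y\}_\tau$ are partitions and $\overline\partial^\beta_\tau X=\bigsqcup_{\tau'\le\tau}\partial^\beta_{\tau'}X$ (similarly for $Y$), so that ``$F$ along $\beta$'' is equivalent to ``$F(\overline\partial^\beta_\tau X)\subseteq\overline\partial_\tau Y$ for all $\tau$'', while condition~\ref{cond:corner-transv:prescodim} is equivalent to ``$\tau_{F(p)}=\widetilde\beta(\sigma_p)$ for all $p\in X$''. In particular \ref{cond:corner-transv:prescodim} forces $F$ to be along $\beta$; conversely for $F$ along $\beta$ one automatically has $\widetilde\beta(\sigma_p)\subseteq\tau_{F(p)}$, so \ref{cond:corner-transv:prescodim} says precisely that $F(p)$ lies in no connected face of $Y$ beyond the $\beta(C)$, $C\in\sigma_p\cap D(\beta)$.

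The technical core is a local lemma. Fix $p$ with $q=F(p)$, assume $F$ along $\beta$, and choose by \Cref{lem:mfdface-cap} charts $\varphi\colon U\to\mathbb H^{\langle m|k\rangle}$ centred at $p$, with the $\mathbb R_+$-coordinates $y_{k+1},\dots,y_m$ indexed by $\sigma_p$, and $\psi\colon V\to\mathbb H^{\langle n|l\rangle}$ centred at $q$, with $\mathbb R_+$-coordinates $z_{l+1},\dots,z_n$ indexed by $\tau_q$ (both bijections, as $X,Y$ are manifolds with faces). For $D\in\tau_q$ let $g_D$ be the corresponding component of $\psi\circ F$; then $g_D\ge 0$ near $p$ and $g_D(p)=0$. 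I claim: (a) the origin is a minimum of $g_D$ along each interior coordinate line $y_1,\dots,y_k$, so $d_pg_D\in\operatorname{span}\{d_py_{k+1},\dots,d_py_m\}$; and (b) if $D=\beta(C)$ for some $C\in\sigma_p\cap D(\beta)$, then $F$ along $\beta$ makes $g_D$ vanish on $\{y_{i(C)}=0\}$, so $g_D$ is divisible by $y_{i(C)}$ and $d_pg_D$ is a scalar multiple of $d_py_{i(C)}$, nonzero exactly when $F\pitchfork D$ at $p$. Both are elementary (a non-negative function vanishing at a point; a Hadamard-type factorisation).

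With these in hand, for $(\Rightarrow)$: condition~\ref{cond:corner-transv:transv} is the special case $\sigma=\varnothing\in\Gamma^\beta_X$, $\tau=\{D\}$ of admissibility, since $\overline\partial_\varnothing X=X$ and $\overline\partial_{\{D\}}Y=D$. For~\ref{cond:corner-transv:prescodim}, suppose for contradiction a connected face $D^*\ni q$ with $D^*\notin\widetilde\beta(\sigma_p)$; put $\tau^*:=\widetilde\beta(\sigma_p)\cup\{D^*\}$ and $\sigma:=\sigma_p\setminus D(\beta)\;(=\widetilde\beta^\perp(\sigma_p))\in\Gamma^\beta_X$, so $q\in\overline\partial_{\tau^*}Y\neq\varnothing$ and $p\in\overline\partial_\sigma X$. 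Admissibility gives $F|_{\overline\partial_\sigma X}\pitchfork\overline\partial_{\tau^*}Y$ at $p$, i.e.\ the images in $T_p^\ast\overline\partial_\sigma X$ of $\{d_pg_D\mid D\in\tau^*\}$ are independent; but restriction to $T_p\overline\partial_\sigma X$ annihilates $d_py_j$ for $j$ indexed by $\sigma$, so (a) puts the image of $d_pg_{D^*}$ in the span of the $d_py_j$ indexed by $\sigma_p\cap D(\beta)$, which by (b) and~\ref{cond:corner-transv:transv} equals the span of the images of $\{d_pg_D\mid D\in\widetilde\beta(\sigma_p)\}$ --- contradiction; hence $\tau_q=\widetilde\beta(\sigma_p)$. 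For $(\Leftarrow)$: \ref{cond:corner-transv:prescodim} already yields $F$ along $\beta$; given $\sigma\in\Gamma^\beta_X$, $\tau\in\Gamma_Y$, and $p\in\overline\partial_\sigma X$ with $q=F(p)\in\overline\partial_\tau Y$ (otherwise transversality is automatic), \ref{cond:corner-transv:prescodim} gives $\tau\subseteq\tau_q=\widetilde\beta(\sigma_p)$, so each $D\in\tau$ equals $\beta(C)$ for a unique $C\in\sigma_p\cap D(\beta)$; by (b) and~\ref{cond:corner-transv:transv} each $d_pg_D$ is a nonzero multiple of $d_py_{i(C)}$, and since $\sigma\cap D(\beta)=\varnothing$ these indices avoid those of $\sigma$, so the $d_py_{i(C)}$ restrict to distinct members of a basis of $T_p^\ast\overline\partial_\sigma X$; thus the images of the $d_pg_D$ ($D\in\tau$) are independent, i.e.\ $F|_{\overline\partial_\sigma X}\pitchfork\overline\partial_\tau Y$ at $p$, and $F$ is admissible.

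The main obstacle is the local lemma and the combinatorial bookkeeping around it: keeping the two bijections $\sigma_p\leftrightarrow\{k+1,\dots,m\}$ and $\tau_q\leftrightarrow\{l+1,\dots,n\}$ straight, tracking how $\widetilde\beta$ matches them, and making (a) (non-negativity) and (b) (divisibility) precise in the chart. Once the lemma is in place, both directions are short linear algebra in the cotangent space $T_p^\ast$.
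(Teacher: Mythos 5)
Your proof is correct and rests on the same local computation as the paper's: in adapted charts, non‑negativity of the boundary components of $\psi F$ kills the interior partial derivatives, and the ``along $\beta$'' condition makes each component $g_{\beta(C)}$ divisible by the coordinate of $C$, so that transversality forces the relevant block of the Jacobian to be diagonal with nonzero entries. The only difference is organizational — the paper first reduces to the case $D(\beta)=\operatorname{bd}X$ by restricting to the strata $X_\sigma$ and then counts the rank of the resulting matrix, whereas you handle general $\sigma\in\Gamma^\beta_X$ directly and argue the forward direction by contradiction with an augmented $\tau^*$ — but the substance is the same.
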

\begin{proof}
Obviously, $\beta$-admissibility implies the condition \ref{cond:corner-transv:transv}.
Thus, to see the required equivalence, it suffices to show that if \ref{cond:corner-transv:transv} holds, the following two are equivalent for each $p\in X$:
\begin{enumerate}[label={\rm(\alph*)}]
  \item\label{cond:prf:corner-transv:admis} For each $\sigma\in\Gamma^\beta_X$ with $p\in\overline\partial_\sigma X$, and for every $\tau\in\Gamma_Y$, $(F|_{\overline\partial_\sigma X})\pitchfork\overline\partial_\tau Y$ at $p$.
  \item\label{cond:prf:corner-transv:codim} $p\in\partial^\beta_\tau X$ implies $F(p)\in\partial_\tau Y$ for $\tau\in\Gamma_Y$.
\end{enumerate}

First, we prove the equivalence in the case $D(\beta)=\operatorname{bd}X$, or equivalently $\Gamma^\beta_X=\{X\}$.
For $p\in X$, take coordinate charts $\varphi:U\to\mathbb H^{\langle m|k\rangle}$ and $\psi:V\to\mathbb H^{\langle n|l\rangle}$ on $X$ and $Y$ centered at $p$ and $F(p)$ respectively.
Write $\psi F\varphi^{-1}=:(F_1,\dots,F_n)$.
Then, the condition \ref{cond:prf:corner-transv:admis} is satisfied if and only if the elements
\[
dF_{l+1},\dots,dF_n\in T^\ast_0\mathbb H^{\langle m|k\rangle}\simeq T^\ast_p X
\]
are linearly independent.
Since $F_j(0)=0$ is the minimal value of $F_j$ for $j=l+1,\dots,n$, we have $\frac{\partial F_j}{\partial x_i}(0)=0$ for $i=1,\dots,k$ and $j=l+1,\dots,n$.
Hence, the condition \ref{cond:prf:corner-transv:admis} is equivalent to that the matrix
\begin{equation}
\label{eq:prf:corner-transv:Fmat}
\begin{pmatrix}
\dfrac{\partial F_{l+1}}{\partial x_{k+1}}(0) & \cdots & \dfrac{\partial F_n}{\partial x_{k+1}}(0) \\
\vdots & \ddots & \vdots \\
\dfrac{\partial F_{l+1}}{\partial x_m}(0) & \cdots & \dfrac{\partial F_n}{\partial x_m}(0)
\end{pmatrix}
\end{equation}
has rank $n-l$.
Now, one can find an edging $\beta_p:\operatorname{bd}\mathbb H^{\langle m|k\rangle}\to\operatorname{bd}\mathbb H^{\langle n|l\rangle}$ so that the following square is commutative:
\[
\xymatrix{
  \operatorname{bd}\mathbb H^{\langle m|k\rangle} \ar[d]_{\varphi^\ast} \ar[r]^{\beta_p} & \operatorname{bd}\mathbb H^{\langle n|l\rangle} \ar[d]^{\psi^\ast} \\
  \operatorname{bd} X \ar[r]^\beta & \operatorname{bd} Y }
\]
Since we assumed $D(\beta)=\operatorname{bd}X$, we have $D(\beta_p)=\operatorname{bd}\mathbb H^{\langle m|k\rangle}$.
By the requirement for $\beta_p$ to be an edging, applying appropriate coordinate permutations, we may assume $\beta_p(\{x_{k+s}=0\})=\{y_{l+s}=0\}$.
On the other hand, since $F$ is along $\beta$, the smooth map $\mathbb H^{\langle m|k\rangle}\supset\varphi(U)\xrightarrow{(F_1,\dots,F_n)}\psi(V)\subset\mathbb H^{\langle n|l\rangle}$ is along $\beta_p$.
This implies that all the components of the matrix \ref{eq:prf:corner-transv:Fmat} is zero except for the diagonals which are non-zero under the condition \ref{cond:corner-transv:transv}.
Therefore, we conclude that, under \ref{cond:corner-transv:transv}, the condition \ref{cond:prf:corner-transv:admis} is satisfied if and only if $m-k=n-l$; in other words, we have
\[
\psi F\varphi^{-1}(\varphi(U)\cap\partial_{m-k}\mathbb H^{\langle m|k\rangle})
\subset \partial_{n-l}\mathbb H^{\langle n|l\rangle}
\]
which is precisely the condition \ref{cond:prf:corner-transv:codim}.

For general cases, put
\[
X_\sigma := \overline\partial_\sigma X \setminus \bigcup_{\sigma>\sigma'\in\Gamma^\beta_X}\overline\partial_{\sigma'}X
\]
for each $\sigma\in\Gamma^\beta_X$.
Then, the edging $\beta$ of $X$ is canonically restricted to that of $X_\sigma$, say $\beta_\sigma$, and we have $D(\beta_\sigma)=\operatorname{bd} X_\sigma$ (cf. \Cref{ex:edge-face}).
One should notice that $F$ is admissible if and only if so is its restriction
\[
F|_{X_\sigma}:X_\sigma\to Y
\]
for each $\sigma\in\Gamma^\beta_X$.
Hence, the result follows from the special case above.
\end{proof}

\begin{remark}
The two conditions in \Cref{prop:corner-transv} is described in view of the equality $\mathcal F^\beta(X,Y)=C^\infty(\widetilde X^\beta_{\sparallel},\widetilde Y)$.
On the other hand, according to \Cref{lem:edge-alongLan}, we have another equality $\mathcal F^\beta(X,Y)=C^\infty(\widetilde X,\widetilde Y_{\widetilde\beta})$.
In this point of view, the two conditions can be described in the following forms:
\begin{enumerate}[label={\rm(\roman*')}]
  \item For each $C\in D(\beta)\subset\operatorname{bd}X$, we have $F\pitchfork \beta(C)$ on $C$.
  \item $F$ carries $\partial_\sigma X$ into $\partial_{\widetilde\beta(\sigma)}Y$ for each $\sigma\in\Gamma_X$.
\end{enumerate}
\end{remark}

In practice, to verify the condition \ref{cond:corner-transv:transv}, the following notion is sometimes convenient:

\begin{definition}[cf. boundary defining functions defined in \cite{Joy09}]
Let $X$ be a manifold with faces.
Then, a non-negative smooth function $f:X\to\mathbb R_+$ is said to recognize a connected face $C\in\operatorname{bd}X$ if it satisfies the following conditions:
\begin{enumerate}[label={\rm(\roman*)}]
  \item\label{cond:func-spec:zero} The map $f$ vanishes on $C$.
  \item\label{cond:func-spec:exact} The $1$-form $df$ does not vanish on $C$.
\end{enumerate}
\end{definition}

Thanks to the Collar Neighborhood Theorem (\Cref{thm:collaring}), every connected face admits a non-negative function recognizing it.
Moreover, if $f:X\to\mathbb R_+$ recognizes a connected face $C$, then for every $C$-collaring vector field $\xi$ on $X$, the smooth function $\xi(f):X\to\mathbb R$ is positive on $C$.
Note that, for each $\sigma\in\Gamma_X$ with $\sigma\not\le C$ and for each $p\in\overline\partial_\sigma X\cap C$, since $\xi_p\in T_pX$ belongs to the image of $T_p\overline\partial_\sigma X$, we have
\[
\xi_p(d_pf|_{\overline\partial_\sigma X}) = \xi_p(d_pf) = \xi_p(f) > 0\,.
\]
This implies that the restriction $f|_{\overline\partial_\sigma X}:\overline\partial_\sigma X\to\mathbb R_+$ recognizes faces contained in $\overline\partial_\sigma X\cap C$.

\begin{proposition}
\label{prop:transv-recog}
Let $X$ and $Y$ be manifolds with finite faces, and let $\beta$ be an edging of $X$ with $Y$.
Suppose $F:X\to Y$ be a smooth map along $\beta$.
Then, for every connected face $D\in\operatorname{bd}Y$ of $Y$, the following two conditions are equivalent:
\begin{enumerate}[label={\rm(\alph*)}]
  \item\label{cond:transv-recog:transv} $F$ intersects transversally to $D$ on any $C$ with $C\in D(\beta)\subset\operatorname{bd}X$ and $\beta(C)=D$.
  \item\label{cond:transv-recog:recog} The map $F$ pulls back non-negative functions on $Y$ recognizing $D$ to those on $X$ recognizing every $C$ with $C\in D(\beta)\subset\operatorname{bd}X$ and $\beta(C)=D$.
\end{enumerate}
\end{proposition}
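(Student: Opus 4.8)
The plan is to reduce the whole statement to the chain rule together with the codimension‑one geometry of a connected face, and to treat it pointwise on the relevant faces. First I would record the elementary observations that make the two conditions comparable. Since $F$ is along $\beta$ and $\beta(C)=D$, we have $F(C)\subseteq D$; hence for any non-negative $g\colon Y\to\mathbb R_+$ vanishing on $D$ the pullback $g\circ F$ is non-negative and vanishes on $C$, so condition \ref{cond:func-spec:zero} in the definition of ``recognizing'' holds automatically on both sides and the entire statement concerns only the differential of $g\circ F$ along $C$. By functoriality of $d$ one has, for $p\in C$, the identity $d_p(g\circ F)=d_{F(p)}g\circ d_pF$ as maps $T_pX\to\mathbb R$. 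Moreover, since $g|_D\equiv 0$, the covector $d_{F(p)}g\in T^\ast_{F(p)}Y$ annihilates $T_{F(p)}D$; and because $D$ is a connected face of $Y$, hence a submanifold of codimension one by \Cref{lem:mfdface-cap}, the hypothesis $d_{F(p)}g\neq 0$ says precisely that $d_{F(p)}g$ induces an isomorphism $T_{F(p)}Y/T_{F(p)}D\xrightarrow{\ \sim\ }\mathbb R$.

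Next I would prove \ref{cond:transv-recog:transv} $\Rightarrow$ \ref{cond:transv-recog:recog}. Fix a non-negative $g$ recognizing $D$, a face $C\in D(\beta)$ with $\beta(C)=D$, and a point $p\in C$; then $F(p)\in D$, and $F\pitchfork D$ at $p$ means the composite $T_pX\xrightarrow{d_pF}T_{F(p)}Y\twoheadrightarrow T_{F(p)}Y/T_{F(p)}D$ is surjective, so there is $v\in T_pX$ with $d_pF(v)\notin T_{F(p)}D$. Composing with the isomorphism above yields $d_p(g\circ F)(v)=d_{F(p)}g(d_pF(v))\neq 0$, so $d_p(g\circ F)\neq 0$. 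As $p\in C$ was arbitrary, $g\circ F$ satisfies \ref{cond:func-spec:exact} on $C$, i.e.\ it recognizes $C$; and this holds for every such $C$, giving \ref{cond:transv-recog:recog}.

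For \ref{cond:transv-recog:recog} $\Rightarrow$ \ref{cond:transv-recog:transv} I would argue by contraposition. Suppose $F$ fails to be transversal to $D$ at some $p\in C$, for some $C\in D(\beta)$ with $\beta(C)=D$. Since $F(p)\in D$ and $\operatorname{codim}(T_{F(p)}D\subseteq T_{F(p)}Y)=1$, the failure of transversality forces $d_pF(T_pX)\subseteq T_{F(p)}D$. Then for \emph{every} smooth non-negative $g\colon Y\to\mathbb R_+$ vanishing on $D$ we get $d_p(g\circ F)=d_{F(p)}g\circ d_pF=0$ on $T_pX$, because $d_{F(p)}g$ annihilates $T_{F(p)}D$. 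Now invoke the Collar Neighborhood Theorem (\Cref{thm:collaring})---as recalled just before the statement, every connected face of $Y$ admits a non-negative function recognizing it---to produce such a $g$ recognizing $D$; its pullback $g\circ F$ has vanishing differential at the point $p\in C$, hence fails \ref{cond:func-spec:exact} and does not recognize $C$, contradicting \ref{cond:transv-recog:recog}. This completes the plan.

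As for difficulty, there is essentially no hard step: the two points requiring any care are (i) that a connected face has codimension one, which is exactly \Cref{lem:mfdface-cap}, so that ``$d_{F(p)}g\neq 0$ and $d_{F(p)}g$ kills $T_{F(p)}D$'' is the same as ``$d_{F(p)}g$ induces an isomorphism on the one-dimensional normal line of $D$'', and (ii) the existence of functions recognizing $D$, which is already available from \Cref{thm:collaring}. One should also note that both conditions are quantified over all faces $C\in D(\beta)$ with $\beta(C)=D$ and that both directions of the argument are genuinely pointwise on such $C$, so it suffices throughout to fix one such $C$ and one point $p\in C$ at a time.
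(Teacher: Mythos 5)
Your proof is correct and follows essentially the same route as the paper's: both directions come down to the chain rule $d_p(g\circ F)=F^\ast d_{F(p)}g$, the fact that $d_{F(p)}g$ spans the annihilator of the codimension-one subspace $T_{F(p)}D$, and the existence of recognizing functions supplied by \Cref{thm:collaring}. The only cosmetic difference is that the paper phrases the argument via (maps of) short exact sequences of cotangent spaces while you argue on the one-dimensional normal quotient and prove the second implication by contraposition.
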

\begin{proof}
First, suppose \ref{cond:transv-recog:transv} is satisfied, and let $g:Y\to\mathbb R_+$ be an arbitrary smooth function recognizing $D$.
It is obvoius that the composition $gF$ vanishes on any connected faces $C\in D(\beta)$ with $\beta(C)=D$.
Moreover, $g$ gives rise to a short exact sequence
\[
\mathbb R\cdot d_qg
\hookrightarrow T^\ast_qY
\twoheadrightarrow T^\ast_qD
\]
for each $q\in D$.
Since we have $F\pitchfork D$ by the condition \ref{cond:transv-recog:transv}, for every $p\in X$ with $F(p)\in D$, the composition
\[
\mathbb R\cdot d_{F(p)}g
\hookrightarrow T^\ast_{F(p)}Y
\xrightarrow{F^\ast} T^\ast_p X
\]
is a monomorphism.
In particular, the pullback $d(gF)=F^\ast(dg)$ does not vanish on any $C$ with $\beta(C)=D$, so that $F$ satisfies \ref{cond:transv-recog:recog}.

Conversely, suppose \ref{cond:transv-recog:recog}.
Take a smooth function $g:Y\to\mathbb R_+$ recognizing $D$, which is done by using \Cref{thm:collaring}.
Let $C$ be any connected face of $X$ with $C\in D(\beta)$ and $\beta(C)=D$.
Then, since $gF$ recognizes $C$ by the assumption, for each $p\in C$, there are isomorphisms
\[
\begin{gathered}
T^\ast_{F(p)}Y \cong T^\ast_{F(p)} D\oplus\mathbb R\cdot d_{F(p)}g \\
T^\ast_pX \cong T^\ast_p C\oplus\mathbb R\cdot d_p(gF)\mathrlap{\,.}
\end{gathered}
\]
We obtain a morphism between short exact sequences:
\[
\xymatrix{
  \mathbb R\cdot d_{F(p)}g \ar[r] \ar[d]_{F^\ast} & T^\ast_{F(p)}Y \ar[r] \ar[d]_{F^\ast} & T^\ast_{F(p)}D \ar[d]^{F^\ast} \\
  \mathbb R\cdot d_p(gF) \ar[r] & T^\ast_pX \ar[r] & T^\ast_pC }
\]
Since the left arrow is clearly a monomorphism, the condition \ref{cond:transv-recog:transv} immediately follows.
\end{proof}

\begin{corollary}
\label{cor:admcompose}
Let $X$ be a manifold with faces equipped with an edging $\beta$ with $Y$.
Suppose, in addition, we also have an edging $\gamma$ of $Y$ with $Z$.
If smooth maps $F:X\to Y$ and $G:Y\to Z$ are admissible along $\beta$ and $\gamma$ respectively, then the composition $GF$ is again admissible along $\gamma\beta$.
\end{corollary}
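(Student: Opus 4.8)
The plan is to verify, for the composite $GF$ and the edging $\gamma\beta$ of $X$ with $Z$ (which is indeed an edging, by \Cref{ex:edge-comp}), the two criteria characterizing admissibility in \Cref{prop:corner-transv}: that $GF\pitchfork D$ for every connected face $D\in\operatorname{bd}Z$, and that $GF$ carries $\partial^{\gamma\beta}_\upsilon X$ into $\partial_\upsilon Z$ for each $\upsilon\in\Gamma_Z$. That $GF$ is along $\gamma\beta$ at all is immediate from \Cref{lem:edgefunc-comp} (applied to the admissible, hence along-$\beta$ and along-$\gamma$, maps $F$ and $G$). Throughout I would use the identity $\widetilde{\gamma\beta}=\widetilde\gamma\circ\widetilde\beta$, which is the functoriality of the equivalence in \Cref{prop:partmap-blex}.

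For the face-transversality criterion I would run the recognizing-function machinery of \Cref{prop:transv-recog} twice. Fix $D\in\operatorname{bd}Z$ and a non-negative smooth $g:Z\to\mathbb R_+$ recognizing $D$; such $g$ exists by the discussion following \Cref{thm:collaring}. Since $G$ is admissible along $\gamma$ it satisfies $G\pitchfork D$, so \Cref{prop:transv-recog} shows $g\circ G$ recognizes every connected face $C'\in D(\gamma)$ with $\gamma(C')=D$. For any $C\in D(\gamma\beta)$ with $(\gamma\beta)(C)=D$ we have $C\in D(\beta)$ and $\beta(C)\in D(\gamma)$ with $\gamma(\beta(C))=D$, so $g\circ G$ recognizes $\beta(C)$; applying \Cref{prop:transv-recog} to the admissible map $F$ (which meets $\beta(C)$ transversally along $C$), the pullback $g\circ(GF)=(g\circ G)\circ F$ recognizes $C$. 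Running the equivalence in \Cref{prop:transv-recog} the other way then gives $GF\pitchfork D$ along every such $C$. To promote this to $GF\pitchfork D$ on all of $X$ one uses the second criterion below: once $GF$ is known to preserve the boundary strata, any $p\in(GF)^{-1}(D)$ automatically lies on a face $C\in D(\gamma\beta)$ with $(\gamma\beta)(C)=D$ (alternatively, invoke the reformulation in the remark after \Cref{prop:corner-transv}).

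The second criterion is a straightforward chase through the face lattices. By \Cref{prop:corner-transv}, admissibility of $F$ means exactly that $F$ sends the open stratum of $X$ of scope $\sigma$ into the open stratum of $Y$ of scope $\widetilde\beta(\sigma)$ for every $\sigma\in\Gamma_X$, and likewise $G$ sends the stratum of scope $\tau$ in $Y$ into that of scope $\widetilde\gamma(\tau)$ in $Z$. Composing, $GF$ sends the stratum of scope $\sigma$ in $X$ into the stratum of scope $\widetilde\gamma(\widetilde\beta(\sigma))=\widetilde{\gamma\beta}(\sigma)$ in $Z$; unwinding $\partial^{\gamma\beta}_\upsilon X=\{p:\widetilde{\gamma\beta}(\sigma_p)=\upsilon\}$, where $\sigma_p\in\Gamma_X$ is the set of connected faces of $X$ through $p$, and the analogue for $\partial_\upsilon Z$, this is precisely the assertion that $GF$ carries $\partial^{\gamma\beta}_\upsilon X$ into $\partial_\upsilon Z$. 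Concretely, for $p\in\partial^{\gamma\beta}_\upsilon X$ one sets $\tau:=\widetilde\beta(\sigma_p)$, observes $p\in\partial^\beta_\tau X$ and $\widetilde\gamma(\tau)=\upsilon$, so $F(p)\in\partial_\tau Y\subset\partial^\gamma_\upsilon Y$ by admissibility of $F$, whence $GF(p)\in\partial_\upsilon Z$ by admissibility of $G$.

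The only mildly delicate ingredient is the elementary lattice bookkeeping underlying the last paragraph: identifying the strata $\partial^\beta_\tau X$ and $\partial^{\gamma\beta}_\upsilon X$ with the scope conditions $\widetilde\beta(\sigma_p)=\tau$ and $\widetilde{\gamma\beta}(\sigma_p)=\upsilon$, and checking that a point of $Y$ of scope $\tau$ with $\widetilde\gamma(\tau)=\upsilon$ lies in $\partial^\gamma_\upsilon Y$ (so that admissibility of $G$ can be applied). These are routine consequences of the order theory of the finite Boolean lattices $\Gamma_X,\Gamma_Y,\Gamma_Z$ and of the fact that $\widetilde\beta,\widetilde\gamma$ preserve infima and do not raise coheight, so I do not expect any obstruction beyond careful notation.
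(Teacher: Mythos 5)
Your proof is correct and follows exactly the route the paper intends: the paper states \Cref{cor:admcompose} without proof immediately after \Cref{prop:transv-recog}, and the implicit argument is precisely your combination of the criterion in \Cref{prop:corner-transv} with the recognizing-function characterization of \Cref{prop:transv-recog}, plus the lattice identity $\widetilde{\gamma\beta}=\widetilde\gamma\circ\widetilde\beta$. The stratum bookkeeping in your last two paragraphs checks out, so there is nothing to add.
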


\begin{example}
Let $\mathcal X$ be a neat arrangement of shape $S$.
Note that, for each $s\in S$, there is a unique edging $\eta_s$ of $\mathcal X(s)$ with $|\mathcal X|$ so that the embedding $\mathcal X(s)\hookrightarrow |\mathcal X|$ is along it.
It is easily verified that the embedding is even admissible.
Now, suppose we have an edging $\beta$ of $\mathcal X$ with $Y$, so that we also have an edging $\beta\eta_s$ of $\mathcal X(s)$ for each $s\in S$.
Then, by \Cref{cor:admcompose}, if a map $F:|\mathcal X|\to Y$ is admissible along $\beta$, the composition
\[
\mathcal X(s)
\hookrightarrow |\mathcal X|
\xrightarrow{F} Y
\]
is also admissible along $\beta\eta_s$.
This is a reason why we did not discuss admissibility of maps on arrangements.
\end{example}

\begin{corollary}
\label{cor:admpaste}
Let $X$, $Y_1$, and $Y_2$ be manifolds with finite faces.
Suppose we have two edging $\beta_1$ and $\beta_2$ of $X$ with $Y_1$ and $Y_2$ respectively such that $D(\beta_1)\cap D(\beta_2)=\varnothing\subset\operatorname{bd}X$, so we have an edging $\beta=\beta_1\amalg\beta_2$ of $X$ with $Y_1\times Y_2$ (see \Cref{ex:edge-paste}).
Then, a smooth map $F=(F_1,F_2):X\to Y_1\times Y_2$ along $\beta$ (by \Cref{lem:edgefunc-prod}, this is equivalent to say $F_i$ is along $\beta_i$ for $i=1,2$) is admissible if and only if both $F_1$ and $F_2$ are admissible.
\end{corollary}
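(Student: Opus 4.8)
The plan is to reduce both directions to the geometric characterization of admissibility in \Cref{prop:corner-transv}: a map along an edging is admissible precisely when (i) it is transverse to every connected face of the target, and (ii) it carries each stratum $\partial^\beta_\tau$ of the source into the stratum $\partial_\tau$ of the target (the locus of points lying on exactly the faces indexed by $\tau$). Throughout I use \Cref{lem:edgefunc-prod} to identify ``$F=(F_1,F_2)$ is along $\beta=\beta_1\amalg\beta_2$'' with ``$F_i$ is along $\beta_i$ for $i=1,2$'', together with the identification $\Gamma_{Y_1\times Y_2}\cong\Gamma_{Y_1}\times\Gamma_{Y_2}$ induced by $\operatorname{bd}(Y_1\times Y_2)\cong\operatorname{bd}Y_1\amalg\operatorname{bd}Y_2$, under which $\widetilde\beta=(\widetilde\beta_1,\widetilde\beta_2)$, $\overline\partial_{(\tau_1,\tau_2)}(Y_1\times Y_2)=\overline\partial_{\tau_1}Y_1\times\overline\partial_{\tau_2}Y_2$, and consequently $\partial_{(\tau_1,\tau_2)}(Y_1\times Y_2)=\partial_{\tau_1}Y_1\times\partial_{\tau_2}Y_2$.

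For the implication ``$F$ admissible $\Rightarrow F_1,F_2$ admissible'' I would avoid \Cref{prop:corner-transv} entirely and argue by composition. Let $q_i\colon Y_1\times Y_2\to Y_i$ be the projections; by \Cref{ex:proj-admis} (after harmlessly swapping the two factors) $q_i$ is admissible along the projection edging $\gamma_i$ of $Y_1\times Y_2$ with $Y_i$, namely the partial map defined on $\operatorname{bd}Y_i\subset\operatorname{bd}(Y_1\times Y_2)$ by the identity. Since $D(\beta_1)\cap D(\beta_2)=\varnothing$, a direct inspection of partial maps shows $\gamma_i\circ\beta=\beta_i$, so $F_i=q_i\circ F$ is admissible along $\beta_i$ by \Cref{cor:admcompose}.

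For the converse, assume $F_1,F_2$ admissible and verify conditions (i) and (ii) of \Cref{prop:corner-transv} for $F$. Every connected face of $Y_1\times Y_2$ is of the form $D_1\times Y_2$ or $Y_1\times D_2$; using the splitting $T(Y_1\times Y_2)\cong TY_1\oplus TY_2$ one sees that $F\pitchfork(D_1\times Y_2)$ is literally $F_1\pitchfork D_1$, which holds by condition (i) of \Cref{prop:corner-transv} for $F_1$ (and symmetrically for $Y_1\times D_2$), giving (i). For (ii), fix $\tau=(\tau_1,\tau_2)$; the crucial point is the stratum inclusion
\[
\partial^\beta_{(\tau_1,\tau_2)}X\ \subset\ \partial^{\beta_1}_{\tau_1}X\cap\partial^{\beta_2}_{\tau_2}X .
\]
Granting it, condition (ii) of \Cref{prop:corner-transv} for $F_1$ gives $F_1(\partial^\beta_\tau X)\subset\partial_{\tau_1}Y_1$ and likewise for $F_2$, so $F(\partial^\beta_\tau X)\subset\partial_{\tau_1}Y_1\times\partial_{\tau_2}Y_2=\partial_\tau(Y_1\times Y_2)$.

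The only real work is the stratum inclusion, and I expect it to be the main obstacle — not because it is deep, but because one must keep the opposite-of-inclusion order on the $\Gamma$'s straight and carefully separate the closed strata $\overline\partial^\beta$ from the open strata $\partial^\beta$. First I would prove the closed identity $\overline\partial^\beta_{(\tau_1,Y_2)}X=\overline\partial^{\beta_1}_{\tau_1}X$: ``$\subseteq$'' is immediate from $\widetilde\beta=(\widetilde\beta_1,\widetilde\beta_2)$, while for ``$\supseteq$'' one replaces any $\sigma$ with $\widetilde\beta_1(\sigma)=\tau_1$ by $\sigma\setminus D(\beta_2)$, which (using $D(\beta_1)\cap D(\beta_2)=\varnothing$) has unchanged $\widetilde\beta_1$-image and $\widetilde\beta_2$-image $Y_2$, and only enlarges $\overline\partial_\sigma X$ by \Cref{cor:mfdface-uniquecap}. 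Then for $p\in\partial^\beta_{(\tau_1,\tau_2)}X$ we get $p\in\overline\partial^\beta_{(\tau_1,\tau_2)}X\subset\overline\partial^\beta_{(\tau_1,Y_2)}X=\overline\partial^{\beta_1}_{\tau_1}X$; were $p$ also in some $\overline\partial^{\beta_1}_{\tau_1'}X=\overline\partial^\beta_{(\tau_1',Y_2)}X$ with $\tau_1'<\tau_1$, then intersecting with $\overline\partial^\beta_{(\tau_1,\tau_2)}X$ and applying \Cref{prop:edge-submfd} would place $p$ in $\overline\partial^\beta_{(\tau_1',\tau_2)}X$ with $(\tau_1',\tau_2)<(\tau_1,\tau_2)$, contradicting $p\in\partial^\beta_{(\tau_1,\tau_2)}X$. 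Hence $p\in\partial^{\beta_1}_{\tau_1}X$, and symmetrically $p\in\partial^{\beta_2}_{\tau_2}X$, completing the inclusion and the proof.
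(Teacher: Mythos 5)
Your proof is correct and follows essentially the same route as the paper: the forward implication by composing with the projections via \Cref{ex:proj-admis} and \Cref{cor:admcompose}, and the converse by checking the two conditions of \Cref{prop:corner-transv} using the splitting of tangent spaces and the product decomposition of the strata of $Y_1\times Y_2$. Your explicit verification of the source-side stratum inclusion $\partial^\beta_{(\tau_1,\tau_2)}X\subset\partial^{\beta_1}_{\tau_1}X\cap\partial^{\beta_2}_{\tau_2}X$ is a correct and welcome filling-in of a step the paper leaves implicit.
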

\begin{proof}
Let us denote by $\pi_i:Y_1\times Y_2\to Y_i$ the projection for $i=1,2$, each of which is admissible along the canonical edging of the projection by \Cref{ex:proj-admis}.
Since $F_i=\pi_i F$, \Cref{cor:admcompose} implies that if $F$ is admissible, then so are $F_1$ and $F_2$.
We show the converse using \Cref{prop:corner-transv}.
Note that each connected face of $Y_1\times Y_2$ is of either form
\[
D_1\times\alpha_2
\quad\text{or}\quad \alpha_1\times D_2
\]
for $D_i\in\operatorname{bd}Y_i$ and $\alpha_i\in\pi_0Y_i$.
Hence, we have commutative diagrams below:
\[
\begin{gathered}
\xymatrix{
  T_pX \ar[r]^F \ar[dr]_{F_1} & T_{F(p)}(Y_1\times Y_2) \ar@{->>}[r] \ar[d]^{\pi_1} & T_{F(p)}(Y_1\times Y_2)/T_{F(p)}(D_1\times\alpha_2) \ar[d]^\cong \\
  & T_{F_1(p)}Y_1 \ar@{->>}[r] & T_{F_1(p)}Y_1/T_{F_1(p)}D_1 }
\\
\xymatrix{
  T_pX \ar[r]^F \ar[dr]_{F_2} & T_{F(p)}(Y_1\times Y_2) \ar@{->>}[r] \ar[d]^{\pi_2} & T_{F(p)}(Y_1\times Y_2)/T_{F(p)}(\alpha_1\times D_2) \ar[d]^\cong \\
  & T_{F_2(p)}Y_2 \ar@{->>}[r] & T_{F_2(p)}Y_2/T_{F_2(p)}D_2 }
\end{gathered}
\]
In addition, one can verify the formula
\[
\partial_\tau(Y_1\times Y_2)
= \partial_{\tau_1}Y_1\times\partial_{\tau_2}Y_2
\subset Y_1\times Y_2
\]
for each $\tau=(\tau_1,\tau_2)\in\Gamma_{Y_1\times Y_2}\cong\Gamma_{Y_1}\times\Gamma_{Y_2}$.
Therefore, thanks to \Cref{prop:corner-transv}, the map $F=(F_1,F_2):X\to Y_1\times Y_2$ is admissible as soon as so are $F_1$ and $F_2$.
\end{proof}

As a consequence of \Cref{cor:admpaste}, we can construct an admissible map $X\to\mathbb R_+^{\operatorname{bd}X}$ for every manifold $X$ with finite faces in the following way:
For each $C\in\operatorname{bd}X$, choose a non-negative smooth function $f_C:X\to\mathbb R_+$ recognizing the face $C$.
Notice that, in the moment, $f_C$ is admissible along the edging
\[
\operatorname{bd}X\supset \{C\}\to \{\mathrm{pt}\}=\operatorname{bd}\mathbb R_+\,.
\]
Therefore, by \Cref{cor:admpaste}, the map
\[
F:=(f_C)_C : X\to\mathbb R^{\operatorname{bd}X}_+
\]
is admissible along the identity edging.
The same result will, however, be proved in the next section in more sophisticated way.

\subsection{Genericity of admissible maps}
\label{sec:generic-adm}

We continue the discussion on admissible maps.
In this section, we prove that maps in $\mathcal F^\beta(X,Y)$ are generically admissible.
More precisely, the goal of this section is to prove the following theorem:

\begin{theorem}[cf. Theorem 1.7 in \cite{Ish98}]
\label{thm:adm-residual}
Let $X$ and $Y$ be manifolds with finite faces, and let $\beta$ be an edging of $X$ with $Y$.
Then admissible maps form an open and dense subset in $\mathcal F^\beta(X,Y)$.
\end{theorem}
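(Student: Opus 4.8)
The plan is to reduce the statement to \Cref{prop:corner-transv}, which identifies $\beta$-admissibility of $F\in\mathcal F^\beta(X,Y)$ with the conjunction, over the finitely many connected faces $D\in\operatorname{bd}Y$, of the conditions $F\pitchfork D$ and ``$F$ carries $\partial^\beta_\tau X$ into $\partial_\tau Y$ for all $\tau\in\Gamma_Y$''. Since $F$ is already along $\beta$ one always has $F(\overline\partial^\beta_\tau X)\subseteq\overline\partial_\tau Y$ and $\overline\partial^\beta_{\{D\}}X\subseteq F^{-1}(D)$, and, using \Cref{prop:edge-submfd}, the second condition is readily seen to be equivalent to the equality $F^{-1}(D)=\overline\partial^\beta_{\{D\}}X$ for every $D$. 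I would therefore write the set of admissible maps as the finite intersection $\bigcap_{D\in\operatorname{bd}Y}\mathcal A_D$, where
$\mathcal A_D:=\{F\in\mathcal F^\beta(X,Y)\mid F^{-1}(D)=\overline\partial^\beta_{\{D\}}X\text{ and }F\pitchfork D\text{ on }\overline\partial^\beta_{\{D\}}X\}$,
and prove that each $\mathcal A_D$ is open and dense; both properties then pass to the finite intersection. (Density of a residual subset could alternatively be invoked through \Cref{cor:edgefunc-Baire}, but once openness of each factor is available the above is cleaner.)

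For openness of $\mathcal A_D$ I would use \Cref{prop:transv-recog}. Fix a non-negative smooth $g_D\colon Y\to\mathbb R_+$ recognizing $D$ and with zero set exactly $D$ (such a function exists, e.g.\ by \Cref{thm:collaring}). For $F\in\mathcal F^\beta(X,Y)$ the composite $h_F:=g_D\circ F$ is non-negative, vanishes on the closed set $Z_D:=\overline\partial^\beta_{\{D\}}X$, and one checks $F\in\mathcal A_D$ exactly when $h_F^{-1}(0)=Z_D$ and $d h_F$ is nowhere zero on $Z_D$. The assignment $F\mapsto j^1(g_D\circ F)$ is continuous for the Whitney $C^\infty$-topology by \Cref{prop:whit-comp}. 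Given $F\in\mathcal A_D$, the continuous functions $p\mapsto\lvert d_p h_F\rvert$ on $Z_D$ and $h_F$ on $X\setminus Z_D$ are strictly positive; picking continuous positive lower bounds and using that $h_G\ge 0$ and $h_G|_{Z_D}\equiv 0$ for every $G\in\mathcal F^\beta(X,Y)$, one exhibits a Whitney $C^1$-neighbourhood of $F$ within which the two defining conditions of $\mathcal A_D$ persist. Hence $\mathcal A_D$ is open.

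For density of $\mathcal A_D$ I would perturb explicitly. Let $\xi_D$ be a $D$-collaring vector field on $Y$ — the construction of \Cref{lem:Ccol-vecfield} applies to $Y$ as a $\mathbf{pt}$-shaped arrangement, compactness being irrelevant for mere existence — reparametrized so that its flow $\{\phi^D_t\}_{t\ge 0}$ is defined and has bounded speed; it is tangent to every connected face of $Y$ other than $D$ and transverse to $D$. Using \Cref{thm:collaring} near $Z_D$ and \Cref{prop:whitney-zero} away from it, choose $\psi\colon X\to\mathbb R_{\ge 0}$ with $\psi^{-1}(0)=Z_D$ and $d\psi$ nowhere zero on $Z_D$, and set $F_\psi(p):=\phi^D_{\psi(p)}(F(p))$. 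Because $\psi$ vanishes on $Z_D\supseteq\bigcup\{C\in D(\beta)\mid\beta(C)=D\}$ and $\xi_D$ is tangent to $\beta(C')$ whenever $\beta(C')\ne D$, the map $F_\psi$ is again along $\beta$; because $\xi_D\pitchfork D$ and $\psi>0$ off $Z_D$, one gets $F_\psi^{-1}(D)=F^{-1}(D)\cap\psi^{-1}(0)=Z_D$; and because $d\psi\neq 0$ on $Z_D$ while $d(g_D\circ F)\ge 0$ there, the first jet of $g_D\circ F_\psi$ has non-vanishing differential along $Z_D$, so by \Cref{prop:transv-recog} $F_\psi\pitchfork D$ on $Z_D$. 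Thus $F_\psi\in\mathcal A_D$, and shrinking $\psi$ (it may be replaced by $\lambda\psi$ for an arbitrarily small positive function $\lambda$ without changing its zero set or the non-vanishing of its differential on $Z_D$) makes $F_\psi$ lie in any prescribed Whitney neighbourhood of $F$. Finally, starting from an arbitrary $F$ one performs such a perturbation for the faces of $Y$ in turn, each step small enough to preserve, by the openness already proved, the membership in the $\mathcal A_{D'}$ achieved at earlier steps; as $\operatorname{bd}Y$ is finite this terminates in an admissible map close to $F$.

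The main obstacle is the behaviour near the corners. The first jet of a map along $\beta$ is confined to a ``half-space'' there — in suitable coordinates $g_D\circ F=x_1\cdot g$ with $g\ge 0$, and $F\pitchfork D$ along a face $C$ with $\beta(C)=D$ amounts to the one-sided condition $g>0$ on $C$ — so the transversality $F\pitchfork D$ is \emph{not} forced by a codimension count and is not obtainable by a direct application of \Cref{thm:multijet-transv} to a single submanifold of a relative jet bundle; it is precisely the availability of one-sided perturbations, realized here through the Collar Neighbourhood Theorem, that makes the density argument work. The remaining difficulty is bookkeeping: one must check that every perturbation stays inside $\mathcal F^\beta(X,Y)$, i.e.\ respects all faces of $X$ simultaneously, and that the finitely many perturbations do not undo one another — both handled by the openness of the $\mathcal A_D$.
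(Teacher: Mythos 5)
Your argument is correct in substance, but it takes a genuinely different route from the paper's. The paper also starts from \Cref{prop:corner-transv}, but it indexes the conditions by pairs $(\sigma,D)\in\Gamma_X\times\operatorname{bd}Y$ and converts both into statements about relative jets: the condition $F(\partial_\sigma X)\subset\partial_{\widetilde\beta(\sigma)}Y$ says that $j^0_{[\sigma,\sigma]}F$ lands in an open stratum (equivalently, is transverse to the deeper corner strata, which cannot be hit transversally for dimension reasons), and the condition $F|_{\partial_\sigma X}\pitchfork D$ becomes, via \Cref{lem:transedge-eqv}, transversality of $j^1_{[\sigma,X]}F$ to a closed subbundle $N_{D,\sigma}$ of the relative $1$-jet bundle; openness is then immediate (each set is of the form $M(U)$ for $U$ open) and density follows from \Cref{thm:multijet-transv}. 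In particular, your remark that the transversality to faces ``is not obtainable by a direct application of \Cref{thm:multijet-transv} to a single submanifold of a relative jet bundle'' is mistaken --- \Cref{lem:transedge-eqv} is exactly the device that makes this possible: since the relative $1$-jet of a map along $\beta$ is confined to a half-space bundle whose boundary is $N_{D,\sigma}$, while the source $\partial_\sigma X$ has no boundary, transversality to $N_{D,\sigma}$ is equivalent to disjointness from it, which is the one-sided condition you want. Your alternative --- openness via recognizing functions and $C^1$-estimates, density via the collar-flow perturbation $F_\psi=\phi^D_{\psi(\cdot)}\circ F$ --- is sound and more elementary/geometric: the identity $d_p(g_D\circ F_\psi)=\xi_D(g_D)(F(p))\,d_p\psi+d_p(g_D\circ F)$ on $Z_D$, whose first term has strictly positive and whose second has non-negative normal component along the relevant face, does force admissibility at $D$ after perturbation, and the finitely many steps are glued by the openness you prove first. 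What this buys in concreteness it costs in analysis that the jet-theoretic route gets for free: you must arrange a complete flow for the collaring field on a possibly non-compact $Y$, and both the openness of $\mathcal A_D$ near the possibly non-compact set $Z_D$ and the Whitney $C^\infty$-smallness of $F_{\lambda\psi}-F$ require the pointwise $\delta$-controlled estimates indicated in the remark after \Cref{lem:whitney-metric}; these are standard but should be written out in full.
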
 

We are going to prove \Cref{thm:adm-residual} by using \Cref{thm:multijet-transv}.
To do this, we want to describe two conditions in the remark following \Cref{prop:corner-transv} in terms of transversality.
There is, however, a difficulty on the first condition.
Let $X$, $Y$, and $\beta$ be as in \Cref{thm:adm-residual}, so that \Cref{lem:edge-alongLan} implies $\mathcal F^\beta(X,Y)=C^\infty(\widetilde X,\widetilde Y_{\widetilde\beta})$.
Then, for $\sigma\in\Gamma_X$ and $D\in\operatorname{bd}Y$ with $\widetilde\beta(\sigma)\le D$, the condition $F\pitchfork D$ on $\partial_\sigma X$ is not actually a condition on relative $0$-th jets.
Indeed, the behaviors of $F$ on $\partial_\sigma X$ should be controlled by its relative jets of the form $j^r_{[\sigma,X]}F:\partial_\sigma X\to J^r_{[\sigma,X]}(\widetilde X,\widetilde Y_{\widetilde\beta})$, while we have $J^0_{[\sigma,X]}(\widetilde X,\widetilde Y_{\widetilde\beta})\cong\partial_\sigma X\times\overline\partial_{\widetilde\beta(\sigma)}Y\subset\partial_\sigma X\times D$.
Hence, we need more observations on transversality to faces.

Let $X$, $Y$, and $\beta$ be as above, and suppose we are given $D\in\operatorname{bd}Y$ and for $\sigma\in\Gamma_X$ with $\widetilde\beta(\sigma)\le D$.
We have the first relative jet bundles
\[
\begin{gathered}
J^1_{[\sigma,X]}(\widetilde X,\widetilde Y_{\widetilde\beta})
\to \partial_\sigma X\times\overline\partial_{\widetilde\beta(\sigma)} Y\,, \\
J^1_{[\sigma,X]}(\widetilde X,\widetilde Y_{\widetilde\beta}\cap D)
\to \partial_\sigma X\times\overline\partial_{\widetilde\beta(\sigma)} Y\,.
\end{gathered}
\]
The embedding $\widetilde Y_{\widetilde\beta}\cap D\hookrightarrow\widetilde Y_{\widetilde\beta}$ induces an embedding
\begin{equation}
\label{eq:transedge}
J^1_{[\sigma,X]}(\widetilde X,\widetilde Y_{\widetilde\beta(\sigma)}\cap D)
\hookrightarrow J^1_{[\sigma,X]}(\widetilde X,\widetilde Y_{\widetilde\beta(\sigma)})\,,
\end{equation}
which is also a bundle map over $\partial_\sigma X\times\overline\partial_{\widetilde\beta(\sigma)} Y$.
For brevity, we denote by $N_{D,\sigma}$ the image of the embedding \eqref{eq:transedge}.
Note that the fiber of the bundle $J^1_{[\sigma,X]}(\widetilde X,\widetilde Y_{\widetilde\beta})$ over a point $(p,q)$ can be identified with the space of linear maps $g:T_pX\to T_qY$ which maps $T_p\overline\partial_{\sigma'}X\subset T_pX$ into $T_q\overline\partial_{\widetilde\beta(\sigma')}Y\subset T_qY$ for each $\sigma\le\sigma'\le X\in\Gamma_X$.
Then, the map \eqref{eq:transedge} is determined by the restriction of the map
\[
\mathrm{Hom}_{\mathbb R}(T_pX,T_qD)
\hookrightarrow \mathrm{Hom}_{\mathbb R}(T_pX,T_qY)\,.
\]
In particular, elements of $N_{D,\sigma}$ are jets lying over $\partial_\sigma X$ and transverse to $D\subset Y$.
The next lemma gives a convenient description for $N_{D,\sigma}$:

\begin{lemma}
\label{lem:ND-pb}
In the situation above, suppose $g:Y\to\mathbb R_+$ is a smooth function such that it recognizes the face $D$ and vanishes precisely on $D$.
We denote by $\widetilde{\mathbb R}^{(D)}_+$ the arrangement of shape $\Gamma_Y$ so that for $D'\in\operatorname{bd}Y$,
\[
\widetilde{\mathbb R}^{(D)}_+(D')=
\begin{cases}
\{0\} \quad & \text{if $D=D'$,} \\
\mathbb R_+ \quad & \text{otherwise.}
\end{cases}
\]
Then, the following square is a transverse pullback; that is, the right and bottom arrows intersect transversally, and the square is a pullback.
\begin{equation}
\label{eq:ND-pb:req}
\vcenter{
\xymatrix{
  N_{D,\sigma} \ar@{^(->}[]+R+(1,0);[r] \ar[d] & J^1_{[\sigma,X]}(\widetilde X,\widetilde Y_{\widetilde\beta}) \ar[d]^{g_\ast} \\
  \partial_\sigma X \ar[r] & J^1_{[\sigma,X]}(\widetilde X,(\widetilde{\mathbb R}^{(D)}_+)_{\widetilde\beta}) }}
\end{equation}
\end{lemma}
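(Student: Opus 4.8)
The plan is to realise the square as the pullback of the zero section of a relative jet bundle along the bundle map ``postcompose with $g$'', and then to check that this map is a submersion, which simultaneously gives transversality and identifies the fibre product with $N_{D,\sigma}$.

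First I would settle the bookkeeping. Since $\widetilde\beta(\sigma)\le D$ in $\Gamma_Y$ we have $D\in\widetilde\beta(\sigma)$, hence $\widetilde{\mathbb R}^{(D)}_+(\widetilde\beta(\sigma))=\{0\}$, so $J^1_{[\sigma,X]}(\widetilde X,(\widetilde{\mathbb R}^{(D)}_+)_{\widetilde\beta})$ really does fibre over $\partial_\sigma X$ and the bottom arrow is the zero section (the $1$-jet of the constant map $0$). Next, $g$ is an $S$-map of arrangements $\widetilde Y_{\widetilde\beta}\to(\widetilde{\mathbb R}^{(D)}_+)_{\widetilde\beta}$ over $[\sigma,X]$: whenever $D\in\widetilde\beta(\sigma')$ one has $\overline\partial_{\widetilde\beta(\sigma')}Y\subseteq D=\{g=0\}$, and otherwise the condition is vacuous. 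So by \Cref{lem:mjet-funcemb} postcomposition with $g$ yields the required smooth map $g_\ast$ between the two jet bundles, covering the projection $\partial_\sigma X\times\overline\partial_{\widetilde\beta(\sigma)}Y\to\partial_\sigma X$ (using $g(\overline\partial_{\widetilde\beta(\sigma)}Y)=\{0\}$); this is the right-hand arrow of the square.

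Second, I would work fibrewise over a point $p\in\partial_\sigma X$ (so $\overline\partial_\sigma X\ne\varnothing$). The interval $[\sigma,X]$ is the lattice of subsets of $\sigma$, and the edging axiom makes $\beta$ injective on $\sigma\cap D(\beta)$, so there is a unique $C_0\in\sigma\cap D(\beta)$ with $\beta(C_0)=D$, and $C_0\in\sigma'$ iff $D\in\widetilde\beta(\sigma')$. Using \Cref{lem:rel1jet-dim} and \Cref{cor:mfdface-uniquecap}, the fibre of the bottom bundle over $p$ is the annihilator of $T_pC_0$ in $T^\ast_pX$ (one-dimensional: among the constraints $\theta|_{T_p\overline\partial_{\sigma'}X}=0$ with $C_0\in\sigma'$, the one at $\sigma'=\{C_0\}$ subsumes the others), the fibre of the top bundle over $(p,q)$ is $\{j\in\mathrm{Hom}_{\mathbb R}(T_pX,T_qY)\mid j(T_p\overline\partial_{\sigma'}X)\subseteq T_q\overline\partial_{\widetilde\beta(\sigma')}Y\ \text{for all}\ \sigma'\subseteq\sigma\}$, and $g_\ast$ acts by $j\mapsto d_qg\circ j$. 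Because $g$ recognises $D$ and vanishes precisely on $D$, we get $d_qg\ne0$ and $\operatorname{ker}d_qg=T_qD$, hence $g_\ast(j)=0$ iff $\operatorname{im}j\subseteq T_qD$ iff $j$ lies in the image of $J^1_{[\sigma,X]}(\widetilde X,\widetilde Y_{\widetilde\beta}\cap D)\hookrightarrow J^1_{[\sigma,X]}(\widetilde X,\widetilde Y_{\widetilde\beta})$ (whose ambient target manifold is $D$), that is, $j\in N_{D,\sigma}$. This presents the square as a set-theoretic pullback.

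Finally I would prove transversality by showing each fibre map $j\mapsto d_qg\circ j$ is onto the annihilator of $T_pC_0$. Given such a $\theta$, set $j:=v\otimes\theta$ (i.e.\ $x\mapsto\theta(x)v$) for a vector $v\in T_q\overline\partial_{\widetilde\beta(\sigma\setminus\{C_0\})}Y$ with $d_qg(v)=1$; such a $v$ exists because $D\notin\widetilde\beta(\sigma\setminus\{C_0\})$ (injectivity of $\beta$ again), so $\overline\partial_{\widetilde\beta(\sigma\setminus\{C_0\})}Y\not\subseteq D$, and since $q$ lies in both that face and $D$, \Cref{lem:mfdface-cap} makes $T_qD\cap T_q\overline\partial_{\widetilde\beta(\sigma\setminus\{C_0\})}Y$ a hyperplane in $T_q\overline\partial_{\widetilde\beta(\sigma\setminus\{C_0\})}Y$, on which $d_qg$ is therefore nonzero. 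Then $j$ respects the arrangement (if $C_0\in\sigma'$ then $T_p\overline\partial_{\sigma'}X\subseteq T_pC_0=\operatorname{ker}\theta$ kills $j$ there; if $C_0\notin\sigma'$ then $\operatorname{im}j\subseteq\mathbb R v\subseteq T_q\overline\partial_{\widetilde\beta(\sigma\setminus\{C_0\})}Y\subseteq T_q\overline\partial_{\widetilde\beta(\sigma')}Y$, the last inclusion since $\sigma'\subseteq\sigma\setminus\{C_0\}$) and $d_qg\circ j=\theta$. So $g_\ast$ is a submersion, hence transverse to the embedded zero section; thus the square is a transverse pullback and $N_{D,\sigma}=g_\ast^{-1}(\text{zero section})$ is an embedded submanifold with the square a fibre product in $\mathbf{Mfd}$ (the submersion case of \Cref{prop:transinv-cover}, or directly since a submersion is locally a projection). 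The hard part will be exactly this fibrewise surjectivity --- producing a single lift $v$ that at once lies in the prescribed face $\overline\partial_{\widetilde\beta(\sigma\setminus\{C_0\})}Y$ and is transverse to $D$ --- and it is precisely the edging axioms (injectivity of $\beta$ on a non-empty $\overline\partial_\sigma X$) together with the manifold-with-faces hypothesis on $Y$ that make this possible.
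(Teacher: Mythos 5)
Your proof is correct, but it is organized rather differently from the paper's. The paper works top--down: it first notes that $D=g^{-1}(0)$ makes the square $D\hookrightarrow Y$, $Y\xrightarrow{g}\mathbb R_+$, $\{0\}\hookrightarrow\mathbb R_+$ a transverse pullback, transports this through the functor $J^1(X,{-})$ to obtain a transverse pullback of \emph{absolute} jet bundles $J^1(X,D)\hookrightarrow J^1(X,Y)\xrightarrow{g_\ast}J^1(X,\mathbb R_+)\hookleftarrow X$, and then simply asserts that this square ``is fiberwisely verified'' to restrict to the relative sub-bundles, giving \eqref{eq:ND-pb:req}. You never leave the relative bundles: you compute both fibres via \Cref{lem:rel1jet-dim}, identify $N_{D,\sigma}$ fibrewise as the kernel of $j\mapsto d_qg\circ j$, and prove that $g_\ast$ is a fibrewise surjection onto the one-dimensional bottom fibre by exhibiting the explicit lift $v\otimes\theta$. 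What your route buys is precisely the content the paper's last sentence elides: restricting a transverse pullback to sub-bundles is not automatic, and the check amounts to showing that the arrangement constraints do not kill surjectivity of $g_\ast$ --- your construction of $v\in T_q\overline\partial_{\widetilde\beta(\sigma\setminus\{C_0\})}Y$ with $d_qg(v)=1$, resting on the injectivity of $\beta$ on $\sigma\cap D(\beta)$ and on \Cref{lem:mfdface-cap}, is exactly that check, and you correctly locate it as the crux. The paper's route is shorter and lets the functoriality of \Cref{lem:jet-functor} carry part of the load, but the deferred fibrewise verification is the same linear algebra you carry out in full.
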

\begin{proof}
Note first that the assumption on the function $g:Y\to\mathbb R_+$ implies that the square below is a transverse pullback:
\[
\xymatrix{
  D \ar@{^(->}[]+R+(1,0);[r] \ar[d] & Y \ar[d]^g \\
  \{0\} \ar[r] & \mathbb R_+ }
\]
This gives rise to the following transverse pullback square
\begin{equation}
\label{eq:prf:ND-pb:pbtot}
\vcenter{
\xymatrix{
  J^1(X,D) \ar@{^(->}[]+R+(1,0);[r] \ar[d] & J^1(X,Y) \ar[d]^{g_\ast} \\
  X \ar[r] & J^1(X,\mathbb R_+) }}
\end{equation}
Then, it is fiberwisely verified that the square \eqref{eq:prf:ND-pb:pbtot} restricts to the required transverse pullback square \eqref{eq:ND-pb:req}.
\end{proof}

Using the description in \Cref{lem:ND-pb}, we can describe the transversality to faces as the condition on first relative jets:

\begin{lemma}
\label{lem:transedge-eqv}
In the situation above, for a map $F\in\mathcal F^\beta(X,Y)$, the following are all equivalent:
\begin{enumerate}[label={\rm(\alph*)}]
  \item\label{cond:transedge-eqv:transbd} The map $F$ intersects transversally to $D$ on $\partial_\sigma X$.
  \item\label{cond:transedge-eqv:disj} The image $j^1_{[\sigma,X]} F(\partial_\sigma X)$ does not intersect with $N_{D,\sigma}\subset\overline\partial_{\widetilde\beta(\sigma)}Y$.
  \item\label{cond:transedge-eqv:transjet} The first relative jet
\[
j^1_{[\sigma,X]} F:\partial_\sigma X\to J^1_{[\sigma,X]}(\widetilde X,\widetilde Y_{\widetilde\beta})
\]
intersects transversally to $N_{D,\sigma}$.
\end{enumerate}
\end{lemma}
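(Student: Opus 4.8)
The plan is to prove the cycle of implications \ref{cond:transedge-eqv:transbd} $\Rightarrow$ \ref{cond:transedge-eqv:disj} $\Rightarrow$ \ref{cond:transedge-eqv:transjet} $\Rightarrow$ \ref{cond:transedge-eqv:transbd}, exploiting the transverse pullback square of \Cref{lem:ND-pb} throughout. First, I would unwind the fiberwise description already recorded in the paragraph before \Cref{lem:ND-pb}: over a point $(p,q)\in\partial_\sigma X\times\overline\partial_{\widetilde\beta(\sigma)}Y$ the fiber of $J^1_{[\sigma,X]}(\widetilde X,\widetilde Y_{\widetilde\beta})$ consists of linear maps $g\colon T_pX\to T_qY$ respecting the relevant boundary arrangements, and the subset $N_{D,\sigma}$ over $(p,q)$ with $q\in D$ is exactly the locus of those $g$ whose image lies in $T_qD$. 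Hence, for $F\in\mathcal F^\beta(X,Y)$ and $p\in\partial_\sigma X$ with $F(p)\in D$, the jet $j^1_{[\sigma,X]}F(p)$ lies in $N_{D,\sigma}$ iff $d_pF$ factors through $T_{F(p)}D\subset T_{F(p)}Y$, i.e. iff $F$ fails to be transverse to $D$ at $p$. Together with the observation that $F(\partial_\sigma X)$ may meet $D$ only at points where $j^1_{[\sigma,X]}F$ lands in the sub-bundle over $\overline\partial_{\widetilde\beta(\sigma)}Y\cap D$, this already gives the equivalence \ref{cond:transedge-eqv:transbd} $\Leftrightarrow$ \ref{cond:transedge-eqv:disj}: transversality to $D$ on $\partial_\sigma X$ is the same as the image of the first relative jet missing $N_{D,\sigma}$ entirely.

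Next I would handle \ref{cond:transedge-eqv:disj} $\Leftrightarrow$ \ref{cond:transedge-eqv:transjet}. One direction is formal: if $j^1_{[\sigma,X]}F(\partial_\sigma X)\cap N_{D,\sigma}=\varnothing$ then $j^1_{[\sigma,X]}F$ is vacuously transverse to $N_{D,\sigma}$, since transversality is a condition only at points of the preimage. For the converse, the key point is a dimension count showing that $N_{D,\sigma}$ has codimension $1$ in $J^1_{[\sigma,X]}(\widetilde X,\widetilde Y_{\widetilde\beta})$ \emph{over} the sub-locus $\partial_\sigma X\times\overline\partial_{\widetilde\beta(\sigma)}Y$, but codimension strictly greater than $\dim\partial_\sigma X$ inside the ambient total space $J^1_{[\sigma,X]}(\widetilde X,\widetilde Y_{\widetilde\beta})$ — because $N_{D,\sigma}$ already forces the base point $q$ to lie in the codimension-one locus $D\subset\overline\partial_{\widetilde\beta(\sigma)}Y$ (using that $\widetilde\beta(\sigma)\le D$ strictly, so $D$ really is a proper face) and simultaneously constrains the jet. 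Then, since $\dim\partial_\sigma X < \operatorname{codim} N_{D,\sigma}$, the standard fact recorded just after the transversality definition in \Cref{sec:prelim} — namely that a map from a manifold of dimension below the codimension of $N$ is transverse to $N$ iff it misses $N$ — forces the image of $j^1_{[\sigma,X]}F$ to be disjoint from $N_{D,\sigma}$. Here I should be slightly careful that the relevant codimension of $N_{D,\sigma}$ really exceeds $\dim\partial_\sigma X$ rather than merely $\ge$; this uses that $g_\ast$ in \Cref{lem:ND-pb} hits a jet bundle whose fiber contains the extra directions coming from the normal derivative of $g$, so $N_{D,\sigma}$ is cut out by more than $\dim\overline\partial_{\widetilde\beta(\sigma)}Y - \dim D$ equations.

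I expect the main obstacle to be precisely this codimension bookkeeping: proving that the inequality $\dim\partial_\sigma X < \operatorname{codim}_{J^1_{[\sigma,X]}(\widetilde X,\widetilde Y_{\widetilde\beta})} N_{D,\sigma}$ holds unconditionally, i.e. for every admissible pair $(\sigma,D)$ with $\widetilde\beta(\sigma)\le D$, $\widetilde\beta(\sigma)\neq D$. The cleanest route is to use the transverse pullback square \eqref{eq:ND-pb:req}: it identifies $N_{D,\sigma}$ with the preimage of $\partial_\sigma X$ (embedded via the zero section) under the bundle map $g_\ast\colon J^1_{[\sigma,X]}(\widetilde X,\widetilde Y_{\widetilde\beta})\to J^1_{[\sigma,X]}(\widetilde X,(\widetilde{\mathbb R}^{(D)}_+)_{\widetilde\beta})$, and the target bundle has fiber of dimension equal to $\dim\partial_\sigma X + 1$ (one copy of $\mathbb R$ for the value of $g$, plus one copy of $\mathbb R$ for each coordinate direction of $T_pX$ that the derivative $\partial g$ can be nonzero on — but the relative-jet constraint kills all but finitely many, and a short computation using \Cref{lem:rel1jet-dim} pins the count). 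Comparing with $\dim\partial_\sigma X = \dim\partial_\sigma X$ for the base, the fiber codimension of the zero section inside the target is exactly that extra positive dimension, which transports back to give $\operatorname{codim} N_{D,\sigma}$ strictly greater than $\dim\partial_\sigma X$. Once this inequality is in hand, all three implications close up, and I would finish by remarking that the argument is entirely local and fiberwise, so it needs only the excellence of the arrangements (guaranteeing the $\mathcal X$- and $\mathcal Y_{\ge\kappa_0}$-charts used to compute the fibers) and \Cref{prop:edge-excel} / \Cref{lem:edge-alongLan} to legitimize writing $\mathcal F^\beta(X,Y)=C^\infty(\widetilde X,\widetilde Y_{\widetilde\beta})$.
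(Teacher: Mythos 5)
Your handling of \ref{cond:transedge-eqv:transbd} $\Leftrightarrow$ \ref{cond:transedge-eqv:disj} is fine (it matches what the paper does after reducing to $Y=\mathbb R_+$), and \ref{cond:transedge-eqv:disj} $\Rightarrow$ \ref{cond:transedge-eqv:transjet} is indeed vacuous. The gap is in \ref{cond:transedge-eqv:transjet} $\Rightarrow$ \ref{cond:transedge-eqv:disj}: your codimension count is wrong. Both bundles in \eqref{eq:transedge} live over the \emph{same} base $\partial_\sigma X\times\overline\partial_{\widetilde\beta(\sigma)}Y$, because $\widetilde\beta(\sigma)\le D$ already forces $\overline\partial_{\widetilde\beta(\sigma)}Y\subseteq D$; so $N_{D,\sigma}$ is cut out purely fiberwise, and the fiberwise count (via \Cref{lem:rel1jet-dim}, or via the reduction of \Cref{lem:ND-pb} to $Y=\mathbb R_+$, $D=\{0\}$, where the ambient bundle becomes the real \emph{line} bundle $E^\sigma:=\mathbb R\cdot df|_{\partial_\sigma X}\subset T^\ast X|_{\partial_\sigma X}$ for a function $f$ recognizing the unique face $C\ge\sigma$ with $\beta(C)=D$, and $N_{D,\sigma}$ becomes its zero section) gives $\operatorname{codim}N_{D,\sigma}=1$. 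Your estimate ``fiber of dimension $\dim\partial_\sigma X+1$'' confuses the total dimension of the bundle with the fiber rank: the relative-jet constraint kills every direction of $T_pX$ except the one normal to $C$, so the fiber is one-dimensional and the zero section has codimension exactly $1$. Consequently $\dim\partial_\sigma X<\operatorname{codim}N_{D,\sigma}$ fails whenever $\dim\partial_\sigma X\ge1$, and the ``too small to meet $N$ transversally except by missing it'' dichotomy from \Cref{sec:prelim} simply does not apply.

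The implication does hold, but for a different reason: one-sidedness rather than codimension. In the reduced model $j^1_{[\sigma,X]}F$ is the composite of $dF$ with the projection $T^\ast X|_{\partial_\sigma X}\to E^\sigma$, and because $F$ is along $\beta$ it factors through the closed half $E^{\sigma+}=\mathbb R_+\cdot df$, whose boundary is precisely $N_{D,\sigma}$. Since $\partial_\sigma X$ has empty boundary, any point $p$ with $j^1_{[\sigma,X]}F(p)\in N_{D,\sigma}$ is an interior minimum of the (non-negative) fiber coordinate, so the induced map $T_p\partial_\sigma X\to T(E^\sigma)/T(N_{D,\sigma})\cong\mathbb R$ vanishes and transversality fails at $p$. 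Hence \ref{cond:transedge-eqv:transjet} forces the image of $j^1_{[\sigma,X]}F$ to miss $N_{D,\sigma}$ altogether. Without this (or some substitute) your cycle of implications does not close.
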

\begin{proof}
If $\partial_\sigma X=\varnothing$ or $\dim Y\le 0$, there is nothing to prove, so we assume $\partial_\sigma X\neq\varnothing$, $Y\neq\varnothing$, and $\dim Y\ge 1$.
Take a smooth function $g:Y\to\mathbb R_+$ recognizing the face $D$ such that
\[
\{q\in Y\mid g(q)=0\} = D\,.
\]
Then, by \Cref{lem:ND-pb}, the square below is a transverse pullback:
\[
\xymatrix{
  N_{D,\sigma} \ar@{^(->}[]+R+(1,0);[r] \ar[d] \ar@{}[dr]|(.4)\pbcorner & J^1_{[\sigma,X]}(\widetilde X,\widetilde Y_{\widetilde\beta}) \ar[d]^{g_\ast} \\
  \mathllap{\partial_\sigma X}\cong J^1_{[\sigma,X]}(\widetilde X,\{0\}) \ar@{^(->}[]+R+(1,0);[r] & J^1_{[\sigma,X]}(\widetilde X,(\mathbb R_+)_{\widetilde\gamma_D\widetilde\beta}) }
\]
Thus, the problem reduces to the case $Y=\mathbb R_+$ and $D=\{0\}\in\operatorname{bd}\mathbb R_+$.

Assume $Y=\mathbb R_+$ and $D=\{0\}$, so that $J^1_{[\sigma,X]}(\widetilde X,(\widetilde{\mathcal R_+})_{\widetilde\beta})$ is a real vector bundle over $\partial_\sigma X$ for each $\sigma\in\Gamma_X$ rather than just a fiber bundle.
If $\widetilde\beta(\sigma)=\{0\}\in\Gamma_Y$ and $\overline\partial_\sigma X\neq\varnothing$, by \Cref{lem:edge-sliceisom}, there is a unique connected face $C\in\operatorname{bd}X$ such that $\beta(C)=\{0\}\subset\mathbb R_+$ and $\sigma\le C\in\Gamma_X$.
Then, for every $p\in\overline\partial_\sigma X$, we have a short exact sequence
\[
J^1_{[\sigma,X]}(\widetilde X,(\widetilde{\mathbb R_+})_{\widetilde\beta})_p
\hookrightarrow T^\ast_p X
\twoheadrightarrow T^\ast_p C
\]
of vector bundles over $\partial_\sigma X$.
In particular, $J^1_{[\sigma,X]}(\widetilde X,(\widetilde{\mathbb R_+})_{\widetilde\beta})$ is a subbundle of $T^\ast X|_{\partial_\sigma X}$.
More precisely, take a smooth function $f:X\to\mathbb R_+$ such that
\begin{itemize}
  \item $\sigma\le C$;
  \item $C\in D(\beta)$ and $\beta(C)=\{0\}\in\operatorname{bd}\mathbb R_+$;
  \item $C=\{p\in X\mid f(p)=0\}$;
  \item $d_pf\neq 0$ for every $p\in C$.
\end{itemize}
Put $E^\sigma:=\mathbb R\cdot df|_{\partial_\sigma X}\subset T^\ast X|_{\partial_\sigma X}$, then we obtain a canonical a canonical isomorphism
\[
T^\ast X|_{\partial_\sigma X}
\cong (T^\ast C|_{\partial_\sigma X})\oplus E^\sigma
\]
together with an identification
\[
J^1_{[\sigma,X]}(\widetilde X,(\widetilde{\mathbb R_+})_{\widetilde\beta})
\cong E^\sigma\,.
\]
For a map $F:X\to\mathbb R_+$ along $\beta$, its first relative jet $j^1_{[\sigma,X]}F$ is identified with the composition
\begin{equation}
\label{eq:prf:transedge-eqv:reljet}
j^1_{[\sigma,X]}\partial_\sigma X
\xrightarrow{dF} T^\ast X|_{\partial_\sigma X}
\xrightarrow{\mathrm{proj.}} E^\sigma\,.
\end{equation}
Now, since we can regard $N_{\{0\},\sigma}$ as the image of the zero-section $\partial_\sigma X\hookrightarrow E^\sigma$, two conditions \ref{cond:transedge-eqv:transbd} and \ref{cond:transedge-eqv:disj} are clearly equivalent.
To see the condition \ref{cond:transedge-eqv:transjet} is also equivalent, we see that the relative jet $j^1_{[\sigma,X]}F$ factors through a submanifold of $E^\sigma$ with boundaries.
Let $f:X\to\mathbb R_+$ be as above, and set
\[
E^{\sigma+}
:= R_+\cdot d_f
\subset E^\sigma\,.
\]
Then, $E^{\sigma+}$ is a submanifold of $E^\sigma$ of codimension $0$ whose boundary can be identified with $N_{\{0\},\sigma}\hookrightarrow E^\sigma$.
It is easily verified that $j^1_{[\sigma,X]}F$, which is identified with the map \eqref{eq:prf:transedge-eqv:reljet}, factors through $E^{\sigma+}$.
Since $E^{\sigma+}$ has nonempty boundaries while $\partial_\sigma X$ does not, the smooth map $j^1_{[\sigma,X]}F$ cannot intersects to the boundary $\partial E^{\sigma+}=N_{\{0\},\sigma}$.
Hence, the condition \ref{cond:transedge-eqv:transjet} is also equivalent to the others.
\end{proof}

\begin{proof}[Proof of \Cref{thm:adm-residual}]
Since the lattice $\Gamma_X$ and the set $\operatorname{bd}Y$ are finite, in view of \Cref{prop:corner-transv} (and the remark following it), it suffices to show that the following subsets of $\mathcal F^\beta(X,Y)=C^\infty(\widetilde X,\widetilde Y_{\widetilde\beta})$ are open and dense:
\[
\begin{gathered}
\mathcal B^\partial_\sigma
:=\left\{F\in\mathcal F^\beta(X,Y)\mid F(\partial_\sigma X)\subset\partial_{\widetilde\beta(\sigma)}Y\right\} \\
\mathcal B^\pitchfork_\sigma
:=\left\{F\in\mathcal F^\beta(X,Y)\mid F|_{\partial_\sigma X}\pitchfork D\right\}\,,
\end{gathered}
\]
where $\sigma\in\Gamma_X$ and $D\in\operatorname{bd}Y$ with $\widetilde\beta(\sigma)\le D\in\Gamma_Y$.

We first show $\mathcal B^\partial_\sigma$ is open and dense.
Recall that we have a canonical diffeomorphism
\[
J^0_{[\sigma,\sigma]}(\widetilde X,\widetilde Y_{\widetilde\beta})
\cong \partial_\sigma X\times\overline\partial_{\widetilde\beta(\sigma)}Y\,.
\]
Hence, a smooth map $F:X\to Y$ along $\beta$ belongs to $\mathcal B^\partial_\sigma$ if and only if its $0$th relative jet
\[
j^0_{[\sigma,\sigma]}F:\partial_\sigma X
\to\partial_\sigma X\times\overline\partial_{\widetilde\beta(\sigma)}Y
\ ;\quad p \mapsto (p,F(p))
\]
does not intersect with $\partial_\sigma X\times\overline\partial_\tau Y$ for any $\tau\in\Gamma_Y$ with $\tau<\widetilde\beta(\sigma)$.
Since $\partial_\sigma X$ is a manifold without boundaries and $\partial_\sigma X\times\overline\partial_\tau Y$ is a set of corners of positive codimensions, one can prove that the following three conditions are equivalent:
\begin{enumerate}[label={\rm(\roman*)}]
  \item $F\in\mathcal B^\partial_\sigma$;
  \item $j^0_{[\sigma,\sigma]}F(\partial_\sigma X)\subset\partial_\sigma X\times\partial_{\widetilde\beta(\sigma)}Y=\partial_\sigma X\times\left(\overline\partial_{\widetilde\beta(\sigma)}Y\setminus\bigcup_{\tau<\widetilde\beta(\sigma)}\overline\partial_\tau Y\right)$;
  \item $j^0_{[\sigma,\sigma]}F\pitchfork(\partial_\sigma X\times\overline\partial_\tau Y)$ for every $\tau\in\Gamma_Y$ with $\tau<\widetilde\beta(\sigma)$.
\end{enumerate}
The second condition implies $\mathcal B^\partial_\sigma\subset\mathcal F^\beta(X,Y)$ is open, and the thrid, \Cref{thm:multijet-transv}, and \Cref{cor:edgefunc-Baire} imply it is dense.
Using \Cref{lem:transedge-eqv}, one can also prove in a really similar way that $\mathcal B^\pitchfork_{\sigma,D}$ is open and dense.
\end{proof}

\subsection{Embedding theorem}
\label{sec:emb-theorem}

In this last section, we prove the embedding theorem.

\thmImmersion
\begin{proof}
By the assumption on dimensions, in view of \Cref{lem:adm-bdcorank}, an admissible map $F:X\to Y$ along $\beta$ is an immersion if and only if the induced map
\begin{equation}
\label{eq:prf:imm-thm:tangent}
d_p(F|_{\partial^\beta_\tau X}):T_p\partial^\beta_\tau X\to T_{F(p)}\overline\partial_\tau Y
\end{equation}
is of corank $0$ for each $\tau\in\Gamma_Y$ and $p\in\partial^\beta_\tau$.
Since we have residually many admissible maps in $\mathcal F^\beta(X,Y)$ by \Cref{thm:adm-residual}, it suffices to show that maps satisfying the condition above form a residual subset in $\mathcal F^\beta(X,Y)$.

Notice that, for each $\tau\in\Gamma_Y$, we have canonical identifications
\begin{equation}
\label{eq:prf:imm-thm:bdl}
J^1_{[\tau,\tau]}(\widetilde X^\tau,\widetilde Y)
\cong J^1(\partial^\beta_\tau X,\overline\partial_\tau Y)
\cong \mathpzc{Hom}_{\mathbb R}(T\partial^\beta_\tau X,T\overline\partial_\tau Y)\,.
\end{equation}
Under this identification, the first relative jet
\[
j^1_{[\tau,\tau]}F:\partial^\beta_\tau X\to J^1_{[\tau,\tau]}(\widetilde X^\beta,\widetilde Y)
\]
for $F\in\mathcal F^\beta(X,Y)$ is identified with the map given by
\[
j^1_{[\tau,\tau]}F(p) = d_p(F|_{\partial^\beta_\tau X})
\in\mathrm{Hom}_{\mathbb R}(T_p\partial^\beta_\tau X,T_{F(p)}\overline\partial_\tau Y)\,.
\]
For each non-negative integer $r\ge0$, jets inducing homomorphisms \eqref{eq:prf:imm-thm:tangent} of corank $r$ forms a subbundle of the bundle \eqref{eq:prf:imm-thm:bdl} over $\partial^\beta_\tau X\times\overline\partial_\tau Y$, which we write
\[
\mathcal L^r_\tau
\subset J^1_{[\tau,\tau]}(\widetilde X^\beta,\widetilde Y)\,.
\]
It is verified that the corank of this embedding is computed by the formula below:
\[
r(r+|\dim \overline\partial_\tau Y-\dim \partial^\beta_\tau X|)\,.
\]
Hence, using the equation $\dim\overline\partial_\tau Y-\dim\partial^\beta_\tau X=\dim Y-\dim X$, we obtain
\[
\codim\mathcal L^r_\tau
= r(r+\dim Y-\dim X)
\ge r(r+\dim X)
> r(1+\dim\partial^\beta_\tau X)\,.
\]
If $r>0$, The last number is strictly greater than the dimension $\partial^\beta_\tau X$ of the domain of the first relative jet $j^1_{[\tau,\tau]}F$.
It follows from \Cref{thm:multijet-transv} that, for residually many $F\in\mathcal F^\beta(X,Y)=C^\infty(\widetilde X^\beta,\widetilde Y)$, $j^1_{[\tau,\tau]}F$ does not intersect with $\mathcal L^r_\tau$ for any $r>0$.
In other words, for those $F$, the map \eqref{eq:prf:imm-thm:tangent} is of corank $0$.
Thus, we obtain the result.
\end{proof}

\begin{theorem}
\label{thm:inj-residual}
Let $X$ and $Y$ be two manifolds with finite faces, and let $\beta$ be an edging of $X$ with $Y$.
Assume we have $2\cdot \dim X+1\le\dim Y$.
Then, injective maps $X\to Y$ along $\beta$ form a residual and, hence, dense subset of the space $\mathcal F^\beta(X,Y)$.
\end{theorem}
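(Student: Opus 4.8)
The plan is to deduce the statement from the multijet transversality theorem \Cref{thm:multijet-transv} by separating pairs of points in the classical Whitney manner, carried out over the strata of the arrangements involved. Using the identification $\mathcal F^\beta(X,Y)=C^\infty(\widetilde X^\beta_{\sparallel},\widetilde Y)$ of \Cref{lem:edge-alongLan}, I work with the excellent arrangements $\mathcal X:=\widetilde X^\beta_{\sparallel}$ and $\mathcal Y:=\widetilde Y$, both of shape $\Gamma_Y$. A map $F\in\mathcal F^\beta(X,Y)$ is injective precisely when there are no two distinct points $x_1,x_2\in X$ with $F(x_1)=F(x_2)$; if $x_i$ has scope $\tau_i\in\Gamma_Y$ in $\mathcal X$, i.e. $x_i\in\partial^\beta_{\tau_i}X=\mathcal X\double(\tau_i\double)$, then such a failure is a collision between the open stratum $\partial^\beta_{\tau_1}X$ and the open stratum $\partial^\beta_{\tau_2}X$. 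Since $\Gamma_Y$ is finite, it suffices to show that for each ordered pair $(\tau_1,\tau_2)\in\Gamma_Y\times\Gamma_Y$ the set of $F$ admitting no such $(\tau_1,\tau_2)$-collision is residual, and then take the (finite) intersection.

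Fix $(\tau_1,\tau_2)$. Let $\mathbf n\colon(\Gamma_Y)^{[1]}\to\mathbb Z_{\ge 0}$ be supported on diagonal intervals, with $\mathbf n([\tau,\tau])=2$ if $\tau_1=\tau_2=:\tau$, and $\mathbf n([\tau_1,\tau_1])=\mathbf n([\tau_2,\tau_2])=1$ if $\tau_1\neq\tau_2$; it is zero elsewhere. Because a $0$-jet is just a value, $J^0(\mathcal X_{[\tau_i,\tau_i]},\mathcal Y_{[\tau_i,\tau_i]})\cong\overline\partial^\beta_{\tau_i}X\times\overline\partial_{\tau_i}Y$, and unwinding the pullback square \eqref{diag:def:multijet} gives
\[
J^0_{\mathbf n}(\mathcal X,\mathcal Y)\;\cong\;\mathcal X^{(\mathbf n)}\times\overline\partial_{\tau_1}Y\times\overline\partial_{\tau_2}Y ,
\]
where $\mathcal X^{(\mathbf n)}=(\partial^\beta_\tau X)^{(2)}$ when $\tau_1=\tau_2$, and $\mathcal X^{(\mathbf n)}=\partial^\beta_{\tau_1}X\times\partial^\beta_{\tau_2}X$ when $\tau_1\neq\tau_2$ (the required distinctness of the two arguments being automatic there, since distinct scopes lie in disjoint strata of $\mathcal X$); moreover $j^0_{\mathbf n}F$ sends $(x_1,x_2)$ to $\bigl(x_1,x_2,F(x_1),F(x_2)\bigr)$ by \eqref{eq:multi-jet}. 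Let $W\subset J^0_{\mathbf n}(\mathcal X,\mathcal Y)$ be the preimage, under the projection to $\overline\partial_{\tau_1}Y\times\overline\partial_{\tau_2}Y$, of the coincidence locus $\{q_1=q_2\}$. Since $\widetilde Y$ is an arrangement, this locus is the diagonally embedded copy of $\overline\partial_{\tau_1}Y\cap\overline\partial_{\tau_2}Y=\overline\partial_{\tau_1\wedge\tau_2}Y$, an embedded submanifold, so $W$ is a submanifold of $J^0_{\mathbf n}(\mathcal X,\mathcal Y)$, and $F$ has no $(\tau_1,\tau_2)$-collision if and only if $j^0_{\mathbf n}F\bigl(\mathcal X^{(\mathbf n)}\bigr)\cap W=\varnothing$.

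Now I count dimensions, using that $\overline\partial^\beta_\rho X\hookrightarrow X$ and $\overline\partial_\rho Y\hookrightarrow Y$ both have codimension equal to the coheight $c(\rho)$ of $\rho$ in $\Gamma_Y$ (as in the proof of \Cref{lem:adm-bdcorank}, via \Cref{lem:mfdface-cap}). When $\tau_1=\tau_2=\tau$ one has $\dim\mathcal X^{(\mathbf n)}=2(\dim X-c(\tau))$ and $\codim W=\dim Y-c(\tau)$, so $\dim\mathcal X^{(\mathbf n)}<\codim W$ reduces to $2\dim X<\dim Y+c(\tau)$; when $\tau_1\neq\tau_2$ one has $\dim\mathcal X^{(\mathbf n)}=2\dim X-c(\tau_1)-c(\tau_2)$ and $\codim W=\dim Y-c(\tau_1)-c(\tau_2)+c(\tau_1\wedge\tau_2)$, so the inequality reduces to $2\dim X<\dim Y+c(\tau_1\wedge\tau_2)$. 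Both hold because $2\dim X+1\le\dim Y$ and the coheights are nonnegative. By the elementary observation following the definition of transversality — that $\dim\mathcal X^{(\mathbf n)}<\codim W$ forces $j^0_{\mathbf n}F\pitchfork W$ to be equivalent to $j^0_{\mathbf n}F(\mathcal X^{(\mathbf n)})\cap W=\varnothing$ — \Cref{thm:multijet-transv} now shows that $\{F\in\mathcal F^\beta(X,Y)\mid j^0_{\mathbf n}F\pitchfork W\}$ is residual and equals the set of $F$ having no $(\tau_1,\tau_2)$-collision. Intersecting over the finitely many pairs $(\tau_1,\tau_2)$ shows the set of injective maps in $\mathcal F^\beta(X,Y)$ is residual; since $\mathcal F^\beta(X,Y)$ is a Baire space by \Cref{cor:edgefunc-Baire}, this set is also dense.

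No genuinely hard step arises: in contrast with \Cref{thm:imm-thm}, no new subbundle of a relative jet bundle has to be manufactured — only the Whitney separation trick, transported to the stratified relative setting. The points demanding attention are the identification of $J^0_{\mathbf n}(\mathcal X,\mathcal Y)$ above and the fact that the diagonally embedded $\overline\partial_{\tau_1\wedge\tau_2}Y$ is an embedded submanifold of $\overline\partial_{\tau_1}Y\times\overline\partial_{\tau_2}Y$ of the stated codimension (local coordinates from \Cref{lem:mfdface-cap} applied to $\tau_1\cup\tau_2$ make this transparent); the bookkeeping of scopes, together with the codimension count where the hypothesis $2\dim X+1\le\dim Y$ enters, is the only delicate part.
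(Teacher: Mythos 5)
Your proof is correct, and it reaches the conclusion by a slightly different route than the paper. The paper first invokes \Cref{thm:adm-residual} to restrict attention to admissible maps: since an admissible $F$ carries each open stratum $\partial^\beta_\tau X$ into $\partial_\tau Y$ and the latter are pairwise disjoint, cross-stratum collisions are impossible for admissible maps, so only the diagonal case $\tau_1=\tau_2$ needs a transversality argument, with $W$ the diagonal $\Delta_\tau\subset(\overline\partial_\tau Y)^{\times 2}$. You instead dispense with admissibility entirely and treat every ordered pair $(\tau_1,\tau_2)$ of strata, taking $W$ to be the preimage of the diagonally embedded $\overline\partial_{\tau_1\wedge\tau_2}Y$; the price is a slightly more involved coincidence submanifold and codimension bookkeeping (correctly carried out — the inequality $2\dim X<\dim Y+c(\tau_1\wedge\tau_2)$ is exactly what is needed, and the off-diagonal case degenerates harmlessly when $\overline\partial_{\tau_1\wedge\tau_2}Y=\varnothing$), while the payoff is independence from the genericity of admissible maps, which in the paper rests on \Cref{prop:corner-transv} and the machinery of \Cref{lem:transedge-eqv}. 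Both arguments are instances of the same Whitney separation trick via $0$-th multijets and \Cref{thm:multijet-transv}; yours is marginally more self-contained, the paper's is shorter because the admissibility reduction is reused elsewhere anyway (e.g. in \Cref{thm:imm-thm} and \Cref{cor:cptemb}).
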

\begin{proof}
If a map $F:X\to Y$ along $\beta$ is admissible, \Cref{prop:corner-transv} implies $F(\partial^\beta_\tau X)\subset\partial_\tau Y$ for each $\tau$.
Notice $\partial_\tau Y\cap\partial_{\tau'}Y=\varnothing$ for every $\tau\neq\tau'\in\Gamma_Y$.
It follows that an admissible map $F$ is injective if and only if the restriction
\[
F_\tau:=F|_{\partial^\beta_\tau X}:\partial^\beta_\tau X\to\overline\partial_\tau Y
\]
is injective.
Since admissible maps form a residual subset of $\mathcal F^\beta(X,Y)$ by \Cref{thm:adm-residual}, to obtain the result, it suffices to show that maps $F:X\to Y$ along $\beta$ with $F_\tau$ injective for each $\tau\in\Gamma_Y$ form a residual subset in $\mathcal F^\beta(X,Y)$.

For an element $\tau\in\Gamma_Y$, define a map $\mathbf n_\tau:(\Gamma_Y)^{[1]}\to\mathbb Z_{\ge0}$ by
\[
\mathbf n_\tau(\kappa):=
\begin{cases}
2 \quad &\text{if $\kappa=[\tau,\tau]$,} \\
0 \quad &\text{otherwise.}
\end{cases}
\]
Then, we have a canonical identification
\begin{equation}
\label{eq:prf:inj-residual:J0}
J^0_{\mathbf n_\tau}(\widetilde X^\beta,\widetilde Y)
\cong (\partial^\beta_\tau X)^{(2)}\times(\overline\partial_\tau Y)^{\times2}\,.
\end{equation}
We denote by $\Delta_\tau\subset J^0_{\mathbf n_\tau}(\widetilde X^\beta,\widetilde Y)$ the image of the embedding
\[
(\partial^\beta_\tau X)^{(2)}\times\overline\partial_\tau Y
\xrightarrow{\mathrm{id}\times\Delta}(\partial^\beta_\tau X)^{(2)}\times(\overline\partial_\tau Y)^{\times2}\,.
\]
Note that, under the identification \eqref{eq:prf:inj-residual:J0}, the $0$-th $\mathbf n_\tau$-multijet of $F$ is given by
\[
\begin{array}{rccc}
j^0_{\mathbf n_\tau}F: & (\partial^\beta_\tau X)^{(2)} & \to & J^0_{\mathbf n_\tau}(\widetilde X^\beta,\widetilde Y) \\
& (p,p') & \mapsto & (p,p',F(p),F(p'))\,,
\end{array}
\]
and its intersection with $\Delta_\tau$ corresponds to pairs $(p,p')\in(\partial^\beta_\tau X)^{(2)}$ of distinct points such that $F(p)=F(p')$.
Thus, the restriction $F_\tau:\partial^\beta_\tau X\to\overline\partial_\tau Y$ is injective if and only if $j^0_{\mathbf n_\tau}F$ does not intersect with $\Delta_\tau$.
This is, moreover, equivalent to the condition  $j^0_{\mathbf n_\tau}F\pitchfork\Delta_\tau$; indeed, we have
\[
\begin{multlined}
\codim \Delta_\tau
= \dim \overline\partial_\tau Y
= \dim Y - \codim\overline\partial_\tau Y
= \dim Y - \codim\partial^\beta_\tau X \\
> 2\cdot\dim X -\codim\partial^\beta_\tau X
\ge \dim X
\ge \dim\partial^\beta_\tau X\,.
\end{multlined}
\]
Therefore, the result follows form \Cref{thm:multijet-transv}.
\end{proof}

Using two theorems above, we obtain the existence result of embeddings in compact cases.

\begin{corollary}[cf. Proposition 2.1.7 in \cite{Lau00}]
\label{cor:cptemb}
Let $X$, $Y$, and $\beta$ be as in \Cref{thm:inj-residual}.
Then, if $X$ is compact, admissible embeddings along $\beta$ form a residual subset in the space $\mathcal F^\beta(X,Y)$.
In particular, there is an embedding $F:X\to Y$ along $\beta$ such that
\begin{enumerate}[label={\rm(\roman*)}]
  \item $F\pitchfork D$ for each connected face $D\in\operatorname{bd}Y$;
  \item $F(\partial^\beta_\tau X)\subset\partial_\tau Y$ for each $\tau\in\Gamma_Y$;
\end{enumerate}
provided $\mathcal F^\beta(X,Y)$ is nonempty.
\end{corollary}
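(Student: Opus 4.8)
The plan is to combine \Cref{thm:imm-thm} and \Cref{thm:inj-residual} with the openness part of \Cref{thm:adm-residual} and the compactness of $X$. Recall that, under the hypothesis $2\dim X+1\le\dim Y$, both \Cref{thm:imm-thm} and \Cref{thm:inj-residual} apply, so immersions along $\beta$ and injective maps along $\beta$ are each residual in $\mathcal F^\beta(X,Y)$. Since $\mathcal F^\beta(X,Y)$ is a Baire space by \Cref{cor:edgefunc-Baire}, the intersection of these two residual sets with the residual set of admissible maps (\Cref{thm:adm-residual}) is again residual; call this intersection $\mathcal R$. A map $F\in\mathcal R$ is an injective admissible immersion along $\beta$.

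Next I would upgrade ``injective immersion'' to ``embedding'' using compactness. For a general smooth map this is the classical fact that a proper injective immersion is an embedding; when $X$ is compact every continuous map $X\to Y$ is automatically proper, and a continuous bijection from a compact space onto its (Hausdorff) image is a homeomorphism, so any injective immersion $F:X\to Y$ with $X$ compact is automatically an embedding. Hence $\mathcal R$ consists of admissible embeddings along $\beta$, which proves the first assertion: admissible embeddings form a residual subset of $\mathcal F^\beta(X,Y)$. For the ``in particular'' clause, note that by hypothesis $\mathcal F^\beta(X,Y)\ne\varnothing$, and a residual subset of a nonempty Baire space is nonempty (indeed dense), so we may pick $F\in\mathcal R$. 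Since $F$ is admissible along $\beta$, \Cref{prop:corner-transv} gives exactly the two listed properties: $F\pitchfork D$ for every connected face $D\in\operatorname{bd}Y$, and $F(\partial^\beta_\tau X)\subset\partial_\tau Y$ for every $\tau\in\Gamma_Y$.

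The only genuinely delicate point is verifying that the set of embeddings really is the intersection of the three residual sets rather than something strictly smaller, i.e.\ that no further condition is needed. This is where compactness of $X$ is essential: without it, injective immersions need not be embeddings, and one would have to intersect in addition the (merely open, not dense in general) set of proper maps, which is the route taken in \Cref{thm:emb-thm}. With $X$ compact this obstacle evaporates, so the main ``work'' is just the bookkeeping of citing \Cref{thm:imm-thm}, \Cref{thm:inj-residual}, \Cref{thm:adm-residual}, \Cref{cor:edgefunc-Baire}, and \Cref{prop:corner-transv} in the right order; there is no new estimate or construction to carry out.
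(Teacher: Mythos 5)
Your proof is correct and follows essentially the same route as the paper: the paper's own proof simply notes that for compact $X$ an injective immersion is an embedding and then cites \Cref{thm:imm-thm} and \Cref{thm:inj-residual}. Your version is slightly more careful in explicitly intersecting with the residual set of admissible maps from \Cref{thm:adm-residual} and invoking \Cref{prop:corner-transv} for the two listed properties, which the paper leaves implicit.
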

\begin{proof}
Note that, for a compact manifold $X$, a smooth map $F:X\to Y$ is an embedding if and only if it is an injective immersion.
Thus, the result follows from \Cref{thm:imm-thm} and \Cref{thm:inj-residual}.
\end{proof}

\begin{remark}
Actually, the embeddings satisfying the properties in \Cref{cor:cptemb} are sometimes called neat embeddings.
It is known that, for a neat embedding $X\hookrightarrow Y$, each point $p\in X$ admits a chart $\varphi:U\to\mathbb R^m\times\mathbb R_+^k$ on $Y$ centered at $p$ so that the following square is a pullback for some $n\le m$:
\[
\xymatrix{
  X\cap U \ar[d]_\varphi \ar@{^(->}[]+R+(1,0);[r] \ar@{}[dr]|(.4)\pbcorner & U \ar[d]^\varphi \\
  0\times\mathbb R^n\times\mathbb R_+^k \ar@{^(->}[]+R+(1,0);[r] & \mathbb R^m\times\mathbb R^k_+ }
\]
\end{remark}

\begin{example}
For positive integers $k\le n$, a $k$-fold $n$-dimensional bordism is a compact manifold $W$ with faces which is equipped with a full edging $\beta$ with $I^2$; i.e. $D(\beta)=\operatorname{bd}W$.
In this case, by virtue of \Cref{prop:edgefunc-polyhedra}, the space $\mathcal F^\beta(W,I^2\times\mathbb R^r)$ is non-empty for every non-negative integer $r$.
Then, \Cref{cor:cptemb} asserts that, for a sufficiently large $r>0$, there is an admissible embedding $W\hookrightarrow I^2\times\mathbb R^r$.
This gives rise to an embedding of classical bordisms (with corners) into the ``$(\infty,k)$-category of bordisms'' defined in \cite{CalaqueScheimbauer2015}.
\end{example}

More generally, the observation used in the proof of \Cref{cor:cptemb} is, in non-compact cases, valid for proper maps.
For this, we need to make more observation on proper maps.
We first discuss the existence.

\begin{proposition}
\label{prop:proper-exist}
Every manifold $X$ with corners admits a proper function $f:X\to\mathbb R$.
\end{proposition}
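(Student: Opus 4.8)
The plan is to reproduce the classical construction of an exhaustion function via a partition of unity; it carries over verbatim to manifolds with corners. First I would use that $X$, being locally modeled on the spaces $\mathbb H^I\subset\mathbb R^{\#I}$, is locally compact, and, being second countable and Hausdorff, is $\sigma$-compact and admits a countable, locally finite open cover $\{U_n\}_{n\ge 1}$ of $X$ such that each closure $\overline{U_n}$ is compact. This is the only place the corner structure enters at all, and it enters only through the local compactness of the local models $\mathbb H^I$; everything afterwards is topology of $\mathbb R$.

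Next I would pick a smooth partition of unity $\{\rho_n\}_{n\ge 1}$ subordinate to $\{U_n\}_n$ — such partitions of unity on manifolds with corners exist and are used repeatedly elsewhere in the paper — and set
\[
f := \sum_{n=1}^{\infty} n\,\rho_n \colon X \to \mathbb R .
\]
Since $\{U_n\}_n$ is locally finite, this sum is locally finite, so $f$ is a well-defined smooth function, and clearly $f \ge \sum_n \rho_n = 1$.

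The remaining step is to verify that $f$ is proper. Fix $c\in\mathbb R$ and suppose $p\in X$ with $f(p)\le c$. If $\rho_n(p)=0$ for every integer $n\le c$, then $f(p)=\sum_{n>c} n\rho_n(p) > c\sum_{n>c}\rho_n(p) = c$, a contradiction; hence $p\in\bigcup_{n\le c}\overline{U_n}$. Therefore $f^{-1}\big((-\infty,c]\big)$ is a closed subset of the finite union $\bigcup_{n\le c}\overline{U_n}$ of compact sets, hence compact (using that $X$ is Hausdorff). Since any compact $K\subset\mathbb R$ lies in some $[-c,c]$, the set $f^{-1}(K)$ is a closed subset of the compact set $f^{-1}\big((-\infty,c]\big)$, hence compact, so $f$ is proper.

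I do not expect a genuine obstacle here: the argument is entirely standard, and the only point needing a (routine) remark rather than a bare citation in this setting is the existence of a countable locally finite cover with relatively compact members, which follows from local compactness of the corner models together with second countability. The rest is the textbook exhaustion-function trick.
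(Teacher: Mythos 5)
Your proof is correct and is the standard exhaustion-function construction; the paper itself gives no argument here, deferring entirely to Proposition I.5.11 of \cite{GG73}, whose proof is essentially the same partition-of-unity trick you carry out. All the steps you flag as needing care (local compactness of the models $\mathbb H^I$, second countability giving a countable locally finite cover with compact closures, and the strict inequality forcing $f^{-1}((-\infty,c])$ into a finite union of the $\overline{U_n}$) check out.
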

\begin{proof}
See Proposition I.5.11 in \cite{GG73}.
\end{proof}

\begin{corollary}
\label{cor:proper-nonemp}
Let $X$ and $Y$ be a manifold with finite faces, and let $\beta$ be an edging of $X$ with $Y$.
Suppose the space $\mathcal F^\beta(X,Y)$ is non-empty.
By abuse of notation, we regard the edging $\beta$ also as an edgin with $Y\times\mathbb R$ since there is a canonical identification $\Gamma_Y\cong\Gamma_{Y\times\mathbb R}$.
Then, there is a proper map $F:X\to Y\times\mathbb R$ along $\beta$.
In other words, the subset of proper maps of the space $\mathcal F^\beta(X,Y\times\mathbb R)$ is non-empty.
\end{corollary}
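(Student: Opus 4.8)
The plan is to take any $G\in\mathcal F^\beta(X,Y)$, which exists by hypothesis, together with a proper function $h:X\to\mathbb R$ furnished by \Cref{prop:proper-exist}, and to form the pair $F:=(G,h):X\to Y\times\mathbb R$. The claim is that $F$ is the desired proper map along $\beta$. So the proof breaks into two parts: first that $F$ is along $\beta$ (as an edging with $Y\times\mathbb R$ under the identification $\Gamma_Y\cong\Gamma_{Y\times\mathbb R}$), and second that $F$ is proper.

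For the first part, I would use the identification $\operatorname{bd}(Y\times\mathbb R)\cong\operatorname{bd}Y$ coming from the fact that $\mathbb R$ has no faces, so that a connected face of $Y\times\mathbb R$ is exactly $D\times\mathbb R$ for $D\in\operatorname{bd}Y$, and the edging $\beta$ is literally reused as a partial map $\operatorname{bd}X\to\operatorname{bd}(Y\times\mathbb R)$ sending $C$ to $\beta(C)\times\mathbb R$. Then for $C\in D(\beta)$ one checks $F(C)=\{(G(p),h(p))\mid p\in C\}\subset\beta(C)\times\mathbb R=(\beta\text{-face of }Y\times\mathbb R)$, which is immediate from $G(C)\subset\beta(C)$. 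That $\beta$ really is an edging with $Y\times\mathbb R$ (the two conditions on intersections of faces) follows formally from the bijection $\Gamma_Y\cong\Gamma_{Y\times\mathbb R}$ preserving meets, joins, coheights and the nonemptiness of face-intersections; this is essentially bookkeeping, and I would cite the discussion around $\mathcal C_Y$ and \Cref{lem:edgegamma} rather than recompute.

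For the second part, properness of $F=(G,h)$: let $K\subset Y\times\mathbb R$ be compact, write $\pi_{\mathbb R}:Y\times\mathbb R\to\mathbb R$ for the projection, and set $K':=\pi_{\mathbb R}(K)\subset\mathbb R$, which is compact. Then $F^{-1}(K)\subset F^{-1}(Y\times K')=(\pi_{\mathbb R}F)^{-1}(K')=h^{-1}(K')$, and $h^{-1}(K')$ is compact because $h$ is proper; since $F^{-1}(K)$ is a closed subset of the compact set $h^{-1}(K')$, it is compact. Hence $F$ is proper. This shows the subset of proper maps in $\mathcal F^\beta(X,Y\times\mathbb R)$ is non-empty.

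I do not expect a serious obstacle here; the only mildly delicate point is making the identification of $\beta$ as an edging with $Y\times\mathbb R$ precise, i.e. verifying that the canonical iso $\Gamma_Y\cong\Gamma_{Y\times\mathbb R}$ is compatible with the structure used in the definition of edging — but this follows from $\partial_1(Y\times\mathbb R)=(\partial_1 Y)\times\mathbb R$ and $\operatorname{bd}$ being computed from $\pi_0(\partial_1(-))$, together with \Cref{cor:mfdface-uniquecap}. Everything else is the standard ``pair a given map with a proper function'' trick, and the properness argument is the three-line closed-subset-of-compact computation above.
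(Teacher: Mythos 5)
Your proposal is correct and is essentially identical to the paper's proof, which also pairs an arbitrary $G\in\mathcal F^\beta(X,Y)$ with a proper function from \Cref{prop:proper-exist} and observes that $(G,h)$ is proper and along $\beta$. You simply spell out the two verifications that the paper dismisses as obvious, and both of your arguments are sound.
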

\begin{proof}
Take a smooth map $F':X\to Y$ along $\beta$ and a proper function $f:X\to\mathbb R$ by \Cref{prop:proper-exist}.
Define a function $F:X\to Y\times \mathbb R$ by $F(p):=(F'(p),f(p))$, then $F$ is obviously proper and along $\beta$.
\end{proof}

We are going to obtain proper embeddings by perturbing general proper maps.
To do this, thanks to \Cref{thm:imm-thm} and \Cref{thm:inj-residual}, it will suffice to prove that proper maps form an open subset.
We use the following consequence of \Cref{prop:proper-exist}:

\begin{lemma}[Lemma I.5.10 in \cite{GG73}]
\label{lem:compl-metric}
Every manifold $X$ with corners admits a complete metric $d_X:X\times X\to\mathbb R_+$.
\end{lemma}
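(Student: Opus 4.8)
The plan is to follow the classical argument of Golubitsky--Guillemin. First I would fix a metric $d$ on $X$ compatible with its topology; such a metric exists because $X$ is second countable, Hausdorff and locally compact (being locally homeomorphic to open subsets of $\mathbb R^n_+$), hence metrizable by Urysohn's theorem — alternatively one may embed $X$ as a closed subset of a boundaryless manifold via \Cref{prop:emb-bdless}, embed that into a Euclidean space, and restrict the Euclidean metric. Next I would invoke \Cref{prop:proper-exist} to obtain a proper smooth function $f:X\to\mathbb R$, and set
\[
d_X(x,y) := d(x,y) + |f(x)-f(y)|\,.
\]

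Then there are two things to check. The first is that $d_X$ is again a metric inducing the topology of $X$: it is a metric because $d$ is a metric and $(x,y)\mapsto|f(x)-f(y)|$ is a pseudometric, and the sum of a metric and a pseudometric is a metric; it induces the same topology because a sequence $x_n$ converges to $x$ with respect to $d$ if and only if $d(x_n,x)\to 0$, which by continuity of $f$ forces $|f(x_n)-f(x)|\to 0$ as well, so $d$-convergence and $d_X$-convergence agree, and in a metrizable space the topology is determined by its convergent sequences. (If one prefers a bounded metric one may replace $d$ by $\min(d,1)$ throughout; this changes nothing in what follows.)

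The second point, and the only one with any content, is completeness. Let $(x_n)$ be a $d_X$-Cauchy sequence. Then $(f(x_n))$ is Cauchy in $\mathbb R$, hence bounded, say $|f(x_n)|\le M$ for all $n$; thus $x_n\in f^{-1}([-M,M])$ for every $n$, and this set is compact because $f$ is proper. Since $(x_n)$ is also $d$-Cauchy and lies in a compact subset of the metric space $(X,d)$, it admits a subsequence converging with respect to $d$ (compact metric spaces are sequentially compact), and a Cauchy sequence with a convergent subsequence converges; hence $x_n$ converges in $d$, and therefore — by the equivalence of convergence established above — in $d_X$. So $(X,d_X)$ is complete, which is the assertion.

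I do not expect any real obstacle: the properness of $f$ is doing all the work, confining $d_X$-Cauchy sequences to compact sets, and \Cref{prop:proper-exist} supplies such an $f$. The only mild care needed is in justifying why $X$ carries some compatible metric to start with and why adding the term $|f(x)-f(y)|$ does not disturb the topology, both of which are handled above.
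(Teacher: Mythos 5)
Your proposal is correct and uses exactly the construction in the paper: $d_X = d + |f(x)-f(y)|$ for a compatible metric $d$ and a proper function $f$ supplied by \Cref{prop:proper-exist}. The only cosmetic difference is that you verify completeness directly on Cauchy sequences (boundedness of $f(x_n)$ confines the sequence to the compact set $f^{-1}([-M,M])$), whereas the paper phrases the same point by showing that closed $d_X$-balls are compact; the properness of $f$ does the work in either formulation.
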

\begin{proof}
Take a proper function $f:X\to\mathbb R$, and consider the map $d_f:X\times X\to\mathbb R_+$ given by
\[
d_f(p,p'):= |f(p)-f(p')|\,.
\]
It is easily verified that $d_f$ is a pseudo-metric and induces a weaker topology than the original one on $X$.
Choosing a metric $d'_X:X\times X\to\mathbb R_+$ compatible with the topology on $X$, we define a map $d_X:X\times X\to\mathbb R_+$ by
\[
d_X(p,p') := d'_X(p,p') + d_f(p,p')\,.
\]
Then, $d_X$ is a metric compatible with the original topology.
It remains to show that $d_X$ is complete.
According to Heine-Borel Theorem, it suffices to show that for each $p\in X$ and $\delta>0$, the subset
\[
\overline B_X(p;\delta)
:= \{p'\in X\mid d_X(p,p')\le\delta\} \subset X
\]
is compact.
This follows from the observation that $\overline B_X(p;\delta)$ is a closed subset of another subset
\[
\overline B_f(p;\delta)
:= \{p'\in X\mid d_f(p,p')\le\delta\}
= f^{-1}([f(p)-\delta,f(p)+\delta])
\subset X
\]
which is compact since $f$ is proper.
\end{proof}

Now, let $X$ and $Y$ be manifolds with corners.
By \Cref{lem:compl-metric}, we can choose a complete metric $d_Y:Y\times Y\to\mathbb R_+$ on $Y$, and define a map $\rho:C^\infty(X,Y)\times C^\infty(X,Y)\to\mathbb R_+$ by
\[
\rho(F,G)
:= \sup_{p\in X} \frac{d_Y(F(p),G(p))}{1+d_Y(F(p),G(p))}\,.
\]
It is verified that $\rho$ can be given as the map $\rho_0$ defined in section \ref{sec:whitney-top}, so that, by \Cref{lem:whitney-metric}, $\rho$ is a metric on $C^\infty(X,Y)$ inducing a weaker topology than Whitney $C^\infty$-topology.
In other words, a subset $B\subset C^\infty(X,Y)$ is open as soon as it is open with respect to the metric $\rho$.
We use this metric to prove the next result.

\begin{proposition}[cf. Lemma II.5.10 in \cite{GG73}]
\label{prop:proper-open}
Let $X$ and $Y$ be manifolds with corners.
Then, proper maps $X\to Y$ form a clopen (i.e. both closed and open) subset of $C^\infty(X,Y)$ (in Whitney $C^\infty$-topology).
\end{proposition}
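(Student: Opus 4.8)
The plan is to avoid arguing directly in the Whitney $C^\infty$-topology and instead work with the metric $\rho$ introduced just above the statement. As recalled there, $\rho$ agrees with the map $\rho_0$ of \Cref{sec:whitney-top}, so by \ref{sublem:whit-contin} in \Cref{lem:whitney-metric} every $\rho$-ball is open in the Whitney $C^\infty$-topology; hence any subset of $C^\infty(X,Y)$ that is open for the metric $\rho$ is Whitney-open, and any subset that is closed for $\rho$ is Whitney-closed. It therefore suffices to show that the set $\mathcal P\subset C^\infty(X,Y)$ of proper maps is clopen for $\rho$.

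The main preliminary point is that the complete metrics $d_X$ and $d_Y$ produced by \Cref{lem:compl-metric} enjoy the Heine--Borel property: every closed bounded subset is compact. This is exactly what the proof of \Cref{lem:compl-metric} establishes, since a closed bounded set is contained in some $\overline B_f(p;\delta)=f^{-1}([f(p)-\delta,f(p)+\delta])$, which is compact because $f$ is proper. Applying this on $X$ and $Y$ one obtains the characterisation: a smooth map $G:X\to Y$ is proper if and only if $G^{-1}(\overline B)$ is $d_X$-bounded for every $d_Y$-bounded $B\subset Y$. Indeed, if $G$ is proper then $\overline B$ is compact and so is $G^{-1}(\overline B)$; conversely a compact $K\subset Y$ is bounded, so $G^{-1}(K)$ is closed and bounded, hence compact.

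Next one records the elementary estimate: if $\rho(F,G)<\varepsilon$ with $0<\varepsilon<1$, then for every $p\in X$ one has $d_Y(F(p),G(p))/(1+d_Y(F(p),G(p)))<\varepsilon$, hence $d_Y(F(p),G(p))<\varepsilon':=\varepsilon/(1-\varepsilon)$; that is, $F$ and $G$ are uniformly $\varepsilon'$-close in $d_Y$. Writing $B^{\varepsilon'}:=\{q\in Y\mid d_Y(q,B)\le\varepsilon'\}$, which is closed and $d_Y$-bounded whenever $B$ is, the two halves are symmetric. \emph{Openness}: if $F\in\mathcal P$ and $\rho(F,G)<\varepsilon<1$, then for any bounded $B\subset Y$ the triangle inequality gives $G^{-1}(\overline B)\subset F^{-1}(B^{\varepsilon'})$; since $F$ is proper and $B^{\varepsilon'}$ is bounded, $F^{-1}(B^{\varepsilon'})$ is bounded, so $G^{-1}(\overline B)$ is bounded and $G\in\mathcal P$ by the characterisation. \emph{Closedness}: if $F\notin\mathcal P$, choose a bounded $B\subset Y$ with $F^{-1}(\overline B)$ unbounded; for any $G$ with $\rho(F,G)<\varepsilon<1$ one gets $F^{-1}(\overline B)\subset G^{-1}(B^{\varepsilon'})$, so $G^{-1}(B^{\varepsilon'})$ is unbounded while $B^{\varepsilon'}$ is bounded, forcing $G\notin\mathcal P$; thus the complement of $\mathcal P$ is $\rho$-open.

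Hence $\mathcal P$ is $\rho$-clopen, and therefore clopen in the Whitney $C^\infty$-topology. The only step needing genuine care is deriving the Heine--Borel property from completeness of the constructed metrics, since it is precisely this that reformulates properness as the perturbation-stable statement about preimages of bounded sets; once that is in place, the rest is the routine uniform estimate above. A minor point to double-check along the way is that $\rho$ really is (up to the chosen auxiliary metrics) the map $\rho_0$, so that \Cref{lem:whitney-metric} applies to transfer $\rho$-openness to Whitney-openness.
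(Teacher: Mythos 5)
Your proof is correct and follows essentially the same route as the paper: reduce to the metric $\rho$ (which induces a coarser topology than the Whitney $C^\infty$-topology), use the Heine--Borel property of the complete metric built from a proper function, and transfer properness between uniformly $d_Y$-close maps via the triangle inequality applied to preimages of enlarged balls. The only cosmetic difference is that you also equip $X$ with such a metric and phrase properness as boundedness of preimages of bounded sets, whereas the paper works directly with compactness of preimages of closed balls $\overline B_Y(q;\delta)$ and the inclusion $G^{-1}(\overline B_Y(q;\delta))\subset F^{-1}(\overline B_Y(q;\delta+1))$.
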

\begin{proof}
Choose a complete metric $d_Y$ on $Y$, and we obtain a metric $\rho$ on $C^\infty(X,Y)$ as above.
By the argument above, it suffices to show that the subset of proper maps is clopen in $C^\infty(X,Y)$ with respect to the metric $\rho$.
More precisely, we will show that, for two maps $F,G\in C^\infty(X,Y)$ with $\rho(F,G)\le1/2$, $F$ is proper if and only if so is $G$.
Indeed, this implies that any proper (resp. non-proper) map $F$ admits an open neighborhood
\[
B_\rho\bigl(F;\frac12\bigr) := \left\{G\in C^\infty(X,Y)\ \middle| \rho(F,G)<\frac12\right\}
\]
consisting of proper (resp. non-proper) maps.

For each point $q\in Y$ and each positive number $\delta>0$, we put
\[
\overline B_Y(q;\delta):=\{q'\in Y\mid d_Y(q,q')\le \delta\}\,.
\]
Since the metric $d_Y$ is complete, by Heine-Borel Theorem for complete metric space, a subset of $Y$ is compact if and only if it is closed and contained in a subset of the form $\overline B_Y(q;\delta)$.
Hence, a smooth map $F:X\to Y$ is proper if and only if the subset
\[
F^{-1}(\overline B_Y(q;\delta)) \subset X
\]
is compact for every $q\in Y$ and $\delta>0$.
Now, suppose we have two maps $F,G:X\to Y$ with $\rho(F,G)\le1/2$.
Note that the condition $\rho(F,G)\le1/2$ is equivalent to that, for each $p\in X$, we have $d_Y(F(p),G(p))\le1$.
Thus, for each $q\in Y$ and $\delta>0$, the triangle inequality implies
\[
\begin{split}
G^{-1}(\overline B_Y(q;\delta)) &\subset F^{-1}(\overline B_Y(q;\delta+1))\mathrlap{\,,} \\
F^{-1}(\overline B_Y(q;\delta)) &\subset G^{-1}(\overline B_Y(q;\delta+1))\mathrlap{\,.}
\end{split}
\]
Since the left-hand sides are closed in the right-hand sides respectively, the formers are compact as soon as so are the latters.
It follows that $F$ is proper if and only if so is $G$, as required.
\end{proof}

\begin{corollary}
\label{cor:propemb-approx}
Let $X$ and $Y$ be as in \Cref{prop:proper-open}, and let $\beta$ be an eding of $X$ with $Y$.
Assume $2\cdot\dim X+1\ge\dim Y$.
Then, every proper map $F:X\to Y$ along $\beta$ can be arbitrarily approximated by proper admissible embeddings in the space $\mathcal F^\beta(X,Y)$.
\end{corollary}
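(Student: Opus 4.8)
The plan is to combine the residuality statements of the preceding subsections with the openness of properness established in \Cref{prop:proper-open}. Under the stated dimension hypothesis, \Cref{thm:imm-thm} and \Cref{thm:inj-residual}, together with \Cref{thm:adm-residual}, show that the subset
\[
\mathcal E^\beta := \left\{ F\in\mathcal F^\beta(X,Y)\ \middle|\ \text{$F$ is an admissible injective immersion along $\beta$}\right\}
\]
is residual in $\mathcal F^\beta(X,Y)$: each of the three conditions ``admissible'', ``immersion along $\beta$'', and ``injective along $\beta$'' cuts out a residual subset (the proofs of \Cref{thm:imm-thm} and \Cref{thm:inj-residual} already build admissibility in), and a finite intersection of residual subsets is residual. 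Since $\mathcal F^\beta(X,Y)$ is a Baire space by \Cref{cor:edgefunc-Baire}, the set $\mathcal E^\beta$ is in particular dense.

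Next I would invoke \Cref{prop:proper-open}: proper maps form a clopen subset of $C^\infty(X,Y)$ in the Whitney $C^\infty$-topology, so the set $\mathcal P^\beta$ of proper maps in $\mathcal F^\beta(X,Y)$ is open. Given a proper map $F\in\mathcal F^\beta(X,Y)$, the open set $\mathcal P^\beta$ is a neighborhood of $F$ in the Baire space $\mathcal F^\beta(X,Y)$, hence is itself Baire; therefore $\mathcal E^\beta\cap\mathcal P^\beta$ is residual, and thus dense, in $\mathcal P^\beta$. In particular $F$ lies in the closure of $\mathcal E^\beta\cap\mathcal P^\beta$, so $F$ is approximated arbitrarily well (in the Whitney $C^\infty$-topology, a fortiori in the metric $\rho$) by elements of $\mathcal E^\beta\cap\mathcal P^\beta$. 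To conclude I would use the elementary fact that a proper injective immersion between manifolds with corners is an embedding: a proper injective continuous map into a locally compact Hausdorff space is a closed topological embedding, and combined with the immersion condition this gives a smooth embedding in the sense of \Cref{sec:submfds}. Hence every element of $\mathcal E^\beta\cap\mathcal P^\beta$ is a proper admissible embedding along $\beta$, which yields the claim.

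I do not expect a serious obstacle here; the only point requiring a little care is the implication ``proper $+$ injective $+$ immersion $\Rightarrow$ embedding'' in the category $\mathbf{Mfd}$ of manifolds with corners, which I would treat either by the standard local-to-global argument or, if convenient, by passing to the boundaryless ambient manifolds $\widehat X,\widehat Y$ furnished by \Cref{prop:emb-bdless}. The remaining bookkeeping—that the intersection of a residual set with a nonempty open set is dense in that open set, inside a Baire space—is routine.
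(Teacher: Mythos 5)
Your argument is correct and is essentially the paper's own proof, which simply cites \Cref{thm:adm-residual}, \Cref{thm:imm-thm}, \Cref{thm:inj-residual}, and \Cref{prop:proper-open} in exactly the way you spell out (residual $\cap$ open is dense in the open set of proper maps, and a proper injective immersion is an embedding). The extra details you supply, including the ``proper $+$ injective $+$ immersion $\Rightarrow$ embedding'' step, are the standard ones the paper leaves implicit.
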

\begin{proof}
The result immediately follows from \Cref{thm:adm-residual}, \Cref{thm:imm-thm}, \Cref{thm:inj-residual}, and \Cref{prop:proper-open}.
\end{proof}

Finally, we obtain our main result in this section:

\thmEmbedding

\begin{corollary}
\label{cor:emb-polyhedra}
Let $X$ be a manifold with faces equipped with an edging $\beta$ with a manifold $K$ realized as a convex polyhedron in the Euclidean space $\mathbb R^n$.
Then, for any sufficiently large integer $n>0$, there is a proper embedding $F:X\to K\times\mathbb R^n$ such that
\begin{enumerate}[label={\rm(\roman*)}]
  \item for each $\tau\in\Gamma_K$, $F(\partial^\beta_\tau X)\subset\partial_\tau K$;
  \item for each connected face $D\in\operatorname{bd}K$ of $Y$, $F\pitchfork D$.
\end{enumerate}
\end{corollary}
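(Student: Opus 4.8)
The statement is a direct consequence of two results already established: the non-emptiness criterion \Cref{prop:edgefunc-polyhedra} and the embedding theorem \Cref{thm:emb-thm}. The plan is simply to verify that the hypotheses of \Cref{thm:emb-thm} hold in the present situation and then invoke that theorem with $Y:=K$.

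First I would observe that $K$ is in fact a manifold with \emph{finite} faces: by hypothesis $K$ is realized as a convex polyhedron in $\mathbb R^n$, i.e.\ a convex subset cut out by finitely many hyperplanes, so each connected face of $K$ arises from one of these bounding hyperplanes and $\operatorname{bd}K$ is finite. Moreover $X$ carries an edging $\beta$ with $K$, and edgings are only defined between manifolds with finite faces, so $X$ too has finite faces. Hence the hypotheses of \Cref{prop:edgefunc-polyhedra} are met verbatim: $K$ is a (possibly non-compact) manifold with faces embeddable into a Euclidean space as a convex polyhedron, $X$ is a manifold with finite faces, and $\beta$ is an edging of $X$ with $K$. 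Applying \Cref{prop:edgefunc-polyhedra} then gives $\mathcal F^\beta(X,K)\neq\varnothing$.

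This non-emptiness is precisely the hypothesis required to apply \Cref{thm:emb-thm} with $Y:=K$. That theorem produces, for every sufficiently large integer $n>0$, a proper embedding $F:X\to K\times\mathbb R^n$ satisfying $F(\partial^\beta_\tau X)\subset\partial_\tau K$ for each $\tau\in\Gamma_K$ and $F\pitchfork D$ for each connected face $D\in\operatorname{bd}K$, which are exactly conditions (i) and (ii) after matching $\Gamma_Y$ with $\Gamma_K$ and $\operatorname{bd}Y$ with $\operatorname{bd}K$ under $Y=K$ and noting that the edging $\beta$ is unchanged. There is no genuine obstacle: all the content is carried by \Cref{prop:edgefunc-polyhedra} and \Cref{thm:emb-thm}, and the only work is the bookkeeping above — in particular, confirming that a convex polyhedron bounded by finitely many hyperplanes is a manifold with finite faces, so that both cited results genuinely apply.
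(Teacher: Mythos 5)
Your argument is correct and is exactly the route the paper intends: the corollary is stated without proof precisely because it is the combination of \Cref{prop:edgefunc-polyhedra} (giving $\mathcal F^\beta(X,K)\neq\varnothing$ for a convex polyhedron $K$) with \Cref{thm:emb-thm} applied to $Y=K$, as the introduction already indicates. Your bookkeeping check that a convex polyhedron bounded by finitely many hyperplanes has finitely many connected faces is the only point of substance, and it is handled correctly.
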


\begin{remark}
\Cref{cor:emb-polyhedra} is a generalization of Proposition 2.1.7 \cite{Lau00}.
Indeed, Laures proved the result for compact $\langle n\rangle$-manifolds $X$; see \Cref{ex:<n>-mfd}.
In particular, it does not covers non-compact cases while our result does.
\end{remark}

\bibliographystyle{plain}
\bibliography{mybiblio}
\end{document}